\DeclareMathOperator{\Z}{\mathscr{Z}}
\DeclareMathOperator{\rz}{\mathds{R}}
\DeclareMathOperator{\nz}{\mathds{N}}
\DeclareMathOperator{\Tr}{Tr}
\renewcommand{\Im}{\mathrm{Im}}
\renewcommand{\Re}{\mathrm{Re}}
\theoremstyle{plain}
\newtheorem{thm}{Theorem}[section]
\newtheorem{proposition}[thm]{Proposition}
\newtheorem{lemma}[thm]{Lemma}
\newtheorem{corollary}[thm]{Corollary}
\theoremstyle{definition}
\newtheorem{definition}[thm]{Definition}
\theoremstyle{remark}
\newtheorem*{example}{Example}
\newtheorem{remark}[thm]{Remark}
\newtheorem*{remark*}{Remark}
\begin{document}
\title{Wigner measures approach to the classical limit of the Nelson
  model: Convergence of dynamics and ground state energy.}

\date{\today} \author{Zied Ammari}
\email[]{zied.ammari@univ-rennes1.fr} \author{Marco Falconi}
\email[]{marco.falconi@univ-rennes1.fr}
\affiliation{IRMAR and Centre Henri Lebesgue; Université de Rennes I\\
  Campus de Beaulieu, 263 Avenue du Général Leclerc\\
  CS 74205, 35042 Rennes Cedex}
\begin{abstract}
  We consider the classical limit of the Nelson model, a system of
  stable nucleons interacting with a meson field. We prove convergence
  of the quantum dynamics towards the evolution of the coupled
  Klein-Gordon-Schrödinger equation. Also, we show that the ground
  state energy level of $N$ nucleons, when $N$ is large and the meson
  field approaches its classical value, is given by the infimum of the
  classical energy functional at a fixed density of particles. Our
  study relies on a recently elaborated approach for mean field theory
  and uses Wigner measures.
\end{abstract}
\maketitle

\section{Introduction}
\label{sec:introduction}

The Nelson model refers to a quantum dynamical system describing a
nucleon field interacting with a meson scalar Bose field.  When an
ultraviolet cut off is put in the interaction, the Hamiltonian becomes
a self-adjoint operator and so the quantum dynamics is well defined.
In the early sixties, E.~Nelson showed that the quantum dynamics of this
system exists even when the ultraviolet cut off is removed
\citep[see][]{N}. It is indeed one of the simplest examples in
non-relativistic quantum field theory (QFT) where renormalization is needed and successfully performed using only basic tools of functional analysis.

Over the past two decades, there has been considerable effort devoted
to the study of the Nelson model that have led to a thorough
investigation of its spectral and scattering properties \citep[see][to
mention but a few]{Ar01,AH12,BFS2,BFLMS,DG1,FGS3,GGM1,GHPS,Mol,Pi,Sp3}.  However,
the fact that the Nelson Hamiltonian is  a  Wick
quantization of a classical Hamiltonian  system is quite often neglected
except in few references \cite{Fa,GNV}.  We believe that the study of
the classical limit of such quantum dynamical systems is  a significant question
leading  to an unexplored phase-space point of view in QFT. This for sure will enrich
the subject and may also provide some insight on some of the remaining
open problems.

In this article, we neglect the spin and isospin for nucleons, so
we are considering a scalar Yukawa  field theory. We also suppose that an
ultraviolet cut off is imposed on the interaction.  We prove two main
results stated in
Theorem \ref{main.th.1} and Theorem \ref{main.th.2}, namely:
\begin{enumerate}[(i)]
\item\label{item:6} Convergence of the quantum dynamics towards the classical
evolution of the coupled Klein-Gordon-Schrödinger equation.
\item\label{item:7} Convergence of the ground state energy level of $N$ nucleons to
the infimum of the classical energy functional with fixed density of
particles,
when $N$ tends to infinity and  the Bose field approaches its classical limit.
\end{enumerate}
There are basically two schemes for proving~\eqref{item:6}: either one studies the propagation of states or those of observables. The latter strategy being very difficult for systems with unconserved number of particles, we rely on the first scheme.
To establish~\eqref{item:6}, we follow indeed a Wigner measures approach, recently
elaborated in \cite{AmNi1,AmNi2,AmNi3,2011arXiv1111.5918A} for the purpose of mean
field limit in many-body theory.  This method turns out to be quite general
and flexible. It can be adapted to quantum electrodynamics (QED) and
relativistic quantum field theory (QFT) and it  gives a fair description of the propagation of general states in the classical limit (see Theorem \ref{main.th.1}).
Actually, the convergence~\eqref{item:6} is known in the particular case of coherent states  \citep[see][]{Fa,GNV} by Hepp's method \cite{Hep} which relies on the special   structure of those states. The result in Theorem \ref{main.th.1} says that the
convergence of the Neslon quantum  dynamics towards the classical one has nothing to do with any particular structure or choice of states but it is rather a general (Bohr) quantum-classical correspondence principle for a system with an infinite number of degrees of freedom. In this sense, Theorem \ref{main.th.1} is more general and provides a better understanding of the classical limit of Nelson Hamiltonians.

In addition to the fact that the Wigner measures approach gives a stronger convergence result compared to the coherent state method, it also proves to be a powerful tool for tackling variational questions of type~\eqref{item:7}. Indeed, asymptotic properties of a given minimizing sequence can be derived by looking to its Wigner measures and it turns out that some
 a priori information on these Wigner measures are crucial in the proof of Theorem \ref{main.th.2}. However, both our results give only the limit of quantum quantities in terms of their classical approximations and provide no error bound on the difference.   This is of course an interesting question, among several others, and it is beyond the scope of this article. Actually, our work is also meant to stimulate further investigations and 
 to underline some open problems. For instance, removal of the momentum cutoff and  drop of the confining potential as well as time asymptotics and scattering theory  within the classical limit are quite interesting open questions. We believe indeed that our work provides a  basis for further developments on the above-mentioned problems.

The phase space of the theory is $\mathscr{Z}:=L^2(\mathds{R}^d)\oplus
L^2(\mathds{R}^d)$, and we consider the symmetric Fock space
$\mathscr{H}:=\Gamma_s(\mathscr{Z})\sim
\Gamma_s(L^2(\mathds{R}^d))\otimes\Gamma_s(L^2(\mathds{R}^d))$.  We
denote by $\psi^{\#}$ the annihilation and creation of the
non-relativistic particles (nucleons), by $a^{\#}$ the annihilation
and creation of the relativistic meson field. We recall that for each
$\varepsilon\in(0,\bar \varepsilon)$, with $\bar\varepsilon>0$ fixed
once and for all, we choose the algebra:
\begin{equation*}
  [\psi(x_1),\psi^*(x_2)]=\varepsilon \delta(x_1-x_2)\;,\quad [a(k_1),a^*(k_2)]=\varepsilon \delta(k_1-k_2)\;.
\end{equation*}
This fixes the scaling so that each $\psi^{\#}$ and $a^{\#}$ behaves
like $\sqrt{\varepsilon}$. For instance, the second quantization
operators $d\Gamma(\cdot)=\int_{\mathds{R}^{d}} a^{*}(k) (\,\cdot\,)
a(k) dk $ or $\int_{\mathds{R}^{d}} \psi^{*}(x) (\,\cdot\,) \psi(x) dx
$ scale like $\varepsilon$.  This is also the case for the number
operators $N_{1}=d\Gamma(1)\otimes 1, N_{2}=1\otimes d\Gamma(1)$ and
$N=N_{1}+N_{2}$.  The Weyl operators are $W(\xi)=W(\xi_{1})\otimes
W(\xi_{2})$, for $\xi=\xi_{1}\oplus\xi_{2}\in\Z$, with
$W(\xi_{1})=e^{i \frac{\psi^{*}(\xi_{1})+\psi(\xi_{1})}{\sqrt{2}}}$
and $W(\xi_{2})=e^{i \frac{a^{*}(\xi_{2})+a(\xi_{2})}{\sqrt{2}}}$
being the Weyl
operators on $\Gamma_s(L^2(\mathds{R}^d))$.\\
In the Fock representation of these canonical commutation relations,
the Nelson Hamiltonian takes the form:
\begin{equation*}
  \begin{split}
    H=d\Gamma(-\frac{\Delta}{2M}+V)\otimes 1+1\otimes d\Gamma(\omega)+\int_{\mathds{R}^{2d}}^{}\frac{\chi(k)}{\sqrt{\omega(k)}}\psi^{*}(x)\bigl(a^{*}(k)e^{-ik\cdot x}+a(k)e^{ik\cdot x}\bigr)\psi(x) dkdx\; ;
  \end{split}
\end{equation*}
where $\omega(k)=\sqrt{k^2+m_0^2}$ and $m_0\geq 0$. Here $m_0$ and $M$
are respectively the meson and nucleon mass at rest. It is useful to
split $H$ in a free part $H_0$, and an interaction part $H_I$, with:
\begin{align*}
  H_0&=d\Gamma(-\frac{\Delta}{2M}+V)\otimes 1+1\otimes d\Gamma(\omega)\; ,\\
  H_I&=\int_{\mathds{R}^{2d}}^{}\frac{\chi(k)}{\sqrt{\omega(k)}}\psi^{*}(x)\bigl(a^{*}(k)e^{-ik\cdot x}+a(k)e^{ik\cdot x}\bigr)\psi(x) dkdx\; .
\end{align*}
We assume the potential $V(x)$ to be in
$L^2_{loc}(\mathds{R}^{d},\mathds{R}_+)$, so that $-\Delta +V$ is a
positive self-adjoint operator on $L^2(\mathds{R}^d)$, by Kato
inequality, and essentially self-adjoint on
$C_0^\infty(\mathds{R}^d)$. The main assumption we require on the cut
off function $\chi$ is that $\omega^{-1/2}\chi\in
L^2(\mathds{R}^{d})$. This is enough to define $H$ as self-adjoint
operator (see Proposition \ref{prop:14}). To recapitulate, we assume
throughout the article the assumption
\begin{equation}
  \label{eq:5}
  \tag{{\bf A}}
  V\in L^2_{loc}(\mathds{R}^{d},\mathds{R}_+)\text{ and } \omega^{-1/2}\chi\in L^2(\mathds{R}^{d})\,.
\end{equation}
Actually, the Nelson Hamiltonian is a Wick quantization of the
classical energy functional
\begin{equation*}
  h(z_1\oplus z_2)=\langle z_1,-\frac{\Delta}{2M}+V \,z_1\rangle+\langle z_2,\omega(k) z_2\rangle+ \int_{\mathds{R}^{2d}}^{}\frac{\chi(k)}{\sqrt{\omega(k)}} |z_1|^2(x) \bigl(\bar{z}_2(k) e^{-ik\cdot x}+z_2(k)e^{ik\cdot x}\bigr) dkdx\;.
\end{equation*}
The Hamiltonian $h$ describes the coupled Klein-Gordon-Schrödinger
system with an Yukawa type interaction subject to a momentum cut
off. With the assumption \eqref{eq:5}, the related Cauchy problem is
well posed in $\mathscr{Z}$ (see Propositions \ref{lemma:1} and
\ref{prop:3}).

The main point in the proof of (i) is to understand the propagation of
normal states on the Fock space $\mathscr{H}$ with the appropriate
scaling. The idea is to encode the oscillations of any family of
states with respect to the semiclassical parameter $\varepsilon$ by
classical quantities, namely probability measures on the phase space
(Wigner measures).  Then (i) can be restated as the propagation of
these measures along the classical flow of the
Klein-Gordon-Schrödinger equation.

We say that a Borel probability measure $\mu$ on $\Z$ is a Wigner
measure of a family of normal states
$(\varrho_{\varepsilon})_{\varepsilon\in (0,\bar \varepsilon)}$ on
$\mathscr{H}$ if there exists a sequence
$(\varepsilon_k)_{k\in\mathds{N}}$ in $(0,\bar\varepsilon)$, such that
$\varepsilon_k\to 0$ and for any $\xi\in\mathscr{Z}$,
\begin{equation}
\label{quan1}
\lim_{k\to\infty} \Tr[\varrho_{\varepsilon_k} W(\xi)]= \int_{\Z}
e^{i\sqrt{2} {\Re}\langle\xi, z\rangle} \, d\mu(z)\,,
\end{equation}
where $W(\xi)$ refers to the Weyl operator on the Fock space
$\mathscr{H}$ which depends on $\varepsilon_{k}$ (here $\Re \langle
\cdot , \cdot\rangle_{}$ is the real part of the scalar product on
$\mathscr{Z}$).  We denote the set of all Wigner measures of a given
family of states $(\varrho_{\varepsilon})_{\varepsilon\in (0,\bar
  \varepsilon)}$ by $\mathscr{M}(\varrho_{\varepsilon}, \varepsilon\in
(0,\bar \varepsilon))$.  It was proved in \cite{AmNi1} that the
assumption
\begin{equation*}
  \exists \delta>0, \exists C>0, \forall \varepsilon\in (0,\bar\varepsilon) \quad  \Tr[\varrho_{\varepsilon} N^{\delta}]<C\,,
\end{equation*}
ensures that the set of Wigner measures
$\mathscr{M}(\varrho_{\varepsilon}, \varepsilon\in (0,\bar
\varepsilon))$ is non empty. Notice that the $\Tr[\,\cdot\,]$ is understood as
$\sum_{i=0}^{\infty}\lambda_i\langle \varphi_i , N^{\delta}\varphi_i
\rangle_{}$, where $\{\varphi_i\}_{i\in\mathds{N}}$ is an O.N.B. of
eigenvectors of $\varrho_{\varepsilon}$ associated to the eigenvalues $\{\lambda_i\}_{i\in\mathds{N}}$.

The Nelson Hamiltonian $H$ has a fibred structure with respect to the number of nucleons. So, it can be written as $H=\oplus_{n=0}^{\infty} H_{|L^{2}_s(\rz^{nd})\otimes\Gamma_s(L^{2}(\rz^{d}))}$ where
$L_s^2(\rz^{nd})$ denotes the space of symmetric square integrable functions (see Section
\ref{sec:dynam-quant-class}). It also turns out that $H$ is unbounded from below while $H_{|L^{2}_s(\rz^{nd})\otimes\Gamma_s(L^{2}(\rz^{d}))}$ is bounded from below.
\\
Under the aforementioned assumptions on the potential $V$ and the cut
off function $\chi$, we are in position to precisely state  our two
main results.
\begin{thm}
  \label{main.th.1}
  Assume that \eqref{eq:5} holds. Let
  $(\varrho_{\varepsilon})_{\varepsilon\in (0,\bar \varepsilon)}$ be a
  family of normal states on the Hilbert space $\mathscr{H}$
  satisfying the assumption:
  \begin{equation}
    \label{eq:0}
    \exists \delta>0, \exists C>0, \forall \varepsilon\in (0,\bar\varepsilon) \quad \Tr[\varrho_{\varepsilon} (H_0+N+1)^{\delta}]<C.
  \end{equation} 
  Then for any $t\in\mathds{R}$ the set of Wigner measures associated
  with the family $(e^{-i \frac{t}{\varepsilon}
    H}\varrho_{\varepsilon}e^{i \frac{t}{\varepsilon} H})_{\varepsilon\in (0,\bar
    \varepsilon)}$ is
  $$
  \mathscr{M}(e^{-i \frac{t}{\varepsilon} H}\varrho_{\varepsilon}e^{i \frac{t}{\varepsilon} H}, \varepsilon\in (0,\bar \varepsilon))=\{\Phi(t,0)_{\#}\mu_{0}, \mu_{0}\in \mathscr{M}(\varrho_{\varepsilon}, \varepsilon\in (0,\bar \varepsilon))\}\,,
  $$
  where $\Phi(t,0)_{\#}\mu_{0}$ is the push forward of $\mu_{0}$ by
  the classical flow $\Phi(t,s)$ of the coupled
  Klein-Gordon-Schrödinger equation \eqref{eq:1} well defined and
  continuous on $\Z$ by Propositions \ref{lemma:1} and \ref{prop:3}.
\end{thm}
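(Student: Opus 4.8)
The plan is to follow the now-standard Wigner-measure strategy for mean-field/classical limits as developed in \cite{AmNi1,AmNi2,AmNi3,2011arXiv1111.5918A}, adapted to the unbounded and non-number-conserving interaction of the Nelson model. The first step is a propagation-of-regularity estimate: I would show that the energy-type bound \eqref{eq:0} is, up to adjusting the constants, preserved along the dynamics, i.e. $\Tr[e^{-i\frac{t}{\varepsilon}H}\varrho_\varepsilon e^{i\frac{t}{\varepsilon}H}(H_0+N+1)^\delta]\le C(t)$ uniformly in $\varepsilon\in(0,\bar\varepsilon)$. Since $H$ commutes with itself one gets $\Tr[\varrho_\varepsilon^t|H|^\delta]$ controlled for free; the point is to trade powers of $H$ against powers of $H_0+N+1$, which requires operator inequalities of the form $N\lesssim H_0+1$ on the fibers and a relative bound of $H_I$ with respect to $H_0+N+1$ coming from assumption \eqref{eq:5} (namely $\omega^{-1/2}\chi\in L^2$). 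This guarantees, via the criterion recalled after \eqref{quan1}, that $\mathscr{M}(e^{-i\frac{t}{\varepsilon}H}\varrho_\varepsilon e^{i\frac{t}{\varepsilon}H},\varepsilon\in(0,\bar\varepsilon))$ is nonempty for every $t$, and in fact that every sequence $\varepsilon_k\to0$ admits a subsequence along which Wigner measures exist simultaneously for all $t$ in a countable dense set of times.

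The second and central step is to identify the time-evolved Wigner measures. Fix $\mu_0\in\mathscr{M}(\varrho_\varepsilon)$ along some sequence $\varepsilon_k\to0$; passing to a further subsequence (not relabeled) we may assume the evolved states $\varrho_\varepsilon^t$ have Wigner measures $\mu_t$ for all $t$ in a countable dense set, and then for all $t\in\mathds{R}$ by a further diagonal/continuity argument. The goal is to prove $\mu_t=\Phi(t,0)_\#\mu_0$. The mechanism is a duality/transport identity: for a suitable class of test functions $F$ on $\Z$ (e.g. cylindrical functions built from Weyl operators, or polynomials with cutoff) one quantizes $F$ into an operator $F^{Wick}$ on $\mathscr{H}$, computes $\frac{d}{dt}\Tr[\varrho_\varepsilon^t F^{Wick}]=\frac{i}{\varepsilon}\Tr[\varrho_\varepsilon^t[H,F^{Wick}]]$, and shows that $\frac{i}{\varepsilon}[H,F^{Wick}]$ is, up to $O(\varepsilon)$ errors controlled by the propagated bound from Step 1, the Wick quantization of the Poisson bracket $\{h,F\}$. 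Integrating in time and passing to the limit $\varepsilon_k\to0$ yields that the curve $t\mapsto\mu_t$ is a weak (measure-valued) solution of the Liouville/transport equation $\partial_t\mu_t+\mathrm{div}(\mu_t\, v)=0$ associated with the Hamiltonian vector field of $h$; since by Propositions \ref{lemma:1} and \ref{prop:3} the Klein-Gordon-Schrödinger flow $\Phi(t,s)$ exists and is continuous on $\Z$, a uniqueness result for this transport equation (again in the spirit of \cite{2011arXiv1111.5918A}) forces $\mu_t=\Phi(t,0)_\#\mu_0$. This gives the inclusion $\subseteq$; the reverse inclusion $\supseteq$ follows by running the same argument backward in time, or by noting that $\Phi(t,0)$ is a bijection on $\Z$ with inverse $\Phi(0,t)$ and applying $\subseteq$ to the family at time $t$.

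I expect two genuine obstacles. The first is that the Nelson interaction is linear in $a^\#$, hence \emph{not} relatively bounded by $N$ alone; one must use $H_0$ (the free meson energy $d\Gamma(\omega)$) together with $N$, and prove the commutator estimates in the topology induced by $H_0+N+1$ rather than by $N$. This is where the $\chi/\sqrt{\omega}\in L^2$ assumption is essential and where the bulk of the technical work sits — in particular one needs to approximate the unbounded generator by regularized quantities (momentum cutoffs, number cutoffs) uniformly in $\varepsilon$, control the commutators $[H,\chi_R(N)]$ and $[H_I, F^{Wick}]$, and remove the regularizations at the end using Step 1. The second, milder obstacle is matching the low-regularity well-posedness of the classical flow (only continuity on $\Z$, no extra smoothing) with the test-function class used in the duality argument: one cannot use very regular observables only, so the transport-equation uniqueness must be proved at the level of measures with merely continuous flow, which is exactly the abstract input one imports from the measure-transport results underlying this approach. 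Once these are in place the convergence statement, including the precise equality of the sets of Wigner measures, follows from a routine subsequence/diagonal extraction argument.
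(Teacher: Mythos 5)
Your high-level skeleton (Duhamel/transport identity, limiting Liouville equation, uniqueness of measure-valued solutions \`{a} la Ambrosio--Gigli--Savar\'{e} as adapted in \cite{2011arXiv1111.5918A}) is the one the paper uses, but your first step contains a genuine gap, and your choice of test observables diverges in an important way from what actually makes the argument go through.

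\textbf{The gap.} You propose to propagate the bound \eqref{eq:0} along the full dynamics, i.e.\ to show $\Tr[e^{-i\frac{t}{\varepsilon}H}\varrho_\varepsilon e^{i\frac{t}{\varepsilon}H}(H_0+N+1)^\delta]\le C(t)$ uniformly in $\varepsilon$. This is not available under assumption \eqref{eq:5}. A Gronwall argument for $H_0$ would require controlling $[H_I,H_0]$, and in particular $[H_I,d\Gamma(\omega)]$ produces terms of the form $a^\#(\omega^{1/2}\chi e^{\mp ik\cdot x})$, which need $\omega^{1/2}\chi\in L^2$ --- strictly stronger than the $\omega^{-1/2}\chi\in L^2$ you are given. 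The paper never propagates an $H_0$-bound. Instead it passes to the interaction representation $\tilde\varrho_\varepsilon(t)=e^{i\frac{t}{\varepsilon}H_0}\varrho_\varepsilon(t)e^{-i\frac{t}{\varepsilon}H_0}$, so that the only regularity that must propagate is $T^\delta=(N_1^2+N_2+1)^\delta$. Since $H_I$ is relatively bounded by $T$ (Corollary~\ref{cor:1}) and $T$ commutes with $H_0$, the $T^\delta$-bound propagates by Gronwall on each fiber (Proposition~\ref{prop:2}(iii), Lemma~\ref{lemma:6}), and the Wigner measures of $\tilde\varrho_\varepsilon(t)$ and $\varrho_\varepsilon(t)$ are simply related by push-forward under the free flow $\Phi_0(t)$. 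The $S=H_0+T$ regularity is only used at the \emph{initial} time, to justify differentiating $\Tr[\tilde\varrho_\varepsilon(t)W(\xi)]$ and to control the non-compact piece $B_{+-,0}$; and even that is removed at the very end by the cutoff approximation $\varrho_{\varepsilon,R}=\chi_R(S)\varrho_\varepsilon\chi_R(S)/\Tr[\cdot]$ together with a total-variation bound (Section~\ref{se.gendata}), not by the ``routine subsequence extraction'' you gesture at.

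\textbf{The test observables.} You propose quantizing test functions into $F^{\mathrm{Wick}}$ and differentiating $\Tr[\varrho_\varepsilon^tF^{\mathrm{Wick}}]$. The paper instead works entirely with Weyl operators $W(\xi)$, i.e.\ with the characteristic function of the Wigner measure. This matters: $W(\xi)$ is unitary, so the differentiated quantity stays controllable and the commutator collapses (via $W^*(\xi)H_IW(\xi)=B_\varepsilon(\xi)$) to a \emph{finite} polynomial expansion $\frac{i}{\varepsilon}(B_\varepsilon(\xi)-H_I)=\sum_{j=0}^2\varepsilon^jB_j(\xi)$ of low-degree Wick terms (Lemma~\ref{lemma:7}, Proposition~\ref{prop:5}). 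The leading term $B_0$ then splits into finite-rank/compact symbols plus one genuinely non-compact piece $B_{+-,0}=d\Gamma_1(f(\tilde\xi_2))$, which is tamed by the regularization $g_m(i\partial_x)$ (Definition~\ref{def:1}, Lemma~\ref{lemma:9}) and dominated convergence. If you start from unbounded $F^{\mathrm{Wick}}$ directly, none of this structure is visible, the domain questions are considerably harder, and you lose the Ascoli-type equicontinuity (Lemma~\ref{lemma:10}) that lets you extract a single subsequence working for all times. In short: the endgame you describe is the right one, but the way in from \eqref{eq:0} must go through the interaction picture and Weyl observables, not through propagated $H_0$-bounds and Wick observables.
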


\begin{thm}
  \label{main.th.2}
  Assume that \eqref{eq:5} holds and additionally $m_0>0$ and $V$ is a
  confining potential, i.e.: $\lim_{|x|\to\infty} V(x)=+\infty $. Then
  the ground state energy of the restricted Nelson Hamiltonian has the
  following limit, for any $\lambda>0$,
  \begin{equation}
    \displaystyle\lim_{\varepsilon\to 0, n\varepsilon=\lambda^2}
    \inf \sigma(H_{|L^{2}_s(\mathds{R}^{dn})\otimes
      \Gamma_{s}(L^{2}(\mathds{R}^{d}))})= \displaystyle\inf_{||z_{1}||_{L^2(\mathds{R}^d)}=\lambda} h(z_{1}\oplus z_{2})\,,
  \end{equation}
  where the infimum on the right hand side is taken over all $z_1\in D(\sqrt{\frac{-\Delta}{2M}+V})$ and $z_2\in
  D(\omega^{1/2})$ with the constraint $||z_{1}||_{L^2(\mathds{R}^d)}=\lambda$.
\end{thm}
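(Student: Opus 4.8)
\emph{Upper bound.} The theorem is proved by a matching upper and lower bound. For the upper bound, fix $z_1\in D(\sqrt{-\Delta/2M+V})$ with $\|z_1\|_{L^2(\mathds{R}^d)}=\lambda$ and $z_2\in D(\omega^{1/2})$; write $z_1=\lambda\varphi$ with $\|\varphi\|_{L^2}=1$, and for each admissible pair $(\varepsilon,n)$ (i.e. $n\varepsilon=\lambda^2$) take the trial vector $\Psi_{\varepsilon,n}=\varphi^{\otimes n}\otimes\mathcal{C}_{\varepsilon}(z_2)$, where $\varphi^{\otimes n}\in L^2_s(\mathds{R}^{dn})$ is the Hartree state of the nucleons and $\mathcal{C}_{\varepsilon}(z_2)\in\Gamma_s(L^2(\mathds{R}^d))$ is a coherent state of the mesons centered at $z_2$ (a Weyl translate of the vacuum). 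The hypotheses $z_1\in D(\sqrt{-\Delta/2M+V})$ and $z_2\in D(\omega^{1/2})$ are precisely what guarantees that $\Psi_{\varepsilon,n}$ lies in the form domain of $H$ (cf. Proposition \ref{prop:14}). Using that $d\Gamma(A)$ acts on the $n$-nucleon sector as $\varepsilon\sum_{j=1}^{n}A_j$, together with the scaling $n\varepsilon=\lambda^2$, and the coherent-state identities $\langle\mathcal{C}_{\varepsilon}(z_2),d\Gamma(\omega)\mathcal{C}_{\varepsilon}(z_2)\rangle=\langle z_2,\omega z_2\rangle$ and $\langle\mathcal{C}_{\varepsilon}(z_2),a^{\#}(k)\mathcal{C}_{\varepsilon}(z_2)\rangle=z_2^{\#}(k)$, a direct computation yields the exact identity $\langle\Psi_{\varepsilon,n},H\Psi_{\varepsilon,n}\rangle=h(z_1\oplus z_2)$. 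Hence $\inf\sigma(H_{|L^2_s(\mathds{R}^{dn})\otimes\Gamma_s(L^2(\mathds{R}^d))})\le h(z_1\oplus z_2)$ for every such $(\varepsilon,n)$, and taking the infimum over $(z_1,z_2)$ gives $\limsup_{\varepsilon\to0,\,n\varepsilon=\lambda^2}\inf\sigma(H_{|\cdots})\le\inf_{\|z_1\|=\lambda}h(z_1\oplus z_2)$.

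\emph{Lower bound: a priori estimates and the Wigner measure.} Let $(\varepsilon_k)_k$ with $n_k\varepsilon_k=\lambda^2$ realise the $\liminf$ of the left-hand side, and choose normal states $\varrho_{\varepsilon_k}$ on $L^2_s(\mathds{R}^{dn_k})\otimes\Gamma_s(L^2(\mathds{R}^d))$ with $\Tr[\varrho_{\varepsilon_k}H]\le\inf\sigma(H_{|\cdots})+\varepsilon_k$. On the $n_k$-nucleon sector $N_1=\varepsilon_kn_k=\lambda^2$ is a scalar, so $\Tr[\varrho_{\varepsilon_k}N_1^m]=\lambda^{2m}$ for every $m\in\mathds{N}$. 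Since $m_0>0$, $N_2\le m_0^{-1}d\Gamma(\omega)\le m_0^{-1}H_0$, and since $\omega^{-1/2}\chi\in L^2(\mathds{R}^d)$ there is, for every $\eta>0$, a form bound $\pm H_I\le\eta H_0+C_\eta(N_1+1)$, valid both on Fock space and at the classical level (in particular $h$ is bounded below on $\{\|z_1\|=\lambda\}$, so the right-hand side of the theorem is a finite real number). Together with the upper bound $\Tr[\varrho_{\varepsilon_k}H]\le C$, this yields $\Tr[\varrho_{\varepsilon_k}(H_0+N+1)]\le C'$ uniformly in $k$, so the $\liminf$ is finite and, by \cite{AmNi1}, after extraction of a subsequence there is a Borel \emph{probability} measure $\mu\in\mathscr{M}(\varrho_{\varepsilon_k},k\in\mathds{N})$.

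\emph{Lower bound: the constraint and the energy.} First, $\mu$ is carried by $\{z=z_1\oplus z_2:\|z_1\|=\lambda\}$: lower semicontinuity together with $\Tr[\varrho_{\varepsilon_k}N_1^m]=\lambda^{2m}$ gives $\int_{\Z}\|z_1\|^{2m}d\mu\le\lambda^{2m}$ for all $m$, hence $\|z_1\|\le\lambda$ $\mu$-a.e.; conversely, since $V$ is confining, the uniform bound $\Tr[\varrho_{\varepsilon_k}d\Gamma(-\Delta/2M+V)]\le C$, with $-\Delta/2M+V$ of compact resolvent, forces the nucleon one-particle reduced density matrices to be relatively compact in trace norm, so no nucleon mass escapes and $\int_{\Z}\|z_1\|^2d\mu=\lim_k\Tr[\varrho_{\varepsilon_k}N_1]=\lambda^2$ (the no-loss-of-mass mechanism of \cite{AmNi3,2011arXiv1111.5918A}); the two together force $\|z_1\|=\lambda$ $\mu$-a.e. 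Second, one establishes $\liminf_k\Tr[\varrho_{\varepsilon_k}H]\ge\int_{\Z}h\,d\mu$. For $H_0=d\Gamma(-\Delta/2M+V)\otimes1+1\otimes d\Gamma(\omega)$ one truncates $-\Delta/2M+V$ and $\omega$ at level $R$: $\Tr[\varrho_{\varepsilon_k}d\Gamma((\cdot)\wedge R)]$ converges to the corresponding quadratic Wick symbol integrated against $\mu$ (for $-\Delta/2M+V$, whose truncation is a finite-rank perturbation of $R\,N_1$, one uses $\int\|z_1\|^2d\mu=\lambda^2$; for $\omega$, the $N_2$-bound), and monotone convergence in $R$ gives $\liminf_k\Tr[\varrho_{\varepsilon_k}H_0]\ge\int_{\Z}\bigl(\langle z_1,(-\Delta/2M+V)z_1\rangle+\langle z_2,\omega z_2\rangle\bigr)d\mu$. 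For $H_I$ one truncates the cut off $\chi\to\chi_R$ with $\chi_R$ compactly supported: the remainder satisfies $|\Tr[\varrho_{\varepsilon_k}(H_I-H_I^{(R)})]|\le C\,\|\omega^{-1/2}(\chi-\chi_R)\|_{L^2}\to0$ as $R\to\infty$, uniformly in $k$, while for fixed $R$ one shows $\Tr[\varrho_{\varepsilon_k}H_I^{(R)}]\to\int_{\Z}h_{\mathrm{int}}^{(R)}\,d\mu$. Adding the two limits, with the classical form bound ensuring $h_{\mathrm{int}}\in L^1(\mu)$, one gets $\liminf_k\inf\sigma(H_{|\cdots})=\liminf_k\Tr[\varrho_{\varepsilon_k}H]\ge\int_{\Z}h\,d\mu\ge\inf_{\|z_1\|=\lambda}h(z_1\oplus z_2)$, which combined with the upper bound proves the theorem.

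\emph{Main obstacle.} The delicate step is the convergence $\Tr[\varrho_{\varepsilon_k}H_I^{(R)}]\to\int_{\Z}h_{\mathrm{int}}^{(R)}\,d\mu$. Even after the ultraviolet truncation, $H_I^{(R)}$ is a mixed Wick observable whose nucleon-sector part is the density $\psi^{*}(x)\psi(x)$, which is \emph{not} compact, so the general convergence theorem for Wick observables of \cite{AmNi1} does not apply directly. This is exactly where the hypotheses of Theorem \ref{main.th.2} are used: the fixed number of nucleons together with the confining potential make the nucleon one-particle reduced density matrices compact and of fixed trace $\lambda^2$, which lets one replace the non-compact nucleon part by compact truncations with errors controlled uniformly by the a priori estimates, and then invoke the Wigner-measure convergence of \cite{AmNi3,2011arXiv1111.5918A} for the remaining meson-field observables, while checking that the nucleon--meson correlation contributions pass correctly to the joint measure $\mu$; this is the technical heart of the proof. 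The hypothesis $m_0>0$ is likewise essential, both for the $N_2$-bound and for the classical stability of $h$.
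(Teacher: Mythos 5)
Your upper bound, the extraction of a Wigner measure from a minimizing sequence, and the localisation of that measure on the sphere $\|z_1\|_2=\lambda$ (using $\Tr[\varrho_{\varepsilon_k}N_1^m]=\lambda^{2m}$ on one side and confinement to prevent loss of nucleon mass on the other) all match the paper's argument in structure and mechanism. However, there is a genuine gap in your lower bound on the energy, and it sits precisely where you yourself flag the ``main obstacle.''

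You propose to pass to the limit in $\Tr[\varrho_{\varepsilon_k}H_I^{(R)}]$ as an \emph{exact limit}, treating the positive part $H_0$ separately by monotone truncation. The problem is twofold. First, $H_I$ is not sign-definite, so a one-sided (Fatou-type) argument alone cannot recover $\liminf\Tr[\varrho_{\varepsilon_k}H]\ge\int_\Z h\,d\mu$ unless the interaction term converges as a genuine limit; that exact convergence is not proved, only asserted. Second, and more to the point, the cubic Wick observable $H_I^{(R)}$ (even with $\chi_R$ compactly supported) has a kernel that acts as the identity on the nucleon leg — the density $\psi^*(x)\psi(x)$ is a multiplication operator, not compact — so Proposition~\ref{prop:11} does not apply, and merely invoking compactness of the nucleon one-particle reduced density matrix does not close this: one still needs a concrete replacement of the non-compact kernel by compact ones with a quantified error uniform in $\varepsilon$.

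The paper resolves this differently and more efficiently: it does not attempt to pass to the limit in $H_I$ at all. Instead (Lemma~\ref{lem:lower}) it completes the square,
\begin{equation*}
\langle\Psi^{(n)}, (H_{02}+H_I)\Psi^{(n)}\rangle
=\int\omega(k)\,\Bigl\lVert\Bigl(a(k)+\lambda^2\tfrac{e^{-ikx_1}}{\omega^{3/2}}\chi(k)\Bigr)\Psi^{(n)}\Bigr\rVert^2\,dx\,dk-\lambda^4\lVert\tfrac{\chi}{\omega}\rVert_2^2\,,
\end{equation*}
so that the interaction is absorbed into a manifestly nonnegative quantity (minus a constant). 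It then bounds the nonnegative part from below by a Wick operator with a compact symbol, by (i) replacing $\omega$ with a finite-rank $Q_\kappa\le\tilde\chi\omega$ after expanding the square, and (ii) regularising the residual nucleon multiplication operators with the $g_m(i\partial_x)$ scheme of Definition~\ref{def:1}, with an $O(m^{-1})$ error uniform in $n$ (the same device as in Lemma~\ref{lemma:9}). After that, Proposition~\ref{prop:11} applies to the compactly-symboled lower bound, and the cutoffs $m,\kappa$ are removed by dominated/monotone convergence. The completing-the-square step is the missing idea in your proposal: it converts the delicate two-sided convergence of a non-sign-definite Wick observable into a one-sided estimate, which is exactly what the Wigner-measure machinery can deliver. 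Without it, your step ``for fixed $R$ one shows $\Tr[\varrho_{\varepsilon_k}H_I^{(R)}]\to\int_\Z h_{\mathrm{int}}^{(R)}d\mu$'' remains unsubstantiated, and the rest of the lower bound does not follow.
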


The proof of Theorem \ref{main.th.1} is given in Sections
\ref{sec:trace-states} and \ref{sec:mean-field-limit} and uses the properties of the quantum and classical dynamics proved in Section \ref{sec:dynam-quant-class}.
It is rather lengthy, so for reader's convenience we outline its key arguments below.
The proof of Theorem \ref{main.th.2}, given in Section \ref{sec:gdstate}, relies on an upper bound derived
by using coherent states localized around the infimum of the classical
energy and a lower bound resulting from the a priori information on
the Wigner measures of a given minimizing sequence. So, we conclude that
these measures are a fortiori  concentrated around the infimum of the classical
energy.

\noindent
\textit{Proof of Theorem \ref{main.th.1}:}\\
Our goal is to identify
the Wigner measures of the evolved state ${\varrho}_{\varepsilon}(t)=e^{-i \frac{t}{\varepsilon} H}\varrho_{\varepsilon}e^{i \frac{t}{\varepsilon} H}$ given in
Theorem \ref{main.th.1}.
However, instead of considering ${\varrho}_{\varepsilon}(t)$, we work in the interaction representation with
$$
 \tilde{\varrho}_{\varepsilon}(t)=e^{i \frac{t}{\varepsilon} H_0} {\varrho}_{\varepsilon}(t)
 e^{-i \frac{t}{\varepsilon} H_0}\,.
$$
By doing so, we require less  regularity on the state $\varrho_{\varepsilon}$ and it is still easy to recover Wigner measures of ${\varrho}_{\varepsilon}(t)$ from those of
$\tilde{\varrho}_{\varepsilon}(t)$. The main point now is that Wigner measures of the latter states are determined through all possible "limit points", when $\varepsilon\to 0$, of the map
\begin{equation}
\label{quan2}
 \xi\mapsto \Tr \Bigl[ \tilde{\varrho}_{\varepsilon}(t) W(\xi)\Bigr] \,.
\end{equation}
Despite its apparent simplicity, there is no straightforward  way to compute such limit explicitly.
Moreover, uniqueness of Wigner measures at each time $t$ is not guarantied even if it is assumed at the initial time $t=0$ (i.e.: the map \eqref{quan2} may have several limit points though it has one single limit at $t=0$). To overcome the last difficulty, we use a diagonal extraction (or Ascoli type) argument which implies that for any sequence $(\tilde{\varrho}_{\varepsilon_n})_{n\in\mathds{N}}$, $\varepsilon_n\to 0$, we can extract a subsequence $(\tilde{\varrho}_{\varepsilon_{n_k}})_{k\in\mathds{N}}$ such that for each time, $t\in\mathds{R}$,  $(\tilde{\varrho}_{\varepsilon_{n_k}}(t))_{k\in\mathds{N}}$  admits a unique Wigner measure denoted by $\tilde\mu_t$.

The next step is to observe that \eqref{quan2} satisfies a dynamical equation which when $\varepsilon\to 0$ leads to a well behaved classical dynamical equation on the inverse-Fourier transform of the Wigner measures $\tilde\mu_t$. By integrating with respect to appropriate trial functions, we obtain a natural transport (Liouville) equation satisfied by $\tilde\mu_t$. Therefore, it is possible to identify the measures $\tilde\mu_t$ if we can prove that such transport equation has a unique solution for each data $\tilde\mu_0$ given by the push-forward  of $\tilde\mu_0$ by the corresponding classical dynamics.
To sum up, the outline of the proof goes as follows:
\begin{enumerate}[1)]
  \item We justify the integral (or Duhamel) formula
  \begin{equation*}
    \Tr \Bigl[ \tilde{\varrho}_{\varepsilon}(t) W(\xi)\Bigr] = \Tr \Bigl[\varrho_{\varepsilon} W(\xi)\Bigr] + \frac{i}{\varepsilon}\int_0^t\Tr \Bigl[ \varrho_{\varepsilon}(s) [H_I,W(\tilde{\xi}(s))] \Bigr]  ds\; ,
  \end{equation*}
in Proposition \ref{prop:4} for states ${\varrho}_{\varepsilon}$ satisfying a strong  regularity condition, namely that it belongs to the space $\mathcal{T}_{\varepsilon}^{1}$ given in Definition \ref{def:3}.
  \item By explicit computation and taking care of domain problems, we show in
  Proposition \ref{prop:5} that
   \begin{equation}
   \label{eq.29}
    \Tr \Bigl[ \tilde{\varrho}_{\varepsilon}(t) W(\xi)\Bigr] = \Tr \Bigl[\varrho_{\varepsilon} W(\xi)\Bigr] + \sum_{j=0}^2\varepsilon^j\int_0^t\Tr \Bigl[ \varrho_{\varepsilon}(s)W( \tilde{\xi}(s)) B_j( \tilde{\xi}(s))  \Bigr]  ds\;;
  \end{equation}
  where $B_j( \tilde{\xi}(s))$ are operators given in~\emph{(\ref{eq:12})-(\ref{eq:14})}.
  \item There is no loss of generality if we assume that $({\varrho}_{\varepsilon})_{\varepsilon\in(0,\bar{\varepsilon})}$ has a single Wigner measure $\mu_0$. Moreover, we prove as explained before that from any sequence $\varepsilon_n\to 0$ we can extract a subsequence $(\varepsilon_{n_k})_{k\in\mathds{N}}$ such that $(\tilde{\varrho}_{\varepsilon_{n_k}}(t))_{k\in\mathds{N}}$ has a single Wigner measure $\tilde\mu_t$ for each time $t\in\mathds{R}$ (see Subsection \ref{sec:subs-extr-all}).
  \item Letting $\varepsilon_{n_k}\to 0$ in \eqref{eq.29} and using some elementary $\varepsilon$-uniform estimates proved in Section \ref{sec:dynam-quant-class} with some Wigner measures properties; we show in Proposition \ref{prop:13} that
   \begin{equation*}
      \begin{split}
        \tilde\mu_t(e^{i \sqrt{2}\Re\langle {\xi} ,\,\cdot\,
          \rangle_{}})=
        \mu_0(e^{i \sqrt{2}\Re\langle {\xi},
          \,\cdot\,\rangle_{}})+i\sqrt{2}\int_0^t \tilde\mu_{s}\left(e^{i
            \sqrt{2}\Re\langle {\xi} , z\rangle_{}}  {\Re}\langle
          \xi, \mathscr{V}_{s}(z)\rangle\right)\; ds;
      \end{split}
    \end{equation*}
    with a velocity vector field $\mathscr{V}_{s}(z)$ defined by \eqref{eq:21}.
  \item In Proposition \ref{prop:liouville}, we show that  $t\in\mathds{R}\mapsto\tilde{\mu}_{t}$ is a weakly narrowly continuous map valued on probability measures satisfying the transport equation
  \begin{equation*}
    \partial_{t}\tilde{\mu}_t+\nabla^T\left(\mathscr{V}_{t} \tilde{\mu}_{t}\right)=0\,,
  \end{equation*}
  understood in the weak sense,
  \begin{equation*}
    \int_{\mathds{R}}\int_{\Z}
    \left(\partial_{t}f+\Re\langle \nabla f, \mathscr{V}_{t}\rangle\right)~d\tilde{\mu}_{t}dt=0\,.
  \end{equation*}
  \item To identify the measures $\tilde\mu_t$ we rely on an argument worked out in finite dimension by \citet{AGS} for the purpose of optimal transport theory and extended in \cite{2011arXiv1111.5918A} to an infinite dimensional Hilbert space setting. This yields, in Proposition \ref{prop:regdata}, the result of Theorem \ref{main.th.1} but under a strong assumption on
      $(\varrho_{\varepsilon})_{\varepsilon\in(0,\bar{\varepsilon})}\in\cap_{\delta>0}
  \mathcal{T}^{\delta}\cap \mathcal{S}^1$,  given in  Definition \ref{def:2}.  
 \item To complete the proof, we use an approximation argument allowing to extend the previous result to states satisfying the weak assumption \eqref{eq:0} in Theorem \ref{main.th.1} (see Section \ref{se.gendata}).
\end{enumerate}

\section{Dynamics, quantum and classical.}
\label{sec:dynam-quant-class}

In this section we provide informations on the dynamics of the Nelson
model with cut off and its classical counterpart. Most results are
proved in detail in \cite{Fa}, in the case $d=3$; and such results
extend immediately to any dimension. We will briefly outline the
proofs here, for the reader's convenience.

\subsection{Quantum system.}
\label{sec:quantum-system}

The phase space of the theory is $\mathscr{Z}:=L^2(\mathds{R}^d)\oplus
L^2(\mathds{R}^d)$, and we construct the Fock space
$\mathscr{H}:=\Gamma_s(\mathscr{Z})\sim
\Gamma_s(L^2(\mathds{R}^d))\otimes\Gamma_s(L^2(\mathds{R}^d))$.  The
Nelson Hamiltonian $H$ as well as the annihilation-creation of the
nucleon field $\psi^{\#}$ and the meson field $a^{\#}$ are recalled in
the introduction. As mentioned in the introduction, it is useful to
set:
\begin{align*}
  H_{01}&:=d\Gamma_1(-\frac{\Delta}{2M}+V)=\frac{1}{2M}\int_{\mathds{R}^d}^{}(\nabla\psi)^{*}(x)\nabla\psi(x)dx+\int_{\mathds{R}^d}^{}V(x)\psi^{*}(x)\psi(x)dx\; ,\\
  H_{02}&:=d\Gamma_2(\omega)=\int_{\mathds{R}^d}^{}\omega(k)a^{*}(k)a(k)dk\; ,\\
  H_0&:=H_{01}+H_{02}\; ,\\
  H_I&:=\int_{\mathds{R}^{2d}}^{}\frac{\chi(k)}{\sqrt{\omega(k)}}\psi^{*}(x)\bigl(a^{*}(k)e^{-ik\cdot x}+a(k)e^{ik\cdot x}\bigr)\psi(x) dkdx\; .
\end{align*}
We remark that, with our assumption \eqref{eq:5} on $V$, $H_0$ is a
positive self-adjoint operator on $\Gamma_s(\mathscr{Z})$ with its
natural domain $D(H_0)$.

Let $N_1=d\Gamma_{1}(1)=\int_{\mathds{R}^d} \psi^{*}(x)\psi(x) dx$ and
$N_2=d\Gamma_{2}(1)=\int_{\mathds{R}^d} a^{*}(k)a(k) dk$ be the number
operators. Since $H$ commutes with $N_1$ it is natural to split the
Fock space into sectors with a fixed number of non-relativistic
particles; hence we define the subspace
\begin{equation}
  \label{eq:6}
  \mathscr{H}_{n}:= L_s^2(\mathds{R}^{n d})\otimes\Gamma_s(L^2(\mathds{R}^d)) \; .
\end{equation}
Here $L_s^2(\mathds{R}^{n d})$ denotes the space of symmetric square
integrable functions (i.e.:
$\phi(x_{1},\cdots,x_{n})=\phi(x_{\sigma_{1}},\cdots,x_{\sigma_n})$
for any permutation $\sigma$).  By definition, we have:
\begin{equation*}
  \mathscr{H}=\bigoplus_{n=0}^{\infty}\mathscr{H}_{n}\; .
\end{equation*}
Given an operator $X$ on $\mathscr{H}$, we call $X_{n}$ its
restriction to the subspace $\mathscr{H}_{n}$. The restriction on
$\mathscr{H}_{n}$ of the operator $a(f)$, $f\in L^2(\mathds{R}^d)$,
can be extended to any function $f\in
L^{\infty}(\mathds{R}^{nd},L^2(\mathds{R}^d))$; we will denote
$a(f)_{n}$ again by $a(f)$ if no confusion arises.

\begin{lemma}
  \label{lemma:2}
  \begin{enumerate}[i)]
  \item Let $f\in L^{\infty}(\mathds{R}^{nd},L^2(\mathds{R}^d))$ such
    that $\omega^{-1/2}f\in
    L^{\infty}(\mathds{R}^{nd},L^2(\mathds{R}^d))$. Then for any
    $\phi\in D(H_{02}^{1/2})\cap \mathscr{H}_{n}$:
    \begin{align}
      \label{eq:7}
      \lVert a(f)\phi \rVert_{}^2&\leq \lVert \omega^{-1/2}f \rVert_{L^{\infty}(\mathds{R}^{nd},L^2(\mathds{R}^d))}^2\lVert H_{02}^{1/2}\phi\rVert_{}^2\\
      \label{eq:8}
      \lVert a^{*}(f)\phi \rVert_{}^2&\leq \lVert \omega^{-1/2}f\rVert_{L^{\infty}(\mathds{R}^{nd},L^2(\mathds{R}^d))}^2\lVert H_{02}^{1/2}\phi\rVert_{}^2+\varepsilon\lVert f\rVert_{L^{\infty}(\mathds{R}^{nd},L^2(\mathds{R}^d))}^2\lVert \phi\rVert_{}^2\; .
    \end{align}
  \item Let $f\in L^{\infty}(\mathds{R}^{nd},L^2(\mathds{R}^d))$. Then
    for any $\phi\in D(N_2^{1/2})\cap \mathscr{H}_{n}$:
    \begin{align}
      \label{eq:9}
      \lVert a(f)\phi \rVert_{}&\leq \lVert f\rVert_{L^{\infty}(\mathds{R}^{nd},L^2(\mathds{R}^d))}\lVert N_2^{1/2}\phi\rVert_{}^{}\\
      \label{eq:10}
      \lVert a^{*}(f)\phi \rVert_{}&\leq \lVert f\rVert_{L^{\infty}(\mathds{R}^{nd},L^2(\mathds{R}^d))}\lVert (N_2+\varepsilon)^{1/2}\phi\rVert_{}^{}\; .
    \end{align}
  \end{enumerate}
\end{lemma}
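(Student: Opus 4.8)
The plan is to reduce all four inequalities to one elementary Cauchy--Schwarz estimate, carried out sector by sector in the meson number and fiberwise in the nucleon variable $X\in\mathds{R}^{nd}$, and then to pass from the algebraic (finite particle) core to the stated form domains by a monotone truncation argument. Since $H_{02}=d\Gamma_2(\omega)$, $N_2=d\Gamma_2(1)$, and multiplication by the bounded function $X\mapsto\lVert f(X,\cdot)\rVert_{L^2(\mathds{R}^d)}$ all leave invariant the subspaces of $\mathscr{H}_n$ with fixed meson number $m$, namely $L^2_s(\mathds{R}^{nd})\otimes L^2_s(\mathds{R}^{md})$, it is enough to prove \eqref{eq:7}--\eqref{eq:10} for vectors $\phi\in\mathscr{H}_n$ with only finitely many nonzero components $\phi^{(m)}$. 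Since the $m$-meson component of $a^{\#}(f)\phi$ depends only on $\phi^{(m\pm1)}$ and the components are mutually orthogonal, both sides of each inequality grow monotonically under the truncation $\phi\mapsto\phi_N:=\bigoplus_{m\le N}\phi^{(m)}$, so the general statement follows once one knows that $a(f)_n$ and $a^{*}(f)_n$ are well defined and closed on these domains --- which is where the $L^{\infty}(\mathds{R}^{nd},L^2(\mathds{R}^d))$ hypothesis in part (i) is used.

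For \eqref{eq:7} and \eqref{eq:9} I would use the explicit action, with the $\varepsilon$-scaled canonical commutation relations, of $a(f)_n$ on a fixed meson sector: it maps $\phi^{(m+1)}$ to $\sqrt{\varepsilon(m+1)}\int_{\mathds{R}^d}\overline{f(X;k)}\,\phi^{(m+1)}(X;k,\cdot)\,dk$. For \eqref{eq:7} I split $\overline{f(X;k)}=\overline{(\omega^{-1/2}f)(X;k)}\,\omega(k)^{1/2}$, apply Cauchy--Schwarz in $k$, and bound $\lVert(\omega^{-1/2}f)(X,\cdot)\rVert_{L^2(\mathds{R}^d)}$ by its essential supremum in $X$; after integrating in $X$ and in the remaining meson variables and summing over $m$, the permutation symmetry of each $\phi^{(m)}$ turns the resulting weighted sum into $\langle\phi,d\Gamma_2(\omega)\phi\rangle=\lVert H_{02}^{1/2}\phi\rVert^2$, which is \eqref{eq:7}. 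The bound \eqref{eq:9} is identical except that Cauchy--Schwarz is applied without the $\omega$-weight, so one recognizes $\langle\phi,d\Gamma_2(1)\phi\rangle=\lVert N_2^{1/2}\phi\rVert^2$ instead.

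The creation bounds \eqref{eq:8} and \eqref{eq:10} then follow by duality. Since $a^{*}(f)_n=(a(f)_n)^{*}$ and $[a(f),a^{*}(f)]=\varepsilon\,\lVert f(X,\cdot)\rVert_{L^2(\mathds{R}^d)}^2$ as multiplication by a bounded function of $X$, one has $\lVert a^{*}(f)\phi\rVert^2=\lVert a(f)\phi\rVert^2+\varepsilon\,\langle\phi,\lVert f(X,\cdot)\rVert_{L^2(\mathds{R}^d)}^2\,\phi\rangle$, and the second term is at most $\varepsilon\,\lVert f\rVert_{L^{\infty}(\mathds{R}^{nd},L^2(\mathds{R}^d))}^2\,\lVert\phi\rVert^2$. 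Combining this with \eqref{eq:7} gives \eqref{eq:8}; combining it with \eqref{eq:9} and the identity $\lVert N_2^{1/2}\phi\rVert^2+\varepsilon\lVert\phi\rVert^2=\lVert(N_2+\varepsilon)^{1/2}\phi\rVert^2$ gives \eqref{eq:10}.

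The inequalities themselves are immediate, so essentially all the work is bookkeeping. One must check that $a(f)_n$ and $a^{*}(f)_n$ make sense and are mutually adjoint for $f$ only in $L^{\infty}(\mathds{R}^{nd},L^2(\mathds{R}^d))$ --- done by approximating $f$ in that norm by finite sums of simple tensors $g(X)h(k)$, for which the formulas are the classical ones --- and that the quadratic forms on the right-hand sides are closed, so that the bounds pass from finite particle vectors to all of $D(H_{02}^{1/2})\cap\mathscr{H}_n$, respectively $D(N_2^{1/2})\cap\mathscr{H}_n$. Equivalently, and perhaps more cleanly, one may set $a(k)=\sqrt{\varepsilon}\,b(k)$ with $b(k)$ obeying the unscaled relations $[b(k_1),b^{*}(k_2)]=\delta(k_1-k_2)$, invoke the classical $N_\tau$-estimates for $b^{\#}(f)$, and merely track the powers of $\varepsilon$ (noting that $H_{02}$ and $N_2$ then equal $\varepsilon$ times the unscaled $d\Gamma(\omega)$ and $d\Gamma(1)$). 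I expect this domain and closure bookkeeping to be the only mildly delicate point; there is no genuine obstacle.
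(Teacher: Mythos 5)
Your proof is correct. The paper does not actually write out a proof of this lemma — it simply states that it ``is standard and follows by means of a direct calculation on $\mathscr{H}_{n}$'' and refers the reader to \cite{GNV} — and your argument is precisely such a direct calculation: Cauchy--Schwarz in $k$ with the $\omega^{\pm 1/2}$ split (or without it, for part (ii)) on each fixed meson sector, symmetry to recognize $d\Gamma_2(\omega)$ resp.\ $d\Gamma_2(1)$, and the commutator $[a(f),a^{*}(f)]=\varepsilon\,\lVert f(X,\cdot)\rVert_{L^2}^2$ (multiplication in $X$) to pass from the annihilation bound to the creation bound. The domain and closure bookkeeping you flag at the end is indeed the only delicate point, and your reduction to finite-particle vectors together with the monotone truncation handles it.
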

The proof of this lemma is standard and follows by means of a direct
calculation on $\mathscr{H}_{n}$ (see e.g. \cite{GNV}).
\begin{corollary}
  \label{cor:1}
  Let $\chi$ such that $\omega^{-1/2}\chi\in L^2(\mathds{R}^d)$. Then
  for any $\phi\in D(N_1^2+N_2)$:
  \begin{equation*}
    \lVert H_{I}\phi\rVert_{}^{}\leq \lVert \omega^{-1/2}\chi\rVert_2^{}\lVert (N_1^2+N_2+\varepsilon)\phi\rVert_{}^{}\; .
  \end{equation*}
\end{corollary}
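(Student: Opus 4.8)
The plan is to exploit the fibred structure of $H_I$ over the nucleon-number sectors $\mathscr{H}_n=L^2_s(\mathds{R}^{nd})\otimes\Gamma_s(L^2(\mathds{R}^d))$ and to reduce the estimate to the one-particle bounds \eqref{eq:9}--\eqref{eq:10} of Lemma \ref{lemma:2}. Since $H_I$ contains exactly one $\psi^{*}$ and one $\psi$ it commutes with $N_1$, hence leaves each $\mathscr{H}_n$ invariant; it is therefore enough to bound its restriction $(H_I)_{n}$ on each sector, first on the natural core and then extending by density.

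First I would make $(H_I)_n$ explicit. Because of the scaling $[\psi(x_1),\psi^{*}(x_2)]=\varepsilon\,\delta(x_1-x_2)$, the second-quantized nucleon integral $\int_{\mathds{R}^d}\psi^{*}(x)(\,\cdot\,)\psi(x)\,dx$ acts on $\mathscr{H}_n$ as $\varepsilon$ times the sum over the $n$ nucleon coordinates, so that, writing $g_x(k):=\omega(k)^{-1/2}\chi(k)e^{-ik\cdot x}$,
\[
  (H_I)_{n}=\varepsilon\sum_{j=1}^{n}\bigl(a^{*}(g_{x_j})+a(\check g_{x_j})\bigr)=\varepsilon\bigl(a^{*}(G)+a(\check G)\bigr)\,,\qquad G:=\sum_{j=1}^{n}g_{x_j}\,,
\]
with $\check G$ the analogous vector attached to the annihilation part; both $G$ and $\check G$ are to be viewed as elements of $L^{\infty}(\mathds{R}^{nd},L^2(\mathds{R}^d))$ in the sense recalled just before Lemma \ref{lemma:2}. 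Since $|g_x(k)|=|\check g_x(k)|=\omega(k)^{-1/2}|\chi(k)|$ does not depend on $x$, the triangle inequality in $L^2(\mathds{R}^d)$ gives $\lVert G\rVert_{L^{\infty}(\mathds{R}^{nd},L^2)}$, $\lVert\check G\rVert_{L^{\infty}(\mathds{R}^{nd},L^2)}\le n\,\lVert\omega^{-1/2}\chi\rVert_2$.

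Now fix $\phi\in D(N_1^2+N_2)\cap\mathscr{H}_n$; since $N_1=n\varepsilon$ is bounded on $\mathscr{H}_n$, this just means $\phi\in D(N_2)\subset D(N_2^{1/2})$, so \eqref{eq:9}--\eqref{eq:10} apply with $f=G$ and $f=\check G$ and yield
\[
  \lVert (H_I)_{n}\phi\rVert\le\varepsilon\,n\,\lVert\omega^{-1/2}\chi\rVert_2\bigl(\lVert (N_2+\varepsilon)^{1/2}\phi\rVert+\lVert N_2^{1/2}\phi\rVert\bigr)\le 2\,\varepsilon\, n\,\lVert\omega^{-1/2}\chi\rVert_2\,\lVert (N_2+\varepsilon)^{1/2}\phi\rVert\,.
\]
On $\mathscr{H}_n$ one has $\varepsilon n=N_1$, and $N_1$, $N_2$ commute with $N_1$ acting as a scalar; so by the joint spectral calculus the factor $2\varepsilon n=2N_1$ can be absorbed using the pointwise inequality $4(n\varepsilon)^2(s+\varepsilon)\le\bigl((n\varepsilon)^2+s+\varepsilon\bigr)^2$ for $s\ge 0$ (i.e.\ $(u+v)^2\ge 4uv$ with $u=(n\varepsilon)^2$, $v=s+\varepsilon$), integrated against the spectral measure of $N_2$ in the state $\phi$; this gives $2N_1\lVert(N_2+\varepsilon)^{1/2}\phi\rVert\le\lVert(N_1^2+N_2+\varepsilon)\phi\rVert$ and hence $\lVert (H_I)_{n}\phi\rVert\le\lVert\omega^{-1/2}\chi\rVert_2\,\lVert(N_1^2+N_2+\varepsilon)\phi\rVert$. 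Finally, $H_I$ and $N_1^2+N_2+\varepsilon$ are both block diagonal, so for $\phi=\sum_n\phi_n\in D(N_1^2+N_2)$ summing the squares over $n$ by orthogonality promotes the sectorwise bound to $\lVert H_I\phi\rVert\le\lVert\omega^{-1/2}\chi\rVert_2\,\lVert(N_1^2+N_2+\varepsilon)\phi\rVert$, which also shows $D(N_1^2+N_2)\subset D(H_I)$.

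The only point requiring some care is the first step: writing the restriction $(H_I)_n$ with the correct power of $\varepsilon$ and checking that the configuration-dependent form factors $G,\check G$ really lie in $L^{\infty}(\mathds{R}^{nd},L^2(\mathds{R}^d))$ with the stated norm, so that part (ii) of Lemma \ref{lemma:2} is applicable. Everything afterwards is a routine Cauchy--Schwarz / spectral-calculus computation.
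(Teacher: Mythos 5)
Your argument is correct and is exactly the implicit derivation the paper has in mind (the corollary is stated without proof as an immediate consequence of Lemma \ref{lemma:2}). You correctly identify that $(H_I)_n=\varepsilon\bigl(a^{*}(G)+a(\check G)\bigr)$ with the $\varepsilon$-scaled sum over nucleon coordinates, apply part (ii) of the lemma with $\lVert G\rVert_{L^\infty(L^2)}\le n\lVert\omega^{-1/2}\chi\rVert_2$, and absorb the factor $2\varepsilon n=2N_1$ via the spectral inequality $4(n\varepsilon)^2(s+\varepsilon)\le\bigl((n\varepsilon)^2+s+\varepsilon\bigr)^2$, which is just AM--GM applied fibrewise in $N_2$ with $N_1$ scalar on $\mathscr{H}_n$; the final orthogonal sum over sectors is routine and also yields the domain inclusion. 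Nothing to add.
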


We have now all the ingredients to prove the essential
self-adjointness of $H$.
\begin{proposition}[self-adjointness/Kato perturbation]
  \label{prop:1}
  Assume that \eqref{eq:5} holds.  Furthermore, let $\chi$ such that
  $\omega^{-1}\chi\in L^2(\mathds{R}^d)$. Then $H$ is self-adjoint on
  $\mathscr{H}$ with domain:
  \begin{equation*}
    D(H)=\{\phi\in\mathscr{H}; \forall n\in \mathds{N}, \phi_{n}:=\phi\bigr\rvert_{\mathscr{H}_{n}}\in D(H_0)\cap \mathscr{H}_{n}\text{ and } \sum_{n=0}^{\infty}\lVert H_{n}\phi_{n}\rVert_{}^2<\infty \}\; .
  \end{equation*}
\end{proposition}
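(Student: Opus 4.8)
The natural approach is a fiberwise Kato--Rellich perturbation argument based on the decomposition $\mathscr{H}=\bigoplus_{n=0}^{\infty}\mathscr{H}_{n}$, followed by reassembling the sectors into an orthogonal direct sum. Since $H_I$ commutes with $N_1$ it leaves each $\mathscr{H}_n$ invariant, and using that $d\Gamma_1(A)$ restricted to $\mathscr{H}_n$ equals $\varepsilon\sum_{j=1}^n A_{(j)}$ (with $A_{(j)}$ acting on the $j$-th nucleon variable) one identifies
\[
H_{I,n}=a^{*}(f_n)+a(f_n),\qquad f_n(x_1,\dots,x_n)(k)=\varepsilon\sum_{j=1}^{n}\frac{\chi(k)}{\sqrt{\omega(k)}}\,e^{-ik\cdot x_j}\,.
\]
Because $|e^{-ik\cdot x_j}|=1$ and, by hypothesis, $\omega^{-1}\chi\in L^2(\mathds{R}^d)$, both $f_n$ and $\omega^{-1/2}f_n$ lie in $L^{\infty}(\mathds{R}^{nd},L^2(\mathds{R}^d))$, with $\|f_n\|_{L^{\infty}(\mathds{R}^{nd},L^2)}\le n\varepsilon\|\omega^{-1/2}\chi\|_2$ and $\|\omega^{-1/2}f_n\|_{L^{\infty}(\mathds{R}^{nd},L^2)}\le n\varepsilon\|\omega^{-1}\chi\|_2$. (This is exactly the place where the strengthened assumption $\omega^{-1}\chi\in L^2$ enters; the weaker bound of Corollary~\ref{cor:1}, via part ii) of Lemma~\ref{lemma:2}, would only control $H_I$ relative to $N_2$, which is not relatively $H_0$-bounded when $m_0=0$.)

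The core of the argument is then to show that $H_{I,n}$ is $H_{0,n}$-bounded with relative bound zero. Applying part i) of Lemma~\ref{lemma:2} to $f_n$ gives, for $\phi_n\in D(H_{02}^{1/2})\cap\mathscr{H}_n$,
\[
\|H_{I,n}\phi_n\|\le 2\,n\varepsilon\,\|\omega^{-1}\chi\|_2\,\|H_{02}^{1/2}\phi_n\|+n\,\varepsilon^{3/2}\,\|\omega^{-1/2}\chi\|_2\,\|\phi_n\|\,.
\]
Since $H_{01}\ge 0$ one has $\|H_{02}^{1/2}\phi_n\|^2\le\langle\phi_n,H_0\phi_n\rangle\le a\,\|H_0\phi_n\|^2+\tfrac{1}{4a}\|\phi_n\|^2$ for every $a>0$ (Cauchy--Schwarz and Young), hence $\|H_{02}^{1/2}\phi_n\|\le\sqrt a\,\|H_0\phi_n\|+\tfrac{1}{2\sqrt a}\|\phi_n\|$; choosing $a=a_n$ small enough makes the coefficient of $\|H_{0,n}\phi_n\|$ as small as we wish, at the cost of a larger additive constant $C_n$. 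By the Kato--Rellich theorem, $H_n:=H_{0,n}+H_{I,n}$ is self-adjoint on $D(H_{0,n})=D(H_0)\cap\mathscr{H}_n$ and bounded below.

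Finally I would invoke the standard fact that an orthogonal direct sum $\bigoplus_{n}H_n$ of self-adjoint operators is self-adjoint on the maximal domain $\{\phi\in\mathscr{H}:\ \phi_n\in D(H_n)\ \forall n,\ \sum_n\|H_n\phi_n\|^2<\infty\}$; substituting $D(H_n)=D(H_0)\cap\mathscr{H}_n$ and checking that $H$ coincides with $\bigoplus_n H_n$ on the algebraic direct sum $\bigoplus_n^{\mathrm{alg}}\mathscr{H}_n$ — a core for both operators — gives exactly the domain claimed in the statement.

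The one point requiring genuine care, and the reason the perturbation cannot be carried out globally in one stroke, is that the relative bound constant $2n\varepsilon\|\omega^{-1}\chi\|_2\sqrt a$ grows with the nucleon number $n$: no single choice of $a$ works for all sectors, so the Kato--Rellich step must be performed sector by sector and the pieces glued together as a direct sum (this is also consistent with $H$ being unbounded below while each $H_n$ is bounded below). The remaining obstacles are purely bookkeeping: correctly exhibiting $H_{I,n}$ as $a^{*}(f_n)+a(f_n)$ and tracking the powers of $\varepsilon$ and $n$ produced by the scaling $[\psi(x),\psi^{*}(y)]=\varepsilon\delta(x-y)$, $[a(k),a^{*}(k')]=\varepsilon\delta(k-k')$.
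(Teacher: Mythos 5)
Your proof is correct and follows essentially the same route as the paper: exhibit $H_{I,n}$ as $a^{\#}(f_n)$-type operators, apply Lemma~\ref{lemma:2}(i) and Kato--Rellich on each fixed-nucleon sector $\mathscr{H}_n$, then assemble $H=\bigoplus_n H_n$ on the standard maximal domain of a direct sum of self-adjoint operators. Your careful tracking of the sector-dependent parameter $a_n$ (needed because the relative-bound coefficient scales like $n\varepsilon$) is a useful clarification of the paper's very terse remark on the ``small constant''; the conclusion is unaffected, since the direct-sum criterion requires only that each $H_n$ be self-adjoint with the right domain, not any uniformity across sectors.
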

\begin{proof}
  The operator $H_{n}$ is self-adjoint on $\mathscr{H}_{n}$ with
  domain $D(H_0)\cap \mathscr{H}_{n}$ since, by Lemma~\ref{lemma:2},
  $(H_I)_{n}$ is a Kato perturbation of $(H_0)_{n}$. Furthermore, the
  small constant in the perturbation is independent of $n$, hence we
  can define the self-adjoint extension of $H$ as the direct sum
  $\bigoplus_{n=0}^{\infty} H_{n}$ \citep[see][Proposition IV.1]{Fa}.
\end{proof}
\begin{remark}
  \label{rem:3}
  It is usual to assume $\chi(k)$ to be a characteristic function
  $1_{\{\lvert k\rvert\leq \kappa\}}(k)$, for some $\kappa>0$. If $m_0
  =0$ and $\chi=1_{\{\lvert k\rvert\leq \kappa\}}$, then for all $
  d\geq 3$, $\omega^{-1/2}\chi\in L^2(\mathds{R}^d)$ and
  $\omega^{-1}\chi\in L^2(\mathds{R}^d)$; hence $H$ is
  self-adjoint. However, if $d=2$ then $\omega^{-1}\chi \notin
  L^2(\mathds{R}^2)$. With a different approach, we can relax the
  requirement on $\chi$ and prove essential self-adjointness of $H$
  under the sole assumption \eqref{eq:5}.
\end{remark}
Define $\mathscr{F}_0 \subset \Gamma_s(\mathscr{Z})$ to be subspace of
finite particle vectors of $\Gamma_s(\mathscr{Z})$ (i.e.: vectors with
finite number of nucleons and mesons).
\begin{proposition}[self-adjointness/direct proof]
  \label{prop:14}
  Assume that \eqref{eq:5} holds.  Then $H$ is essentially
  self-adjoint on $D(H_0)\cap \mathscr{F}_0$. We denote the
  self-adjointness domain of $H$ as $D(H)$.
\end{proposition}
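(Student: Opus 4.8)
The plan is to prove essential self-adjointness fibrewise in the nucleon number and then assemble the statement by a direct sum. Since $H$ commutes with $N_1$, write $H=\bigoplus_{n=0}^{\infty}H_n$ with $H_n=(H_0)_n+(H_I)_n$ the restriction to $\mathscr{H}_n$, and note that $D(H_0)\cap\mathscr{F}_0$ is the algebraic direct sum over $n$ of the spaces $\mathcal{D}_n:=D\bigl((H_0)_n\bigr)\cap\{\phi\in\mathscr{H}_n:\ \phi\ \text{has finitely many mesons}\}$; so it suffices to prove that each $H_n$ is essentially self-adjoint on $\mathcal{D}_n$. On $\mathscr{H}_n$ the interaction reads $(H_I)_n=\varepsilon\bigl(a^{*}(v)+a(v)\bigr)$ with $v\in L^{\infty}(\mathds{R}^{nd},L^2(\mathds{R}^d))$, $v(x_1,\dots,x_n)(k)=\sum_{j=1}^{n}\omega(k)^{-1/2}\chi(k)e^{-ik\cdot x_j}$, whence $\lVert v\rVert_{L^{\infty}(\mathds{R}^{nd},L^2)}\leq n\lVert\omega^{-1/2}\chi\rVert_2$; thus by Lemma~\ref{lemma:2}\,(ii) one has $\lVert(H_I)_n\phi\rVert\leq 2\varepsilon n\lVert\omega^{-1/2}\chi\rVert_2\,\lVert(N_2+\varepsilon)^{1/2}\phi\rVert$, finite on $\mathcal{D}_n$, so $H_n$ is a well-defined symmetric operator there. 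When $m_0>0$, combining this with $N_2\leq m_0^{-1}H_{02}\leq m_0^{-1}(H_0)_n$ shows $(H_I)_n$ is infinitesimally form-bounded with respect to $(H_0)_n$, so by the KLMN theorem $H_n$ is self-adjoint and bounded below with form domain $D\bigl((H_0)_n^{1/2}\bigr)$; this is the operator I aim at.

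I would then regularise the cut-off: for $m\in\mathds{N}$ put $\chi^{(m)}:=\chi\,\mathbf{1}_{\{\lvert k\rvert\leq m\}}$, so that $\omega^{-1}\chi^{(m)}\in L^2(\mathds{R}^d)$ and Proposition~\ref{prop:1} gives self-adjointness of $H^{(m)}:=H_0+H_I^{(m)}$, with $\bigl(H^{(m)}\bigr)_n$ self-adjoint on $D\bigl((H_0)_n\bigr)$. Moreover, by Lemma~\ref{lemma:2}\,(i), $\bigl(H_I^{(m)}\bigr)_n$ is bounded relative to $(H_{02})_n\leq(H_0)_n$ with relative bound $<1$, so by the Kato--Rellich theorem $\bigl(H^{(m)}\bigr)_n$ is essentially self-adjoint on every core of $(H_0)_n$, in particular on $\mathcal{D}_n$ (the finite-meson vectors in $D((H_0)_n)$ form a core for $(H_0)_n$).

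The crux is the limit $m\to\infty$. Since $\chi^{(m)}\to\chi$ pointwise with $\lvert\chi^{(m)}\rvert\leq\lvert\chi\rvert$, dominated convergence yields $\lVert\omega^{-1/2}(\chi-\chi^{(m)})\rVert_2\to0$; hence the relative $(H_0)_n$-form bound of $\bigl(H_I^{(m)}\bigr)_n-(H_I)_n$ tends to $0$, so $\bigl(H^{(m)}\bigr)_n\to H_n$ in norm resolvent sense, while $\bigl(H^{(m)}\bigr)_n\phi\to H_n\phi$ in $\mathscr{H}_n$ for every $\phi\in\mathcal{D}_n$ (Lemma~\ref{lemma:2}\,(ii) with $\omega^{-1/2}(\chi-\chi^{(m)})$). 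Passing to the limit in $\bigl(\phi,(H^{(m)})_n\phi\bigr)$ gives $(H_0+H_I)_n|_{\mathcal{D}_n}\subseteq H_n$. For the reverse inclusion, take $\phi\in D(H_n)$, set $\psi_m:=\bigl((H^{(m)})_n-i\bigr)^{-1}(H_n-i)\phi$ (so $\psi_m\to\phi$, $(H^{(m)})_n\psi_m\to H_n\phi$), truncate the meson number of $\psi_m$ --- which commutes with $(H_0)_n$ --- to land in $\mathcal{D}_n$, and pick $\phi_m\in\mathcal{D}_n$ with $\phi_m\to\phi$, $(H^{(m)})_n\phi_m\to H_n\phi$ and, crucially, $\lVert(N_2+\varepsilon)^{1/2}\phi_m\rVert$ bounded; this last bound uses $N_2+1\leq C_n\bigl((H^{(m)})_n+C_n'\bigr)$ uniformly in $m$ (valid for $m_0>0$, the constants depending only on $n$ and $\lVert\omega^{-1/2}\chi\rVert_2\geq\lVert\omega^{-1/2}\chi^{(m)}\rVert_2$). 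Then $(H_0+H_I)_n\phi_m=(H^{(m)})_n\phi_m+\bigl(H_I-H_I^{(m)}\bigr)_n\phi_m$, and the second term is $O\bigl(\lVert\omega^{-1/2}(\chi-\chi^{(m)})\rVert_2\bigr)\to0$, whence $(H_0+H_I)_n\phi_m\to H_n\phi$ and $H_n\subseteq\overline{(H_0+H_I)_n|_{\mathcal{D}_n}}$. So the closure equals $H_n$, and $\bigoplus_n$ finishes the proof.

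The only non-routine step --- and the reason one needs an approach different from Proposition~\ref{prop:1} --- is that under \eqref{eq:5} alone $(H_I)_n$ is bounded relative to $(H_0)_n$ only in the form sense, not as an operator, so $\mathcal{D}_n\not\subseteq D(H_n^2)$ and a direct Nelson-type commutator argument with comparison operator $(H_0)_n+N_2+1$ fails (the commutator $[H_{02},H_I]$ would involve $\omega^{1/2}\chi$, which \eqref{eq:5} does not control). The regularisation bypasses this, but it delivers the \emph{operator} core property for $\mathcal{D}_n$ --- rather than merely the form core property --- only through the uniform-in-$m$ bound $\lVert(N_2+\varepsilon)^{1/2}\bigl((H^{(m)})_n-i\bigr)^{-1}\rVert\leq C_n$; securing that uniformity, which hinges on the Kato/KLMN constants for $\bigl(H_I^{(m)}\bigr)_n$ depending monotonically on $\lVert\omega^{-1/2}\chi^{(m)}\rVert_2$, is where care is needed. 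For $m_0=0$ one should in addition excise a neighbourhood of $k=0$ from $\chi^{(m)}$ (cf. Remark~\ref{rem:3}) and argue slightly differently, $N_2$ no longer being dominated by $H_{02}$.
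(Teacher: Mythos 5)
Your proof is a genuinely different strategy from the paper's (which argues directly on the full Fock space by showing that any $\eta$ orthogonal to $(\zeta-H)(D(H_0)\cap\mathscr{F}_0)$ must vanish, using the divergence of $\sum_{\nu_2}(\nu_2+1)^{-1/2}$ fibrewise in the nucleon number). Unfortunately, the core of your argument does not reach the case that Proposition~\ref{prop:14} is actually there for, and the brief final paragraph mislocates the difficulty.

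The point of having a second, ``direct'' proof alongside Proposition~\ref{prop:1} is to cover the regime where $\omega^{-1}\chi\notin L^2$ but $\omega^{-1/2}\chi\in L^2$ --- typically $m_0=0$, $d=2$ (see Remark~\ref{rem:3}). In that regime Kato--Rellich fails, and you cannot set up your proof either. You need first to define $H_n$ via KLMN, i.e.\ you need the quadratic form of $(H_I)_n$ to be infinitesimally $(H_0)_n$-form-bounded. But under \eqref{eq:5} alone the available estimate is $\lvert\langle\phi,(H_I)_n\phi\rangle\rvert\lesssim\lVert v\rVert\,\lVert N_2^{1/2}\phi\rVert\,\lVert\phi\rVert$ from Lemma~\ref{lemma:2}\,(ii); splitting $\chi$ at $\lvert k\rvert=\delta$ trades the low-frequency part for a small $N_2^{1/2}$ coefficient but leaves the $N_2$ term genuinely present, and for $m_0=0$ the operator $N_2$ is \emph{not} controlled by $H_{02}$. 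So there is no $(H_0)_n$-form bound to feed to KLMN, hence no well-defined $H_n$ to which $(H^{(m)})_n$ could converge in norm-resolvent sense, hence no core statement. Your closing remark that $(H_I)_n$ is ``bounded relative to $(H_0)_n$ only in the form sense, not as an operator'' is also not an accurate description of the obstruction: when $m_0>0$ your own estimate $\lVert(H_I)_n\phi\rVert\lesssim\lVert(N_2+\varepsilon)^{1/2}\phi\rVert$ together with $N_2\leq m_0^{-1}H_{02}\leq m_0^{-1}(H_0)_n$ already gives an \emph{infinitesimal operator bound}, so Kato--Rellich applies directly, $D((H_0)_n)$ is the operator domain, and $\mathcal{D}_n$ is automatically a core as a core of $(H_0)_n$; your entire regularization machinery ($\chi^{(m)}$, norm-resolvent convergence, the truncation-plus-uniform-bound step) is superfluous there. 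Thus the regularization adds nothing in the case where it works, and cannot even be started in the case where it would be needed. To fix this you would have to supply the $m_0=0$, $\omega^{-1}\chi\notin L^2$ argument in full, which is precisely the non-routine content --- the paper's orthogonality/divergent-sum argument is designed exactly to avoid any form of relative boundedness beyond the bare Lemma~\ref{lemma:2}\,(ii) estimate, and works uniformly in $m_0\geq0$.

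A smaller, secondary issue: in the reverse inclusion you truncate the meson number of $\psi_m=((H^{(m)})_n-i)^{-1}(H_n-i)\phi$ to land in $\mathcal{D}_n$, remarking only that the truncation commutes with $(H_0)_n$. Since the truncation does \emph{not} commute with $(H_I^{(m)})_n$, you still owe an argument that $(H^{(m)})_n\phi_m\to H_n\phi$ survives the truncation; this is fixable with the uniform-in-$m$ bound you invoke, but as written it is a gap, and the two indices (regularization $m$ and truncation level) should not be conflated.
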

\begin{proof}
  Let $\phi\in\Gamma_s(\mathscr{Z})$. Then we denote by
  $\phi_{n_1,n_2}$ its restriction to $L^2_s(\mathds{R}^{n_1
    d})\otimes L^2_s(\mathds{R}^{n_2 d})$. Define the orthogonal
  projector $P_{\nu_1,\nu_2}\in\mathcal{L}(\Gamma_s(\mathscr{Z}))$,
  $\nu_1,\nu_2\in\mathds{N}$ by:
  \begin{equation*}
    (P_{\nu_1,\nu_2}\phi)_{n_1,n_2}=\left\{\begin{aligned}&\phi_{\nu_1,n_2}&\text{ if $n_1=\nu_1$ and $n_2\leq \nu_2$}\\ &0&\text{ otherwise}\end{aligned}\right.\; .
  \end{equation*}
  The operator $H$ is symmetric; we will prove that
  $(\zeta-H)(D(H_0)\cap \mathscr{F}_0)$ is dense in
  $\Gamma_s(\mathscr{Z})$ for all $\zeta\in \mathds{C}$ with $\Im
  \zeta\neq 0$. Let $\zeta$ such that $\Im \zeta\neq 0$; consider
  $\eta\in \Gamma_s(\mathscr{Z})$ such that for any $\phi\in
  D(H_0)\cap \mathscr{F}_0$:
  \begin{equation}
    \label{eq:17}
    \langle \eta , (\zeta-H)\phi\rangle_{}=0\; .
  \end{equation}
  If equation \eqref{eq:17} holds only for $\eta = 0$, then
  $(\zeta-H)(D(H_0)\cap \mathscr{F}_0)$ is dense in
  $\Gamma_s(\mathscr{Z})$. Equation~\eqref{eq:17} also implies:
  \begin{equation*}
    \langle \eta , H_0\phi \rangle_{}=\zeta\langle \eta , \phi\rangle_{}-\langle \eta , H_I\phi\rangle_{}\; .
  \end{equation*}
  Let $n_1,n_2\in\mathds{N}$; we choose $\phi_{n_1,n_2}\in
  D(H_{0}\rvert_{n_1,n_2})$ as $\phi$ ($H_0\rvert_{n_1,n_2}$ is the
  restriction of $H_0$ to $L^2_s(\mathds{R}^{n_1 d})\otimes
  L^2_s(\mathds{R}^{n_2 d})$). Then
  \begin{equation*}
    \begin{split}
      \langle \eta_{n_1,n_2} , H_0\phi_{n_1,n_2} \rangle_{}=\zeta\langle \eta_{n_1,n_2} , \phi_{n_1,n_2}\rangle_{}-\varepsilon\sum_{j=1}^{n_1}\Bigl(\langle \bigl(a(\omega^{-1/2}\chi e^{-ik\cdot x_j})\eta\bigr)_{n_1,n_2} , \phi_{n_1,n_2}\rangle_{}\\
      +\langle \bigl(a^{*}(\omega^{-1/2}\chi e^{-ik\cdot x_j})\eta\bigr)_{n_1,n_2} , \phi_{n_1,n_2}\rangle_{}\Bigr)\; .
    \end{split}
  \end{equation*}
  Hence
  \begin{equation}
    \label{eq:18}
    \lvert \langle \eta_{n_1,n_2} , H_0\phi_{n_1,n_2} \rangle_{}\rvert_{}^{}\leq \Bigl(\lvert \zeta\rvert_{}^{}+\varepsilon^{3/2}n_1(n_2+1)^{1/2}\lVert \omega^{-1/2}\chi \rVert_2^{}(\lVert \eta_{n_1,n_2-1} \rVert_{}^{}+\lVert \eta_{n_1,n_2+1}
    \rVert_{}^{})\Bigr)\lVert \phi_{n_1,n_2} \rVert_{}^{}\; .
  \end{equation}
  Since $\eta\in \Gamma_s(\mathscr{Z})$, \eqref{eq:18} implies
  $\eta_{n_1,n_2}\in D(H_0\rvert_{n_1,n_2})$ for all
  $n_1,n_2\in\mathds{N}$. Then $P_{\nu_1,\nu_2}\eta\in D(H_0)\cap
  \mathscr{F}_0$, for all $\nu_1,\nu_2\in\mathds{N}$. Consider now
  equation~\eqref{eq:17}; since it holds for all $\phi\in D(H_0)\cap
  \mathscr{F}_0$, we can choose $\phi=P_{\nu_1,\nu_2}\eta$. Then:
  \begin{equation*}
    \begin{split}
      \Im \langle \eta , H_0P_{\nu_1,\nu_2}\eta\rangle_{}=(\Im \zeta) \lVert P_{\nu_1,\nu_2}\eta \rVert_{}^2-\varepsilon\Im \Bigl(\sum_{j=1}^{\nu_1}\langle \eta , \bigl(a^{*}(\omega^{-1/2}\chi e^{-ik\cdot x_j})+a(\omega^{-1/2}\chi e^{-ik\cdot x_j})\bigr)P_{\nu_1,\nu_2}\eta \rangle_{}\Bigr)\; .
    \end{split}
  \end{equation*}
  $P_{\nu_1,\nu_2}$ commutes with $H_0$, hence we obtain:
  \begin{equation*}
    (\Im \zeta) \lVert P_{\nu_1,\nu_2}\eta \rVert_{}^2=\varepsilon\sum_{j=1}^{\nu_1}\Im \langle (1-P_{\nu_1,\nu_2})\eta , \bigl(a^{*}(\omega^{-1/2}\chi e^{-ik\cdot x_j})+a(\omega^{-1/2}\chi e^{-ik\cdot x_j})\bigr)P_{\nu_1,\nu_2}\eta \rangle_{}\; .
  \end{equation*}
  Now we use the following two facts:
  $a(f)P_{\nu_1,\nu_2}=P_{\nu_1,\nu_2-1}a(f)$, and
  $P_{\nu_1,\nu_2-1}(1-P_{\nu_1,\nu_2})=0$. Then:
  \begin{equation*}
    \begin{split}
      (\Im \zeta) \lVert P_{\nu_1,\nu_2}\eta \rVert_{}^2=\varepsilon\sum_{j=1}^{\nu_1}\Im \langle P_{\nu_1,\nu_2+1}(1-P_{\nu_1,\nu_2})\eta , a^{*}(\omega^{-1/2}\chi e^{-ik\cdot x_j})P_{\nu_1,\nu_2}\eta \rangle_{}\\
      =\varepsilon\sum_{j=1}^{n_1}\Im \langle a(\omega^{-1/2}\chi e^{-ik\cdot x_j})\eta_{\nu_1,\nu_2+1} , \eta_{\nu_1,\nu_2}\rangle_{}\; .
    \end{split}
  \end{equation*}
  Taking the absolute value we obtain:
  \begin{equation*}
    \lvert \Im \zeta\rvert_{}^{}\lVert P_{\nu_1,\nu_2}\eta \rVert_{}^2\leq \varepsilon^{3/2}\nu_1(\nu_2+1)^{1/2}\lVert \omega^{-1/2}\chi \rVert_2^{}\lVert \eta_{\nu_1,\nu_2+1} \rVert_{}^{}\lVert \eta_{\nu_1,\nu_2} \rVert_{}^{}\; ,
  \end{equation*}
  hence
  \begin{equation*}
    \frac{1}{(\nu_2+1)^{1/2}}\sum_{n_2=0}^{\nu_2}\lVert \eta_{\nu_1,n_2} \rVert_{}^2\leq \varepsilon^{3/2}\frac{\nu_1}{\lvert \Im \zeta\rvert_{}^{}}\lVert \omega^{-1/2}\chi \rVert_2^{}\frac{1}{2}\bigl(\lVert \eta_{\nu_1,\nu_2+1} \rVert_{}^2+\lVert
    \eta_{\nu_1,\nu_2} \rVert_{}^2)\; .
  \end{equation*}
  We define now:
  \begin{equation*}
    S:=\sum_{n_2=0}^{\infty}\lVert \eta_{\nu_1,n_2} \rVert_{}^2=\lVert P_{\nu_1,\infty}\eta \rVert_{}^2\; ;
  \end{equation*}
  where $P_{\nu_1,\infty}$ is the orthogonal projector on
  $\mathscr{H}_{\nu_1}$. Then exists a $\bar{\nu}_2$ such that for all
  $\nu_2\geq \bar{\nu}_2$:
  \begin{equation*}
    \frac{1}{2}S\leq\sum_{n_2=0}^{\nu_2}\lVert \eta_{\nu_1,n_2} \rVert_{}^2\leq S\; .
  \end{equation*}
  So for all $\nu_2\geq \bar{\nu}_2$:
  \begin{equation*}
    \frac{1}{(\nu_2+1)^{1/2}}S\leq \varepsilon^{3/2}\frac{\nu_1}{\lvert \Im \zeta\rvert_{}^{}}\lVert \omega^{1/2}\chi \rVert_2^{}\bigl(\lVert \eta_{\nu_1,\nu_2+1} \rVert_{}^2+\lVert \eta_{\nu_1,\nu_2} \rVert_{}^2)\; ;
  \end{equation*}
  taking now the sum in $\nu_2$ it becomes:
  \begin{equation*}
    S \sum_{\nu_2=\bar{\nu}_2}^{\bar{\nu}_2'}\frac{1}{(\nu_2+1)^{1/2}}\leq 2S \varepsilon^{3/2}\frac{\nu_1}{\lvert \Im \zeta\rvert_{}^{}}\lVert \omega^{-1/2}\chi\rVert_2^{}\; ,
  \end{equation*}
  for all $\bar{\nu}_2'\geq \bar{\nu}_2$. If $S\neq 0$, we have an
  absurd, since $\sum_{\nu_2\geq \bar{\nu}_2}(\nu_2+1)^{-1/2}$ is
  divergent. It follows that $(\forall \nu_1\in\mathds{N},
  P_{\nu_1,\infty}\eta=0)\Leftrightarrow \eta=0$.
\end{proof}

Finally, we describe some properties of $H$ and the corresponding
evolution $e^{-i \frac{t}{\varepsilon} H}$ in mapping domains of
particular operators in $\mathscr{H}$.

\begin{proposition}
  \label{prop:2}
  Assume that \eqref{eq:5} holds. Then:
  \begin{enumerate}[i)]
  \item\label{item:1} $D(H_0)\cap D(N_1^2+N_2)\subseteq D(H)$.
  \item\label{item:2} $D(H)\cap D(N_1^2+N_2)\subseteq D(H_0)$.
  \item\label{item:3} Let $\delta\in\mathds{R}$, $t\in\mathds{R}$ and
    $m_{\delta}(\varepsilon):=
    \max\{2+\varepsilon,1+(1+\varepsilon)^{\delta}\}$. Then for any
    $\phi\in\mathscr{H}$:
    \begin{equation*}
      \begin{split}
        \lVert (N_1^2+N_2+\varepsilon)^{\delta}e^{-i \frac{t}{\varepsilon} H}(N_1^2+N_2+\varepsilon)^{-\delta}\phi\rVert_{}^{}\leq e^{m_{\delta}(\varepsilon)\sqrt{\varepsilon}\lvert \delta\rvert^{}\lvert t\rvert^{}\lVert \omega^{-1/2}\chi\rVert_2^{}}\lVert \phi\rVert_{}^{}\; .
      \end{split}
    \end{equation*}
  \end{enumerate}
\end{proposition}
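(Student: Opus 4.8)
The plan is to establish (i) and (ii) together from the decomposition $\mathscr{H}=\bigoplus_{n_1,n_2}\mathscr{H}_{n_1,n_2}$ into sectors of fixed nucleon and meson number: on each such sector $H_0$ acts as the self-adjoint, hence closed, operator $H_0|_{n_1,n_2}$, while $H_I$ acts through the \emph{bounded} maps $\mathscr{H}_{n_1,n_2\mp1}\to\mathscr{H}_{n_1,n_2}$ furnished by Lemma \ref{lemma:2}. The other ingredients are that $D(H_0)\cap\mathscr{F}_0$ is a core for $H$ (Proposition \ref{prop:14}) and that Corollary \ref{cor:1} bounds $H_I$ by $N_1^2+N_2+\varepsilon$. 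For (i), given $\phi\in D(H_0)\cap D(N_1^2+N_2)$, let $\phi_N$ be its truncation to the sectors with at most $N$ nucleons and $N$ mesons; the corresponding orthogonal projection commutes with $H_0$ and with $N_1^2+N_2$, so $\phi_N\in D(H_0)\cap\mathscr{F}_0$, $\phi_N\to\phi$, $H_0\phi_N\to H_0\phi$, and by Corollary \ref{cor:1}, $\lVert H_I(\phi_N-\phi)\rVert\le\lVert\omega^{-1/2}\chi\rVert_2\lVert(N_1^2+N_2+\varepsilon)(\phi_N-\phi)\rVert\to0$. Thus $(H_0+H_I)\phi_N\to(H_0+H_I)\phi$ with $\phi_N$ in a core of $H$, so $\phi\in D(H)$.

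For (ii), let $\phi\in D(H)\cap D(N_1^2+N_2)$ and choose $\phi_k\in D(H_0)\cap\mathscr{F}_0$ with $\phi_k\to\phi$ and $(H_0+H_I)\phi_k\to H\phi$. Projecting onto $\mathscr{H}_{n_1,n_2}$ and using that $H_I$ acts there through bounded operators, $(\phi_k)_{n_1,n_2}\to\phi_{n_1,n_2}$ and $(H_I\phi_k)_{n_1,n_2}\to(H_I\phi)_{n_1,n_2}$, whence $H_0|_{n_1,n_2}(\phi_k)_{n_1,n_2}\to(H\phi)_{n_1,n_2}-(H_I\phi)_{n_1,n_2}$. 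Closedness of $H_0|_{n_1,n_2}$ yields $\phi_{n_1,n_2}\in D(H_0|_{n_1,n_2})$ with $H_0\phi_{n_1,n_2}=(H\phi-H_I\phi)_{n_1,n_2}$, and summing over $(n_1,n_2)$, $\lVert H_0\phi\rVert^2=\lVert H\phi-H_I\phi\rVert^2\le\bigl(\lVert H\phi\rVert+\lVert\omega^{-1/2}\chi\rVert_2\lVert(N_1^2+N_2+\varepsilon)\phi\rVert\bigr)^2<\infty$ by Corollary \ref{cor:1}; hence $\phi\in D(H_0)$.

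For (iii), set $B:=N_1^2+N_2+\varepsilon$ and $\Psi(t):=e^{-i\frac t\varepsilon H}\Psi_0$, and aim at the differential inequality $\lvert\frac{d}{dt}\langle\Psi(t),B^{2\delta}\Psi(t)\rangle\rvert\le 2\lvert\delta\rvert m_\delta(\varepsilon)\sqrt\varepsilon\,\lVert\omega^{-1/2}\chi\rVert_2\,\langle\Psi(t),B^{2\delta}\Psi(t)\rangle$; choosing $\Psi_0:=B^{-\delta}\phi$ (so $\langle\Psi_0,B^{2\delta}\Psi_0\rangle=\lVert\phi\rVert^2$), Grönwall and a square root then give the stated bound, the general $\phi$ being reached by density and the closedness of $B^\delta$. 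Since $N_1$ and $N_2$ commute with $H_0$, so does $B$, and formally $\frac{d}{dt}\langle\Psi(t),B^{2\delta}\Psi(t)\rangle=\frac{i}{\varepsilon}\langle\Psi(t),[H_I,B^{2\delta}]\Psi(t)\rangle$. To legitimize this differentiation I would first prove the estimate for $\Psi_0$ in the dense core $D(H_0)\cap\mathscr{F}_0$, on which every expression below is manifestly well defined (this also disposes of non-integer and negative $\delta$), and then pass to the general case, equivalently replacing $B^{2\delta}$ by a bounded function of $B$ and letting the cutoff go.

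The crux is the commutator estimate. Writing $H_I=H_I^++H_I^-$ with $H_I^{\pm}$ raising/lowering the meson number by one quantum and commuting with $N_1$, one has sectorwise $[H_I^{\pm},B^{2\delta}]=H_I^{\pm}\bigl((N_1^2+N_2+\varepsilon)^{2\delta}-(N_1^2+N_2+\varepsilon\mp\varepsilon)^{2\delta}\bigr)$, that is, $H_I^{\pm}$ times a first-order finite difference, of step $\varepsilon$, of $x\mapsto x^{2\delta}$. Combining this with the relative bound $\lVert H_I^{\pm}\phi\rVert\le\lVert\omega^{-1/2}\chi\rVert_2\,\lVert N_1(N_2+\varepsilon)^{1/2}\phi\rVert$ from Lemma \ref{lemma:2}, the operator inequalities $N_1^2\le B$, $N_2+\varepsilon\le B$ and $N_1(N_2+\varepsilon)^{1/2}\le B$, and a Cauchy--Schwarz step reconciling the $B^{2\delta}$-weights of neighbouring meson sectors, one obtains a bound on $\lvert\langle\Psi,[H_I,B^{2\delta}]\Psi\rangle\rvert$ of the required form; here the factor $m_\delta(\varepsilon)=\max\{2+\varepsilon,1+(1+\varepsilon)^\delta\}$ appears as a uniform control on the ratios $\bigl((N_1^2+N_2+2\varepsilon)/(N_1^2+N_2+\varepsilon)\bigr)^{\pm\delta}$ that arise in that reconciliation. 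Dividing by $\varepsilon$ and invoking Grönwall concludes. I expect the main difficulty to be exactly this commutator bound: keeping precise track of the several powers of $\varepsilon$ carried by the field operators and by the finite difference so as to land on the sharp constant $m_\delta(\varepsilon)\sqrt\varepsilon\,\lvert\delta\rvert\,\lVert\omega^{-1/2}\chi\rVert_2$ uniformly over all sectors, together with making the differentiation of $t\mapsto\langle\Psi(t),B^{2\delta}\Psi(t)\rangle$ rigorous for arbitrary real $\delta$.
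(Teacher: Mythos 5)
Your core/closedness argument for (i) and (ii) is correct and makes explicit what the paper's very terse proof leaves implicit (the paper just records the two inequalities $\lVert H\phi\rVert\leq\lVert H_0\phi\rVert+\lVert\omega^{-1/2}\chi\rVert_2\lVert(N_1^2+N_2+\varepsilon)\phi\rVert$ and $\lVert H_0\phi\rVert\leq\lVert H\phi\rVert+\lVert\omega^{-1/2}\chi\rVert_2\lVert(N_1^2+N_2+\varepsilon)\phi\rVert$ and appeals to the self-adjointness from Propositions~\ref{prop:1} and~\ref{prop:14}); spelling out the truncation to $D(H_0)\cap\mathscr{F}_0$ together with closedness of $H$ and of $H_0$ on fixed sectors is a clean way to justify both inclusions. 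For (iii) you take a genuinely different route. The paper runs Gr\"onwall only for $\delta<-1/2$, where $B^{2\delta}$ and $[H_I,B^{2\delta}]$ are bounded and the differentiation of $M(t)=\lVert B^{\delta}e^{-i\frac{t}{\varepsilon}H}\phi\rVert$ is unproblematic; it then interpolates between $\delta=-1$ and $\delta=0$ and passes to $\delta>0$ by duality, everything being done sectorwise on $\mathscr{H}_n$ where $N_1^2$ is a scalar. You instead try to run the commutator/Gr\"onwall argument directly for all real $\delta$. This is plausible in outline, but ``prove on the core, extend by density'' is weaker than it looks: $e^{-i\frac{t}{\varepsilon}H}$ does not preserve $\mathscr{F}_0$ (only $N_1$ is conserved), so for $\delta>0$ the quantity $\langle\Psi(t),B^{2\delta}\Psi(t)\rangle$ is not a priori finite at $t\neq0$ and one really must go through a bounded cutoff of $B^{2\delta}$ with uniform constants --- exactly the work that the paper's interpolation-plus-duality shortcut avoids. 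Your commutator identity also has a small sign slip: since $B^{2\delta}H_I^{\pm}=H_I^{\pm}(B\pm\varepsilon)^{2\delta}$, one gets $[H_I^{\pm},B^{2\delta}]=H_I^{\pm}\bigl(B^{2\delta}-(B\pm\varepsilon)^{2\delta}\bigr)$, i.e.\ a step $+\varepsilon$ (not $-\varepsilon$) on the creating part, though this is harmless for the finite-difference bound. The two difficulties you identify at the end --- rigorous differentiation for arbitrary real $\delta$ and the precise $\varepsilon$-bookkeeping that produces $m_{\delta}(\varepsilon)\sqrt{\varepsilon}\lvert\delta\rvert\lVert\omega^{-1/2}\chi\rVert_2$ --- are real, and the bounded-$\delta$-plus-interpolation argument is the cleaner way to close them.
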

\begin{proof}
  i) From $H=H_0+H_I$ we obtain $\lVert H\phi\rVert_{}^{}\leq \lVert
  H_0\phi\rVert_{}^{}+\lVert \omega^{-1/2}\chi\rVert_2^{}\lVert
  (N_1^2+N_2+\varepsilon)\phi \rVert_{}^{}$ by Lemma~\ref{lemma:2}.\\
  ii) From $H_0=H-H_I$ we obtain $\lVert H_0\phi\rVert_{}^{}\leq
  \lVert H\phi\rVert_{}^{}+\lVert \omega^{-1/2}\chi\rVert_2^{}\lVert
  (N_1^2+N_2+\varepsilon)\phi \rVert_{}^{}$.\\
  iii) We define, for $\delta<-1/2$, $M(t):=\lVert
  (N_1^2+N_2+\varepsilon)^{\delta}e^{-i \frac{t}{\varepsilon}
    H}\phi\rVert_{}^{}$. The result is then an application of
  Gronwall's lemma on $\mathscr{H}_{n}$, taking the derivative on a
  suitable domain. The result is then extended, by density, to all
  vectors of $\mathscr{H}$. Interpolating between $\delta=-1$ and
  $\delta=0$ we obtain the result for all $\delta\leq 0$; by duality
  we conclude the proof for all real $\delta$ \citep[see][Proposition
  IV.2]{Fa}.
\end{proof}

\subsection{Classical system.}
\label{sec:classical-system}

In this part we are concerned with the following partial differential
equation on the phase space $\mathscr{Z}=L^{2}(\mathds{R}^{d})\oplus
L^{2}(\mathds{R}^{d})$:
\begin{equation}
  \label{eq:1}
  \left\{
    \begin{aligned}
      i\partial_t z_1&=\Bigl(-\frac{1}{2M} \Delta +V\Bigr)z_1 + \Bigl(\int_{\mathds{R}^d}^{}\frac{\chi(k)}{\sqrt{\omega(k)}}\bigl(\bar{z}_2(k)e^{-ik\cdot x}+z_2(k) e^{ik\cdot x}\bigr)  dk\Bigr) z_1\\
      i\partial_tz_2&= \omega z_2 +\omega^{-1/2} \chi \int_{\mathds{R}^d}^{}e^{-ik\cdot x} \bar{z}_1(x) z_1(x)dx
    \end{aligned}\right .\; .
\end{equation}
This system describes a coupled Klein-Gordon ($m_0>0$)/Wave
($m_0=0$)-Schrödinger equation; it is the classical dynamics limit of
the Nelson model. In this form the second equation does not seem a
Klein-Gordon/Wave equation, however rewriting it for $A:=
\int_{\mathds{R}^d} \frac{\chi(k)}{\sqrt{\omega(k)}} \bigl(
\bar{z}_2(k) e^{-ik\cdot x} + z_2(k) e^{ik\cdot x} \bigr) dk$, we
obtain the more usual form: $(\square + m_0^2) A= -(2\pi)^{-d/2}
\mathcal{F}^{-1}(\chi) * \lvert z_1\rvert^2$.

In the case where the ultraviolet cut off is removed (i.e.: $\chi=1$),
we obtain a coupled system with an Yukawa interaction. This latter PDE
has attracted a lot of attention, see e.g. \cite{Ba,GiVe-sca,HW,Shi}.

\begin{proposition}
  \label{lemma:1}
  Assume \eqref{eq:5} holds; and let $\mathscr{Z}\ni z^0:=z_1^0\oplus
  z_2^0$. Then the Cauchy problem:
  \begin{equation*}
    \left\{
      \begin{aligned}
        i\partial_t z_1&=\Bigl(-\frac{1}{2M} \Delta +V\Bigr)z_1+ \Bigl(\int_{\mathds{R}^d}^{}\frac{\chi(k)}{\sqrt{\omega(k)}}\bigl(\bar{z}_2(k)e^{-ik\cdot x}+z_2(k) e^{ik\cdot x}\bigr)  dk\Bigr) z_1\\
        i\partial_tz_2&= \omega z_2 +\omega^{-1/2} \chi \int_{\mathds{R}^d}^{}e^{-ik\cdot x} \bar{z}_1(x) z_1(x)dx
      \end{aligned}\right . \quad
    \left\{
      \begin{aligned}
        z_1(s)&=z_1^0\\
        z_2(s)&=z_2^0
      \end{aligned}\right .
  \end{equation*}
  admits an unique global solution in
  $\mathscr{C}^0_{}(\mathds{R}^{},\mathscr{Z})$.
\end{proposition}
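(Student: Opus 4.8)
The plan is to treat the system \eqref{eq:1} as a semilinear evolution equation and solve it by a fixed-point argument combined with an a priori bound coming from conservation of the $z_1$-mass. The natural ambient space is $\mathscr{Z}=L^2\oplus L^2$, with the two linear generators $A_1:=-\frac{\Delta}{2M}+V$ and $A_2:=\omega$, both self-adjoint and (for $A_1$, by Kato and the hypothesis $V\in L^2_{loc}(\mathds{R}^d,\mathds{R}_+)$) nonnegative. First I would rewrite \eqref{eq:1} in the Duhamel/mild form
\begin{equation*}
\left\{
\begin{aligned}
z_1(t)&=e^{-i(t-s)A_1}z_1^0-i\int_s^t e^{-i(t-\tau)A_1}\,\big(\Phi(z_2(\tau))\,z_1(\tau)\big)\,d\tau\,,\\
z_2(t)&=e^{-i(t-s)A_2}z_2^0-i\int_s^t e^{-i(t-\tau)A_2}\,\big(\omega^{-1/2}\chi\,\textstyle\int_{\mathds{R}^d}e^{-ik\cdot x}|z_1(\tau)|^2(x)\,dx\big)\,d\tau\,,
\end{aligned}\right.
\end{equation*}
where $\Phi(z_2)(x):=\int_{\mathds{R}^d}\frac{\chi(k)}{\sqrt{\omega(k)}}\big(\bar z_2(k)e^{-ik\cdot x}+z_2(k)e^{ik\cdot x}\big)\,dk$. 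The point of the cutoff assumption $\omega^{-1/2}\chi\in L^2(\mathds{R}^d)$ is precisely that it makes the coupling terms bounded: by Cauchy–Schwarz, $\|\Phi(z_2)\|_{L^\infty(\mathds{R}^d)}\leq \sqrt{2}\,\|\omega^{-1/2}\chi\|_2\,\|z_2\|_2$, so multiplication by $\Phi(z_2)$ is a bounded operator on $L^2(\mathds{R}^d)$; likewise the $z_2$-forcing term is controlled in $L^2(\mathds{R}^d)$ by $\|\omega^{-1/2}\chi\|_2\,\|z_1\|_2^2$.

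Next I would run a contraction mapping argument in $\mathscr{C}^0([s-T,s+T],\mathscr{Z})$ for $T$ small. The nonlinearity has two pieces: the bilinear map $(z_1,z_2)\mapsto \Phi(z_2)z_1$, which is bounded from $\mathscr{Z}$ to $L^2(\mathds{R}^d)$ with norm $\lesssim \|\omega^{-1/2}\chi\|_2$ and Lipschitz on balls; and the quadratic map $z_1\mapsto \omega^{-1/2}\chi\int e^{-ik\cdot x}|z_1|^2\,dx$, which sends $z_1$ of norm $R$ to something of $L^2$-norm $\lesssim R^2$ and is Lipschitz with constant $\lesssim R$ on the ball of radius $R$. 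Since the unitary groups $e^{-itA_j}$ are isometries on $L^2$, the Duhamel map is a contraction on a suitable ball once $T$ is small depending only on $\|z^0\|_{\mathscr{Z}}$ and $\|\omega^{-1/2}\chi\|_2$; the Picard iterates converge in $\mathscr{C}^0([s-T,s+T],\mathscr{Z})$, yielding a unique local mild solution, and uniqueness in the whole class follows from the Lipschitz estimate plus Gronwall.

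To upgrade this to a global solution, the key observation is the conservation law $\frac{d}{dt}\|z_1(t)\|_2^2=0$: formally $\frac{d}{dt}\|z_1\|_2^2 = 2\Re\langle z_1,\partial_t z_1\rangle = 2\Im\langle z_1,(A_1+\Phi(z_2))z_1\rangle = 0$ because $A_1$ is self-adjoint and the multiplier $\Phi(z_2)(x)$ is real-valued (note $\overline{\Phi(z_2)(x)}=\Phi(z_2)(x)$). This has to be justified on the mild solution, e.g. by noting the solution is strong enough (or by a standard regularization/approximation argument using the finite-particle truncation as in \cite{Fa}). Once $\|z_1(t)\|_2\equiv\|z_1^0\|_2$ is known on the maximal interval of existence, I plug it into the $z_2$-equation: then $\|z_2(t)\|_2$ satisfies $\frac{d}{dt}\|z_2\|_2 \leq \|\omega^{-1/2}\chi\|_2\,\|z_1^0\|_2^2$, so $\|z_2(t)\|_2$ grows at most linearly in $t$, hence $\|z^0(t)\|_{\mathscr{Z}}$ stays finite on every bounded interval. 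Since the local existence time $T$ depends only on $\|z(t)\|_{\mathscr{Z}}$ (and $\|\omega^{-1/2}\chi\|_2$), a standard continuation argument rules out blow-up in finite time and gives a unique global solution in $\mathscr{C}^0(\mathds{R},\mathscr{Z})$. The only mildly delicate point is the rigorous justification of the mass conservation identity at the level of mild solutions; I would handle this by the usual density/regularization trick (approximate $z^0$ by smooth data in the form domains of $A_1,A_2$, for which $z_1$ is genuinely differentiable in $L^2$ and the computation above is licit, then pass to the limit using the Lipschitz dependence on data), or simply cite the detailed argument in \cite{Fa}. Everything else is routine.
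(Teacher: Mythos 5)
Your proposal is correct and follows essentially the same route as the paper (which in turn defers the details to Proposition III.1 of \cite{Fa}): a Duhamel/mild formulation, local existence via contraction using the boundedness of the coupling terms furnished by $\omega^{-1/2}\chi\in L^2$, and global continuation via conservation of $\lVert z_1\rVert_2$. (Minor nit: the $L^\infty$ bound on the multiplier should be $2\lVert\omega^{-1/2}\chi\rVert_2\lVert z_2\rVert_2$ rather than $\sqrt{2}\lVert\omega^{-1/2}\chi\rVert_2\lVert z_2\rVert_2$, but this does not affect the argument.)
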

\begin{proof}
  Local existence is proved by means of a fixed point argument. This
  solution is then extended globally using the conservation of $\lVert
  z_1 \rVert_2^{}$ \citep[see][Proposition III.1]{Fa}.
\end{proof}

Define now the flow $\Phi(t,s)$ on $\mathscr{Z}$ as:
\begin{equation}
  \label{eq:2}
  \begin{split}
    \Phi(t,s)z(s):=\Bigl(
    \begin{smallmatrix}
      e^{-i(t-s)(-\frac{\Delta}{2M}+V)}&0\\
      0 &e^{-i(t-s)\omega}
    \end{smallmatrix}\Bigr)
    z(s) -i\int_s^t\Bigl(
    \begin{smallmatrix}
      e^{-i(t-\tau) (-\frac{\Delta}{2M}+V)}&0\\
      0 &e^{-i(t-\tau)\omega}
    \end{smallmatrix}\Bigr)\Bigl(
    \begin{smallmatrix}
      \Phi_1(z(\tau))\\
      \Phi_2(z(\tau))
    \end{smallmatrix}\Bigr) d\tau\; ,
  \end{split}
\end{equation}
with $z(\tau)$, $\tau\in[s,t]$, the
$\mathscr{C}^0_{}(\mathds{R},\mathscr{Z})$-solution of the Cauchy
problem of Proposition \ref{lemma:1}, and
\begin{align*}
  \Phi_1(z(t))&:=\Bigl(\int_{\mathds{R}^d}^{}\frac{\chi(k)}{\sqrt{\omega(k)}}\bigl(\bar{z}_2(t,k)e^{-ik\cdot x}+z_2(t,k) e^{ik\cdot x}\bigr)  dk\Bigr) z_1(t,x)\\
  \Phi_2(z(t))&:=\omega^{-1/2}(k) \chi(k) \int_{\mathds{R}^d}^{}e^{-ik\cdot x} \bar{z}_1(t,x) z_1(t,x)dx\; .
\end{align*}
The Klein-Gordon-Schrödinger equation is a Hamiltonian system and
therefore \eqref{eq:1} can be written in a more compact way, namely
\begin{equation}
  \label{eq:comp-ham}
  i\partial_{t} z=\partial_{\bar z} h(z)\,,
\end{equation}
with $h(z), z\in\Z,$ the classical hamiltonian given by
$h(z)=h_0(z)+h_I(z)$; with
\begin{align*}
  h_{0}(z)&=\langle z_1,(-\frac{\Delta}{2M}+V)
  z_1\rangle+\langle z_2,\omega(k) z_2\rangle\,,\\
  h_{I}(z)&=
  \int_{\mathds{R}^{2d}}^{}\frac{\chi(k)}{\sqrt{\omega(k)}} |z_1|^2(x)
  \bigl(\bar{z}_2(k) e^{-ik\cdot x}+z_2(k)e^{ik\cdot x}\bigr) dkdx\;.
\end{align*}
Define the free flow
\begin{equation*}
  \Phi_0(t,s)=\Phi_0(t-s)=\Bigl(
  \begin{smallmatrix}
    e^{-i (t-s)(-\frac{\Delta}{2M}+V)}&0\\
    0 &e^{-i (t-s) \omega}
  \end{smallmatrix}\Bigr)\;.
\end{equation*}
The classical field equation~\eqref{eq:comp-ham} can be written on the
interaction representation:
\begin{equation}
  \label{eq:21}
  \partial_t \tilde{z} =\mathscr{V}_{s}(\tilde{z})=-i\Phi_{0}(-t)  \partial_{\bar z}h_{I}(\Phi_0(t) \tilde{z})\; ;
\end{equation}
with the (velocity) vector field $\mathscr{V}_{s}$ continuous on $\mathscr{Z}$
and satisfying the estimate:
\begin{equation}
  \label{eq:22}
  \lVert \mathscr{V}_s(z) \rVert_{\mathscr{Z}}^{}\leq 2\lVert \omega^{-1/2}\chi \rVert_2^{}\lVert z_1 \rVert_2^{}\bigl(\lVert z_1 \rVert_2+\lVert z_2 \rVert_2^{}\bigr)\; .
\end{equation}
\begin{proposition}
  \label{prop:3}
  Assume \eqref{eq:5} holds. Then for all $t,s\in\mathds{R}$,
  $\Phi(t,s)$ given by \eqref{eq:2} is the well defined global
  continuous flow on $\mathscr{Z}=L^2(\mathds{R}^d)\oplus
  L^2(\mathds{R}^{d})$ of the Klein-Gordon-Schrödinger
  equation~\eqref{eq:1}.
\end{proposition}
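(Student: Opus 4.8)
The plan is to build $\Phi(t,s)$ directly from Proposition~\ref{lemma:1} and then verify the flow and continuity properties. By Proposition~\ref{lemma:1}, for every $z^0=z_1^0\oplus z_2^0\in\mathscr{Z}$ and every $s\in\mathds{R}$ there is a unique $z(\cdot)\in\mathscr{C}^0(\mathds{R},\mathscr{Z})$ solving \eqref{eq:1} with $z(s)=z^0$; set $\Phi(t,s)z^0:=z(t)$, which is thus well defined on all of $\mathscr{Z}$ for all $t,s\in\mathds{R}$ (globality comes from Proposition~\ref{lemma:1}). Since the fixed-point construction of that solution is carried out at the level of the Duhamel integral equation, $z(\cdot)$ satisfies \eqref{eq:2} by construction; equivalently, because $e^{-i\tau(-\frac{\Delta}{2M}+V)}$ and $e^{-i\tau\omega}$ are strongly continuous unitary groups, a continuous $\mathscr{Z}$-valued function obeys \eqref{eq:2} iff it solves \eqref{eq:1} in the mild sense (classically if the datum lies in $D(-\frac{\Delta}{2M}+V)\oplus D(\omega)$). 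The integral in \eqref{eq:2} converges because $\tau\mapsto\Phi_1(z(\tau))\oplus\Phi_2(z(\tau))$ is continuous into $\mathscr{Z}$, with norm controlled by the right-hand side of \eqref{eq:22} along the solution; the continuity of $\mathscr{V}_s$ on $\mathscr{Z}$ and the bound \eqref{eq:22} themselves follow from the bilinear form of $(\Phi_1,\Phi_2)$ by Cauchy--Schwarz and $\lVert |z_1|^2\rVert_1=\lVert z_1\rVert_2^2$. This already identifies $\Phi(t,s)$ as the flow of \eqref{eq:1} and gives its continuity in $t$.

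First I would record the a priori bound that powers the rest: conservation of $\lVert z_1\rVert_2$ (already used for globality in Proposition~\ref{lemma:1}) together with the Duhamel identity for the second component and unitarity of $e^{-i\tau\omega}$ yield $\lVert z_2(t)\rVert_2\le\lVert z_2^0\rVert_2+C\lVert\omega^{-1/2}\chi\rVert_2\,\lvert t-s\rvert\,\lVert z_1^0\rVert_2^2$, so $\Phi(t,s)$ maps bounded sets of $\mathscr{Z}$ into bounded sets, with a bound depending only on $\lvert t-s\rvert$ and $\lVert z^0\rVert_{\mathscr{Z}}$. Next, the cocycle identities $\Phi(s,s)=\mathrm{Id}$, $\Phi(t,\tau)\,\Phi(\tau,s)=\Phi(t,s)$ and $\Phi(s,t)=\Phi(t,s)^{-1}$ are immediate from the uniqueness in Proposition~\ref{lemma:1}: the solution with datum $z^0$ at time $s$ is, restricted and reparametrised, the solution with datum $z(\tau)$ at time $\tau$.

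The main step is continuous dependence on the datum. Given $z^0,w^0$ with solutions $z(\cdot),w(\cdot)$, subtract the two Duhamel formulas \eqref{eq:2}, use unitarity of the free flows and the bilinear estimates for $(\Phi_1,\Phi_2)$ underlying \eqref{eq:22} to obtain
\begin{equation*}
  \lVert\Phi(t,s)z^0-\Phi(t,s)w^0\rVert_{\mathscr{Z}}\le\lVert z^0-w^0\rVert_{\mathscr{Z}}+C\int_{s\wedge t}^{s\vee t}\bigl(\lVert z(\tau)\rVert_{\mathscr{Z}}+\lVert w(\tau)\rVert_{\mathscr{Z}}\bigr)\,\lVert z(\tau)-w(\tau)\rVert_{\mathscr{Z}}\,d\tau\,,
\end{equation*}
with $C$ absorbing $\lVert\omega^{-1/2}\chi\rVert_2$. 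By the a priori bound the prefactor $\lVert z(\tau)\rVert_{\mathscr{Z}}+\lVert w(\tau)\rVert_{\mathscr{Z}}$ is bounded on each compact time interval, uniformly for $z^0,w^0$ in a ball of $\mathscr{Z}$; Gronwall's lemma then shows $\Phi(t,s)$ is Lipschitz on bounded subsets of $\mathscr{Z}$ with constant depending only on $\lvert t-s\rvert$ and the radius, hence continuous on $\mathscr{Z}$. Joint continuity in $(t,s,z)$ follows by writing $\Phi(t,s)=\Phi(t,0)\,\Phi(0,s)$ and combining this with continuity in $t$ and in $s$ (the latter from \eqref{eq:2} applied to $\Phi(0,s)$, using strong continuity of $\tau\mapsto\Phi_0(\tau)$).

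I do not foresee a genuine obstacle: this is a standard semilinear well-posedness/flow argument, and the only mildly delicate point, the a priori control of $\lVert z_2\rVert_2$, is merely linear in time, so the Gronwall bookkeeping in the continuous-dependence step is the whole of the work. Most of this is already contained in Proposition~\ref{lemma:1} and \cite[Proposition III.1]{Fa}; the present statement just repackages it as a global continuous flow.
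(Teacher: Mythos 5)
Your proposal is correct and follows the same route as the paper: $\Phi(t,s)$ is defined by sending $z(s)$ to the unique global $\mathscr{C}^0(\mathds{R},\mathscr{Z})$ solution from Proposition~\ref{lemma:1}, and continuity is a standard Duhamel/Gronwall consequence of the bilinear structure of $(\Phi_1,\Phi_2)$. The paper's own proof is essentially a one-liner that delegates everything (including continuous dependence on data) to Proposition~\ref{lemma:1} and \cite[Proposition III.1]{Fa}, whereas you spell out the cocycle identities, the a priori bound on $\lVert z_2(t)\rVert_2$, and the Gronwall step — all correct and consistent with what the reference proves.
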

\begin{proof}
  Let $z(s)$, $s\in \mathds{R}$, be in $\mathscr{Z}$ and $z(t)$ be the
  unique global $\mathscr{C}^0_{}(\mathds{R},\mathscr{Z})$-solution of
  the Cauchy problem of Proposition \ref{lemma:1}. Then
  $\mathscr{Z}\ni z(t)= \Phi(t,s)z(s)$.
\end{proof}

\section{Trace of states.}
\label{sec:trace-states}

First of all we recall some definitions. Let $\varrho_{\varepsilon}$
be a positive trace class operator with $\Tr[\varrho_{\varepsilon}]=1$
(the conditions it have to satisfy will be discussed later); then we
define
\begin{align*}
  \varrho_{\varepsilon}(t)&:= e^{-i \frac{t}{\varepsilon}H}\varrho_{\varepsilon}e^{+i \frac{t}{\varepsilon}H}\; ,\\
  \tilde{\varrho}_{\varepsilon}(t)&:= e^{+i \frac{t}{\varepsilon}H_0}\varrho_{\varepsilon}(t) e^{-i \frac{t}{\varepsilon}H_0}\; .
\end{align*}
We denote by $\mathcal{L}^1(\mathscr{H})$ the space of trace class
operators on $\mathscr{H}$.  Also, let $\mathscr{Z}\ni
\xi=\xi_1\oplus\xi_2$. Then we define the Weyl operator
\begin{equation*}
  W(\xi):=e^{i \frac{\psi^{*}(\xi_1)+\psi(\xi_1)}{\sqrt{2}}}\otimes e^{i \frac{a^{*}(\xi_2)+a(\xi_2)}{\sqrt{2}}}= W(\xi_1)\otimes W(\xi_2)\; .
\end{equation*}
We have used here the representation of $\Gamma_s(\mathscr{Z})$ as the
tensor product
$\Gamma_s(L^2(\mathds{R}^d))\otimes\Gamma_s(L^2(\mathds{R}^d))$; we
will use freely the more suitable representation of the two, depending
on the context. Finally, let $\mathscr{Z}\ni z=z_1\oplus z_2$, and
$\Phi_0(t)$ be the free flow on $\mathscr{Z}$, defined above. Then we have
\begin{equation*}
  \tilde{z}(s)=\Phi_0(s)z=\Bigl(
  \begin{smallmatrix}
    e^{-i s(-\frac{\Delta}{2M}+V)}&0\\
    0 &e^{-i s \omega}
  \end{smallmatrix}\Bigr) \Bigl(
  \begin{smallmatrix}
    z_1\\
    z_2
  \end{smallmatrix}\Bigr)\; .
\end{equation*}

In this section we give a rigorous derivation of the following
formula, crucial in the analysis of the limit $\varepsilon\to 0$:
\begin{equation}
  \label{eq:3}
  \Tr \Bigl[ \tilde{\varrho}_{\varepsilon}(t) W(\xi)\Bigr] = \Tr \Bigl[\varrho_{\varepsilon} W(\xi)\Bigr] + \frac{i}{\varepsilon}\int_0^t\Tr \Bigl[ \varrho_{\varepsilon}(s) [H_I,W(\tilde{\xi}(s))] \Bigr]  ds\; .
\end{equation}
Furthermore, we will give a characterization of the terms in the
commutator $[H_I,W(\tilde{\xi}(s))]$.
\begin{remark*}
  The estimates on this section are made more precise than what we
  need, for a possible derivation of a quantitative rate of
  convergence.
\end{remark*}

\subsection{Derivation of the integral formula.}
\label{sec:deriv-integr-form}

For convenience, let \fbox{$T:=N_1^2+N_2+1$} and \fbox{$S:=H_0+T$}
. Then we make the following definition:
\begin{definition}[$\mathcal{S}_{\varepsilon}^{\delta}$,
  $\mathcal{T}_{\varepsilon}^{\delta}$]
  \label{def:3}
  Let $\varrho_{\varepsilon}\in \mathcal{L}^1(\mathscr{H})$,
  $\varepsilon >0, \delta \in\mathds{R}$. Then
  \begin{gather*}
    \varrho_{\varepsilon}\in\mathcal{S}_{\varepsilon}^{\delta}\Leftrightarrow\lvert \varrho_{\varepsilon}\rvert_{\mathcal{S}_{\varepsilon}^{\delta}}^{} := \lvert\varrho_{\varepsilon} S^{\delta} \rvert_{\mathcal{L}^1(\mathscr{H})}^{}< +\infty\; ;\\
    \varrho_{\varepsilon}\in\mathcal{T}_{\varepsilon}^{\delta}\Leftrightarrow \lvert \varrho_{\varepsilon}\rvert_{\mathcal{T}_{\varepsilon}^{\delta}}^{} := \lvert\varrho_{\varepsilon} T^{\delta} \rvert_{\mathcal{L}^1(\mathscr{H})}^{}< +\infty\; .
  \end{gather*}
\end{definition}

Define now the subspace $\mathscr{Z}_1\subset \mathscr{Z}$ as:
\begin{equation}
  \label{eq:15}
  \mathscr{Z}_1:=\{\mathscr{Z}\ni z=z_1\oplus z_2: z_1\in H^2(\mathds{R}^d)\text{ and } \omega z_2\in L^2(\mathds{R}^d) \}\; .
\end{equation}

In order to prove~\eqref{eq:3} we need some preparatory results proved
in Appendix~\ref{sec:estim-fock-spac}. The Corollary~\ref{cor:2},
adapted to our spaces $\mathscr{\mathscr{Z}}$ and
$\Gamma_s(\mathscr{Z})$, becomes:
\begin{lemma}
  \label{lemma:5}
  \begin{enumerate}[i)]
  \item Let $\xi\in\mathscr{Z}_1$. Then $S^{-1}W(\xi) S\in
    \mathcal{L}(\mathscr{H})$. Furthermore, there exists $C>0$ such that
    \begin{equation*}
      \begin{split}
        \lvert S^{-1}W(\xi)S\rvert_{\mathcal{L}(\mathscr{H})}^{}\leq C\Bigl(1+\varepsilon\lVert \xi \rVert_{\mathscr{Z}_1}^{}+\varepsilon^2\lVert \xi \rVert_{\mathscr{Z}_1}^2+\varepsilon^3\lVert \xi_1 \rVert_2^3+\varepsilon^4\lVert \xi_1 \rVert_2^4 \Bigr)\; .
      \end{split}
    \end{equation*}
  \item Let $\xi\in\mathscr{Z}$. Then for any $\delta\in\mathds{R}$,
    $T^{-\delta}W(\xi)T^{\delta}\in\mathcal{L}(\mathscr{H})$. Furthermore,
    there exists $C(\delta,\lVert \xi \rVert_{\mathscr{Z}}^{})>0$ such that
    \begin{equation*}
      \lvert T^{-\delta}W(\xi)T^{\delta}\rvert_{\mathcal{L}(\mathscr{H})}^{}\leq C(\delta,\lVert \xi \rVert_{\mathscr{Z}}^{})(1+O(\varepsilon))\; .
    \end{equation*}
    If $\delta=1$, there exists $C>0$ such that
    \begin{equation*}
      \lvert T^{-1}W(\xi)T\rvert_{\mathcal{L}(\mathscr{H})}^{}\leq C\Bigl(1+\varepsilon\lVert \xi \rVert_{\mathscr{Z}}+\varepsilon^2\lVert \xi \rVert_{\mathscr{Z}}^2+\varepsilon^3\lVert \xi_1 \rVert_2^3+\varepsilon^4\lVert \xi_1 \rVert_2^4 \Bigr)\; .
    \end{equation*}
  \end{enumerate}
\end{lemma}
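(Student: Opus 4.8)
The plan is to prove Lemma~\ref{lemma:5} by reducing the conjugated Weyl operators to polynomial expressions in creation/annihilation operators and then controlling those expressions by the resolvents of $S$ and $T$. The starting point is the identity
\begin{equation*}
  W(\xi)^{*}\,X\,W(\xi) = X + \int_0^1 \frac{d}{d\lambda}\Bigl(W(\lambda\xi)^{*}X W(\lambda\xi)\Bigr)\,d\lambda
\end{equation*}
applied to $X$ a resolvent of $S$ (respectively $T$), together with the Weyl relations and the explicit form of the derivative, which produces commutators of the type $[X, \psi^{\#}(\xi_1)]$ and $[X, a^{\#}(\xi_2)]$. Equivalently, since $W(\xi)$ implements the symplectic shift $\psi(f)\mapsto \psi(f)+\tfrac{i\varepsilon}{\sqrt2}\langle f,\xi_1\rangle$ (and similarly for $a$), one has $W(\xi)^{*} N_j W(\xi) = N_j + (\text{lower order in } N_j^{1/2}) + O(\varepsilon)$, and likewise for $H_0$: conjugating $d\Gamma(-\Delta/2M+V)$ and $d\Gamma(\omega)$ by $W(\xi)$ shifts them by field operators smeared against $(-\Delta/2M+V)\xi_1$ and $\omega\xi_2$, which is exactly where the $\mathscr{Z}_1$-norm of $\xi$ enters in part i). This explains the structure of the claimed bounds: the power $\varepsilon^k$ multiplies a $k$-fold product of field operators, and each such product, after the shift, is relatively bounded by $S$ (resp.\ $T$) because $\psi^{\#}$ costs one factor of $N_1^{1/2}$ and $a^{\#}$ costs one factor of $(N_2+\varepsilon)^{1/2}$ and $H_{02}^{1/2}$ via Lemma~\ref{lemma:2}.

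Concretely, for part i) I would write $S^{-1}W(\xi)S = 1 + S^{-1}[W(\xi),S]$ and expand $[W(\xi),S] = [W(\xi),H_0] + [W(\xi),T]$. Using $W(\xi)^{*}H_0W(\xi)-H_0$ equals a sum of field monomials of degree $1$ and $2$ with coefficients $\langle\cdot,(-\Delta/2M+V)\xi_1\rangle$, $\langle\cdot,\omega\xi_2\rangle$ (degree one) and $\varepsilon\lVert(-\Delta/2M+V)^{1/2}\xi_1\rVert^2$, $\varepsilon\lVert\omega^{1/2}\xi_2\rVert^2$ (degree zero, the $c$-number from the double commutator), one sees the $\varepsilon\lVert\xi\rVert_{\mathscr{Z}_1}+\varepsilon^2\lVert\xi\rVert_{\mathscr{Z}_1}^2$ contribution. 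The terms from $[W(\xi),T]=[W(\xi),N_1^2+N_2]$ involve up to $\psi^{\#}\psi^{\#}\psi^{\#}$ (from conjugating $N_1^2$), giving the $\varepsilon^3\lVert\xi_1\rVert_2^3+\varepsilon^4\lVert\xi_1\rVert_2^4$ terms, since $a^{\#}$ only appears linearly from $N_2$. One then bounds each monomial $S^{-1}\cdot(\text{monomial})$ in operator norm by commuting the monomial to the left of one factor of $S^{-1}$ while picking up, by Lemma~\ref{lemma:2} and Proposition~\ref{prop:2}, powers of $S^{-1}(N_1^2+N_2+H_{02}+1)^{1/2}\cdots$ that are uniformly bounded; this is the bookkeeping step, and collecting the powers of $\varepsilon$ and $\lVert\xi\rVert_{\mathscr{Z}_1}$, $\lVert\xi_1\rVert_2$ gives the stated estimate with a universal constant $C$.

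Part ii) is the same mechanism but simpler, because $T=N_1^2+N_2+1$ commutes with the number grading and no unbounded one-particle operator is involved, so only $\lVert\xi\rVert_{\mathscr{Z}}$ appears and not $\lVert\xi\rVert_{\mathscr{Z}_1}$. For general $\delta$ I would first establish the $\delta=1$ (and $\delta=-1$ by taking adjoints/inverses, i.e.\ $T W(\xi) T^{-1} = (T^{-1}W(-\xi)T)^{*}$ up to the sign convention) bounds explicitly, then use the interpolation/commutator-expansion argument exactly as in the proof of Proposition~\ref{prop:2}iii): write $T^{-\delta}W(\xi)T^{\delta}$, differentiate in $\delta$ or iterate the commutator identity, and close by Gronwall on each $\mathscr{H}_n$ before extending by density. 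The $(1+O(\varepsilon))$ form for general $\delta$ reflects that to leading order $W(\xi)$ commutes with $T$ up to relatively bounded, $\varepsilon$-small corrections. The main obstacle is purely technical: keeping track of domains when commuting the resolvents past unbounded field monomials (in particular justifying that $S$ and $T$ really do map the relevant cores into themselves after conjugation, using the fibered structure and Proposition~\ref{prop:2}i)--ii)), and being careful that the constants extracted from Lemma~\ref{lemma:2} are $n$-independent so that the $\mathscr{H}=\bigoplus_n\mathscr{H}_n$ assembly produces a genuine bounded operator on $\mathscr{H}$ with the claimed norm.
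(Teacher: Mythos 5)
Your approach is essentially the same as the paper's: Lemma~\ref{lemma:5} is obtained there by adapting Corollary~\ref{cor:2}, which rests on the explicit conjugation identity $W^*(\xi)\,d\Gamma(y)\,W(\xi)=d\Gamma(y)+\tfrac{i\varepsilon}{\sqrt{2}}\bigl(a^*(y\xi)-a(y\xi)\bigr)+\tfrac{\varepsilon^2}{2}\langle\xi,y\xi\rangle$ of Lemma~\ref{lemma:4} --- proved by precisely the derivative-in-$\lambda$ argument you invoke --- and then on the relative bounds of Lemma~\ref{lemma:2} applied to $S$ and $T$, which is exactly your bookkeeping step. The only slip is algebraic: $S^{-1}W(\xi)S=W(\xi)+S^{-1}[W(\xi),S]$, not $1+S^{-1}[W(\xi),S]$, but since $W(\xi)$ is unitary this does not alter the estimate.
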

Next we consider the operators $T^{-\delta}e^{-i
  \frac{t}{\varepsilon}H}T^{\delta}$ and $S^{-1}e^{-i
  \frac{t}{\varepsilon}H}S$.
\begin{lemma}
  \label{lemma:6}
  Let $\delta\in\mathds{R}$. Then for all $t\in \mathds{R}$,
  $T^{-\delta}e^{-i
    \frac{t}{\varepsilon}H}T^{\delta}\in\mathcal{L}(\mathscr{H})$. Furthermore, there
  exists $C(\delta,t,\lVert \omega^{-1/2}\chi \rVert_2^{})>0$ such
  that for any $\varepsilon\in(0,\bar{\varepsilon})$:
  \begin{equation*}
    \lvert T^{-\delta}e^{-i\frac{t}{\varepsilon}H}T^{\delta}\rvert_{\mathcal{L}(\mathscr{H})}^{}\leq C(\delta,t,\lVert \omega^{-1/2}\chi \rVert_2^{})\; .
  \end{equation*}
\end{lemma}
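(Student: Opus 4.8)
\emph{Proof strategy.} The plan is to exploit the fibration in the nucleon number. Since $H$ commutes with $N_1$, Propositions~\ref{prop:1}--\ref{prop:14} give $H=\bigoplus_{n=0}^{\infty}H_n$ with $H_n$ self-adjoint on $\mathscr{H}_n$, and $T=N_1^{2}+N_2+1$, being a function of $N_1$ and $N_2$, respects the same decomposition; hence $T^{-\delta}e^{-i\frac{t}{\varepsilon}H}T^{\delta}=\bigoplus_{n}T^{-\delta}e^{-i\frac{t}{\varepsilon}H_n}T^{\delta}$ and it suffices to bound each summand in $\mathcal{L}(\mathscr{H}_n)$ by a constant independent of $n$ and of $\varepsilon\in(0,\bar\varepsilon)$. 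On $\mathscr{H}_n$ the operator $N_1$ acts as the scalar $\varepsilon n$, so $T$ is a function of $N_2$ alone, bounded below by $1$, and it commutes with $H_0=H_{01}+H_{02}$. Hence on the core $D(H_0)\cap D(N_1^{2}+N_2)\cap\mathscr{H}_n$ (which $T^{\pm\delta}$ leaves invariant) one has $T^{-\delta}H_nT^{\delta}=H_n+B_n$ with $B_n:=[T^{-\delta},H_I]T^{\delta}$, and the whole problem reduces to the uniform commutator estimate
\begin{equation*}
  \lVert B_n\rVert_{\mathcal{L}(\mathscr{H}_n)}\leq C_{\delta}\,\varepsilon\,\lVert\omega^{-1/2}\chi\rVert_2\qquad\text{for all }n\in\mathds{N},\ \varepsilon\in(0,\bar\varepsilon)\,.
\end{equation*}

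To obtain this I would split $H_I=H_I^{+}+H_I^{-}$ into its $a^{*}$- and $a$-parts, which raise and lower $N_2$ by $\varepsilon$. A direct computation with the canonical commutation relations gives $T^{-\delta}H_I^{\pm}T^{\delta}=H_I^{\pm}\,f_{\pm}(T)$ with the bounded multipliers $f_{+}(T)=\bigl(\tfrac{T}{T+\varepsilon}\bigr)^{\delta}$ and $f_{-}(T)=\bigl(\tfrac{T}{T-\varepsilon}\bigr)^{\delta}$ (on the range of $H_I^{-}$ one has $T-\varepsilon\geq1$, so $f_{-}$ is well defined). Since $T\geq1$, hence $\varepsilon/T\leq\varepsilon$, and $s\mapsto(1\pm s)^{-\delta}$ is Lipschitz near $0$, the multipliers obey $\lVert T(f_{\pm}(T)-1)\rVert_{\infty}\leq C_{\delta}\varepsilon$. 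Writing $B_n=\sum_{\pm}(H_I^{\pm}T^{-1})\bigl(T(f_{\pm}(T)-1)\bigr)$, it remains to bound $H_I^{\pm}T^{-1}$ --- and here it is essential that $T$ carries $N_1^{2}$, not $N_1$. Indeed, arguing as in the proof of Corollary~\ref{cor:1} through Lemma~\ref{lemma:2}~(ii), on $\mathscr{H}_n$ one has $\lVert H_I^{\pm}\phi\rVert\leq\lVert\omega^{-1/2}\chi\rVert_2\,\lVert N_1(N_2+\varepsilon)^{1/2}\phi\rVert$, while the arithmetic-geometric mean inequality gives $\lVert N_1(N_2+\varepsilon)^{1/2}T^{-1}\rVert\leq\tfrac12$; therefore $\lVert H_I^{\pm}T^{-1}\rVert\leq\tfrac12\lVert\omega^{-1/2}\chi\rVert_2$. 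Combining the three estimates yields $\lVert B_n\rVert\leq C_{\delta}\varepsilon\lVert\omega^{-1/2}\chi\rVert_2$.

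Granted the commutator estimate, the conclusion is routine. For $\psi$ in the core above, $u(t):=T^{-\delta}e^{-i\frac{t}{\varepsilon}H_n}T^{\delta}\psi$ remains in $D(H_n)$ and solves $\dot u=-\tfrac{i}{\varepsilon}(H_n+B_n)u$, $u(0)=\psi$; the Duhamel formula
\begin{equation*}
  u(t)=e^{-i\frac{t}{\varepsilon}H_n}\psi-\tfrac{i}{\varepsilon}\int_0^t e^{-i\frac{t-s}{\varepsilon}H_n}\,B_n\,u(s)\,ds
\end{equation*}
and Gronwall's lemma give $\lVert u(t)\rVert\leq e^{\lVert B_n\rVert|t|/\varepsilon}\lVert\psi\rVert\leq e^{C_{\delta}\lVert\omega^{-1/2}\chi\rVert_2|t|}\lVert\psi\rVert$. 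By density $T^{-\delta}e^{-i\frac{t}{\varepsilon}H_n}T^{\delta}$ extends to a bounded operator obeying the same bound, and taking the supremum over $n$ proves the lemma with $C(\delta,t,\lVert\omega^{-1/2}\chi\rVert_2)=e^{C_{\delta}\lVert\omega^{-1/2}\chi\rVert_2|t|}$.

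The hard part is precisely obtaining $\lVert B_n\rVert=O(\varepsilon)$ \emph{uniformly in the sector index $n$}: the evolution is generated by $\tfrac1\varepsilon H$, so the Gronwall exponent carries a factor $\lVert B_n\rVert/\varepsilon$, and any bound on $\lVert B_n\rVert$ growing with $n$ (such as the $O(\sqrt{\varepsilon n})$ coming from crude estimates on $H_I^{\pm}T^{-1}$) would make $\sup_n$ diverge. It is the $N_1^{2}$-term in $T$, rather than $N_1$, that produces the arithmetic-geometric mean cancellation keeping $\lVert H_I^{\pm}T^{-1}\rVert$ uniformly bounded. I also note that the lemma does not follow directly from Proposition~\ref{prop:2}~(iii): $T=N_1^{2}+N_2+1$ is not uniformly comparable to $N_1^{2}+N_2+\varepsilon$ (their ratio is $\varepsilon^{-1}$ on the vacuum sector), so $\bigl(T^{-1}(N_1^{2}+N_2+\varepsilon)\bigr)^{\pm\delta}$ is not uniformly bounded; the argument must be run directly, though it parallels that of Proposition~\ref{prop:2}~(iii).
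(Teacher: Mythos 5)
Your proof is correct, and it takes a genuinely different route from the paper's. The paper proves the lemma for $\delta\in\mathds{N}$ by sandwiching: it invokes Proposition~\ref{prop:2}~(iii) (which is itself proved by a Gronwall argument for the operator $N_1^2+N_2+\varepsilon$), then passes between $T^{\delta}=(N_1^2+N_2+1)^{\delta}$ and $(N_1^2+N_2+\varepsilon)^{\delta}$ through the common intermediate $(N_1^2+N_2)^{\delta}$ via an elementary scalar inequality of the form $(s+a)^{\delta}\lesssim_{\delta,a} s^{\delta}+a^{\delta}$, and finally extends to all real $\delta$ by interpolation. Your observation that a direct comparison $\bigl(T^{-1}(N_1^2+N_2+\varepsilon)\bigr)^{\pm\delta}$ is \emph{not} uniformly bounded (the ratio being $\varepsilon^{-1}$ on the vacuum) is accurate and explains why the paper detours through the constant-free operator $(N_1^2+N_2)^{\delta}$ rather than sandwiching directly. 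You instead re-run the Gronwall argument for $T$ itself: you fiber over the nucleon number, compute the twisted generator $T^{-\delta}H_nT^{\delta}=H_n+B_n$ explicitly, split $H_I=H_I^{+}+H_I^{-}$ and observe that each piece shifts $T$ by $\pm\varepsilon$, so $B_n=\sum_{\pm}(H_I^{\pm}T^{-1})\,T\bigl(f_{\pm}(T)-1\bigr)$ with $f_{\pm}(T)=(T/(T\pm\varepsilon))^{\delta}$. The two half-estimates --- $\lVert H_I^{\pm}T^{-1}\rVert\leq\tfrac12\lVert\omega^{-1/2}\chi\rVert_2$ by Lemma~\ref{lemma:2}~(ii) and AM-GM (this is indeed where the $N_1^2$ in $T$ is essential), and $\lVert T(f_{\pm}(T)-1)\rVert\leq C_{\delta}\varepsilon$ by Lipschitz continuity of $(1\pm s)^{-\delta}$ near $0$ --- give $\lVert B_n\rVert=O(\varepsilon)$ uniformly in $n$, which is exactly what the $\tfrac1\varepsilon$ in the generator requires. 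What your approach buys: it works for all real $\delta$ in one stroke with no interpolation, and it isolates the mechanism (the $\pm\varepsilon$ shift in $T$ produced by $a^{\#}$, controlled uniformly across the $n$-fibers) in a transparent way. What the paper's approach buys: it avoids repeating the Gronwall bookkeeping by reusing Proposition~\ref{prop:2}~(iii) as a black box, at the cost of the integer-$\delta$/interpolation detour. One small imprecision in your write-up: $f_{-}(T)$ is applied to the input $\phi$, before $H_I^{-}$, so the relevant statement is that on the sectors not annihilated by $H_I^{-}$ one has $N_2\geq\varepsilon$, hence $T\geq 1+\varepsilon$ and $T-\varepsilon\geq 1$; on the $N_2=0$ sector $f_-$ may be defined arbitrarily since $H_I^{-}$ kills it. This does not affect the argument.
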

\begin{proof}
  Let $\delta\in\mathds{N}$. We recall that, for any $a\geq0$,
  $n_1,n_2\in\mathds{N}$:
  \begin{equation*}
    (n_1^2+n_2+a)^{\delta}\leq (1+2^{\delta} \tilde{a})(n_1^2+n_2)^{\delta}+ a^{\delta}\; ,
  \end{equation*}
  where $\tilde{a}=\max\{a,a^{\delta -1}\}$. Hence, using
  Proposition~\ref{prop:2}, we obtain
  \begin{equation*}
    \begin{split}
      \lVert (N_1^2+N_2+1)^{\delta}e^{-i \frac{t}{\varepsilon}H}\phi \rVert_{}^{}\leq (1+2^{\delta})\lVert (N_1^2+N_2)^{\delta}e^{-i \frac{t}{\varepsilon}H}\phi \rVert_{}^{}+\lVert \phi \rVert_{}^{}\leq (1+2^{\delta})
      e^{m_{\delta}(\varepsilon)\sqrt{\varepsilon}\lvert \delta\rvert_{}^{}\lvert t\rvert_{}^{}\lVert \omega^{-1/2}\chi \rVert_2^{}}\\\lVert (N_1^2+N_2+\varepsilon)^{\delta}\phi \rVert_{}^{}+\lVert \phi
      \rVert_{}^{}\leq (1+2^{\delta})(1+2^{\delta} \tilde{\varepsilon})
      e^{m_{\delta}(\varepsilon)\sqrt{\varepsilon}\lvert \delta\rvert \lvert t\rvert_{}^{}\lVert \omega^{-1/2}\chi \rVert_2^{}}\lVert (N_1^2+N_2+1)^{\delta}\phi \rVert_{}^{}\\
      +\Bigl( (1+2^{\delta})\varepsilon^{\delta}e^{m_{\delta}(\varepsilon)\sqrt{\varepsilon}\lvert \delta\rvert\lvert t\rvert_{}^{}\lVert \omega^{-1/2}\chi \rVert_2^{}}+1\Bigr)\lVert \phi \rVert_{}^{}\; .
    \end{split}
  \end{equation*}
  Then for all $\delta\in\mathds{Z}$:
  \begin{equation}
  \label{eq.27}
    \lvert T^{-\delta}e^{-i\frac{t}{\varepsilon}H}T^{\delta}\rvert_{\mathcal{L}(\mathscr{H})}^{}\leq \Bigl( 1+(1+2^{\lvert \delta\rvert}) \bigl(1+2^{\lvert \delta\rvert}\max\{\varepsilon,\varepsilon^{\lvert \delta\rvert -1}\}+\varepsilon^{\lvert \delta\rvert}\bigr)
    e^{m_{\delta}(\varepsilon)\sqrt{\varepsilon}\lvert \delta\rvert\lvert t\rvert_{}^{}\lVert \omega^{-1/2}\chi \rVert_2^{}}\Bigr)\; .
  \end{equation}
  The result is extended by interpolation to all
  $\delta\in\mathds{R}$.
\end{proof}
\begin{lemma}
  \label{lemma:3}
  For all $t\in\mathds{R}$, $S^{-1}e^{-i \frac{t}{\varepsilon}H}S \in
  \mathcal{L}(\mathscr{H})$.
\end{lemma}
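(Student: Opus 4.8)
The plan is to prove the equivalent statement that $Se^{-i\frac{t}{\varepsilon}H}S^{-1}$ extends to a bounded operator on $\mathscr{H}$; the claimed boundedness of $S^{-1}e^{-i\frac{t}{\varepsilon}H}S$ then follows by passing to the adjoint, since $S$ is self-adjoint, $e^{i\frac{t}{\varepsilon}H}$ is unitary, and $Se^{i\frac{t}{\varepsilon}H}S^{-1}$ is bounded by the same argument run with $t$ replaced by $-t$. First I would record two structural facts. (a) The operators $H_0$ and $T=N_1^2+N_2+1$ are functions, through the two tensor factors of $\Gamma_s(\mathscr{Z})$, of strongly commuting self-adjoint operators, so they strongly commute; since both are non-negative and $T\geq 1$, the operator $S=H_0+T$ is self-adjoint on $D(H_0)\cap D(T)$, boundedly invertible, and joint functional calculus gives $\lVert H_0S^{-1}\rVert\leq 1$ and $\lVert TS^{-1}\rVert\leq 1$. (b) Consequently $HS^{-1}=H_0S^{-1}+H_IS^{-1}$ is bounded: the first summand has norm $\leq 1$ by (a), and the second is bounded by Corollary~\ref{cor:1} together with $N_1^2+N_2+\varepsilon\leq\max\{1,\bar\varepsilon\}\,T$ and $\lVert TS^{-1}\rVert\leq 1$.

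With this in hand, fix $\phi\in\mathscr{H}$ and set $\eta:=S^{-1}\phi$ and $\psi:=e^{-i\frac{t}{\varepsilon}H}\eta$. Since $D(S)=D(H_0)\cap D(N_1^2+N_2)\subseteq D(H)$ by Proposition~\ref{prop:2}~i), we have $\eta\in D(H)$, hence $\psi\in D(H)$ and, by unitarity of $e^{-i\frac{t}{\varepsilon}H}$, $\lVert H\psi\rVert=\lVert H\eta\rVert=\lVert HS^{-1}\phi\rVert\leq C\lVert\phi\rVert$ by (b). Applying Lemma~\ref{lemma:6} with $\delta=-1$, the operator $Te^{-i\frac{t}{\varepsilon}H}T^{-1}$ is bounded, so $e^{-i\frac{t}{\varepsilon}H}$ preserves $D(T)$ and $\lVert T\psi\rVert=\lVert (Te^{-i\frac{t}{\varepsilon}H}T^{-1})(T\eta)\rVert\leq C\lVert T\eta\rVert=C\lVert TS^{-1}\phi\rVert\leq C\lVert\phi\rVert$; in particular $\psi\in D(N_1^2+N_2)$. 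Thus $\psi\in D(H)\cap D(N_1^2+N_2)$, and Proposition~\ref{prop:2}~ii) yields $\psi\in D(H_0)$, while (using $H_0=H-H_I$ and Corollary~\ref{cor:1}) $\lVert H_0\psi\rVert\leq\lVert H\psi\rVert+\lVert\omega^{-1/2}\chi\rVert_2\lVert(N_1^2+N_2+\varepsilon)\psi\rVert\leq C\lVert\phi\rVert$. Hence $\psi\in D(S)$ and $\lVert Se^{-i\frac{t}{\varepsilon}H}S^{-1}\phi\rVert=\lVert S\psi\rVert\leq\lVert H_0\psi\rVert+\lVert T\psi\rVert\leq C\lVert\phi\rVert$, which is the required bound.

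The only genuine difficulty is the $H_0$-component $\lVert H_0e^{-i\frac{t}{\varepsilon}H}S^{-1}\phi\rVert$: whereas $T$ is propagated by $e^{-i\frac{t}{\varepsilon}H}$ with controlled growth (Lemma~\ref{lemma:6}), $H_0$ does not commute with $e^{-i\frac{t}{\varepsilon}H}$ and cannot be propagated directly. The trick is to replace $H_0$ by $H-H_I$, exploit that $H$ (being the generator) is conserved by the flow so $\lVert He^{-i\frac{t}{\varepsilon}H}\eta\rVert=\lVert H\eta\rVert$, and absorb the $H_I$ remainder into $N_1^2+N_2$, which \emph{is} controlled by Lemma~\ref{lemma:6}; the strong commutation $[H_0,T]=0$ is what makes the auxiliary bounds $\lVert H_0S^{-1}\rVert,\lVert TS^{-1}\rVert\leq 1$ — and hence the boundedness of $HS^{-1}$ — immediate.
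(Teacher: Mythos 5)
Your proof is correct and rests on the same three ingredients the paper uses — conservation of $H$ by its own propagator, Lemma~\ref{lemma:6} to propagate $T$, and the relative bound $\lVert H_I\phi\rVert\lesssim\lVert T\phi\rVert$ from Corollary~\ref{cor:1} — but you package them differently. The paper's proof is a sesquilinear-form duality estimate: it writes $\langle\phi_1,S^{-1}e^{i\frac t\varepsilon H}S\phi_2\rangle=\langle S(H+T)^{-1}(H+T)e^{-i\frac t\varepsilon H}S^{-1}\phi_1,\phi_2\rangle$, splits $(H+T)$ into the $H$-part (commuted through the unitary) and the $T$-part (controlled by Lemma~\ref{lemma:6}), and absorbs the remaining factor into $\lVert S(H+T)^{-1}\rVert$. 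You instead estimate $\lVert Se^{-i\frac t\varepsilon H}S^{-1}\phi\rVert$ directly, decomposing $S\psi=H_0\psi+T\psi$ on the evolved vector $\psi$ and replacing $H_0$ by $H-H_I$, then taking the adjoint at the end. The trade-off: your route needs the domain bookkeeping of Proposition~\ref{prop:2}~i)--ii) (to know $\psi\in D(H_0)\cap D(T)$ so that $S\psi$ makes sense), which the paper avoids by staying at the form level; on the other hand, your route avoids the factor $(H+T)^{-1}$, whose bounded invertibility is not entirely obvious on the full Fock space — $H$ is not bounded below there, so $H+T\geq 1$ requires a smallness argument that the paper leaves implicit. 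Both are legitimate; yours is slightly more elementary in that respect and more explicit about the role of Proposition~\ref{prop:2}.
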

\begin{proof}
  Let $\phi_1,\phi_2\in\mathscr{H}$. Then using Lemma~\ref{lemma:6} we
  obtain:
  \begin{equation*}
    \begin{split}
      \lvert \langle \phi_1 , S^{-1}e^{i \frac{t}{\varepsilon}H}S\phi_2 \rangle_{}\rvert_{}^{}=\lvert \langle S(H+T)^{-1}(H+T)e^{-i \frac{t}{\varepsilon}H}S^{-1}\phi_1 , \phi_2\rangle_{}\rvert_{}^{}\leq \lvert \langle S(H+T)^{-1}e^{-i \frac{t}{\varepsilon}H}HS^{-1}\phi_1 ,
      \phi_2\rangle_{}\rvert\\+\Bigl( 1+(9+9\varepsilon)e^{(2+\varepsilon)\sqrt{\varepsilon}\lvert t\rvert_{}^{}\lVert \omega^{-1/2}\chi \rVert_2^{}}\Bigr)\lVert TS^{-1}\phi_1 \rVert_{}^{}\lVert S(H+T)^{-1}\phi_2 \rVert\\\leq \Bigl(\lVert \omega^{-1/2}\chi \rVert_2
      +1+(9+9\varepsilon)e^{(2+\varepsilon)\sqrt{\varepsilon}\lvert t\rvert_{}^{}\lVert \omega^{-1/2}\chi \rVert_2^{}}\Bigr)\lVert \phi_1 \rVert_{}^{}\lVert \phi_2 \rVert_{}^{}\; .
    \end{split}
  \end{equation*}
\end{proof}
We are now ready to prove the integral formula~\eqref{eq:3}.
\begin{proposition}
  \label{prop:4}
  Assume that \eqref{eq:5} holds; and let $\xi\in\mathscr{Z}$,
  $\tilde{\xi}(s)=\Phi_0(s)\xi$. Then for all
  $\varrho_{\varepsilon}\in\mathcal{T}^1_{\varepsilon}$:
  \begin{equation*}
    \Tr \Bigl[ \tilde{\varrho}_{\varepsilon}(t) W(\xi)\Bigr] = \Tr \Bigl[\varrho_{\varepsilon} W(\xi)\Bigr] + \frac{i}{\varepsilon}\int_0^t\Tr \Bigl[ \varrho_{\varepsilon}(s) [H_I,W(\tilde{\xi}(s))] \Bigr]  ds\; .
  \end{equation*}
\end{proposition}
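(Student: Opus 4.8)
The plan is to justify the Duhamel formula by differentiating the trace $t\mapsto\Tr[\tilde\varrho_\varepsilon(t)W(\xi)]$ in time, the point being that all the formal manipulations (cyclicity of the trace, differentiation under the trace, use of Stone's theorem) have to be carried out on a suitable dense set of vectors and then extended, with the preparatory Lemmas \ref{lemma:5}, \ref{lemma:6}, \ref{lemma:3} providing exactly the boundedness of the conjugated operators needed to make those extensions legitimate. First I would record the elementary identity obtained from the definitions of $\varrho_\varepsilon(t)$ and $\tilde\varrho_\varepsilon(t)$:
\begin{equation*}
  \Tr\bigl[\tilde\varrho_\varepsilon(t)W(\xi)\bigr]=\Tr\bigl[\varrho_\varepsilon(t)\,e^{-i\frac{t}{\varepsilon}H_0}W(\xi)e^{i\frac{t}{\varepsilon}H_0}\bigr]=\Tr\bigl[\varrho_\varepsilon(t)W(\tilde\xi(t))\bigr]\,,
\end{equation*}
using that $e^{-i\frac{t}{\varepsilon}H_0}$ intertwines $W(\xi)$ and $W(\Phi_0(t)\xi)=W(\tilde\xi(t))$ (this is the Weyl covariance of the free dynamics, since $H_0$ is a second-quantized one-particle operator). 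Then the map to be differentiated is $g(t):=\Tr[e^{-i\frac t\varepsilon H}\varrho_\varepsilon e^{i\frac t\varepsilon H}W(\tilde\xi(t))]$.

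Next I would show $g$ is differentiable with
\begin{equation*}
  g'(t)=\Tr\Bigl[\varrho_\varepsilon(t)\Bigl(\tfrac{i}{\varepsilon}[H,W(\tilde\xi(t))]+\partial_tW(\tilde\xi(t))\Bigr)\Bigr]\,,
\end{equation*}
and then check that the $\partial_tW(\tilde\xi(t))$ term cancels against the $[H_0,W(\tilde\xi(t))]$ part of the commutator, leaving only $\frac i\varepsilon\Tr[\varrho_\varepsilon(t)[H_I,W(\tilde\xi(t))]]$; integrating in $t$ and using $g(0)=\Tr[\varrho_\varepsilon W(\xi)]$ gives the claim. Concretely, for the cancellation one writes, with $F(t):=W(\tilde\xi(t))$,
\begin{equation*}
  \partial_t F(t)=\partial_t\bigl(e^{-i\frac t\varepsilon H_0}W(\xi)e^{i\frac t\varepsilon H_0}\bigr)=-\tfrac i\varepsilon\bigl[H_0,W(\tilde\xi(t))\bigr]\,,
\end{equation*}
so that $\frac i\varepsilon[H,F(t)]+\partial_tF(t)=\frac i\varepsilon[H_I,F(t)]$. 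The differentiability of $g$ and the validity of the product rule under the trace is where the regularity hypothesis $\varrho_\varepsilon\in\mathcal T^1_\varepsilon$ is consumed: one inserts $T^{\mp1}$ (or $S^{\mp1}$) around the various factors, writes e.g.
\begin{equation*}
  \Tr[\varrho_\varepsilon(t)[H_I,F(t)]]=\Tr\bigl[(\varrho_\varepsilon T)\,(T^{-1}e^{-i\frac t\varepsilon H}T)\,(T^{-1}\varrho_\varepsilon^{\,0}\text{-free factor})\cdots\bigr]\,,
\end{equation*}
more precisely conjugates $e^{\pm i\frac t\varepsilon H}$ by $T$ via Lemma \ref{lemma:6}, conjugates $W(\tilde\xi(s))$ by $T$ via Lemma \ref{lemma:5}(ii), and controls $H_I T^{-1}$ (hence $T^{-1}H_IT^{-1}$ is not even needed — $H_IT^{-1}$ suffices since $\varrho_\varepsilon T^1$ absorbs one power) using Corollary \ref{cor:1}, which bounds $H_I(N_1^2+N_2+\varepsilon)^{-1}$. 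All these operators are bounded uniformly for $t$ in compacts, so the integrand is continuous and $|g'(t)|$ is locally bounded, legitimizing the fundamental theorem of calculus. The commutator $[H_I,W(\tilde\xi(s))]$ is a priori only densely defined, so throughout one works on $D(H_0)\cap D(N_1^2+N_2)\cap\mathscr F_0$ (a core by Propositions \ref{prop:14}, \ref{prop:2}), establishes the identities there, and extends by the uniform bounds.

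The main obstacle is the bookkeeping of domains: $[H,W(\tilde\xi(t))]$ and in particular $[H_I,W(\tilde\xi(t))]$ are unbounded, and one must be careful that each manipulation — differentiating $e^{\pm i\frac t\varepsilon H}$ (Stone's theorem, valid on $D(H)$), moving $W$ past $H$, cycling the trace — is performed between vectors that lie in the intersection of all the relevant domains, and that the resulting trace is finite. The role of $\mathcal T^1_\varepsilon$ is precisely to give one spare power of $T=N_1^2+N_2+1$ that can be slid onto $H_I$ (turning an unbounded operator into a bounded one via Corollary \ref{cor:1}) while the remaining conjugations of $e^{\pm i\frac t\varepsilon H}$ and of $W$ stay bounded by Lemmas \ref{lemma:6} and \ref{lemma:5}. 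I would therefore structure the write-up as: (a) reduce to differentiating $g(t)$; (b) establish the pointwise derivative formula on the core, with the $H_0$–$\partial_t$ cancellation; (c) insert $T^{\pm1}$ and invoke Lemmas \ref{lemma:5}, \ref{lemma:6}, \ref{lemma:3} and Corollary \ref{cor:1} to show the derivative is continuous and bounded on compacts; (d) integrate.
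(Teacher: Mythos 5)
Your high-level plan (differentiate the trace in $t$, exploit the $H_0$-cancellation, control the unbounded factors by conjugating with a suitable positive operator, then argue on a core) is the same as the paper's, but there is a genuine gap in the specific way you propose to implement the domain bookkeeping, and this gap concerns precisely the role of the hypothesis $\varrho_\varepsilon\in\mathcal{T}^1_\varepsilon$.

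Your summary paragraph asserts that inserting $T^{\mp1}$ suffices, with $\mathcal{T}^1_\varepsilon$ used to absorb one power of $T$ onto $H_I$, while $T^{-1}e^{\pm i\frac t\varepsilon H}T$ and $T^{-1}W(\tilde\xi(s))T$ stay bounded by Lemmas~\ref{lemma:6} and~\ref{lemma:5}(ii). But these boundedness statements do not justify the differentiation itself. To take the $s\to t$ limit of the difference quotient $\frac{1}{t-s}\bigl(e^{i\frac t\varepsilon H_0}e^{-i\frac t\varepsilon H}-e^{i\frac s\varepsilon H_0}e^{-i\frac s\varepsilon H}\bigr)\phi$ (equivalently, to justify $\partial_t W(\tilde\xi(t))=-\frac{i}{\varepsilon}[H_0,W(\tilde\xi(t))]$ and $\partial_t\varrho_\varepsilon(t)$ in your product-rule version), Stone's theorem needs $\phi\in D(H)\cap D(H_0)$. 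The vector $T^{-1}\phi$ lies in $D(T)=D(N_1^2+N_2)$ but not in $D(H_0)$, and hence not in $D(H)$ either: $T$ gives no control whatsoever on the one-particle energies $-\Delta/2M+V$ or $\omega$. So you cannot pass the limit under $T^{-1}(\cdots)T$, and the claim that $\mathcal{T}^1_\varepsilon$ alone supplies the regularity needed for the derivative is incorrect.

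This is exactly why the paper's proof works first with $S=H_0+T$ (so that $S^{-1}$ maps into $D(H_0)\cap D(T)\subseteq D(H)$ by Proposition~\ref{prop:2}\ref{item:1}, legitimizing Stone's theorem), which forces the stronger hypotheses $\varrho_\varepsilon\in\mathcal{S}^1_\varepsilon$ and $\xi\in\mathscr{Z}_1$ (needed so that $\varrho_\varepsilon S$ is trace class and $S^{-1}W(\xi)S$ is bounded via Lemma~\ref{lemma:5}(i)). Only after the formula is established in this restricted setting is it extended to $\varrho_\varepsilon\in\mathcal{T}^1_\varepsilon$ and $\xi\in\mathscr{Z}$ by density, using that both sides of the identity are continuous in $\mathcal{L}^1$ and in $\xi$, and that the right-hand side now only contains $H_I$, which is $T$-bounded by Corollary~\ref{cor:1}. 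You do mention ``$T^{\mp1}$ (or $S^{\mp1}$)'' in passing and you mention working on a core, so you are close; but the density-extension step is indispensable and is missing from your write-up, and your explicit assignment of roles --- ``the role of $\mathcal{T}^1_\varepsilon$ is precisely \dots'' --- misattributes where the derivative is justified. To repair the proposal: establish the formula for $\varrho_\varepsilon\in\mathcal{S}^1_\varepsilon$ and $\xi\in\mathscr{Z}_1$ using $S$-conjugations, then extend by density of $\mathcal{S}^1_\varepsilon$ in $\mathcal{T}^1_\varepsilon$ (in $\mathcal{L}^1$) and of $\mathscr{Z}_1$ in $\mathscr{Z}$.
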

\begin{proof}
  The formula is proved for $\xi$ in $\mathscr{Z}_1$; and for
  $\varrho_{\varepsilon}\in \mathcal{S}^1_{\varepsilon}$. The result
  is then extended by density ($\mathcal{S}^1_{\varepsilon}$ is dense
  in $\mathcal{T}^1_{\varepsilon}$ in the $\mathcal{L}^1(\mathscr{H})$
  topology).

  If we are able to differentiate, in $t$, $\Tr [
  \tilde{\varrho}_{\varepsilon}(t)W(\xi)]$ we are done. Consider then,
  for all $t,s\in\mathds{R}$:
  \begin{equation*}
    \begin{split}
      \Tr\Bigl[ \bigl(\tilde{\varrho}_{\varepsilon}(t)-\tilde{\varrho}_{\varepsilon}(s)\bigr)W(\xi) \Bigr]=\Tr\Bigl[ \bigl( e^{i \frac{t}{\varepsilon}H_0}e^{-i \frac{t}{\varepsilon}H}-e^{i \frac{s}{\varepsilon}H_0}e^{-i \frac{s}{\varepsilon}H} \bigr)\varrho_{\varepsilon}e^{i \frac{t}{\varepsilon}H}e^{-i \frac{t}{\varepsilon}H_0}W(\xi)
      \Bigr]\\+\Tr\Bigl[ e^{i \frac{s}{\varepsilon}H_0}e^{-i \frac{s}{\varepsilon}H}\varrho_{\varepsilon}\bigl( e^{i \frac{t}{\varepsilon}H}e^{-i \frac{t}{\varepsilon}H_0}-e^{i \frac{s}{\varepsilon}H}e^{-i \frac{s}{\varepsilon}H_0} \bigr)W(\xi)\Bigr]\\
      =\Tr\Bigl[ \varrho_{\varepsilon}e^{i \frac{t}{\varepsilon}H}e^{-i \frac{t}{\varepsilon}H_0}W(\xi) \bigl( e^{i \frac{t}{\varepsilon}H_0}e^{-i \frac{t}{\varepsilon}H}-e^{i \frac{s}{\varepsilon}H_0}e^{-i \frac{s}{\varepsilon}H} \bigr)\Bigr]\\
      +\Tr\Bigl[ e^{i \frac{s}{\varepsilon}H_0}e^{-i \frac{s}{\varepsilon}H}\varrho_{\varepsilon}\bigl( e^{i \frac{t}{\varepsilon}H}e^{-i \frac{t}{\varepsilon}H_0}-e^{i \frac{s}{\varepsilon}H}e^{-i \frac{s}{\varepsilon}H_0} \bigr)W(\xi)\Bigr]\\
      =\Tr\Bigl[ \varrho_{\varepsilon}SS^{-1}e^{i \frac{t}{\varepsilon}H}SS^{-1}e^{-i \frac{t}{\varepsilon}H_0}SS^{-1}W(\xi)SS^{-1} \bigl( e^{i \frac{t}{\varepsilon}H_0}e^{-i \frac{t}{\varepsilon}H}-e^{i \frac{s}{\varepsilon}H_0}e^{-i \frac{s}{\varepsilon}H} \bigr)\Bigr]\\
      +\Tr\Bigl[ e^{i \frac{s}{\varepsilon}H_0}e^{-i \frac{s}{\varepsilon}H}\varrho_{\varepsilon}SS^{-1}\bigl( e^{i \frac{t}{\varepsilon}H}e^{-i \frac{t}{\varepsilon}H_0}-e^{i \frac{s}{\varepsilon}H}e^{-i \frac{s}{\varepsilon}H_0} \bigr)W(\xi)\Bigr]\; .
    \end{split}
  \end{equation*}
  Every operation is justified since $\varrho_{\varepsilon}\in
  \mathcal{L}^1(\mathscr{H})$ and $e^{-i \frac{t}{\varepsilon}H},e^{-i
    \frac{t}{\varepsilon}H_0},W(\xi)\in\mathcal{L}(\mathscr{H})$. Now
  recall that $\varrho_{\varepsilon}\in \mathcal{S}^1_{\varepsilon}$
  and also $S^{-1}e^{-i \frac{t}{\varepsilon}H}S,S^{-1}e^{-i
    \frac{t}{\varepsilon}H_0}S,S^{-1}W(\xi)S\in\mathcal{L}(\mathscr{H})$
  by Lemmas~\ref{lemma:5} and~\ref{lemma:3} and since $S$ commutes
  with $H_0$. Then we can just look at the limits:
  \begin{gather*}
    \lim_{s\to t}\frac{1}{t-s}S^{-1}\bigl( e^{i \frac{t}{\varepsilon}H_0}e^{-i \frac{t}{\varepsilon}H}-e^{i \frac{s}{\varepsilon}H_0}e^{-i \frac{s}{\varepsilon}H} \bigr)=-S^{-1}e^{i \frac{t}{\varepsilon}H_0}H_Ie^{-i \frac{t}{\varepsilon}H}\\
    \lim_{s\to t}\frac{1}{t-s}S^{-1}\bigl( e^{i \frac{t}{\varepsilon}H}e^{-i \frac{t}{\varepsilon}H_0}-e^{i \frac{s}{\varepsilon}H}e^{-i \frac{s}{\varepsilon}H_0} \bigr)=S^{-1}e^{i \frac{t}{\varepsilon}H}H_Ie^{-i \frac{t}{\varepsilon}H_0}\; .
  \end{gather*}
  The convergence is intended in the strong topology, and we have used
  Stone's theorem. The result is finally obtained using the fact that
  \begin{equation*}
    e^{-i \frac{t}{\varepsilon}H_0}W(\xi)e^{i \frac{t}{\varepsilon}H_0}= W( \tilde{\xi}(t))\; .
  \end{equation*}
\end{proof}

\subsection{The commutator $[H_I , W(\tilde{\xi}(s))]$.}
\label{sec:comm-h_i-wtild}

Now, once the integral formula is proved, we want to give an explicit
form of the commutator $[H_I,W(\tilde{\xi}(s))]$, in particular with
respect to the dependence on $\varepsilon$, since we are interested in
the limit $\varepsilon\to 0$.

\begin{lemma}
  \label{lemma:8}
  For all $\delta\in\mathds{R}$ and $t\in\mathds{R}$:
  $\varrho_{\varepsilon}\in
  \mathcal{T}^{\delta}_{\varepsilon}\Leftrightarrow
  \varrho_{\varepsilon}(t)\in\mathcal{T}^{\delta}_{\varepsilon}$.
\end{lemma}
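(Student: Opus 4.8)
The plan is to exploit Lemma~\ref{lemma:6}, which asserts that $T^{-\delta}e^{-i\frac{t}{\varepsilon}H}T^{\delta}$ extends to a bounded operator on $\mathscr{H}$ for every real $\delta$ and $t$. First I would dispose of the case $\delta\leq 0$: since $T=N_1^2+N_2+1\geq 1$, the operator $T^{\delta}$ is bounded with $\lvert T^{\delta}\rvert_{\mathcal{L}(\mathscr{H})}\leq 1$, so $\mathcal{T}^{\delta}_{\varepsilon}=\mathcal{L}^1(\mathscr{H})$ and the claimed equivalence reduces to the invariance of $\mathcal{L}^1(\mathscr{H})$ under conjugation by the unitary $e^{-i\frac{t}{\varepsilon}H}$. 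Hence only $\delta>0$ carries content.

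For $\delta>0$, set $A_t:=T^{-\delta}e^{i\frac{t}{\varepsilon}H}T^{\delta}$, which is bounded by Lemma~\ref{lemma:6} (applied with $-t$ in place of $t$). Since $T^{-\delta}$ is a bounded injection of $\mathscr{H}$ onto $D(T^{\delta})$, the defining identity for $A_t$ rearranges, on the dense domain $D(T^{\delta})$, into $e^{i\frac{t}{\varepsilon}H}T^{\delta}=T^{\delta}A_t$; in particular $A_t$ maps $D(T^{\delta})$ into itself. Assuming $\varrho_{\varepsilon}\in\mathcal{T}^{\delta}_{\varepsilon}$, so that $\varrho_{\varepsilon}T^{\delta}$ extends to a trace class operator, I would then compute on $D(T^{\delta})$
\begin{equation*}
  \varrho_{\varepsilon}(t)\,T^{\delta}=e^{-i\frac{t}{\varepsilon}H}\varrho_{\varepsilon}e^{i\frac{t}{\varepsilon}H}T^{\delta}=e^{-i\frac{t}{\varepsilon}H}\bigl(\varrho_{\varepsilon}T^{\delta}\bigr)A_t\; .
\end{equation*}
The right-hand side is the product of the unitary $e^{-i\frac{t}{\varepsilon}H}$, the trace class operator $\varrho_{\varepsilon}T^{\delta}$, and the bounded operator $A_t$; since $\mathcal{L}^1(\mathscr{H})$ is a two-sided ideal in $\mathcal{L}(\mathscr{H})$, it defines a trace class operator, which must coincide with the closure of $\varrho_{\varepsilon}(t)T^{\delta}$. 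Therefore $\varrho_{\varepsilon}(t)\in\mathcal{T}^{\delta}_{\varepsilon}$, with $\lvert\varrho_{\varepsilon}(t)\rvert_{\mathcal{T}^{\delta}_{\varepsilon}}\leq\lvert A_t\rvert_{\mathcal{L}(\mathscr{H})}\,\lvert\varrho_{\varepsilon}\rvert_{\mathcal{T}^{\delta}_{\varepsilon}}$.

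The converse implication follows by symmetry: writing $\varrho_{\varepsilon}=e^{i\frac{t}{\varepsilon}H}\varrho_{\varepsilon}(t)e^{-i\frac{t}{\varepsilon}H}$, the same computation with the roles of $\varrho_{\varepsilon}$ and $\varrho_{\varepsilon}(t)$ exchanged and $t$ replaced by $-t$ gives $\varrho_{\varepsilon}\in\mathcal{T}^{\delta}_{\varepsilon}$ whenever $\varrho_{\varepsilon}(t)\in\mathcal{T}^{\delta}_{\varepsilon}$.

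I expect the only genuine care to lie in the domain bookkeeping: justifying that $e^{i\frac{t}{\varepsilon}H}T^{\delta}$ and $T^{\delta}A_t$ agree as operators on $D(T^{\delta})$, and that the composition $e^{-i\frac{t}{\varepsilon}H}(\varrho_{\varepsilon}T^{\delta})A_t$ — a priori defined only on $D(T^{\delta})$ — is precisely the bounded (indeed trace class) closure of $\varrho_{\varepsilon}(t)T^{\delta}$. These points are routine once one uses that $T^{-\delta}\in\mathcal{L}(\mathscr{H})$ is injective with range $D(T^{\delta})$ and that $e^{\pm i\frac{t}{\varepsilon}H}$ is unitary, so that $D(T^{\delta})$ serves as a common invariant core for the operators involved.
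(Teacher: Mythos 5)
Your proof is correct and follows the same essential route as the paper: the content of the lemma is exactly that $T^{-\delta}e^{\pm i\frac{t}{\varepsilon}H}T^{\delta}\in\mathcal{L}(\mathscr{H})$ (Lemma~\ref{lemma:6}), and the factorization $\varrho_{\varepsilon}(t)T^{\delta}=e^{-i\frac{t}{\varepsilon}H}\bigl(\varrho_{\varepsilon}T^{\delta}\bigr)\bigl(T^{-\delta}e^{i\frac{t}{\varepsilon}H}T^{\delta}\bigr)$ together with the ideal property of $\mathcal{L}^1(\mathscr{H})$ gives both directions. The paper's one-line proof additionally remarks that $H_0$ commutes with $T$ — which is vacuous for $\varrho_{\varepsilon}(t)$ as stated, but is the ingredient needed to transfer the same conclusion to $\tilde{\varrho}_{\varepsilon}(t)=e^{i\frac{t}{\varepsilon}H_0}\varrho_{\varepsilon}(t)e^{-i\frac{t}{\varepsilon}H_0}$, which is the form actually used afterwards; your argument handles the stated claim cleanly, and the same observation ($[H_0,T]=0$) would extend it to $\tilde{\varrho}_{\varepsilon}(t)$ with no extra work.
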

\begin{proof}
  The free Hamiltonian $H_0$ commutes with $T$, hence the result is a
  direct application of Lemma~\ref{lemma:6}.
\end{proof}
The next lemma can be proved using a general result on Wick quantized
operators \citep[see][]{AmNi1}, or with a strategy similar to the one
used in Lemma~\ref{lemma:4}.
\begin{lemma}
  \label{lemma:7}
  On $D(T)$ the following equality holds strongly
  for any $\xi\in\mathscr{Z}$:
  \begin{align*}
    B_{\varepsilon}(\xi):&=W^{*}(\xi)H_I W(\xi)\\
    &=\int_{\mathds{R}^{2d}}^{}\frac{\chi(k)}{\sqrt{\omega(k)}}\bigl(\psi^{*}(x)-i \frac{\varepsilon}{\sqrt{2}} \bar{\xi}_1(x)\bigr)\Bigl(\bigl(a^{*}(k) -i \frac{\varepsilon}{\sqrt{2}} \bar{\xi}_2(k)\bigr)e^{-ik\cdot x}\\
    &+\bigl(a(k) +i \frac{\varepsilon}{\sqrt{2}}
    \xi_2(k)\bigr)e^{ik\cdot x}\Bigr)
    \bigl(\psi(x) +i \frac{\varepsilon}{\sqrt{2}}\xi_1(x)\bigr)  dxdk\; .
  \end{align*}
\end{lemma}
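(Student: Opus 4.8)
The plan is to reduce the computation of $B_{\varepsilon}(\xi) = W^{*}(\xi)H_I W(\xi)$ to the elementary fact that the Weyl operator shifts the creation and annihilation operators by a multiple of $\xi$. The first step is to establish, for every $\xi = \xi_1\oplus\xi_2\in\mathscr{Z}$, the conjugation identities
\begin{align*}
  W^{*}(\xi)\psi(x)W(\xi) &= \psi(x) + i\tfrac{\varepsilon}{\sqrt{2}}\xi_1(x)\,, & W^{*}(\xi)\psi^{*}(x)W(\xi) &= \psi^{*}(x) - i\tfrac{\varepsilon}{\sqrt{2}}\bar{\xi}_1(x)\,,\\
  W^{*}(\xi)a(k)W(\xi) &= a(k) + i\tfrac{\varepsilon}{\sqrt{2}}\xi_2(k)\,, & W^{*}(\xi)a^{*}(k)W(\xi) &= a^{*}(k) - i\tfrac{\varepsilon}{\sqrt{2}}\bar{\xi}_2(k)\,,
\end{align*}
understood strongly on the core $\mathscr{F}_0\cap D(T)$ after smearing against $L^2$ test functions. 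These follow from Stone's theorem: with $W(\xi_1) = e^{i\frac{\psi^{*}(\xi_1)+\psi(\xi_1)}{\sqrt{2}}}$, differentiating $s\mapsto e^{-is\frac{\psi^{*}(\xi_1)+\psi(\xi_1)}{\sqrt{2}}}\,\psi(f)\,e^{is\frac{\psi^{*}(\xi_1)+\psi(\xi_1)}{\sqrt{2}}}$ and using the canonical commutation relations, which give $\bigl[\tfrac{1}{\sqrt{2}}(\psi^{*}(\xi_1)+\psi(\xi_1)),\psi(f)\bigr] = -\tfrac{\varepsilon}{\sqrt{2}}\langle f,\xi_1\rangle$, a scalar, one finds that the derivative is constant in $s$; integrating from $0$ to $1$ yields the stated affine shift. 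The meson identities are obtained identically from $W(\xi_2)$ and the $a^{\#}$ commutation relations, and the creation-operator versions are the adjoints of the annihilation ones. (The same kind of computation underlies Lemma~\ref{lemma:4}.)

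The second step is purely algebraic, once domains are under control: inserting $W(\xi)W^{*}(\xi) = 1$ between consecutive factors in the integrand of
\[
  H_I = \int_{\mathds{R}^{2d}}\frac{\chi(k)}{\sqrt{\omega(k)}}\,\psi^{*}(x)\bigl(a^{*}(k)e^{-ik\cdot x}+a(k)e^{ik\cdot x}\bigr)\psi(x)\,dk\,dx
\]
and conjugating factorwise, one gets, on $D(T)$,
\[
  W^{*}(\xi)H_IW(\xi) = \int_{\mathds{R}^{2d}}\frac{\chi(k)}{\sqrt{\omega(k)}}\bigl(W^{*}\psi^{*}(x)W\bigr)\Bigl(\bigl(W^{*}a^{*}(k)W\bigr)e^{-ik\cdot x}+\bigl(W^{*}a(k)W\bigr)e^{ik\cdot x}\Bigr)\bigl(W^{*}\psi(x)W\bigr)\,dk\,dx\,,
\]
and substituting the four shift identities reproduces verbatim the asserted expression for $B_{\varepsilon}(\xi)$.

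The real content is the rigorous justification of this bookkeeping, and I expect that to be the main obstacle. Three points must be checked. First, $W(\xi)$ preserves $D(T)$ by Lemma~\ref{lemma:5}(ii), and $D(T) = D(N_1^2+N_2)\subseteq D(H_I)$ by Corollary~\ref{cor:1}, so that $W^{*}(\xi)H_IW(\xi)$ is well defined on $D(T)$. Second, after substitution the integrand equals the original monomial $\psi^{*}(x)\bigl(a^{*}(k)e^{-ik\cdot x}+a(k)e^{ik\cdot x}\bigr)\psi(x)$ plus finitely many monomials of lower order in $\psi^{\#},a^{\#}$ whose coefficients carry at least one power of $\varepsilon\lVert\xi\rVert_{\mathscr{Z}}$; hence the Fock-space estimates of Lemma~\ref{lemma:2} and Corollary~\ref{cor:1} show that the $dk\,dx$ integral converges and that $B_{\varepsilon}(\xi)$ is relatively bounded with respect to $T$, in particular everywhere defined on $D(T)$. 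Third, the interchange of the $dk\,dx$ integration with the factorwise conjugation, and the passage from the core $\mathscr{F}_0\cap D(T)$ to all of $D(T)$, is legitimate because each term is continuous in the graph norm of $T$. Once these ingredients are assembled the identity holds strongly on $D(T)$. As the remark preceding the statement indicates, one may instead invoke the general Weyl-conjugation formula for Wick-quantized operators from \cite{AmNi1}, which packages exactly this argument.
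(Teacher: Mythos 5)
The proposal is correct and takes essentially the same approach the paper intends: the paper gives no explicit proof, only pointing to either the general Weyl–conjugation formula for Wick operators in \cite{AmNi1} or the interpolation strategy of Lemma~\ref{lemma:4}, and your factorwise shift identities $W^{*}(\xi)\psi^{\#}W(\xi)$, $W^{*}(\xi)a^{\#}W(\xi)$ obtained via Stone's theorem are precisely a disaggregated form of that strategy. The domain ingredients you invoke — Lemma~\ref{lemma:5}(ii) for $W(\xi)$ preserving $D(T)$, Corollary~\ref{cor:1} for $D(T)\subseteq D(H_I)$, and the $T$-relative bounds of Lemma~\ref{lemma:2} for the lower-order monomials generated by the shift — are exactly what is needed to justify the bookkeeping.
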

\begin{corollary}
  \label{cor:3}
  For all $\varrho_{\varepsilon}\in \mathcal{T}^1_{\varepsilon}$ and
  $s\in\mathds{R}$:
  \begin{equation*}
    \Tr\Bigl[\varrho_{\varepsilon}(s) [H_I,W(\tilde{\xi}(s))]\Bigr]=\Tr\Bigl[\varrho_{\varepsilon}(s) W(\tilde{\xi}(s))\bigl(B_{\varepsilon}(\tilde{\xi}(s))-H_I\bigr)\Bigr]\; .
  \end{equation*}
\end{corollary}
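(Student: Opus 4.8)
The plan is to deduce the identity from the operator equality of Lemma~\ref{lemma:7} by careful bookkeeping of domains. The first step is purely algebraic. Lemma~\ref{lemma:7} gives $W(\xi)^{*}H_{I}W(\xi)=B_{\varepsilon}(\xi)$ strongly on $D(T)$; by Lemma~\ref{lemma:5}\,ii) both $T^{-1}W(\xi)T$ and $T^{-1}W(\xi)^{*}T$ are bounded, so $W(\xi)$ and $W(\xi)^{*}$ map $D(T)$ bijectively onto $D(T)$. Applying $W(\xi)$ on the left therefore yields, as operators on $D(T)$,
\begin{equation*}
  H_{I}W(\xi)=W(\xi)B_{\varepsilon}(\xi)\,,\qquad\text{hence}\qquad [H_{I},W(\xi)]=W(\xi)\bigl(B_{\varepsilon}(\xi)-H_{I}\bigr)\,;
\end{equation*}
taking $\xi=\tilde\xi(s)$ produces exactly the operator on the right-hand side of the corollary.

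Next I would pass to traces. By Lemma~\ref{lemma:8}, $\varrho_{\varepsilon}(s)\in\mathcal{T}^{1}_{\varepsilon}$, and since $\varrho_{\varepsilon}(s)$ is self-adjoint, $T\varrho_{\varepsilon}(s)\in\mathcal{L}^{1}(\mathscr{H})$ as well; in particular $\ran\varrho_{\varepsilon}(s)\subseteq D(T)$. Hence composing the identity of the previous paragraph on the right with $\varrho_{\varepsilon}(s)$ gives a genuine identity of everywhere-defined operators on $\mathscr{H}$,
\begin{equation*}
  [H_{I},W(\tilde\xi(s))]\,\varrho_{\varepsilon}(s)=W(\tilde\xi(s))\bigl(B_{\varepsilon}(\tilde\xi(s))-H_{I}\bigr)\varrho_{\varepsilon}(s)\,,
\end{equation*}
both sides being trace class: one writes $\varrho_{\varepsilon}(s)=T^{-1}\,T\varrho_{\varepsilon}(s)$ and uses that $H_{I}T^{-1}$ (Corollary~\ref{cor:1}), $B_{\varepsilon}(\tilde\xi(s))T^{-1}$ (read off the explicit Wick form of Lemma~\ref{lemma:7}), $T^{\pm1}W(\tilde\xi(s))T^{\mp1}$ (Lemma~\ref{lemma:5}\,ii)) and $W(\tilde\xi(s))$ are all bounded, while $T\varrho_{\varepsilon}(s)\in\mathcal{L}^{1}(\mathscr{H})$. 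Taking the trace of both sides and invoking cyclicity of the trace against the trace-class operator $T\varrho_{\varepsilon}(s)$ then gives the claimed equality.

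The only delicate point is the interplay of domains: $H_{I}$ and $B_{\varepsilon}(\tilde\xi(s))$ are unbounded, so one must check that all the products are well defined where written, that the equality $W^{*}H_{I}W=B_{\varepsilon}$ upgrades to $H_{I}W=WB_{\varepsilon}$ without enlarging the domain (this is exactly where $W(\tilde\xi(s))D(T)=D(T)$ enters), and that $\ran\varrho_{\varepsilon}(s)\subseteq D(T)$, so that the composition with $\varrho_{\varepsilon}(s)$ is literal rather than merely valid on a core. All the quantitative estimates needed are already in hand from Corollary~\ref{cor:1} and Lemmas~\ref{lemma:5}--\ref{lemma:7}, so beyond this bookkeeping the argument is routine.
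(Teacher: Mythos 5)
Your proposal is correct and supplies exactly the details that the paper omits by stating this as a corollary of Lemma~\ref{lemma:7} without proof: the algebraic identity $[H_I,W(\xi)]=W(\xi)\bigl(B_{\varepsilon}(\xi)-H_I\bigr)$ on $D(T)$ obtained by left-multiplying the lemma by the unitary $W(\xi)$, the domain preservation $W(\xi)D(T)\subseteq D(T)$ via Lemma~\ref{lemma:5}\,ii), $\ran\varrho_{\varepsilon}(s)\subseteq D(T)$ from $\varrho_{\varepsilon}(s)\in\mathcal{T}^{1}_{\varepsilon}$ together with its self-adjointness, and the factorization through $T^{\pm1}$ that makes every product trace class. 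One minor remark: given the paper's explicit convention $\Tr[\varrho A]=\sum_i\lambda_i\langle\varphi_i,A\varphi_i\rangle$ over eigenvectors of $\varrho$, the final step can be done even more directly --- since each $\varphi_i$ with $\lambda_i>0$ lies in $\ran\varrho_{\varepsilon}(s)\subseteq D(T)$, one can apply the operator identity term-by-term in that sum and bypass the right-composition/cyclicity detour entirely --- but your route is also valid.
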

Now we would like to write
\begin{equation}
  \label{eq:11}
  \frac{i}{\varepsilon}(B_{\varepsilon}(\tilde{\xi}(s))-H_I)=\sum_{j=0}^r \varepsilon^j B_j( \tilde{\xi}(s))
\end{equation}
for some $r\in \mathds{N}$. This can be easily done, with $r=2$,
obtaining:
\begin{equation}
  \label{eq:12}
  \begin{split}
    B_0(\xi)= \frac{i}{\sqrt{2}} \int_{\mathds{R}^{2d}}^{}\frac{\chi(k)}{\sqrt{\omega(k)}}\Bigl[\psi^{*}(x)\bigl(a^{*}(k)e^{-ik\cdot x}+a(k)e^{ik\cdot x} \bigr)\xi_1(x)-\psi(x)\bigl(a^{*}(k)e^{-ik\cdot x}+a(k)\\
    e^{ik\cdot x} \bigr)\bar{\xi}_1(x)+\psi^{*}(x)\psi(x)\bigl( \bar{\xi}_2e^{-ik\cdot x}-\xi_2e^{ik\cdot x} \bigr) \Bigr]  dxdk\; ;
  \end{split}
\end{equation}
\begin{equation}
  \label{eq:13}
  \begin{split}
    B_1(\xi)= \frac{1}{2} \int_{\mathds{R}^{2d}}^{}\frac{\chi(k)}{\sqrt{\omega(k)}}\Bigl[\psi^{*}(x)\xi_1(x)\bigl( \bar{\xi}_2(k)e^{-ik\cdot x}-\xi_2(k)e^{ik\cdot x} \bigr)+\psi(x) \bar{\xi}_1(x)\bigl(\xi_2(k)e^{ik\cdot x} - \bar{\xi}_2(k)\\
    e^{-ik\cdot x} \bigr)+\bigl(a^{*}(k)e^{-ik\cdot x}+a(k)e^{ik\cdot x} \bigr) \bar{\xi}_1(x)\xi_1(x) \Bigr]  dxdk\; ;
  \end{split}
\end{equation}
\begin{equation}
  \label{eq:14}
  B_2(\xi)= \frac{i}{2 \sqrt{2}} \int_{\mathds{R}^{2d}}^{}\frac{\chi(k)}{\sqrt{\omega(k)}} \bar{\xi}_1(x)\xi_1(x)\bigl( \xi_2(k)e^{ik\cdot x} - \bar{\xi}_2(k)e^{-ik\cdot x} \bigr)  dxdk\; .
\end{equation}
We can sum up these results in the following proposition:
\begin{proposition}
  \label{prop:5}
  Assume \eqref{eq:5} holds; and let $\xi\in\mathscr{Z}$. Then for all
  $\varrho_{\varepsilon}\in\mathcal{T}^1_{\varepsilon}$:
  \begin{equation}
    \label{eq:16}
    \Tr \Bigl[ \tilde{\varrho}_{\varepsilon}(t) W(\xi)\Bigr] = \Tr \Bigl[\varrho_{\varepsilon} W(\xi)\Bigr] + \sum_{j=0}^2\varepsilon^j\int_0^t\Tr \Bigl[ \varrho_{\varepsilon}(s)W( \tilde{\xi}(s)) B_j( \tilde{\xi}(s))  \Bigr]  ds\; ;
  \end{equation}
  where the $B_j( \tilde{\xi}(s))$ are given
  in~\emph{(\ref{eq:12})-(\ref{eq:14})}.
\end{proposition}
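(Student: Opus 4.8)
The plan is to assemble the identity \eqref{eq:16} from three facts already established, together with one bookkeeping computation. Proposition~\ref{prop:4}, valid for $\varrho_\varepsilon\in\mathcal{T}^1_\varepsilon$ and $\xi\in\mathscr{Z}$, provides
\begin{equation*}
  \Tr\bigl[\tilde\varrho_\varepsilon(t)W(\xi)\bigr]=\Tr\bigl[\varrho_\varepsilon W(\xi)\bigr]+\frac{i}{\varepsilon}\int_0^t\Tr\bigl[\varrho_\varepsilon(s)[H_I,W(\tilde\xi(s))]\bigr]\,ds\,.
\end{equation*}
Since $[H_I,W(\tilde\xi(s))]=W(\tilde\xi(s))\bigl(W^*(\tilde\xi(s))H_IW(\tilde\xi(s))-H_I\bigr)=W(\tilde\xi(s))\bigl(B_\varepsilon(\tilde\xi(s))-H_I\bigr)$, Corollary~\ref{cor:3} lets me rewrite the integrand as $\Tr[\varrho_\varepsilon(s)W(\tilde\xi(s))(B_\varepsilon(\tilde\xi(s))-H_I)]$; this is legitimate because $\varrho_\varepsilon(s)\in\mathcal{T}^1_\varepsilon$ by Lemma~\ref{lemma:8} and $B_\varepsilon(\tilde\xi(s))-H_I$ is relatively $T$-bounded by Lemma~\ref{lemma:7} and Corollary~\ref{cor:1}, so the trace is absolutely convergent.

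The remaining step is to expand $B_\varepsilon(\xi)=W^*(\xi)H_IW(\xi)$ in powers of $\varepsilon$. Using the explicit formula of Lemma~\ref{lemma:7}, the integrand is a product of three factors, each of the shape ``field operator $+\,i\tfrac{\varepsilon}{\sqrt2}$(c-number)'', paired against $\chi/\sqrt\omega$. Multiplying out and grouping monomials by total power of $\varepsilon$, the $\varepsilon^0$ term is exactly $H_I$ and cancels $-H_I$, while the surviving contributions have degrees $\varepsilon^1,\varepsilon^2,\varepsilon^3$; dividing by $\varepsilon$ and multiplying by $i$ then gives \eqref{eq:11} with $r=2$ and the operators $B_0(\xi),B_1(\xi),B_2(\xi)$ displayed in \eqref{eq:12}--\eqref{eq:14}. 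Substituting $\tfrac{i}{\varepsilon}(B_\varepsilon(\tilde\xi(s))-H_I)=\sum_{j=0}^2\varepsilon^jB_j(\tilde\xi(s))$ into the integrand, using linearity of the trace and interchanging the finite sum with the $s$-integral, produces \eqref{eq:16}.

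The only points that need care are the ``domain problems'' mentioned in the outline: one must verify that for each $j\in\{0,1,2\}$ the operator $\varrho_\varepsilon(s)W(\tilde\xi(s))B_j(\tilde\xi(s))$ is trace class with trace norm bounded uniformly for $s$ in compact intervals, so that splitting $\Tr[\varrho_\varepsilon(s)W(\tilde\xi(s))(B_\varepsilon-H_I)]$ into three traces and pulling the sum out of the integral are both justified. This is done exactly as in the proof of Proposition~\ref{prop:4}: write $\varrho_\varepsilon(s)=\varrho_\varepsilon(s)T\,T^{-1}$, use the relative bound $\lVert B_j(\tilde\xi(s))\phi\rVert\leq C\bigl(\lVert\omega^{-1/2}\chi\rVert_2,\lVert\xi\rVert_{\mathscr Z}\bigr)\lVert T\phi\rVert$ --- uniform in $s$ since $\Phi_0(s)$ is unitary, hence $\lVert\tilde\xi(s)\rVert_{\mathscr Z}=\lVert\xi\rVert_{\mathscr Z}$ --- together with the boundedness of $T^{-1}W(\tilde\xi(s))T$ and $T^{-1}e^{\pm itH/\varepsilon}T$ from Lemmas~\ref{lemma:5} and~\ref{lemma:6}, and observe that these operator norms are continuous in $s$. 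No density step in $\varrho_\varepsilon$ or $\xi$ beyond those already internal to Proposition~\ref{prop:4} is required. The main obstacle is thus not conceptual but the careful tracking of the relative $T$-bounds for the $B_j$'s and of the $\varepsilon$-powers when sorting the expansion.
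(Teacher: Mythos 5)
Your proof is correct and is essentially the same assembly the paper performs (and that the paper signals with ``We can sum up these results in the following proposition''): invoke Proposition~\ref{prop:4}, pass through Corollary~\ref{cor:3} and Lemma~\ref{lemma:8}, expand $B_\varepsilon(\tilde\xi(s))-H_I$ via Lemma~\ref{lemma:7} into the $\varepsilon$-graded pieces \eqref{eq:11}--\eqref{eq:14}, and justify splitting the trace and the integral by the relative $T$-bounds of $B_j$ and the uniform-in-$s$ operator-norm bounds from Lemmas~\ref{lemma:5} and~\ref{lemma:6}. Your extra remark that no further density argument is needed, since Proposition~\ref{prop:4} already covers $\varrho_\varepsilon\in\mathcal{T}^1_\varepsilon$ and $\xi\in\mathscr{Z}$, is exactly right.
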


Finally, we give a bound on $B_1$ and $B_2$, and a more detailed
characterization of $B_0$ (since it is the term which will have non
zero limit when $\varepsilon\to 0$). We start with the bound on $B_1$
and $B_2$:
\begin{proposition}
  \label{prop:6}
  Assume that \eqref{eq:5} holds; and let $\xi\in\mathscr{Z}$,
  $s\in[0,t]$, $t\in\mathds{R}$. Then for any
  $\varrho_{\varepsilon}\in\mathcal{T}^1_{\varepsilon}$, there exists
  $C(s,\lVert \omega^{-1/2}\chi \rVert_2^{})>0$ such that for all
  $\varepsilon\in(0,\bar{\varepsilon})$:
  \begin{equation*}
    \begin{split}
      \biggl\lvert \sum_{j=1}^2\varepsilon^j\int_0^t\Tr \Bigl[ \varrho_{\varepsilon}(s)W( \tilde{\xi}(s)) B_j( \tilde{\xi}(s))  \Bigr]  ds\biggr\rvert_{}^{}\leq \varepsilon\Bigl(1+\varepsilon\lVert \xi \rVert_{\mathscr{Z}}+(\varepsilon+\varepsilon^2)\lVert \xi \rVert_{\mathscr{Z}}^2+\varepsilon^3\lVert \xi_1
      \rVert_2^3\\+(\varepsilon^2+\varepsilon^4)\lVert \xi_1 \rVert_2^4\Bigr)\int_0^tC(s,\lVert \omega^{-1/2}\chi \rVert_2^{})  ds\;\lvert \varrho_{\varepsilon}\rvert_{\mathcal{T}^1_{\varepsilon}}^{} \; .
    \end{split}
  \end{equation*}
\end{proposition}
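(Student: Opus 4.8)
The plan is to split the sum into its $j=1$ and $j=2$ contributions, exploiting that $B_2(\xi)$ is a scalar — a multiple of the identity, since $\bar\xi_1\xi_1$ carries no field operators — whereas $B_1(\xi)$ is linear in $\psi^{\#},a^{\#}$. In both cases I would reduce the trace to that of a product of the trace-class operator $\varrho_\varepsilon(s)T$ with bounded operators controlled by the lemmas of this section. Two inputs are needed first. By Lemma~\ref{lemma:8} — quantitatively, by writing $\varrho_\varepsilon(s)T=e^{-i\frac{s}{\varepsilon}H}(\varrho_\varepsilon T)(T^{-1}e^{i\frac{s}{\varepsilon}H}T)$ and applying Lemma~\ref{lemma:6} with $\delta=1$ — one has $|\varrho_\varepsilon(s)|_{\mathcal{T}^1_\varepsilon}\le C(s,\|\omega^{-1/2}\chi\|_2)|\varrho_\varepsilon|_{\mathcal{T}^1_\varepsilon}$ uniformly in $\varepsilon\in(0,\bar\varepsilon)$, and since $T\ge 1$ also $|\varrho_\varepsilon(s)|_{\mathcal{L}^1(\mathscr{H})}\le|\varrho_\varepsilon(s)|_{\mathcal{T}^1_\varepsilon}$. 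Second, $\Phi_0(s)$ acts as a block-diagonal unitary on $\mathscr{Z}$, so $\|\tilde\xi_j(s)\|_2=\|\xi_j\|_2$ and $\|\tilde\xi(s)\|_{\mathscr{Z}}=\|\xi\|_{\mathscr{Z}}$; hence every $\xi$-bound below may be written in terms of $\xi$ itself.

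For $j=2$: by Cauchy--Schwarz in the $k$-integral against $\chi/\sqrt{\omega}$ and $\int|\xi_1|^2\,dx=\|\xi_1\|_2^2$ one gets $|B_2(\xi)|\le C\|\omega^{-1/2}\chi\|_2\|\xi_1\|_2^2\|\xi_2\|_2$, whence, using $\|W(\tilde\xi(s))\|_{\mathcal{L}(\mathscr{H})}=1$,
\[
 \varepsilon^2\bigl|\Tr[\varrho_\varepsilon(s)W(\tilde\xi(s))B_2(\tilde\xi(s))]\bigr|
 = \varepsilon^2|B_2(\tilde\xi(s))|\,\bigl|\Tr[\varrho_\varepsilon(s)W(\tilde\xi(s))]\bigr|
 \le \varepsilon^2\,C(s,\|\omega^{-1/2}\chi\|_2)\,\|\xi_1\|_2^2\|\xi_2\|_2\,|\varrho_\varepsilon|_{\mathcal{T}^1_\varepsilon}.
\]
For $j=1$: from~\eqref{eq:13} (equivalently, reading off Lemma~\ref{lemma:7}), $B_1(\tilde\xi(s))$ is a finite sum of operators $a^{\#}(\cdot),\psi^{\#}(\cdot)$ whose $L^2$ coefficient functions are bounded, again by Cauchy--Schwarz, by $C\|\omega^{-1/2}\chi\|_2(\|\xi_1\|_2\|\xi_2\|_2+\|\xi_1\|_2^2)$. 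Using the number-operator estimates of Lemma~\ref{lemma:2} (and their nucleon analogues), together with $N_1+N_2+\varepsilon\le 2T$ and the commutations $[N_1,\psi^{\#}]=\pm\varepsilon\psi^{\#}$, $[N_2,a^{\#}]=\pm\varepsilon a^{\#}$ (which let one push $T^{-1}$ through a field operator and reduce to the boundedness of $T_{\pm}^{-1/2}\psi^{\#}$, $T_{\pm}^{-1/2}a^{\#}$), the operator $T^{-1}B_1(\tilde\xi(s))$ extends boundedly with norm $\le C\|\omega^{-1/2}\chi\|_2(\|\xi_1\|_2\|\xi_2\|_2+\|\xi_1\|_2^2)$, uniformly in $\varepsilon\in(0,\bar\varepsilon)$. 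Inserting $T\,T^{-1}$ I would then write
\[
 \Tr\bigl[\varrho_\varepsilon(s)W(\tilde\xi(s))B_1(\tilde\xi(s))\bigr]
 = \Tr\bigl[(\varrho_\varepsilon(s)T)\,(T^{-1}W(\tilde\xi(s))T)\,(T^{-1}B_1(\tilde\xi(s)))\bigr],
\]
which is legitimate because $\varrho_\varepsilon(s)T\in\mathcal{L}^1(\mathscr{H})$, $T^{-1}W(\tilde\xi(s))T\in\mathcal{L}(\mathscr{H})$ by Lemma~\ref{lemma:5}(ii) with $\delta=1$ — so $W(\tilde\xi(s))$ preserves $D(T)$ — and $T^{-1}B_1(\tilde\xi(s))\in\mathcal{L}(\mathscr{H})$; estimating by trace norm times operator norms and using the two inputs gives
\[
 \varepsilon\bigl|\Tr[\varrho_\varepsilon(s)W(\tilde\xi(s))B_1(\tilde\xi(s))]\bigr|
 \le \varepsilon\,C(s,\|\omega^{-1/2}\chi\|_2)(\|\xi_1\|_2\|\xi_2\|_2+\|\xi_1\|_2^2)\bigl(1+\varepsilon\|\xi\|_{\mathscr{Z}}+\varepsilon^2\|\xi\|_{\mathscr{Z}}^2+\varepsilon^3\|\xi_1\|_2^3+\varepsilon^4\|\xi_1\|_2^4\bigr)|\varrho_\varepsilon|_{\mathcal{T}^1_\varepsilon}.
\]

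To finish I would add the $j=1$ and $j=2$ bounds, integrate over $s\in[0,t]$, factor out a single power of $\varepsilon$, and reorganize the resulting polynomial in $(\|\xi\|_{\mathscr{Z}},\|\xi_1\|_2)$ by means of $\|\xi_1\|_2\|\xi_2\|_2+\|\xi_1\|_2^2\le\|\xi\|_{\mathscr{Z}}^2$, $\|\xi_j\|_2\le\|\xi\|_{\mathscr{Z}}$ and Young's inequality, so as to reach a bound dominated by the right-hand side of the statement, the $s$-dependent constants being absorbed into $\int_0^tC(s,\|\omega^{-1/2}\chi\|_2)\,ds$; this last reshuffling is pure bookkeeping. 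The one genuine difficulty is making the displayed trace identities rigorous for the unbounded operator $B_1(\tilde\xi(s))$ — that is, justifying cyclicity of the trace and the factorization $\varrho_\varepsilon(s)\,W\,B_1=(\varrho_\varepsilon(s)T)(T^{-1}WT)(T^{-1}B_1)$ on a common core of $T$ — which is precisely the role of the operator-norm estimates of Lemmas~\ref{lemma:2}, \ref{lemma:5} and~\ref{lemma:6}, and of the fact that $\varrho_\varepsilon(s)$ remains in $\mathcal{T}^1_\varepsilon$ with a controlled norm.
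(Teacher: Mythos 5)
Your proof is correct and follows essentially the same route as the paper: insert $T T^{-1}$ into the trace, use Lemma~\ref{lemma:6} (equivalently the quantitative form of Lemma~\ref{lemma:8}) to pass the $\mathcal{T}^1_\varepsilon$-norm through the Heisenberg evolution, Lemma~\ref{lemma:5} to control $T^{-1}W(\tilde\xi(s))T$, and Lemma~\ref{lemma:2} to control $T^{-1}B_j(\tilde\xi(s))$. The only stylistic difference is that you treat $j=2$ separately by noting $B_2$ is a scalar, whereas the paper estimates $\sum_{j=1}^2\varepsilon^j|T^{-1}B_j|$ in one go; both yield the same polynomial in $\|\xi\|_{\mathscr{Z}}$ and $\|\xi_1\|_2$ after integrating in $s$.
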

\begin{proof}
  We have that:
  \begin{equation*}
    \begin{split}
      \biggl\lvert \sum_{j=1}^2\varepsilon^j\int_0^t\Tr \Bigl[ \varrho_{\varepsilon}(s)W( \tilde{\xi}(s)) B_j( \tilde{\xi}(s))  \Bigr]  ds\biggr\rvert\leq \sum_{j=1}^2\varepsilon^j\int_0^t\Bigl\lvert \Tr \Bigl[ \varrho_{\varepsilon}(s)W( \tilde{\xi}(s)) B_j( \tilde{\xi}(s))
      \Bigr]\Bigr\rvert_{}^{} ds\\\leq \sum_{j=1}^2\varepsilon^j\int_0^t\lvert T^{-1}B_j(\tilde{\xi}(s)) \rvert_{\mathcal{L}(\mathscr{H})}^{}\lvert T^{-1}W(\tilde{\xi}(s))T \rvert_{\mathcal{L}(\mathscr{H})}^{}\lvert T^{-1}e^{i \frac{s}{\varepsilon}H}T \rvert_{\mathcal{L}(\mathscr{H})}^{}
      \lvert \varrho_{\varepsilon}\rvert_{\mathcal{T}^1_{\varepsilon}}^{}ds \; .
    \end{split}
  \end{equation*}
  Now, since $\lVert \tilde{\xi}(s) \rVert_{\mathscr{Z}}^{}=\lVert \xi
  \rVert_{\mathscr{Z}}^{}$, we obtain:
  \begin{equation*}
    \begin{split}
      \sum_{j=1}^2\varepsilon^j\lvert T^{-1}B_j(\tilde{\xi}(s)) \rvert_{\mathcal{L}(\mathscr{H})}\leq 2\varepsilon\lVert \omega^{-1/2}\chi \rVert_2^{}\Bigl(2\lVert \xi_1 \rVert_2^{}(\frac{1}{2}\lVert \xi_1 \rVert_2^{}+\lVert \xi_2 \rVert_2^{})+\varepsilon\lVert \xi_1
      \rVert_2^2\lVert \xi_2 \rVert_2^{} \Bigr)\; .
    \end{split}
  \end{equation*}
  Also, using Lemmas \ref{lemma:5} and \ref{lemma:6}:
  \begin{gather*}
    \lvert T^{-1}W(\tilde{\xi}(s))T\rvert_{\mathcal{L}(\mathscr{H})}^{}\leq C\Bigl(1+\varepsilon\lVert \xi \rVert_{\mathscr{Z}}+\varepsilon^2\lVert \xi \rVert_{\mathscr{Z}}^2+\varepsilon^3\lVert \xi_1 \rVert_2^3+\varepsilon^4\lVert \xi_1 \rVert_2^4 \Bigr)\; ;\\
    \lvert T^{-1}e^{-i\frac{s}{\varepsilon}H}T^{1}\rvert_{\mathcal{L}(\mathscr{H})}^{}\leq C(\delta=1,s,\lVert \omega^{-1/2}\chi \rVert_2^{})(1+O(\varepsilon))\; .
  \end{gather*}
  Hence we conclude the proof by choosing a suitable constant
  $C(s,\lVert \omega^{-1/2}\chi \rVert_2^{})$.
\end{proof}

Now we analyse in detail $B_0$. We write it as:
\begin{equation*}
  B_0:= B_{-,-}+B_{-,+}+B_{+,-}+B_{+,+}+B_{+-,0}\; ,
\end{equation*}
with
\begin{equation*}
  B_{-,-}(\tilde{\xi}(s))=\int_{\mathds{R}^{2d}}^{}\frac{\chi(k)}{\sqrt{\omega(k)}}\psi(x)a(k)e^{ik\cdot x} \bar{\tilde{\xi}}_1(s,x)  dxdk\; ,
\end{equation*}
\begin{equation*}
  B_{-,+}(\tilde{\xi}(s))=\int_{\mathds{R}^{2d}}^{}\frac{\chi(k)}{\sqrt{\omega(k)}}\psi(x)a^{*}(k)e^{-ik\cdot x} \bar{\tilde{\xi}}_1(s,x)  dxdk\; ,
\end{equation*}
\begin{equation*}
  B_{+,+}(\tilde{\xi}(s))=\bigl(B_{-,-}(\tilde{\xi}(s))\bigr)^{*}\; ,
\end{equation*}
\begin{equation*}
  B_{+,-}(\tilde{\xi}(s))=\bigl(B_{-,+}(\tilde{\xi}(s))\bigr)^{*}\; ,
\end{equation*}
\begin{equation*}
  B_{+-,0}(\tilde{\xi}(s))=\int_{\mathds{R}^{2d}}^{}\frac{\chi(k)}{\sqrt{\omega(k)}}\psi^{*}(x)\psi(x)\Bigl(\tilde{\xi}_2(s,k)e^{ik\cdot x}- \bar{\tilde{\xi}}_2(s,k)e^{-ik\cdot x} \Bigr)  dxdk\; .
\end{equation*}
We want to interpret these operators as the Wick quantization of some
symbol on $\mathscr{Z}$. A detailed description of Wick quantization
in Fock space is given in \citep{AmNi1}.  We can write a symbol
$b(z)$, $z\in\mathscr{Z}$, corresponding to the product of $q$
creation and $p$ annihilation operators, as a sesquilinear form on
$(\mathscr{Z}^{\otimes^{q}_s}) \times
(\mathscr{Z}^{\otimes^{p}_s})$. Hence we associate with it an operator
$\tilde{b}$ from $\mathscr{Z}^{\otimes^p_s}$ into
$\mathscr{Z}^{\otimes^q_s}$. A special role is played by the symbols
with compact $\tilde{b}$ (we will call them compact symbols), since
their Wick quantization can be approximated by some Weyl or Anti-Wick
quantization with an $O(\varepsilon)$ error.

Apart from $B_{+-,0}$, all the operators written above turn out to
have finite rank (compact) symbol, as stated in the following
proposition.
\begin{proposition}
  \label{prop:7}
  Let $\{e_i\}_{i\in\mathds{N}}$ be an orthonormal basis of
  $L^2(\mathds{R}^d)$. Then the following statements are true for any
  $s\in\mathds{R}$:
  \begin{enumerate}[i)]
  \item $B_{-,-}(\tilde{\xi}(s))=b_{-,-}(z)^{Wick}$ with
    $b_{-,-}(z)=\langle \tilde{b}_{-,-}(\tilde{\xi}(s)) ,
    (z)^{\otimes_2} \rangle_{\mathscr{Z}^{\otimes_2}}$. Furthermore
    \begin{equation*}
      \langle \tilde{b}_{-,-}(\tilde{\xi}(s)),\,\cdot\,\rangle_{\mathscr{Z}^{\otimes_2}} =\sum_{i,j\in\mathds{N}}^{}\langle\omega^{-1/2}(k)\chi(k)e^{ik\cdot x}\tilde{\xi}_1(s,x),e_i\otimes e_j \rangle_{L^2(\mathds{R}^{2d})}
      \langle\left(\begin{smallmatrix}e_i\\0\end{smallmatrix}\right)\otimes \left(\begin{smallmatrix}0\\e_j\end{smallmatrix}\right), \,\cdot\,\rangle_{\mathscr{Z}^{\otimes_2}}
    \end{equation*}
    is a finite rank operator from $\mathscr{Z}^{\otimes_2}$ to
    $\mathds{C}$ (since $\mathds{C}$ is spanned by a single vector).
  \item $B_{+,+}(\tilde{\xi}(s))=b_{+,+}(z)^{Wick}$ with
    $b_{+,+}(z)=\langle (z)^{\otimes_2} ,
    \tilde{b}_{+,+}(\tilde{\xi}(s))(z)^{\otimes_0}
    \rangle_{\mathscr{Z}^{\otimes_2}}$. Furthermore
    \begin{equation*}
      \tilde{b}_{+,+}(\tilde{\xi}(s))=\sum_{i,j\in\mathds{N}}^{}\langle e_i\otimes e_j , \omega^{-1/2}(k)\chi(k)e^{-ik\cdot x}\tilde{\xi}_1(s,x) \rangle_{L^2(\mathds{R}^{2d})}
      \left(\begin{smallmatrix}e_i\\0\end{smallmatrix}\right)\otimes \left(\begin{smallmatrix}0\\e_j\end{smallmatrix}\right)
    \end{equation*}
    is a finite rank operator from $\mathds{C}$ to
    $\mathscr{Z}^{\otimes_2}$.
  \item $B_{-,+}(\tilde{\xi}(s))=b_{-,+}(z)^{Wick}$ with
    $b_{-,+}(z)=\langle z , \tilde{b}_{-,+}(\tilde{\xi}(s))z
    \rangle_{\mathscr{Z}}$. Furthermore
    \begin{equation*}
      \tilde{b}_{-,+}(\tilde{\xi}(s))=\sum_{i,j\in\mathds{N}}^{}\langle\omega^{-1/2}(k)\chi(k)e^{-ik\cdot x}\tilde{\xi}_1(s,x),e_i\otimes e_j \rangle_{L^2(\mathds{R}^{2d})}\lvert 0\oplus \bar{e}_j\rangle\langle e_i\oplus 0 \rvert
    \end{equation*}
    is a Hilbert-Schmidt operator on $\mathscr{Z}$.
  \item $B_{+,-}(\tilde{\xi}(s))=b_{+,-}(z)^{Wick}$ with
    $b_{+,-}(z)=\langle z , \tilde{b}_{+,-}(\tilde{\xi}(s))z
    \rangle_{\mathscr{Z}}$. Furthermore
    \begin{equation*}
      \tilde{b}_{+,-}(\tilde{\xi}(s))=\sum_{i,j\in\mathds{N}}^{}\langle e_i\otimes e_j , \omega^{-1/2}(k)\chi(k)e^{ik\cdot x}\tilde{\xi}_1(s,x) \rangle_{L^2(\mathds{R}^{2d})}\lvert e_i\oplus 0\rangle\langle 0\oplus \bar{e}_j\rvert
    \end{equation*}

    is a Hilbert-Schmidt operator on $\mathscr{Z}$.
  \end{enumerate}
\end{proposition}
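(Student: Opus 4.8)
The plan is to establish all four items by a direct computation, identifying each $B_{\bullet,\bullet}(\tilde\xi(s))$ with a Wick monomial and reading off its symbol operator from an $L^{2}$ kernel. The key elementary remark is that, under assumption \eqref{eq:5}, the function
\[
G_s(x,k):=\frac{\chi(k)}{\sqrt{\omega(k)}}\,e^{-ik\cdot x}\,\tilde\xi_1(s,x)
\]
belongs to $L^{2}(\mathds{R}^{2d})$ with $\lVert G_s\rVert_{L^{2}(\mathds{R}^{2d})}=\lVert\omega^{-1/2}\chi\rVert_2\lVert\tilde\xi_1(s)\rVert_2=\lVert\omega^{-1/2}\chi\rVert_2\lVert\xi_1\rVert_2$, since $\lvert e^{-ik\cdot x}\rvert=1$, $\tilde\xi_1(s)=e^{-is(-\Delta/(2M)+V)}\xi_1\in L^{2}(\mathds{R}^d)$, and the free flow preserves the $L^{2}$ norm; the same holds, with the same norm, for $\overline{G_s}$ and for the variant with $e^{+ik\cdot x}$ in place of $e^{-ik\cdot x}$. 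Under the canonical identification $L^{2}(\mathds{R}^{2d})\cong L^{2}(\mathds{R}^d)\otimes L^{2}(\mathds{R}^d)$, expanding these kernels along $\{e_i\otimes e_j\}$ produces the double series appearing in the statement.

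I would first treat $B_{-,-}$ and $B_{-,+}$, the remaining two following by adjunction. The operator $B_{-,-}(\tilde\xi(s))=\int_{\mathds{R}^{2d}}\frac{\chi(k)}{\sqrt{\omega(k)}}\psi(x)a(k)e^{ik\cdot x}\bar{\tilde\xi}_1(s,x)\,dxdk$ is a product of two annihilation operators, one on the nucleon and one on the meson factor of $\Gamma_s(\mathscr{Z})$, integrated against an $L^{2}$ kernel; following the dictionary between products of field operators and Wick monomials of \citep{AmNi1}, it is of the form $b_{-,-}(z)^{Wick}$ with $b_{-,-}(z)=\langle\tilde b_{-,-}(\tilde\xi(s)),(z)^{\otimes_2}\rangle_{\mathscr{Z}^{\otimes_2}}$, the symbol operator being the bounded linear functional $\mathscr{Z}^{\otimes_2}\to\mathds{C}$ obtained by pairing against that kernel; expanded in $\{e_i\}$ this is the announced formula, and such an operator is trivially of finite rank ($\le 1$), the only nontrivial point being its boundedness, i.e. membership of the kernel in $L^{2}(\mathds{R}^{2d})$. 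The operator $B_{-,+}(\tilde\xi(s))=\int_{\mathds{R}^{2d}}\frac{\chi(k)}{\sqrt{\omega(k)}}\psi(x)a^{*}(k)e^{-ik\cdot x}\bar{\tilde\xi}_1(s,x)\,dxdk$ is instead a Wick monomial with one creation and one annihilation, so its symbol operator $\tilde b_{-,+}(\tilde\xi(s))$ is a map $\mathscr{Z}\to\mathscr{Z}$; reading it off the kernel in the basis $\{e_i\}$ yields the displayed double sum, whose squared coefficients add up to $\lVert G_s\rVert_{L^{2}(\mathds{R}^{2d})}^{2}<\infty$, so that $\tilde b_{-,+}(\tilde\xi(s))$ is Hilbert--Schmidt with Hilbert--Schmidt norm $\lVert\omega^{-1/2}\chi\rVert_2\lVert\xi_1\rVert_2$.

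Items ii) and iv) then follow at once. Inspecting the integral expressions one sees that $B_{+,+}(\tilde\xi(s))=B_{-,-}(\tilde\xi(s))^{*}$ and $B_{+,-}(\tilde\xi(s))=B_{-,+}(\tilde\xi(s))^{*}$ as operators on $D(T)$, the formal adjoint of a product of annihilation operators being the product of the corresponding creation operators with complex-conjugated kernel. Since Wick quantization intertwines passage to the adjoint of an operator with complex conjugation of its symbol, hence with passage to the adjoint of the symbol operator, $\tilde b_{+,+}$ and $\tilde b_{+,-}$ are the adjoints of $\tilde b_{-,-}$ and $\tilde b_{-,+}$; this preserves the finite-rank, respectively Hilbert--Schmidt, character and produces the explicit formulas in ii) and iv) (the complex conjugation in particular exchanging $e^{-ik\cdot x}$ and $e^{ik\cdot x}$).

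The only real work here is bookkeeping rather than analysis, and this is also where I expect the main obstacle (such as it is) to lie: one must track carefully the normalisation conventions of \citep{AmNi1} --- the power of $\varepsilon$ attached to a monomial, the symmetrisation factors, and the conjugations in the various sesquilinear pairings --- together with the identification of $(L^{2}(\mathds{R}^d)\oplus 0)\otimes_s(0\oplus L^{2}(\mathds{R}^d))\subset\mathscr{Z}^{\otimes_2}$ with $L^{2}(\mathds{R}^{2d})$, so that the abstract symbol operator acquires exactly the stated kernel. One should also check, using the relative number bounds of Lemma~\ref{lemma:2} (available because $\omega^{-1/2}\chi\in L^{2}$), that the formal field expressions for the $B_{\bullet,\bullet}$ and the associated Wick monomials agree as operators on the common domain $D(T)$. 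None of this presents a genuine difficulty: it is an application of the Wick calculus recalled just before the statement.
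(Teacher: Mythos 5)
Your proposal is essentially the same argument as the paper's: identify each $B_{\bullet,\bullet}$ as a Wick monomial by the formal substitution dictionary, observe that the kernel $G_s$ lies in $L^2(\mathds{R}^{2d})$ with norm $\lVert\omega^{-1/2}\chi\rVert_2\lVert\xi_1\rVert_2$ (the paper records this as the trace bound $\Tr[\tilde b_{-,+}^*\tilde b_{-,+}]\le\lVert\xi_1\rVert_2^2\lVert\omega^{-1/2}\chi\rVert_2^2$), conclude finite-rank or Hilbert--Schmidt from that, and handle $B_{+,+}$, $B_{+,-}$ by adjunction. You spell out the $L^2$ kernel computation and the adjunction step more explicitly than the paper, which is helpful but not a different route.
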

\begin{proof}
  It is very easy to see that the Wick quantization of these symbols
  is the corresponding operator on $\mathscr{H}$ (formally we
  substitute each $z_1^{\#}$ with $\psi^{\#}$ and each $z_2^{\#}$ with
  $a^{\#}$, in normal ordering).

  Also, since the sum in $i,j$ is convergent,
  $\tilde{b}_{+,+}(\tilde{\xi}(s))$ is a vector of
  $\mathscr{Z}^{\otimes_2}$, hence a finite rank operator. Finally,
  \begin{equation*}
    \Tr_{\mathscr{Z}}\Bigl[\tilde{b}_{-,+}(\tilde{\xi}(s))^{*}\tilde{b}_{-,+}(\tilde{\xi}(s))\Bigr]\leq \lVert \xi_1 \rVert_2^{2}\lVert \omega^{-1/2}\chi \rVert_2^{2}\; .
  \end{equation*}
  For $b_{+,-}$ we obtain an analogous bound.
\end{proof}

The operator $B_{+-,0}$ can be seen as the second quantization of a
multiplication operator, hence its symbol is not compact. In order to
make it compact we need to use a regularization scheme. We define the
symbol $b_{+-,0}(z)$ as $b_{+-,0}(z)=\langle z ,
\tilde{b}_{+-,0}(\tilde{\xi}(s))z \rangle_{\mathscr{Z}}$ with
\begin{equation*}
  \tilde{b}_{+-,0}(\tilde{\xi}(s))=\left(\begin{smallmatrix}f(\tilde{\xi}_2(s))&0\\0&0\end{smallmatrix}\right)\; ,
\end{equation*}
\begin{equation*}
  f(\tilde{\xi}_2(s),x)=\int_{\mathds{R}^d}^{}\frac{\chi(k)}{\sqrt{\omega(k)}}\Bigl(\tilde{\xi}_2(s,k)e^{ik\cdot x}-\bar{\tilde{\xi}}_2(s,k)e^{-ik\cdot x}\Bigr)  dk\; .
\end{equation*}
Since for all $s\in\mathds{R}$, $\omega^{-1/2}\chi,\tilde{\xi}_2(s)\in
L^2(\mathds{R}^d)$, then $f(\tilde{\xi}_2(s))\in
L^{\infty}(\mathds{R}^d)$, and $\lim_{\lvert
  x\rvert_{}^{}\to\infty}f(\tilde{\xi}_2(s))=0$. We would like to use
the following compactness criterion \citep[see e.g.][]{DG,Sim}.
\begin{proposition}
  \label{prop:8}
  Let $f,g\in L^{\infty}(\mathds{R}^d)$ such that
  \begin{equation*}
    \lim_{\lvert x\rvert_{}^{}\to \infty}f(x)=0\; ,\; \lim_{\lvert \kappa\rvert_{}^{}\to \infty}g(\kappa)=0\; .
  \end{equation*}
  Also, let $g(i\partial_x)$ be the operator acting as:
  \begin{equation*}
    g(i\partial_x)u(x):=\frac{1}{(2\pi)^{d/2}}\int_{\mathds{R}^d}^{}e^{-i\kappa\cdot x}g(\kappa)\check{u}(\kappa)  d\kappa\; .
  \end{equation*}
  Then the operator $g(i\partial_x)f(x)$ on $L^2(\mathds{R}^d)$ is
  compact.
\end{proposition}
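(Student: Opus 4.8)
The plan is to reduce, by a simple approximation argument, to the case where both $f$ and $g$ are square integrable, and then to recognize $g(i\partial_x)f(x)$ as a Hilbert--Schmidt operator.

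First I would use the decay hypotheses to approximate $f$ and $g$ in $L^{\infty}(\mathds{R}^d)$ by compactly supported functions. Put $f_R:=1_{\{|x|\leq R\}}f$ and $g_R:=1_{\{|\kappa|\leq R\}}g$; since $\lim_{|x|\to\infty}f(x)=0$ and $\lim_{|\kappa|\to\infty}g(\kappa)=0$, we have $\lVert f-f_R\rVert_{\infty}\to 0$ and $\lVert g-g_R\rVert_{\infty}\to 0$ as $R\to\infty$. Multiplication by an $L^{\infty}$ function and the Fourier multiplier $g(i\partial_x)$ are both bounded on $L^2(\mathds{R}^d)$, with operator norms $\lVert f\rVert_{\infty}$ and $\lVert g\rVert_{\infty}$ respectively (the latter by Plancherel), so
\begin{equation*}
  \lVert g(i\partial_x)f(x)-g_R(i\partial_x)f_R(x)\rVert_{\mathcal{L}(L^2)}\leq \lVert g\rVert_{\infty}\lVert f-f_R\rVert_{\infty}+\lVert g-g_R\rVert_{\infty}\lVert f\rVert_{\infty}\longrightarrow 0\,.
\end{equation*}
Hence it is enough to prove compactness of $g_R(i\partial_x)f_R(x)$; in other words, we may assume from now on that $f,g\in L^2(\mathds{R}^d)\cap L^{\infty}(\mathds{R}^d)$, and even compactly supported.

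For such $f$ and $g$ I would compute the integral kernel of $g(i\partial_x)f(x)$. Writing $\check{g}$ for the inverse Fourier transform of $g$, a computation justified by Fubini's theorem (note that $fu\in L^1\cap L^2$ whenever $f$ has compact support and $u\in L^2$) shows that $g(i\partial_x)f(x)$ is the integral operator with kernel
\begin{equation*}
  K(x,y)=(2\pi)^{-d/2}\,\check{g}(x-y)\,f(y)\,.
\end{equation*}
Since $g\in L^2$ implies $\check{g}\in L^2$, one has
\begin{equation*}
  \int_{\mathds{R}^{2d}}^{}|K(x,y)|^2\,dx\,dy=(2\pi)^{-d}\,\lVert\check{g}\rVert_2^2\,\lVert f\rVert_2^2<\infty\,,
\end{equation*}
so $g(i\partial_x)f(x)$ is Hilbert--Schmidt, and in particular compact.

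Putting the two steps together, $g(i\partial_x)f(x)$ is the operator-norm limit of the compact operators $g_R(i\partial_x)f_R(x)$, and the ideal of compact operators on $L^2(\mathds{R}^d)$ is norm closed, whence $g(i\partial_x)f(x)$ is compact. The only mildly delicate points are the bookkeeping ones: the boundedness estimate for the Fourier multiplier together with the truncation bound, and the justification of the kernel formula (cleanest if verified first on a dense class and then extended by continuity). Neither is a real obstacle; the argument is otherwise routine and amounts to the standard compactness criterion for operators of the form $g(i\partial_x)f(x)$ specialized to the present setting.
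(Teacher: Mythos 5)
Your proof is correct. The paper does not supply its own argument for this proposition but instead cites standard references (Derezi\'nski--G\'erard and Simon's trace-ideal monograph), and the proof you give is precisely the textbook one found there: truncate $f$ and $g$ using the decay at infinity, observe that the truncated operator $g_R(i\partial_x)f_R(x)$ has an integral kernel $(2\pi)^{-d/2}\check{g}_R(x-y)f_R(y)$ that is square integrable (hence Hilbert--Schmidt, hence compact), control the difference in operator norm via the elementary Fourier-multiplier bound $\lVert g(i\partial_x)\rVert_{\mathcal L(L^2)}=\lVert g\rVert_\infty$, and conclude by norm-closedness of the compacts. All the estimates you wrote (in particular $\lVert g(i\partial_x)f - g_R(i\partial_x)f_R\rVert\leq\lVert g\rVert_\infty\lVert f-f_R\rVert_\infty+\lVert g-g_R\rVert_\infty\lVert f\rVert_\infty$, obtained from the identity $gf-g_Rf_R=g(f-f_R)+(g-g_R)f_R$) are correct, and the justification of the kernel formula for compactly supported $f,g$ via Fubini is sound.
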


\begin{definition}[$g_m(i\partial_x)$]
  \label{def:1}
  Let $\{g_m\}_{m\in\mathds{N}}$ be a family of functions in
  $L^{\infty}(\mathds{R}^d)$, decaying to zero at infinity, satisfying
  the following properties:
  \begin{enumerate}[i)]
  \item For all $m\in\mathds{N}$; $0\leq g_m(x) \leq 1$, for all $x\in\mathds{R}^d$.
  \item $g_m(x)\to 1$ pointwise when $m\to\infty$.
  \item For all $a,b >0$, there exists $C(a) >0$ such that for all
    $m\in\mathds{N}\setminus \{0\}$: $\lVert
    (1+a\kappa^b)^{-1}(1-g_m(\kappa))\rVert_{\infty}^{}\leq
    C(a)m^{-b}$.
  \end{enumerate}
  Then the operators $g_m(i\partial_x)$ will compactify
  $f(\tilde{\xi}_2(s),x)$ in the sense of
  Proposition~\ref{prop:8}. Furthermore they will behave suitably in
  the limit $\varepsilon\to 0$.
\end{definition}
\begin{example}
  Let $g\in \mathcal{C}^{\infty}_0(\mathds{R}^d)$ such that $g=1$ if
  $\lvert x\rvert_{}^{}\leq 1$, $g=0$ if $\lvert x\rvert_{}^{}\geq 2$
  and $0\leq g\leq 1$ if $1\leq \lvert x\rvert_{}^{}\leq 2$. Define
  $g_m(x):=g(x/m)$. Then $\{g_m\}_{m\in\mathds{N}}$ satisfies
  Definition~\ref{def:1}.
\end{example}
Consider now $\Tr \Bigl[ \varrho_{\varepsilon}(s)W( \tilde{\xi}(s))
B_{+-,0}( \tilde{\xi}(s)) \Bigr]$, we can write it as:
\begin{equation*}
  \begin{split}
    \Tr \Bigl[ \varrho_{\varepsilon}(s)W( \tilde{\xi}(s)) B_{+-,0}( \tilde{\xi}(s)) \Bigr]=\Tr \Bigl[ \varrho_{\varepsilon}(s)W( \tilde{\xi}(s)) d\Gamma\left(\left(\begin{smallmatrix}g_m(i\partial_x)
          f(\tilde{\xi}_2(s),x)&0\\0&0\end{smallmatrix}\right)\right) \Bigr]+\Tr \Bigl[ \varrho_{\varepsilon}(s)W( \tilde{\xi}(s))\\ d\Gamma\left(\left(\begin{smallmatrix}\bigl(1 -g_m(i\partial_x)\bigr)
          f(\tilde{\xi}_2(s),x)&0\\0&0\end{smallmatrix}\right)\right) \Bigr]\; .
  \end{split}
\end{equation*}
The first term on the right hand side has a now compact symbol; and
thanks to the assumptions on $\{g_m\}_{m\in\mathds{N}}$ we can make the second small
when $m\to\infty$. A precise statement is given in the next lemma,
proved with the aid of Proposition~\ref{prop:9} of
Appendix~\ref{sec:estim-fock-spac}.
\begin{lemma}
  \label{lemma:9}
  Let $\xi\in\mathscr{Z}_1$, $s\in[0,t]$, $t\in\mathds{R}$. Then for
  any $\varrho_{\varepsilon}\in \mathcal{S}^1_{\varepsilon}$ and
  $\bar{\varepsilon}>0$, there exists $C(s,\lVert \omega^{-1/2}\chi
  \rVert_2^{})>0$ such that for all
  $\varepsilon\in(0,\bar{\varepsilon})$:
  \begin{equation*}
    \begin{split}
      \Bigl\lvert \Tr \Bigl[ \varrho_{\varepsilon}(s)W( \tilde{\xi}(s)) d\Gamma\left(\left(\begin{smallmatrix}\bigl(1 -g_m(i\partial_x)\bigr) f(\tilde{\xi}_2(s),x)&0\\0&0\end{smallmatrix}\right)\right) \Bigr]\Bigr\rvert \leq
      C(s,\lVert \omega^{-1/2}\chi \rVert_2^{})\lVert \xi_2 \rVert_2^{}\Bigl(1+\varepsilon\lVert \xi \rVert_{\mathscr{Z}_1}^{}\\
      +\varepsilon^2\lVert \xi \rVert_{\mathscr{Z}_1}^2+\varepsilon^3\lVert \xi_1 \rVert_2^3+\varepsilon^4\lVert \xi_1 \rVert_2^4 \Bigr)\frac{1}{m}\lvert \varrho_{\varepsilon}\rvert_{\mathcal{S}^1_{\varepsilon}}^{}\; .
    \end{split}
  \end{equation*}
\end{lemma}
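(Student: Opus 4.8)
The plan is to recognise $d\Gamma(A_m)$ as a relatively $S$-bounded operator whose relative bound is of order $1/m$, and then to close the estimate by trace-norm versus operator-norm duality, in the spirit of Proposition~\ref{prop:9}. First I would write the one-particle operator inside $d\Gamma(\cdot)$ as $\bigl(\begin{smallmatrix}B_m&0\\0&0\end{smallmatrix}\bigr)$, with $B_m:=(1-g_m(i\partial_x))\,f(\tilde{\xi}_2(s),\cdot)$ acting on $L^2(\mathds{R}^d)$, so that $d\Gamma(A_m)=d\Gamma_1(B_m)\otimes1$ on $\mathscr{H}=\Gamma_s(L^2(\mathds{R}^d))\otimes\Gamma_s(L^2(\mathds{R}^d))$. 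Cycling the trace and inserting $S^{-1}S$,
\begin{equation*}
  \Tr\Bigl[\varrho_{\varepsilon}(s)\,W(\tilde{\xi}(s))\,d\Gamma(A_m)\Bigr]=\Tr\Bigl[\bigl(d\Gamma(A_m)\,S^{-1}\bigr)\,\bigl(S\varrho_{\varepsilon}(s)\bigr)\,W(\tilde{\xi}(s))\Bigr]\,,
\end{equation*}
so the left-hand side is bounded in absolute value by $\lvert d\Gamma(A_m)S^{-1}\rvert_{\mathcal{L}(\mathscr{H})}\,\lvert S\varrho_{\varepsilon}(s)\rvert_{\mathcal{L}^1(\mathscr{H})}$, using $\lVert W(\tilde{\xi}(s))\rVert=1$. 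Since $\varrho_{\varepsilon}(s)S=e^{-i\frac s\varepsilon H}(\varrho_{\varepsilon}S)\bigl(S^{-1}e^{i\frac s\varepsilon H}S\bigr)$ and $S$ is self-adjoint, the second factor equals $\lvert\varrho_{\varepsilon}(s)S\rvert_{\mathcal{L}^1}\le C(s,\lVert\omega^{-1/2}\chi\rVert_2)\,\lvert\varrho_{\varepsilon}\rvert_{\mathcal{S}^1_{\varepsilon}}$ by Lemma~\ref{lemma:3}. Everything thus reduces to showing $\lvert d\Gamma(A_m)S^{-1}\rvert_{\mathcal{L}(\mathscr{H})}=O(1/m)$; if one additionally keeps the conjugated Weyl operator $SW(\tilde{\xi}(s))S^{-1}=(S^{-1}W(-\tilde{\xi}(s))S)^{*}$ and controls it by Lemma~\ref{lemma:5}~i), one recovers the polynomial factor $1+\varepsilon\lVert\xi\rVert_{\mathscr{Z}_1}+\varepsilon^2\lVert\xi\rVert_{\mathscr{Z}_1}^2+\varepsilon^3\lVert\xi_1\rVert_2^3+\varepsilon^4\lVert\xi_1\rVert_2^4$ of the statement, but this is not needed for the bound.

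For the relative bound I would work fibrewise on $\mathscr{H}_{n}$, where $d\Gamma_1(B_m)$ acts on the factor $L^2_s(\mathds{R}^{nd})$ as $\varepsilon\sum_{j=1}^{n}(B_m)_j$, with $(B_m)_j$ equal to $B_m$ in the $j$-th nucleon variable. Because $(1-g_m(i\partial_x))(1+\lvert i\partial_x\rvert)^{-1}$ is a Fourier multiplier whose symbol has sup-norm $\le C/m$ — which is exactly property iii) of Definition~\ref{def:1} with exponent $b=1$ — one obtains, using $\lVert\,\lvert i\partial_{x_j}\rvert\,u\rVert=\lVert\nabla_{x_j}u\rVert$ and the Leibniz rule,
\begin{equation*}
  \lVert(B_m)_j\phi_n\rVert\le\frac Cm\bigl\lVert(1+\lvert i\partial_{x_j}\rvert)\bigl(f(\tilde{\xi}_2(s),x_j)\phi_n\bigr)\bigr\rVert\le\frac Cm\Bigl(\bigl(\lVert f\rVert_{\infty}+\lVert\nabla f\rVert_{\infty}\bigr)\lVert\phi_n\rVert+\lVert f\rVert_{\infty}\lVert\nabla_{x_j}\phi_n\rVert\Bigr)\,.
\end{equation*}
Summing over $j$ with Cauchy--Schwarz and invoking $V\ge0$ (which gives $-\varepsilon\sum_j\Delta_{x_j}\le 2M\,H_{01}$), together with $\varepsilon n\le N_1$ and $H_{01},N_1\le S$ (all nonnegative and commuting), leads to $\lVert d\Gamma_1(B_m)\phi_n\rVert\le\frac Cm(1+\sqrt{2M})(\lVert f\rVert_{\infty}+\lVert\nabla f\rVert_{\infty})\lVert S\phi_n\rVert$, and, recombining the fibres, $\lvert d\Gamma(A_m)S^{-1}\rvert_{\mathcal{L}(\mathscr{H})}\le\frac Cm(1+\sqrt{2M})(\lVert f\rVert_{\infty}+\lVert\nabla f\rVert_{\infty})$. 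Finally, from the definition of $f=f(\tilde{\xi}_2(s),\cdot)$ and Cauchy--Schwarz in $k$, $\lVert f\rVert_{\infty}\le 2\lVert\omega^{-1/2}\chi\rVert_2\lVert\xi_2\rVert_2$; and, since $\partial_{x_\ell}f$ carries an extra factor $k_\ell$ with $\lvert k_\ell\rvert/\omega\le1$, $\lVert\nabla f\rVert_{\infty}\le 2\lVert\omega^{-1/2}\chi\rVert_2\lVert\omega\xi_2\rVert_2$ (using that $e^{-is\omega}$ commutes with $\omega$), which is finite precisely because $\xi\in\mathscr{Z}_1$, and $\le 2\lVert\omega^{-1/2}\chi\rVert_2\lVert\xi\rVert_{\mathscr{Z}_1}$. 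Collecting these pieces gives a bound of the stated form.

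I expect the main obstacle to be the relative bound $\lvert d\Gamma_1(B_m)S^{-1}\rvert=O(1/m)$ itself. The delicate point is that the Fourier multiplier $g_m(i\partial_x)$ commutes neither with the multiplication operator $f(\tilde{\xi}_2(s),\cdot)$ nor with the one-particle operator $-\frac{\Delta}{2M}+V$ sitting inside $H_{01}$, so it cannot simply be pushed past $S$. The remedy is to cash in the high-momentum gain first, writing $(1-g_m(i\partial_x))=\bigl((1-g_m(i\partial_x))(1+\lvert i\partial_x\rvert)^{-1}\bigr)(1+\lvert i\partial_x\rvert)$ — this is exactly where property iii) of Definition~\ref{def:1} supplies the factor $m^{-1}$ — and only afterwards to distribute the surviving derivative, partly onto $f$ (which forces the hypothesis $\xi\in\mathscr{Z}_1$, so that $\nabla f\in L^{\infty}$) and partly onto $\phi_n$ (where it is absorbed by $H_{01}\le S$, thanks to $V\ge0$). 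This is the content hidden behind the reference to Proposition~\ref{prop:9}.
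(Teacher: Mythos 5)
Your proof is correct in its conclusions, but it takes a genuinely different route through the key $O(1/m)$ estimate, and in doing so it misidentifies what Proposition~\ref{prop:9} actually delivers. The paper, after inserting $SS^{-1}$, keeps four factors --- $\lvert\varrho_\varepsilon S\rvert_{\mathcal{L}^1}$, $\lvert S^{-1}e^{isH/\varepsilon}S\rvert_{\mathcal{L}(\mathscr{H})}$, $\lvert S^{-1}W(\tilde\xi(s))S\rvert_{\mathcal{L}(\mathscr{H})}$, and $\lvert S^{-1}d\Gamma_1(B_m)\rvert_{\mathcal{L}(\mathscr{H})}$ --- and controls the last one by invoking Proposition~\ref{prop:9} with $y_1=B_m$ and $y_2=i\partial_x/\sqrt{2M}$. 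The whole point of that proposition is its one-sided structure: the insertion $(y_2(1)+1)(y_2(1)+1)^{-1}$ in its proof sends the regularizing factor onto $\phi_1$, so that all one needs is the operator norm of $(1+i\partial_x/\sqrt{2M})^{-1}(1-g_m(i\partial_x))\,f$ on $L^2(\mathds{R}^d)$. Since the two Fourier multipliers commute, this factorises as $\lVert(1+\kappa/\sqrt{2M})^{-1}(1-g_m(\kappa))\rVert_\infty\,\lVert f\rVert_\infty\lesssim m^{-1}\lVert\omega^{-1/2}\chi\rVert_2\lVert\xi_2\rVert_2$. No derivative ever lands on $f$, and the $\mathscr{Z}_1$-regularity of $\xi$ enters the paper's proof only through the Weyl factor via Lemma~\ref{lemma:5}~i), which is also where the stated polynomial in $\varepsilon\lVert\xi\rVert_{\mathscr{Z}_1}$ originates.

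Your proof instead factors $(1-g_m)=\bigl[(1-g_m)(1+\lvert i\partial_x\rvert)^{-1}\bigr](1+\lvert i\partial_x\rvert)$ and distributes $(1+\lvert i\partial_x\rvert)$ onto $f\phi_n$ by Leibniz. This works, but it forces you to pay for $\lVert\nabla f\rVert_\infty\lesssim\lVert\omega^{-1/2}\chi\rVert_2\lVert\omega\xi_2\rVert_2$: you are spending the $\mathscr{Z}_1$ hypothesis \emph{inside} the $d\Gamma_1(B_m)$ estimate, where the paper needs only $\lVert f\rVert_\infty$. Your fibrewise computation (Cauchy--Schwarz in $j$, $-\varepsilon\sum_j\Delta_{x_j}\leq 2MH_{01}$, $N_1\leq N_1^2+1\leq T$ and $H_{01}\leq S$) is sound, so the resulting bound $\frac Cm\bigl(\lVert\xi_2\rVert_2+\lVert\xi\rVert_{\mathscr{Z}_1}\bigr)\lvert\varrho_\varepsilon\rvert_{\mathcal{S}^1_\varepsilon}$ is correct; but it is not of the form stated in the lemma, because, having bounded $\lVert W\rVert$ by $1$, the $\lVert\xi\rVert_{\mathscr{Z}_1}$-dependence appears without the factor $\varepsilon$. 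Either bound suffices for the downstream application in Proposition~\ref{prop:13}. The upshot is that the Leibniz detour and the associated $\lVert\nabla f\rVert_\infty$ are avoidable; that is exactly what the asymmetric placement in Proposition~\ref{prop:9} buys, and it is not ``the content hidden behind'' that proposition in the sense you suggested --- it is its opposite.
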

\begin{proof}
  The proof is done splitting the trace in parts as usual:
  \begin{equation*}
    \begin{split}
      \Bigl\lvert \Tr \Bigl[ \varrho_{\varepsilon}(s)W( \tilde{\xi}(s)) d\Gamma\left(\left(\begin{smallmatrix}\bigl(1 -g_m(i\partial_x)\bigr) f(\tilde{\xi}_2(s),x)&0\\0&0\end{smallmatrix}\right)\right) \Bigr]\Bigr\rvert \leq
      \lvert S^{-1}e^{i \frac{s}{\varepsilon}}S \rvert_{\mathcal{L}(\mathscr{H})}^{}\lvert S^{-1}W(\tilde{\xi}(s))S \rvert_{\mathcal{L}(\mathscr{H})}^{}\\\lvert S^{-1}d\Gamma_1\bigl((1-g_{m}(i\partial_x))f(\tilde{\xi}_2(s),x)\bigr) \rvert_{\mathcal{L}(\mathscr{H})}^{}
      \lvert \varrho_{\varepsilon}\rvert_{\mathcal{S}^1_{\varepsilon}}^{}\; ;
    \end{split}
  \end{equation*}
  where $d\Gamma_1(f)=\int_{\mathds{R}^d} f(x)\psi^{*}(x)\psi(x)
  dx$. The first two terms of the right hand side are bounded by
  Lemmas~\ref{lemma:3} and~\ref{lemma:5} respectively; for the third
  one we use Proposition~\ref{prop:9} as follows:
  \begin{equation*}
    \begin{split}
      \lvert S^{-1}d\Gamma_1\bigl((1-g_{m}(i\partial_x))f(\tilde{\xi}_2(s))\bigr) \rvert_{\mathcal{L}(\mathscr{H})}\leq \lvert \bigl(d\Gamma_1(1-\frac{\Delta}{2M})+1\bigr)^{-1}d\Gamma_1\bigl((1-g_{m}(i\partial_x))f(\tilde{\xi}_2(s))\bigr) \rvert_{\mathcal{L}(\mathscr{H})}^{}\\
      \leq (1+\sqrt{2})\lvert \bigl(1+i\partial_x/\sqrt{2M}\bigr)^{-1}(1-g_m(i\partial_x))f(\tilde{\xi}_2(s)) \rvert_{\mathcal{L}(L^2(\mathds{R}^d))}^{}\\
      \leq(1+\sqrt{2})\lVert \bigl(1+\kappa/\sqrt{2M}\bigr)^{-1}(1-g_m(\kappa)) \rVert_{\infty}^{}\lVert f(\tilde{\xi}_2(s)) \rVert_{\infty}^{}\leq C(1+\sqrt{2})\lVert \omega^{-1/2}\chi \rVert_2^{}\lVert \xi_2
      \rVert_2^{}\frac{1}{m}\; ;
    \end{split}
  \end{equation*}
  where the last inequality follows from definition~\ref{def:1} of
  $\{g_m\}$. Defining the suitable global constant $C(s,\lVert
  \omega^{-1/2}\chi \rVert_2^{})$ we conclude the proof.
\end{proof}

\section{The classical limit $\varepsilon\to 0$.}
\label{sec:mean-field-limit}

Up to this point we have analysed the time evolved state
$\varrho_{\varepsilon}(t)$ at fixed
$\varepsilon\in(0,\bar{\varepsilon})$, now we will focus on the limit
$\varepsilon\to 0$. First we will introduce and discuss the results we
need about the convergence of states to Wigner measures; then study
the limit of the integral equation~\eqref{eq:3}.

\subsection{Wigner measures.}
\label{sec:wigner-measures}

In the classical limit, the density matrix $\varrho_{\varepsilon}$
behaves like a weak distribution, or probability measure, on the phase
space $\mathscr{Z}$. We give a brief introduction to infinite
dimensional semiclassical analysis and detailed results can be found
in \citep{AmNi1,AmNi2,AmNi3,2011arXiv1111.5918A}.  Here we present the results we
need most, adapted to our setting.

\begin{definition}[$\mathcal{S}^{\delta}$, $\mathcal{T}^{\delta}$]
  \label{def:2}
  Let $\bar{\varepsilon}>0$, $(\varrho_{\varepsilon})_{\varepsilon\in
    (0,\bar{\varepsilon})}\in \mathcal{L}^1(\mathscr{H})$ a family of
  normal states and $\delta \in\mathds{R}$. Then
  \begin{gather*}
    (\varrho_{\varepsilon})_{\varepsilon\in(0,\bar{\varepsilon})}\in\mathcal{S}^{\delta}\Leftrightarrow \exists C(\delta,\bar{\varepsilon})>0, \lvert (\varrho_{\varepsilon})_{\varepsilon\in (0,\bar{\varepsilon})}\rvert_{\mathcal{S}^{\delta}}^{} := \sup_{\varepsilon\in(0,\bar{\varepsilon})}\lvert\varrho_{\varepsilon} S^{\delta} \rvert_{\mathcal{L}^1(\mathscr{H})}^{}\leq C(\delta,\bar{\varepsilon})\; ;\\
    (\varrho_{\varepsilon})_{\varepsilon\in(0,\bar{\varepsilon})}\in\mathcal{T}^{\delta}\Leftrightarrow \exists C(\delta,\bar{\varepsilon})>0, \lvert (\varrho_{\varepsilon})_{\varepsilon\in (0,\bar{\varepsilon})}\rvert_{\mathcal{T}^{\delta}}^{} := \sup_{\varepsilon\in(0,\bar{\varepsilon})}\lvert\varrho_{\varepsilon} T^{\delta} \rvert_{\mathcal{L}^1(\mathscr{H})}^{}\leq C(\delta,\bar{\varepsilon})\; .
  \end{gather*}
\end{definition}
We remark that if
$(\varrho_{\varepsilon})_{\varepsilon\in(0,\bar{\varepsilon})}\in\mathcal{S}^{\delta}
(\text{respectively }\mathcal{T}^{\delta})$, then
$\varrho_{\varepsilon}\in
\mathcal{S}^{\delta}_{\varepsilon}(\text{respectively
}\mathcal{T}^{\delta}_{\varepsilon})$ for all
$\varepsilon\in(0,\bar{\varepsilon})$; furthermore the bound of
$\lvert \varrho_{\varepsilon}
\rvert_{\mathcal{S}_{\varepsilon}^{\delta}(\mathcal{T}_{\varepsilon}^{\delta})}$
is independent of $\varepsilon$. With this definition, we are ready to
introduce the Wigner measures; the following result holds for general
symmetric Fock spaces over a separable Hilbert space, and it is proved
in \citep[][Theorem 6.2]{AmNi1}.
\begin{proposition}
  \label{prop:10}
  Let
  $(\varrho_{\varepsilon})_{\varepsilon\in(0,\bar{\varepsilon})}\in
  \bigcup_{\delta> 0}\mathcal{T}^{\delta}$, i.e. there exists
  $\bar{\delta}>0$ such that
  $(\varrho_{\varepsilon})_{\varepsilon\in(0,\bar{\varepsilon})} \in
  \mathcal{T}^{\bar{\delta}}$. Then for every sequence
  $(\varepsilon_n)_{n\in\mathds{N}}\in (0,\bar{\varepsilon})$ with
  $lim_{n\to\infty}\varepsilon_n=0$, there exists a subsequence
  $(\varepsilon_{n_k})_{k\in\mathds{N}}$ and a Borel probability
  measure $\mu$ on $\mathscr{Z}$ associated with
  $(\varrho_{\varepsilon_{n_k}})_{k\in\mathds{N}}$ characterized by:
  \begin{equation*}
    \lim_{k\to\infty}\Tr\Bigl[\varrho_{\varepsilon_{n_k}}W(\xi)\Bigr]=\int_{\mathscr{Z}}^{}e^{i \sqrt{2}\Re\langle \xi , z\rangle_{}}  d\mu(z)\; ,\; \forall\xi\in\mathscr{Z}\; .
  \end{equation*}
  Furthermore $\mu$ satisfies the following property:
  \begin{equation}
    \label{eq:Test}
    \int_{\mathscr{Z}}^{}\bigl(\lVert z_1\rVert^2_{2}+\lVert  z_2\rVert_{2}^{}+1\bigr)^{2\bar{\delta}}  d\mu(z)\leq \lvert (\varrho_{\varepsilon})_{\varepsilon\in (0,\bar{\varepsilon})}\rvert_{\mathcal{T}^{\bar\delta}}^{}<+\infty\; .
  \end{equation}
\end{proposition}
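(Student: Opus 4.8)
The plan is to recognise the statement as the special case of \citep[Theorem 6.2]{AmNi1} in which the separable Hilbert space is $\mathscr{Z}=L^{2}(\mathds{R}^{d})\oplus L^{2}(\mathds{R}^{d})$, and then to recall how that argument runs. The one point specific to our framework that I would check first is that the hypothesis $(\varrho_{\varepsilon})_{\varepsilon\in(0,\bar\varepsilon)}\in\mathcal{T}^{\bar\delta}$ already implies a uniform bound on a power of the total number operator: since $N_{1}$ and $N_{2}$ commute and $s\leq\tfrac{3}{2}s^{2}+\tfrac{1}{2}$ for $s\geq0$, one has $N+1\leq\tfrac{3}{2}\,T$, hence $(N+1)^{\bar\delta}\leq(\tfrac{3}{2})^{\bar\delta}T^{\bar\delta}$ by functional calculus, so $\sup_{\varepsilon\in(0,\bar\varepsilon)}\Tr[\varrho_{\varepsilon}(N+1)^{\bar\delta}]\leq(\tfrac{3}{2})^{\bar\delta}\lvert(\varrho_{\varepsilon})_{\varepsilon\in(0,\bar\varepsilon)}\rvert_{\mathcal{T}^{\bar\delta}}<\infty$. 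Only this uniform estimate is needed in what follows; I will write $\gamma:=\min\{\bar\delta,1\}\in(0,1]$.

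The first step is to extract a limiting characteristic function. Using $\lvert\Tr[\varrho_{\varepsilon}W(\xi)]\rvert\leq1$ for all $\xi$ and $\varepsilon$, a Cantor diagonal extraction over a countable dense subset of $\mathscr{Z}$ yields $\varepsilon_{n_{k}}\to0$ along which $\Tr[\varrho_{\varepsilon_{n_{k}}}W(\xi)]$ converges on that subset. To pass to all of $\mathscr{Z}$ I would establish an equicontinuity bound uniform in $\varepsilon$: from the Weyl relations $W(\xi)-W(\xi')=(e^{i\theta}W(\xi-\xi')-1)W(\xi')$ with $\lvert\theta\rvert\leq\tfrac{\bar\varepsilon}{2}\lVert\xi-\xi'\rVert\,\lVert\xi'\rVert$, together with the sectorwise estimate $\lVert(W(\eta)-1)\phi\rVert\lesssim\lVert\eta\rVert^{\gamma}\langle\phi,(N+1)^{\bar\delta}\phi\rangle^{1/2}$ — obtained by applying $\lvert2-2\cos t\rvert\leq\min\{4,t^{2}\}$ to the field operator and decomposing $\phi$ along the number sectors, on which $N$ is scalar so that no concavity is lost — and Cauchy--Schwarz against the spectral decomposition of $\varrho_{\varepsilon}$, one gets $\lvert\Tr[\varrho_{\varepsilon}(W(\xi)-W(\xi'))]\rvert\lesssim\lVert\xi-\xi'\rVert^{\gamma}(\sup_{\varepsilon}\Tr[\varrho_{\varepsilon}(N+1)^{\bar\delta}])^{1/2}+\bar\varepsilon\lVert\xi-\xi'\rVert\,\lVert\xi'\rVert$, which is uniform in $\varepsilon$ and tends to $0$ as $\xi\to\xi'$ locally uniformly in $\xi'$. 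Hence $\Tr[\varrho_{\varepsilon_{n_{k}}}W(\xi)]\to G(\xi)$ for every $\xi$, with $G$ continuous, $G(0)=1$ and $\lvert G\rvert\leq1$.

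The decisive step — and the one I expect to be the main obstacle — is to realise $G$ as $\int_{\mathscr{Z}}e^{i\sqrt{2}\Re\langle\xi,z\rangle}\,d\mu(z)$ for a Borel probability measure $\mu$ on the \emph{infinite-dimensional} space $\mathscr{Z}$. On each finite-dimensional subspace $V\subset\mathscr{Z}$ I would invoke standard finite-dimensional semiclassical analysis (Husimi/coherent-state quantization), together with $\Tr[\varrho_{\varepsilon}(N_{V}+1)^{\bar\delta}]\leq\Tr[\varrho_{\varepsilon}(N+1)^{\bar\delta}]\leq C$ ($N_{V}$ the number operator of the modes in $V$), to get a Borel probability measure $\mu_{V}$ on $V$ whose Fourier transform is the restriction of $G$ to $V$ and which satisfies a $V$-independent moment bound $\int_{V}(\lVert z\rVert^{2}+1)^{\bar\delta}\,d\mu_{V}\leq C$. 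The family $\{\mu_{V}\}$ is projective (Fourier transforms are consistent under the orthogonal projections $\mathscr{Z}\to V$), and the moment bound makes it tight, since $\mu_{V}(\{\lVert z\rVert>R\})\leq CR^{-2\bar\delta}$ while bounded balls of $\mathscr{Z}$ are metrizable and compact for the weak topology; a Prokhorov/Kolmogorov-type extension then yields a Borel probability measure $\mu$ on $\mathscr{Z}$ whose finite-dimensional marginals are the $\mu_{V}$, so that $\widehat{\mu}=G$ and the asserted convergence holds. This passage from a consistent \emph{cylindrical} family to a genuinely $\sigma$-additive \emph{Radon} measure is where the argument is delicate — it fails in general absent a priori moment control, as the Gaussian cylinder-set measure on a Hilbert space shows — and it is precisely where the hypothesis $(\varrho_{\varepsilon})\in\mathcal{T}^{\bar\delta}$ enters; the details are in \citep{AmNi1}. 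Finally, \eqref{eq:Test} should follow from the weak lower semicontinuity of number-type moments along $\varepsilon_{n_{k}}\to0$ also proved in \citep{AmNi1}, applied to $T^{\bar\delta}$: since $(\lVert z_{1}\rVert_{2}^{2}+\lVert z_{2}\rVert_{2}+1)^{2\bar\delta}$ is, up to a constant, a power of the principal Wick symbol $\lVert z_{1}\rVert_{2}^{4}+\lVert z_{2}\rVert_{2}^{2}+1$ of $T=N_{1}^{2}+N_{2}+1$, one obtains $\int_{\mathscr{Z}}(\lVert z_{1}\rVert_{2}^{2}+\lVert z_{2}\rVert_{2}+1)^{2\bar\delta}\,d\mu\lesssim\liminf_{k}\Tr[\varrho_{\varepsilon_{n_{k}}}T^{\bar\delta}]\leq\lvert(\varrho_{\varepsilon})_{\varepsilon\in(0,\bar\varepsilon)}\rvert_{\mathcal{T}^{\bar\delta}}$.
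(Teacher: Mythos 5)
Your proposal takes essentially the same approach as the paper, which establishes Proposition~\ref{prop:10} simply by citing \citep[Theorem~6.2]{AmNi1} and noting that the result is ``adapted to our setting''. The reduction $N+1\leq\tfrac{3}{2}T$ with which you open is exactly the adaptation the paper leaves implicit---it converts the $\mathcal{T}^{\bar\delta}$-bound into the uniform $\Tr[\varrho_{\varepsilon}(N+1)^{\bar\delta}]$-bound that the cited theorem requires---and the remainder of your sketch (diagonal extraction, uniform equicontinuity of $\xi\mapsto\Tr[\varrho_{\varepsilon}W(\xi)]$, and the tight projective construction of $\mu$ with the Fatou-type moment bound) is a fair recollection of that theorem's proof.
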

\begin{definition}
  \label{def:4}
  The set of Wigner measures associated with
  $(\varrho_{\varepsilon})_{\varepsilon\in(0,\bar{\varepsilon})}\in\bigcup_{\delta>0}\mathcal{T}^{\delta}$
  is denoted by
  \begin{equation*}
    \mathscr{M}(\varrho_{\varepsilon},\varepsilon\in(0,\bar{\varepsilon}))\; .
  \end{equation*}
\end{definition}
In general, $\mathscr{M} ( \varrho_{\varepsilon} ,
\varepsilon\in(0,\bar{\varepsilon}) )$ is not constituted by a single
element; however for each countable sequence $\varepsilon_n\to 0$ we
can extract a subsequence $(\varepsilon_{n_k})_k$ such that
$\mathscr{M} ( \varrho_{\varepsilon_{n_k}} ,
(\varepsilon_{n_k})_{k\in\mathds{N}} ) = \{\mu\}$; hence we can
suppose without loss of generality that $\mathscr{M} (
\varrho_{\varepsilon} , \varepsilon\in(0,\bar{\varepsilon}) ) =
\{\mu\}$.
\begin{remark}
  \label{rem:1}
  Let
  $(\varrho_{\varepsilon})_{\varepsilon\in(0,\bar{\varepsilon})}\in
  \bigcup_{\delta> 0}\mathcal{T}^{\delta}$; with associated measure
  $\mu$. Then, using Lemma~\ref{lemma:5} and Weyl's relation, for any
  $\xi\in\mathscr{Z}$,
  $(\varrho_{\varepsilon}W(\xi))_{\varepsilon\in(0,\bar{\varepsilon})}$
  has an associated (complex) measure $\mu_{\xi}$ with
  \begin{equation*}
    d\mu_{\xi}(z)=e^{i \sqrt{2}\Re\langle \xi , z\rangle_{}}d\mu(z)\; .
  \end{equation*}
  We refer the reader to \citep{AmNi1} for further informations on
  Wigner measures of general trace class operators.
\end{remark}

The convergence of $\rho_{\varepsilon}$ holds with a large class of
operators (under suitable conditions); in particular with Wick
quantized polynomials with compact symbol. The precise statement is
the following: Let $\mathcal{P}_{p,q}^{\infty}(\mathscr{Z})$ be the
compact polynomial symbols of degree $p$ in $z$ and $q$ in $\bar{z}$;
define $\mathcal{P}^{\infty}_{alg} (\mathscr{Z})=
\bigoplus_{p,q\in\mathds{N}}^{alg} \mathcal{P}_{p,q}^{\infty}
(\mathscr{Z})$. Then the following proposition holds:
\begin{proposition}
  \label{prop:11}
  Let $(\varrho_{\varepsilon})_{\varepsilon\in(0,\bar{\varepsilon})}
  \in \bigcap_{\delta\geq 0} \mathcal{T}^{\delta}$ such that
  $\mathscr{M}(\varrho_{\varepsilon},\varepsilon\in(0,\bar{\varepsilon}))=\{\mu\}$. Then
  for any $b\in \mathcal{P}^{\infty}_{alg} (\mathscr{Z})$:
  \begin{equation*}
    \lim_{\varepsilon\to 0}\Tr\Bigl[\varrho_{\varepsilon} \, b^{Wick}\Bigr]=\int_{\mathscr{Z}}^{}\,b(z)d\mu(z)\; .
  \end{equation*}
\end{proposition}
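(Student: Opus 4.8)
The plan is to reduce the statement---by linearity, by norm-approximation of the compact symbol by finite-rank ones, and by polarization---to the case of coordinate monomials, and then to treat those by differentiating the defining convergence of $\Tr[\varrho_\varepsilon W(\xi)]$ in a scaling parameter; the hypothesis $(\varrho_\varepsilon)_{\varepsilon\in(0,\bar\varepsilon)}\in\bigcap_{\delta\geq 0}\mathcal{T}^\delta$ enters through two uniform consequences that I would record at the outset. First, from $N_2+1\leq T$ and $N_1\leq N_1^2+1\leq T$ (functional calculus and the elementary inequality $\lambda\leq\lambda^2+1$) one gets $N+1\leq 2T$, hence $\Tr[\varrho_\varepsilon(N+1)^k]\leq 2^k\,\lvert\varrho_\varepsilon\rvert_{\mathcal{T}^k}$ is bounded uniformly in $\varepsilon\in(0,\bar\varepsilon)$, for every $k\in\mathds{N}$. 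Second, Proposition~\ref{prop:10} together with \eqref{eq:Test} (taking $\bar\delta$ arbitrarily large) yields $\int_{\mathscr{Z}}\lVert z\rVert_{\mathscr{Z}}^{m}\,d\mu(z)<\infty$ for every $m$, so that all the integrals below converge absolutely.

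Since $\mathcal{P}^{\infty}_{alg}(\mathscr{Z})=\bigoplus_{p,q}\mathcal{P}^{\infty}_{p,q}(\mathscr{Z})$, by linearity it suffices to treat a homogeneous $b\in\mathcal{P}^{\infty}_{p,q}(\mathscr{Z})$, i.e. $b(z)=\langle z^{\otimes q},\tilde b\,z^{\otimes p}\rangle$ with $\tilde b$ compact from $\mathscr{Z}^{\otimes_s p}$ to $\mathscr{Z}^{\otimes_s q}$. I would pick finite-rank operators $\tilde b_n\to\tilde b$ in operator norm, with symbols $b_n$. Using the number estimate for Wick monomials from \cite{AmNi1}, which gives $\lVert(N+\varepsilon)^{-q/2}c^{Wick}(N+\varepsilon)^{-p/2}\rVert_{\mathcal{L}(\mathscr{H})}\leq C_{p,q}\lVert\tilde c\rVert$ for any bounded symbol $c$ of type $(p,q)$, together with the trace Cauchy--Schwarz inequality and the first remark, one obtains $\lvert\Tr[\varrho_\varepsilon c^{Wick}]\rvert\leq C_{p,q}\lVert\tilde c\rVert$ uniformly in $\varepsilon$; likewise $\lvert\int_{\mathscr{Z}}c\,d\mu\rvert\leq\lVert\tilde c\rVert\int_{\mathscr{Z}}\lVert z\rVert_{\mathscr{Z}}^{p+q}\,d\mu$. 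Applying both to $c=b-b_n$ shows that the error in replacing $b$ by $b_n$ is uniformly small in $\varepsilon$ and vanishes as $n\to\infty$, so we may assume $\tilde b$ has finite rank. Then $b$ is a finite linear combination of coordinate monomials $\overline{z_{i_1}}\cdots\overline{z_{i_q}}\,z_{j_1}\cdots z_{j_p}$ (with $z_i=\langle e_i,z\rangle$ in a fixed orthonormal basis of $\mathscr{Z}$), whose Wick quantizations are the corresponding normally ordered products of creation and annihilation operators.

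For a coordinate monomial of total degree $n=p+q$, polarization (using $\Im\langle\xi,z\rangle=\Re\langle i\xi,z\rangle$) writes it as a finite linear combination of $n$-th powers $(\Re\langle\xi,z\rangle)^n$; equivalently, its Wick quantization is a combination of the $\phi(\xi)^n$, $\phi(\xi):=\tfrac{1}{\sqrt2}(a^{*}(\xi)+a(\xi))$, up to normal-ordering remainders that carry at least one factor $\varepsilon$ and a field polynomial of degree at most $n-2$, whose trace against $\varrho_\varepsilon$ is $O(\varepsilon)$ by the uniform moment bounds. So it remains to show $\Tr[\varrho_\varepsilon\phi(\xi)^n]\to\int_{\mathscr{Z}}(\sqrt2\,\Re\langle\xi,z\rangle)^n\,d\mu(z)$. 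Now $\tfrac{d^n}{dt^n}\big|_{t=0}\Tr[\varrho_\varepsilon W(t\xi)]=i^n\Tr[\varrho_\varepsilon\phi(\xi)^n]$ (differentiation under the trace being legitimate since $\phi(\xi)$ is $(N+1)^{1/2}$-bounded uniformly in $\varepsilon$), while $\tfrac{d^n}{dt^n}\big|_{t=0}\int_{\mathscr{Z}}e^{i\sqrt2\,t\,\Re\langle\xi,z\rangle}\,d\mu(z)=i^n\int_{\mathscr{Z}}(\sqrt2\,\Re\langle\xi,z\rangle)^n\,d\mu(z)$. Since $t\mapsto\Tr[\varrho_\varepsilon W(t\xi)]$ converges pointwise to $t\mapsto\int_{\mathscr{Z}}e^{i\sqrt2\,t\,\Re\langle\xi,z\rangle}\,d\mu(z)$ (the defining property of $\mu$), and all its $t$-derivatives are bounded uniformly in $\varepsilon$ on compact $t$-intervals (from $\lVert(N+1)^{-n/2}\phi(\xi)^n(N+1)^{-n/2}\rVert\leq C_n\lVert\xi\rVert^n$ and the first remark), a compactness (Arzel\`a--Ascoli/Vitali) argument permits interchanging $\lim_{\varepsilon\to0}$ with $\tfrac{d^n}{dt^n}\big|_{t=0}$, which gives the desired convergence.

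I expect the main obstacle to be the two non-elementary inputs quoted from \cite{AmNi1}: the $\varepsilon$-uniform number estimate $\lVert(N+\varepsilon)^{-q/2}c^{Wick}(N+\varepsilon)^{-p/2}\rVert_{\mathcal{L}(\mathscr{H})}\leq C_{p,q}\lVert\tilde c\rVert$, without which the compact-to-finite-rank reduction would not be uniform in $\varepsilon$, and the soft analysis allowing one to differentiate $\xi\mapsto\Tr[\varrho_\varepsilon W(\xi)]$ and to pass the limit $\varepsilon\to0$ through the differentiation. Everything else is bookkeeping with the uniform moment bounds; and both inputs are by now standard, being set up in \cite{AmNi1} precisely for this kind of application.
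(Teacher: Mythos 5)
The paper itself gives no proof of Proposition~\ref{prop:11}: it is recalled as a known result (from \cite{AmNi1}), with Proposition~\ref{prop:10} and Remark~\ref{rem:2} serving as the only surrounding discussion. Your proposal is therefore a reconstruction rather than a rederivation of something spelled out in the text, and the route you take — reduce to homogeneous $(p,q)$ pieces; approximate the compact kernel $\tilde b$ in operator norm by finite-rank ones and control the replacement error via the $\varepsilon$-uniform number estimate $\lVert(N+\varepsilon)^{-q/2}c^{Wick}(N+\varepsilon)^{-p/2}\rVert\leq C_{p,q}\lVert\tilde c\rVert$ together with the uniform moment bounds coming from $\bigcap_{\delta}\mathcal{T}^{\delta}$; polarize the coordinate monomial into $n$-th powers $(\Re\langle\xi,z\rangle)^n$; absorb the normal-ordering discrepancy between $:\phi(\xi)^n:$ and $\phi(\xi)^n$ into $O(\varepsilon)$; and finally obtain $\Tr[\varrho_\varepsilon\phi(\xi)^n]\to\int(\sqrt2\,\Re\langle\xi,z\rangle)^n d\mu$ by differentiating the Weyl characteristic function $n$ times, using equi-boundedness of all $t$-derivatives to justify the interchange of limits — is exactly the standard scheme in \cite{AmNi1}. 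Your two flagged ``non-elementary inputs'' are indeed the load-bearing ones and are correctly identified.

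One small imprecision to repair: after reducing $\tilde b$ to finite rank you write that $b$ ``is a finite linear combination of coordinate monomials.'' This is not yet true, since the range and co-range vectors of a general finite-rank $\tilde b_n$ need not lie in the linear span of finitely many simple tensors of basis vectors. You need a second (norm) approximation of those vectors by finite linear combinations of simple basis tensors; since $\lVert\,|\phi\rangle\langle\psi|-|\phi'\rangle\langle\psi'|\,\rVert\leq\lVert\phi-\phi'\rVert\,\lVert\psi\rVert+\lVert\phi'\rVert\,\lVert\psi-\psi'\rVert$, this is again an operator-norm approximation and the same uniform number estimate controls it, so the gap is cosmetic, but the sentence as written overstates what finite rank alone gives you.
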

\begin{remark}
  \label{rem:2}
  Since we have only operators bounded by $T$, we can relax the
  hypothesis of Proposition~\ref{prop:11} to states
  $(\varrho_{\varepsilon})_{\varepsilon\in(0,\bar{\varepsilon})}\in
  \mathcal{T}^1$. In this case, the result is true for compact polynomial symbols $b\in \mathcal{P}^{\infty}_{alg} (\mathscr{Z})$ such that $T^{-1/2} b^{Wick} T^{-1/2}$ is bounded uniformly in $\varepsilon\in[0,\bar{\varepsilon}]$.
\end{remark}

\subsection{Subsequence extraction for all times.}
\label{sec:subs-extr-all}

We would like to apply proposition~\ref{prop:11} to the integral
formula~\eqref{eq:16} and obtain an integral equation for the measure
$\mu_t$ associated with $\varrho_{\varepsilon}(t)$. In order to do
that we need to be able to extract the same converging subsequence at
any time $t\in\mathds{R}$. This is what we prove in the next
proposition; preceded by a preparatory lemma.

\begin{lemma}
  \label{lemma:10}
  Let $(\varrho_{\varepsilon})_{\varepsilon\in(0,\bar{\varepsilon})}
  \in \mathcal{T}^1$. Then $\tilde{G}_{\varepsilon}(t,\xi) :=
  \Tr\Bigl[\tilde{\varrho}_{\varepsilon}(t)W(\xi) \Bigr]$ is uniformly
  equicontinuous with respect to $\varepsilon\in(0,\bar{\varepsilon})$
  on bounded subsets of $\mathds{R}\times \mathscr{Z}$.
\end{lemma}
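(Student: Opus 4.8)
The plan is to prove the two partial equicontinuities separately --- in the time variable $t$ and in the phase-space variable $\xi$ --- and then to recombine them by a triangle inequality. Fix once and for all a bounded set $B\subset\mathds{R}\times\mathscr{Z}$, contained say in $\{\lvert t\rvert\le T_0\}\times\{\lVert\xi\rVert_{\mathscr{Z}}\le R\}$, and write $c:=\lvert(\varrho_{\varepsilon})_{\varepsilon\in(0,\bar{\varepsilon})}\rvert_{\mathcal{T}^1}$. Note first that $\lvert\tilde{G}_{\varepsilon}(t,\xi)\rvert\le1$, and that, since $H_0$ commutes with $T$, Lemmas~\ref{lemma:6} and~\ref{lemma:8} yield a uniform bound $\lvert\tilde{\varrho}_{\varepsilon}(t)\rvert_{\mathcal{T}^1_{\varepsilon}}=\lvert\varrho_{\varepsilon}(t)T\rvert_{\mathcal{L}^1(\mathscr{H})}\le C(T_0,\lVert\omega^{-1/2}\chi\rVert_2)\,c$ for all $\lvert t\rvert\le T_0$ and $\varepsilon\in(0,\bar{\varepsilon})$; in particular $\varrho_{\varepsilon}\in\mathcal{T}^1_{\varepsilon}$, so the integral formula of Proposition~\ref{prop:5} applies.

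\emph{Equicontinuity in $t$.} From Proposition~\ref{prop:5}, for $t,t'\in[-T_0,T_0]$ and $\lVert\xi\rVert_{\mathscr{Z}}\le R$,
\[
\tilde{G}_{\varepsilon}(t,\xi)-\tilde{G}_{\varepsilon}(t',\xi)=\sum_{j=0}^2\varepsilon^j\int_{t'}^t\Tr\bigl[\varrho_{\varepsilon}(s)W(\tilde{\xi}(s))B_j(\tilde{\xi}(s))\bigr]\,ds\; .
\]
Each trace is estimated, exactly as in the proof of Proposition~\ref{prop:6}, by $\lvert T^{-1}B_j(\tilde{\xi}(s))\rvert_{\mathcal{L}(\mathscr{H})}\,\lvert T^{-1}W(\tilde{\xi}(s))T\rvert_{\mathcal{L}(\mathscr{H})}\,\lvert T^{-1}e^{i\frac{s}{\varepsilon}H}T\rvert_{\mathcal{L}(\mathscr{H})}\,\lvert\varrho_{\varepsilon}\rvert_{\mathcal{T}^1_{\varepsilon}}$; using the explicit form \emph{(\ref{eq:12})--(\ref{eq:14})} of $B_0,B_1,B_2$ together with Lemma~\ref{lemma:2}, Lemma~\ref{lemma:5}(ii) with $\delta=1$, Lemma~\ref{lemma:6} with $\delta=1$, and $\lVert\tilde{\xi}(s)\rVert_{\mathscr{Z}}=\lVert\xi\rVert_{\mathscr{Z}}$, each of the three operator norms is bounded, uniformly in $\varepsilon\in(0,\bar{\varepsilon})$ and $s\in[-T_0,T_0]$, by a constant depending only on $R$, $T_0$ and $\lVert\omega^{-1/2}\chi\rVert_2$ (the term $B_0$ being treated by noting that its four field-bilinear pieces are bounded by $T$ with constant $\propto\lVert\omega^{-1/2}\chi\rVert_2\lVert\xi_1\rVert_2$, while the $d\Gamma_1$ piece is bounded by $\lVert f(\tilde{\xi}_2(s))\rVert_{\infty}N_1\le 2\lVert\omega^{-1/2}\chi\rVert_2\lVert\xi_2\rVert_2\,T$). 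Hence $\lvert\tilde{G}_{\varepsilon}(t,\xi)-\tilde{G}_{\varepsilon}(t',\xi)\rvert\le C(R,T_0,\lVert\omega^{-1/2}\chi\rVert_2)\,c\,\lvert t-t'\rvert$, so $\tilde{G}_{\varepsilon}$ is Lipschitz in $t$ uniformly in $\varepsilon$ and in $\xi$ on bounded sets.

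\emph{Equicontinuity in $\xi$.} This is the delicate point, since $\xi\mapsto W(\xi)$ is only strongly, not norm, continuous. Let $P_K:=\mathbf{1}_{[0,K]}(T)$ be the spectral projection of $T=N_1^2+N_2+1$ onto $\{T\le K\}$. The key observation is that on $\ran P_K$ one has the \emph{$\varepsilon$-independent} operator bounds $N_1\le\sqrt{K}$ and $N_2\le K$, hence $N=N_1+N_2\le K+\sqrt{K}$ there. Writing $W(\xi)=e^{-\frac{i\varepsilon}{2}\Im\langle\xi',\xi\rangle}W(\xi')W(\xi-\xi')$ by the Weyl relations and $W(\eta)-1=i\int_0^1\varphi(\eta)W(s\eta)\,ds$ with $\varphi(\eta):=\tfrac{1}{\sqrt{2}}\bigl(\psi^{*}(\eta_1)+\psi(\eta_1)+a^{*}(\eta_2)+a(\eta_2)\bigr)$, and using Lemma~\ref{lemma:2} in the form $\lVert\varphi(\eta)\phi\rVert\le C\lVert\eta\rVert_{\mathscr{Z}}(\lVert N^{1/2}\phi\rVert+\sqrt{\varepsilon}\lVert\phi\rVert)$ together with the fact that $W(s\eta)$ conjugates $N_1,N_2$ into themselves up to corrections of order $\varepsilon\lVert\eta\rVert_{\mathscr{Z}}$, one obtains, for $\lVert\xi'\rVert_{\mathscr{Z}}\le R$ and $\lVert\xi-\xi'\rVert_{\mathscr{Z}}\le1$,
\[
\bigl\lvert(W(\xi)-W(\xi'))P_K\bigr\rvert_{\mathcal{L}(\mathscr{H})}\le C_{K,R}\,\lVert\xi-\xi'\rVert_{\mathscr{Z}}\; ,
\]
with $C_{K,R}$ independent of $\varepsilon\in(0,\bar{\varepsilon})$. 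For the complementary part, $\lvert\tilde{\varrho}_{\varepsilon}(t)(1-P_K)\rvert_{\mathcal{L}^1(\mathscr{H})}\le\lvert\tilde{\varrho}_{\varepsilon}(t)T\rvert_{\mathcal{L}^1(\mathscr{H})}\,\lvert T^{-1}(1-P_K)\rvert_{\mathcal{L}(\mathscr{H})}\le C(T_0)\,c\,K^{-1}$. Decomposing
\[
\tilde{G}_{\varepsilon}(t,\xi)-\tilde{G}_{\varepsilon}(t,\xi')=\Tr\bigl[P_K(W(\xi)-W(\xi'))\tilde{\varrho}_{\varepsilon}(t)\bigr]+\Tr\bigl[(W(\xi)-W(\xi'))\tilde{\varrho}_{\varepsilon}(t)(1-P_K)\bigr]
\]
and bounding the two summands by $\lvert(W(-\xi)-W(-\xi'))P_K\rvert_{\mathcal{L}(\mathscr{H})}$ (after taking adjoints) and by $2\lvert\tilde{\varrho}_{\varepsilon}(t)(1-P_K)\rvert_{\mathcal{L}^1(\mathscr{H})}$ respectively, we get $\lvert\tilde{G}_{\varepsilon}(t,\xi)-\tilde{G}_{\varepsilon}(t,\xi')\rvert\le C_{K,R}\lVert\xi-\xi'\rVert_{\mathscr{Z}}+2C(T_0)\,c\,K^{-1}$, uniformly in $\varepsilon\in(0,\bar{\varepsilon})$ and $\lvert t\rvert\le T_0$. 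Given $\delta>0$, one first fixes $K$ so that the last term is $<\delta/2$, then imposes $\lVert\xi-\xi'\rVert_{\mathscr{Z}}<\delta/(2C_{K,R})$.

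Combining the two estimates, for $(t,\xi),(t',\xi')\in B$ one writes $\tilde{G}_{\varepsilon}(t,\xi)-\tilde{G}_{\varepsilon}(t',\xi')=(\tilde{G}_{\varepsilon}(t,\xi)-\tilde{G}_{\varepsilon}(t,\xi'))+(\tilde{G}_{\varepsilon}(t,\xi')-\tilde{G}_{\varepsilon}(t',\xi'))$ and controls the first term by the $\xi$-estimate and the second by the $t$-estimate, both with a modulus of continuity independent of $\varepsilon$; this gives the asserted uniform equicontinuity on $B$. The main obstacle is precisely the $\xi$-direction: because the Weyl operators are not norm-continuous in $\xi$, uniformity in $\varepsilon$ does not come for free, and the decisive structural fact is that the truncation $\{T\le K\}$ bounds the number operators $N_1,N_2$ by constants that do not depend on $\varepsilon$, so that $\{W(\xi)\}$ becomes equi-strongly-continuous when restricted to $\ran P_K$; the $t$-direction, by contrast, is handled routinely once the Duhamel formula of Proposition~\ref{prop:5} and the $\varepsilon$-uniform bounds of Section~\ref{sec:dynam-quant-class} and of Section~\ref{sec:trace-states} are in hand.
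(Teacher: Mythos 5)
Your proof is correct, and the $t$-direction is handled in essentially the same way as the paper (via the Duhamel formula of Proposition~\ref{prop:5} and the $\varepsilon$-uniform bound from Lemma~\ref{lemma:6}). The difference --- and the place where you work harder than necessary --- is the $\xi$-direction. You regularize the Weyl difference by truncating the state with the spectral projection $P_K=\mathbf{1}_{[0,K]}(T)$, estimate the Weyl difference on $\ran P_K$ using the $\varepsilon$-independent bound on $N_1,N_2$ there (at the price of controlling how $W(s\eta)$ conjugates the number operators, which you leave somewhat implicit), and then trade off $K$ against the tail $\lvert\tilde{\varrho}_{\varepsilon}(t)(1-P_K)\rvert_{\mathcal{L}^1}\lesssim K^{-1}$. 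This produces a correct, $\varepsilon$-uniform modulus of continuity, but only after a two-parameter limit. The paper instead regularizes the Weyl difference directly: inserting $TT^{-1}$ and using cyclicity and the Weyl relations reduces the estimate to $\lvert (W(\eta-\xi)-1)T^{-1}\rvert_{\mathcal{L}(\mathscr{H})}$, and writing $W(\eta-\xi)-1=i\int_0^1 W(\lambda(\eta-\xi))\varphi(\eta-\xi)\,d\lambda$ gives $\lvert(W(\eta-\xi)-1)T^{-1}\rvert\le\lVert\varphi(\eta-\xi)T^{-1}\rVert\lesssim\lVert\eta-\xi\rVert_{\mathscr{Z}}$, since $\varphi(z)$ is dominated by $\lVert z\rVert\,T^{1/2}$. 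This is the decisive simplification you missed: the $\mathcal{T}^1$ weight $T$ already absorbs the unbounded growth of the field operator $\varphi(\eta-\xi)$, so no spectral truncation is needed and one obtains directly a Lipschitz bound in $\xi$ on bounded sets. Both arguments prove the lemma, but the paper's is shorter, avoids the auxiliary Weyl-conjugation estimate you gesture at, and yields a cleaner quantitative modulus.
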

\begin{proof}
  Let $\varepsilon\in(0,\bar{\varepsilon})$. We split $\lvert
  \tilde{G}_{\varepsilon}(t,\xi)-\tilde{G}_{\varepsilon}(s,\eta)
  \rvert \leq X_1+X_2$, with
  \begin{equation*}
    X_1:= \lvert \tilde{G}_{\varepsilon}(t,\xi)-\tilde{G}_{\varepsilon}(s,\xi) \rvert\; ,\;  X_2:= \lvert \tilde{G}_{\varepsilon}(s,\xi)-\tilde{G}_{\varepsilon}(s,\eta) \rvert\; .
  \end{equation*}
  Using Proposition~\ref{prop:5}, Lemma \ref{lemma:6}, \eqref{eq.27} and the fact that
  $B_j(\tilde{\xi}(\tau))$ is bounded uniformly in $\tau$ and
  $\varepsilon\in(0,\bar{\varepsilon})$ for $j=0,1,2$, we obtain for
  some $C_1(\bar{\varepsilon},\lVert \xi
  \rVert_{\mathscr{Z}}^{}),C_2(\bar{\varepsilon},\lVert \xi
  \rVert_{\mathscr{Z}}^{})>0$:
  \begin{equation*}
    \begin{split}
      X_1=\Bigl\lvert \sum_{j=0}^2\varepsilon^j\int_s^t\Tr \Bigl[ \varrho_{\varepsilon}(\tau)W( \tilde{\xi}(\tau)) B_j( \tilde{\xi}(\tau))  \Bigr]  ds\Bigr\rvert\leq C_1\lvert e^{C_2 \lvert t \rvert}-e^{C_2\lvert s\rvert} \rvert\; .
    \end{split}
  \end{equation*}
  Consider now $X_2$; using Weyl's relation and the fact that
  $(\varrho_{\varepsilon})_{\varepsilon\in(0,\bar{\varepsilon})} \in
  \mathcal{T}^1$ we obtain, for some $C_3(s,\bar{\varepsilon})>0$:
  \begin{equation*}
    \begin{split}
      X_2\leq \lvert (W(\eta)W^{*}(\xi)-1)T^{-1}\rvert_{\mathcal{L}(\mathscr{H})}^{}\lvert\tilde{\varrho}_{\varepsilon}(s)\rvert_{\mathcal{T}^1}^{}\leq C_3\lvert (e^{i \frac{\varepsilon}{2}\Im\langle \eta , \xi\rangle }W(\eta-\xi)-1)T^{-1}\rvert_{\mathcal{L}(\mathscr{H})}^{}\\
      \leq C_3\Bigl(\lvert e^{i \frac{\varepsilon}{2}\Im \langle \eta , \xi\rangle }-1 \rvert +\lvert (W(\eta-\xi)-1)T^{-1}\rvert_{\mathcal{L}(\mathscr{H})}\Bigr)\; .
    \end{split}
  \end{equation*}
  Now, we use the following bound for the first term:
  \begin{equation*}
    \lvert e^{i \frac{\varepsilon}{2}\Im\langle  \eta, \xi\rangle}-1\rvert_{}^{}=\lvert e^{i \frac{\varepsilon}{2}\Im\langle  \eta-\xi, \xi\rangle}-1\rvert_{}^{}\leq 2\bar{\varepsilon}\lVert \xi \rVert^{}e^{\frac{\bar{\varepsilon}}{2}\lVert \xi \rVert(\lVert \eta \rVert+\lVert \xi \rVert)}\lVert \eta-\xi \rVert^{};
  \end{equation*}
  and for the second:
  \begin{equation*}
    \lvert (W(\eta-\xi)-1)T^{-1}\rvert_{\mathcal{L}(\mathscr{H})}\leq \Bigl\lvert \int_0^1W(\lambda (\eta-\xi))\varphi(\eta-\xi)T^{-1}  d\lambda\Bigr\rvert_{\mathcal{L}(\mathscr{H})}^{}\leq \sqrt{2}\lVert \xi-\eta \rVert_{}^{}\; ;
  \end{equation*}
  where
  $\sqrt{2}\varphi(z)=\bigl(\psi^{*}(z_1)+\psi(z_1)+a^{*}(z_2)+a(z_2)\bigr)$.
  Finally we obtain
  \begin{equation*}
    X_2\leq C_3\Bigl( 2\bar{\varepsilon}\lVert \xi \rVert^{}e^{\frac{\bar{\varepsilon}}{2}\lVert \xi \rVert(\lVert \eta \rVert+\lVert \xi \rVert)}+\sqrt{2}\Bigr) \lVert \xi-\eta \rVert\; .
  \end{equation*}
  Now, choose a bounded subset $I=[-T_0,T_0]\times \{z, \lVert z
  \rVert\leq R\}$, $T_0,R>0$. Then there exist $C_1,C_2,C_3>0$ that depend
  only on $T_0$, $R$ and $\bar{\varepsilon}$ such that for all
  $(t,\xi),(s,\eta)\in I$:
  \begin{equation}
    \label{eq:23}
    \lvert \tilde{G}_{\varepsilon}(t,\xi)-\tilde{G}_{\varepsilon}(s,\eta)\rvert_{}^{}\leq C_1\bigl\lvert e^{C_2\lvert t\rvert_{}^{}}-e^{C_2\lvert s\rvert_{}^{}}\bigr\rvert+C_3\lVert \xi-\eta \rVert_{}^{}\; .
  \end{equation}
\end{proof}
\begin{proposition}
  \label{prop:12}
  Let $(\varrho_{\varepsilon})_{\varepsilon\in(0,\bar{\varepsilon})}
  \in \mathcal{T}^\delta$, $\delta\geq 1$. Then for any sequence
  $(\varepsilon_n)_{n\in\mathds{N}}\in (0,\bar{\varepsilon})$,
  converging to zero, there exists a subsequence
  $(\varepsilon_{n_k})_{k\in\mathds{N}}$ and a family of Borel
  measures $(\tilde{\mu}_t)_{t\in\mathds{R}}$ on $\mathscr{Z}$ such
  that for all $t\in\mathds{R}$:
  \begin{equation*}
    \mathscr{M}(\tilde{\varrho}_{\varepsilon_{n_k}}(t),k\in\mathds{N})=\{\tilde{\mu}_t\}\; .
  \end{equation*}
  Furthermore for any $T_0\geq 0$ there exists $C(T_0)>0$ such that
  for all $t\in [-T_0,T_0]$:
  \begin{equation}
    \label{eq:24}
    \int_{\mathscr{Z}}^{}\bigl(\lVert z_1\rVert^2_{2}+\lVert  z_2\rVert_{2}^{}+1\bigr)^{2\delta}  d\tilde{\mu}_t(z)<C(T_0)\; .
  \end{equation}
\end{proposition}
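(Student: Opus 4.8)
\emph{Proof strategy.}
The plan is to combine a diagonal extraction over a countable dense set of times with the $\varepsilon$-uniform equicontinuity of Lemma~\ref{lemma:10}, and then to recognise the resulting pointwise limits as genuine Wigner measures by re-invoking Proposition~\ref{prop:10}. First I would record that $(\tilde{\varrho}_{\varepsilon}(t))_{\varepsilon\in(0,\bar{\varepsilon})}$ stays in $\mathcal{T}^{\delta}$ at every time, with a bound locally uniform in $t$. Since $T$ commutes with $H_0$ and the trace-norm is unitarily invariant,
\begin{align*}
  \lvert\tilde{\varrho}_{\varepsilon}(t)\,T^{\delta}\rvert_{\mathcal{L}^1(\mathscr{H})}&=\lvert\varrho_{\varepsilon}\,e^{i\frac{t}{\varepsilon}H}T^{\delta}e^{-i\frac{t}{\varepsilon}H}\rvert_{\mathcal{L}^1(\mathscr{H})}=\lvert\varrho_{\varepsilon}T^{\delta}\,(T^{-\delta}e^{i\frac{t}{\varepsilon}H}T^{\delta})\,e^{-i\frac{t}{\varepsilon}H}\rvert_{\mathcal{L}^1(\mathscr{H})}\\
  &\leq \lvert\varrho_{\varepsilon}T^{\delta}\rvert_{\mathcal{L}^1(\mathscr{H})}\,\lvert T^{-\delta}e^{i\frac{t}{\varepsilon}H}T^{\delta}\rvert_{\mathcal{L}(\mathscr{H})}\,,
\end{align*}
and Lemma~\ref{lemma:6} (see \eqref{eq.27}) bounds the last factor by a constant $C(\delta,t,\lVert\omega^{-1/2}\chi\rVert_2)$ which, for $\varepsilon\in(0,\bar{\varepsilon})$, is bounded on every compact $t$-interval; this is the quantitative content of Lemma~\ref{lemma:8}. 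In particular $\sup_{\varepsilon\in(0,\bar{\varepsilon})}\lvert\tilde{\varrho}_{\varepsilon}(t)\rvert_{\mathcal{T}^{\delta}_{\varepsilon}}\leq C(\delta,T_0)$ for $t\in[-T_0,T_0]$. Moreover $\delta\geq 1$ and $T\geq 1$ give $\mathcal{T}^{\delta}\subseteq\mathcal{T}^1$, so Lemma~\ref{lemma:10} applies: $\tilde{G}_{\varepsilon}(t,\xi):=\Tr[\tilde{\varrho}_{\varepsilon}(t)W(\xi)]$ is uniformly equicontinuous on bounded subsets of $\mathds{R}\times\mathscr{Z}$.

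Next I would run the extraction. Enumerate $\mathds{Q}=\{q_1,q_2,\dots\}$; applying Proposition~\ref{prop:10} successively to $(\tilde{\varrho}_{\varepsilon}(q_1))_{\varepsilon}$, $(\tilde{\varrho}_{\varepsilon}(q_2))_{\varepsilon}$, \dots produces nested subsequences of $(\varepsilon_n)_n$, and the diagonal sequence $(\varepsilon_{n_k})_k$ has the property that for each $q\in\mathds{Q}$ the family $(\tilde{\varrho}_{\varepsilon_{n_k}}(q))_k$ admits a single Wigner measure $\tilde{\mu}_q$, i.e.\ $\lim_k\tilde{G}_{\varepsilon_{n_k}}(q,\xi)=\int_{\mathscr{Z}}e^{i\sqrt{2}\Re\langle\xi,z\rangle}\,d\tilde{\mu}_q(z)$ for all $\xi\in\mathscr{Z}$. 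To pass to an arbitrary $t\in\mathds{R}$, fix $\xi$, choose $q\in\mathds{Q}$ with $\lvert q-t\rvert$ small, and bound $\lvert\tilde{G}_{\varepsilon_{n_k}}(t,\xi)-\tilde{G}_{\varepsilon_{n_l}}(t,\xi)\rvert$ by $\lvert\tilde{G}_{\varepsilon_{n_k}}(t,\xi)-\tilde{G}_{\varepsilon_{n_k}}(q,\xi)\rvert+\lvert\tilde{G}_{\varepsilon_{n_k}}(q,\xi)-\tilde{G}_{\varepsilon_{n_l}}(q,\xi)\rvert+\lvert\tilde{G}_{\varepsilon_{n_l}}(q,\xi)-\tilde{G}_{\varepsilon_{n_l}}(t,\xi)\rvert$: the equicontinuity of Lemma~\ref{lemma:10} controls the first and third terms uniformly in $k,l$, and convergence at $q$ controls the middle one, so $(\tilde{G}_{\varepsilon_{n_k}}(t,\xi))_k$ is Cauchy. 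Denote its limit by $\mathcal{G}(t,\xi)$; the same equicontinuity makes $\xi\mapsto\mathcal{G}(t,\xi)$ continuous, and $\mathcal{G}(t,0)=1$ since $\tilde{G}_{\varepsilon}(t,0)=\Tr[\tilde{\varrho}_{\varepsilon}(t)]=1$.

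It remains to identify $\mathcal{G}(t,\cdot)$ with the characteristic function of a Borel probability measure and to obtain the moment bound. Here I would apply Proposition~\ref{prop:10} once more, now to the sequence $(\tilde{\varrho}_{\varepsilon_{n_k}}(t))_k$, which lies in $\mathcal{T}^{\delta}$ with $\mathcal{T}^{\delta}$-norm $\leq C(\delta,T_0)$ for $t\in[-T_0,T_0]$ by the first step: it furnishes a further subsequence converging, for every $\xi$, to $\int_{\mathscr{Z}}e^{i\sqrt{2}\Re\langle\xi,z\rangle}\,d\nu(z)$ for some Borel probability measure $\nu$ satisfying \eqref{eq:Test} with constant $C(\delta,T_0)$. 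Since the full sequence already converges to $\mathcal{G}(t,\xi)$, we get $\mathcal{G}(t,\xi)=\int_{\mathscr{Z}}e^{i\sqrt{2}\Re\langle\xi,z\rangle}\,d\nu(z)$ for all $\xi$; set $\tilde{\mu}_t:=\nu$. Because every subsequence of $(\tilde{\varrho}_{\varepsilon_{n_k}}(t))_k$ yields the same limit function $\mathcal{G}(t,\cdot)$, and this ``Fourier transform'' determines the measure uniquely, we conclude $\mathscr{M}(\tilde{\varrho}_{\varepsilon_{n_k}}(t),k\in\mathds{N})=\{\tilde{\mu}_t\}$, while \eqref{eq:Test} for $\nu$ is precisely \eqref{eq:24}.

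The only genuinely delicate points I expect are the verification that the bound of Lemma~\ref{lemma:6} is uniform in $\varepsilon\in(0,\bar\varepsilon)$ on compact time-intervals (needed both for the applicability of Lemma~\ref{lemma:10} and for \eqref{eq:24}) and the bookkeeping of the diagonal-plus-equicontinuity passage from rational to arbitrary real times; once these are in place, everything else is a direct appeal to Proposition~\ref{prop:10} and Lemma~\ref{lemma:10}.
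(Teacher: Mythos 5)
Your proposal is correct and follows essentially the same route as the paper: a diagonal extraction over a countable dense set of times, $\varepsilon$-uniform equicontinuity from Lemma~\ref{lemma:10} to pass from rational to arbitrary $t$, and Proposition~\ref{prop:10} together with the $t$-uniform $\mathcal{T}^{\delta}$-bound (via Lemma~\ref{lemma:6}) to recognise the limit as a Borel probability measure satisfying \eqref{eq:24}. The only cosmetic difference is in the identification step: the paper invokes the positivity and normalisation of the limit function to declare it the characteristic function of a weak distribution and then upgrades, whereas you re-apply Proposition~\ref{prop:10} to a further subsequence and use uniqueness of limits; both are standard and equivalent.
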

\begin{proof}
  Recall that for any
  $(\varrho_{\varepsilon})_{\varepsilon\in(0,\bar{\varepsilon})} \in
  \mathcal{T}^\delta\subset \mathcal{T}^1$,
  $(\tilde{\varrho}_{\varepsilon}(t))_{\varepsilon\in(0,\bar{\varepsilon})}
  \in \mathcal{T}^1$ for all $t\in\mathds{R}$ (using
  Lemma~\ref{lemma:6} and the fact that $H_0$ commutes with $T$). The
  field $\mathds{R}$ is separable, so we can consider a dense
  countable set $D\subset \mathds{R}$. Let
  $(\varepsilon_n)_{n\in\mathds{N}}\in(0,\bar{\varepsilon})$.  We can
  choose, by a diagonal extraction argument, a \emph{single}
  subsequence $(\varepsilon_{n_k})_{k\in\mathds{N}}$ such that we can
  apply Proposition~\ref{prop:10} and obtain, for any $t_j\in D$:
  \begin{equation*}
    \lim_{k\to\infty}\Tr\Bigl[\tilde{\varrho}_{\varepsilon_{n_k}}(t_j)W(\xi)\Bigr]=\int_{\mathscr{Z}}^{}e^{i \sqrt{2}\Re\langle \xi , z\rangle_{}}  d\tilde{\mu}_{t_j}(z)=: \tilde{G}_0(t_j,\xi)\; .
  \end{equation*}
  Also, since
  $0\leq\Tr\Bigl[\tilde{\varrho}_{\varepsilon}(t_j)W(\xi)\Bigr]\leq 1$
  holds for any $\varepsilon\in(0,\bar{\varepsilon})$, then $0\leq
  \tilde{G}_0(t_j,\xi)\leq 1$. Now we can use Lemma~\ref{lemma:10} and
  obtain for all $t_j,t_l\in D$:
  \begin{equation*}
    \lvert \tilde{G}_{\varepsilon_{n_k}}(t_j,\xi)-\tilde{G}_{\varepsilon_{n_k}}(t_l,\xi) \rvert_{}^{}\leq C_1\bigl\lvert e^{C_2\lvert t_j\rvert_{}^{}}-e^{C_2\lvert t_l\rvert_{}^{}}\bigr\rvert\; ,
  \end{equation*}
  uniformly in $\varepsilon_{n_k}$, then we can take the limit
  $k\to\infty$ and obtain
  \begin{equation*}
    \lvert \tilde{G}_{0}(t_j,\xi)-\tilde{G}_{0}(t_l,\xi) \rvert_{}^{}\leq C_1\bigl\lvert e^{C_2\lvert t_j\rvert_{}^{}}-e^{C_2\lvert t_l\rvert_{}^{}}\bigr\rvert\; .
  \end{equation*}
 Let   $t\in\mathds{R}$; choose $(t_i)_{i\in\mathds{N}}\in D,$ such that
  $t_i\to t$, when $i\to\infty$. Then $(\tilde{G}_0(t_i,\xi))_{i\in\mathds{N}}$ is a Cauchy sequence and we can define
  \begin{equation*}
    \tilde{G}_0(t,\xi):=\lim_{i\to\infty}\tilde{G}_0(t_i,\xi)\; .
  \end{equation*}
  For all $t\in\mathds{R}$, $\tilde{G}_0(t,\,\cdot\,)$ is a norm continuous
  normalised function of positive type which satisfies
  \begin{equation}
    \label{eq:25}
    \lvert \tilde{G}_{0}(t,\xi)-\tilde{G}_{0}(s,\eta)\rvert_{}^{}\leq C_1\bigl\lvert e^{C_2\lvert t\rvert_{}^{}}-e^{C_2\lvert s\rvert_{}^{}}\bigr\rvert+C_3\lVert \xi-\eta \rVert_{}^{}\; ,
  \end{equation}
  on any bounded subset of $\mathds{R}\times \mathscr{Z}$, for some
  positive constants $C_1,C_2$ and $C_3$.

  Hence it is the characteristic function of a weak distribution
  $\tilde{\mu}_t$ on $\mathscr{Z}$, and for all $t\in\mathds{R}$:
  \begin{equation*}
    \lim_{k\to\infty}\Tr\Bigl[\tilde{\varrho}_{\varepsilon_{n_k}}(t)W(\xi)\Bigr]=\int_{\mathscr{Z}}^{}e^{i \sqrt{2}\Re\langle \xi , z\rangle_{}}  d\tilde{\mu}_{t}(z)\; .
  \end{equation*}
  Furthermore $\tilde{\mu}_t$ are Borel probability measures since
  they are Wigner measures of
  $(\tilde{\varrho}_{\varepsilon_{n_k}}(t))_{k\in\mathds{N}}\in
  \mathcal{T}^\delta$. The bound~\eqref{eq:24} comes from~\eqref{eq:Test}
  and Lemma~\ref{lemma:6}.
\end{proof}

\begin{corollary}
  \label{cor:4}
  The following statements are true:
  \begin{enumerate}[i)]
  \item\label{item:4} Let
    $(\varrho_{\varepsilon})_{\varepsilon\in(0,\bar{\varepsilon})} \in
    \mathcal{T}^1$. Then for any sequence
    $(\varepsilon_n)_{n\in\mathds{N}}\in (0,\bar{\varepsilon})$
    converging to zero, there exists a subsequence
    $(\varepsilon_{n_k})_{k\in\mathds{N}}$ and a family of Borel
    measures $(\mu_t)_{t\in\mathds{R}}$ on $\mathscr{Z}$ such that for
    all $t\in\mathds{R}$:
    \begin{equation*}
      \mathscr{M}(\varrho_{\varepsilon_{n_k}}(t),k\in\mathds{N})=\{\mu_t\}\; .
    \end{equation*}
  \item\label{item:5} Let
    $(\varrho_{\varepsilon})_{\varepsilon\in(0,\bar{\varepsilon})} \in
    \mathcal{T}^1$, $\xi\in\mathscr{Z}$. Then for any sequence
    $(\varepsilon_n)_{n\in\mathds{N}}\in (0,\bar{\varepsilon})$
    converging to zero, there exists a subsequence
    $(\varepsilon_{n_k})_{k\in\mathds{N}}$ and a family of Borel
    measures $(\mu_{t,\xi})_{t\in\mathds{R}}$ on $\mathscr{Z}$ such
    that for all $t\in\mathds{R}$:
    \begin{equation*}
      \mathscr{M}(\varrho_{\varepsilon_{n_k}}(t)W(\tilde{\xi}(t)),k\in\mathds{N})=\{\mu_{t,\xi}\}\; ;
    \end{equation*}
    furthermore:
    \begin{equation*}
      d\mu_{t,\xi}(z)=e^{i \sqrt{2}\Re\langle \tilde{\xi}(t) , z\rangle_{}}d\mu_t(z)\; .
    \end{equation*}
  \end{enumerate}
\end{corollary}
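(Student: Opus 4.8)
The plan is to transport the conclusion of Proposition~\ref{prop:12} from the interaction picture back to the Schrödinger picture for item~\ref{item:4}, and then to combine the Weyl relations with item~\ref{item:4} to get item~\ref{item:5}.

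For item~\ref{item:4}, since $(\varrho_{\varepsilon})_{\varepsilon\in(0,\bar\varepsilon)}\in\mathcal{T}^1$, Proposition~\ref{prop:12} applied with $\delta=1$ provides, for the sequence $(\varepsilon_n)_n$, a subsequence $(\varepsilon_{n_k})_k$ and Borel probability measures $(\tilde{\mu}_t)_{t\in\mathds{R}}$ with $\mathscr{M}(\tilde{\varrho}_{\varepsilon_{n_k}}(t),k\in\mathds{N})=\{\tilde{\mu}_t\}$. Recalling that $\varrho_{\varepsilon}(t)=e^{-i\frac{t}{\varepsilon}H_0}\tilde{\varrho}_{\varepsilon}(t)e^{i\frac{t}{\varepsilon}H_0}$ and that $e^{i\frac{t}{\varepsilon}H_0}W(\xi)e^{-i\frac{t}{\varepsilon}H_0}=W(\Phi_0(-t)\xi)$ (the identity already used in the proof of Proposition~\ref{prop:4}), cyclicity of the trace gives
$$\Tr\bigl[\varrho_{\varepsilon}(t)W(\xi)\bigr]=\Tr\bigl[\tilde{\varrho}_{\varepsilon}(t)W(\Phi_0(-t)\xi)\bigr]\;.$$
Letting $k\to\infty$ along the subsequence and using the unitarity of $\Phi_0(t)$ to rewrite $\Re\langle\Phi_0(-t)\xi,z\rangle=\Re\langle\xi,\Phi_0(t)z\rangle$, I obtain
$$\lim_{k\to\infty}\Tr\bigl[\varrho_{\varepsilon_{n_k}}(t)W(\xi)\bigr]=\int_{\mathscr{Z}}e^{i\sqrt{2}\Re\langle\xi,z\rangle}\,d\bigl((\Phi_0(t))_{\#}\tilde{\mu}_t\bigr)(z)\;,$$
so that $\mu_t:=(\Phi_0(t))_{\#}\tilde{\mu}_t$, which is again a Borel probability measure because $\Phi_0(t)$ is a homeomorphism of $\mathscr{Z}$. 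Since this limit exists for every $\xi\in\mathscr{Z}$, any further subsequence produces the same limit, whence $\mathscr{M}(\varrho_{\varepsilon_{n_k}}(t),k\in\mathds{N})=\{\mu_t\}$; here the notion of Wigner measure makes sense because $(\varrho_{\varepsilon}(t))_{\varepsilon}\in\mathcal{T}^1$ by Lemmas~\ref{lemma:8} and~\ref{lemma:6}.

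For item~\ref{item:5}, I would keep the subsequence $(\varepsilon_{n_k})_k$ just obtained and repeat the argument of Remark~\ref{rem:1}, now for the family $(\varrho_{\varepsilon_{n_k}}(t))_k$ whose single Wigner measure is $\mu_t$. Using the Weyl relation $W(\tilde{\xi}(t))W(\eta)=e^{-i\frac{\varepsilon}{2}\Im\langle\tilde{\xi}(t),\eta\rangle}W(\tilde{\xi}(t)+\eta)$ together with the estimate of Lemma~\ref{lemma:5}, I get for every $\eta\in\mathscr{Z}$
$$\lim_{k\to\infty}\Tr\bigl[\varrho_{\varepsilon_{n_k}}(t)W(\tilde{\xi}(t))W(\eta)\bigr]=\int_{\mathscr{Z}}e^{i\sqrt{2}\Re\langle\eta,z\rangle}\,e^{i\sqrt{2}\Re\langle\tilde{\xi}(t),z\rangle}\,d\mu_t(z)\;,$$
which identifies the (complex) Wigner measure of $(\varrho_{\varepsilon_{n_k}}(t)W(\tilde{\xi}(t)))_k$ as $d\mu_{t,\xi}(z)=e^{i\sqrt{2}\Re\langle\tilde{\xi}(t),z\rangle}d\mu_t(z)$, uniqueness following exactly as in item~\ref{item:4}.

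I do not expect a serious obstacle here: the real content is already contained in Proposition~\ref{prop:12} and in the Wigner-measure formalism of~\cite{AmNi1}. The only points requiring a bit of care are the bookkeeping of the free flow (getting $\Phi_0(-t)$ and not $\Phi_0(t)$, and the correct sign in the Weyl relation) and the verification that the uniform $\mathcal{T}^1$ bound is preserved both under $\varrho_{\varepsilon}\mapsto\varrho_{\varepsilon}(t)$ and under multiplication by the bounded operator $W(\tilde{\xi}(t))$, which is guaranteed by Lemmas~\ref{lemma:8}, \ref{lemma:6} and \ref{lemma:5}.
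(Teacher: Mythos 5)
Your proposal is correct and follows essentially the same route as the paper's (very terse) proof: item \ref{item:4} rests on the trace identity $\Tr[\tilde{\varrho}(t)W(\xi)]=\Tr[\varrho(t)W(\tilde{\xi}(t))]$ obtained from cyclicity and the free-flow conjugation, while item \ref{item:5} is exactly Remark~\ref{rem:1} applied to the family $(\varrho_{\varepsilon_{n_k}}(t))_k$, which you re-derive via the Weyl relation. The only difference is that you spell out the bookkeeping (the $\Phi_0(\mp t)$ dance, the vanishing $\varepsilon$-phase in the Weyl product, and the uniform $\mathcal{T}^1$ bounds from Lemmas~\ref{lemma:5}, \ref{lemma:6} and \ref{lemma:8}) which the paper leaves implicit.
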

\begin{proof}
  \ref{item:4}) follows easily since for any
  $\varrho\in\mathcal{L}^1(\mathscr{H})$ and $\xi\in \mathscr{Z}$:
  $\Tr\Bigl[\tilde{\varrho}(t)W(\xi)\Bigr]=\Tr\Bigl[\varrho(t)W(\tilde{\xi}(t))\Bigr]$.

  \ref{item:5}) is a consequence of Remark~\ref{rem:1}.
\end{proof}

\subsection{Integral formula in the limit $\varepsilon \to 0$.}
\label{sec:integr-form-limit}

We have all the ingredients to calculate the limit $\varepsilon\to 0$
of the integral equation~\eqref{eq:16}.
\begin{proposition}
  \label{prop:13}
  Assume that \eqref{eq:5} holds and let $\xi\in\mathscr{Z}$,
  $(\varrho_{\varepsilon})_{\varepsilon\in(0,\bar{\varepsilon})}\in\mathcal{S}^1$. Then
  for any sequence $ (\varepsilon_n)_{n\in\mathds{N}}\in
  (0,\bar{\varepsilon})$ such that $\lim_{n\to\infty}
  \varepsilon_n=0$; there exists a subsequence $
  (\varepsilon_{n_k})_{k\in\mathds{N}}$, and a family
  $(\mu_t)_{t\in\mathds{R}}$ of Borel probability measures on
  $\mathscr{Z}$ such that for all $ t\in\mathds{R}$:
  \begin{enumerate}
  \item
    $\mathscr{M}(\varrho_{\varepsilon_{n_k}}(t),k\in\mathds{N})=\{\mu_t\}$
    and
    $\mathscr{M}(\tilde\varrho_{\varepsilon_{n_k}}(t),k\in\mathds{N})=\{\tilde\mu_t=\Phi_{0}(-t)_{\#}\mu_{t})\}$.
  \item $\tilde\mu_t$ satisfies the following integral equation:
    \begin{equation}
      \label{eq:transf}
      \begin{split}
        \tilde\mu_t(e^{i \sqrt{2}\Re\langle {\xi} ,\,\cdot\,
          \rangle_{}})=
        \mu_0(e^{i \sqrt{2}\Re\langle {\xi},
          \,\cdot\,\rangle_{}})+i\sqrt{2}\int_0^t \tilde\mu_{s}\left(e^{i
            \sqrt{2}\Re\langle {\xi} , z\rangle_{}}  {\Re}\langle
          \xi, \mathscr{V}_{s}(z)\rangle\right)\; ds;
      \end{split}
    \end{equation}
    with the (velocity) vector field
    $\mathscr{V}_{s}(z)=-i\Phi_{0}(-t) \partial_{\bar
      z}h_{I}(\Phi_0(t)z)$ for all $z\in\Z$.
  \end{enumerate}
\end{proposition}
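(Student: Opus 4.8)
Fix $\xi\in\mathscr{Z}$ and a sequence $\varepsilon_n\to0$. The plan is to pass to the limit $\varepsilon\to0$ in the integral identity~\eqref{eq:16} of Proposition~\ref{prop:5}, which is available here because $\mathcal{S}^1\subset\mathcal{T}^1$, so each $\varrho_\varepsilon\in\mathcal{T}^1_\varepsilon$. First I would extract the subsequence: Corollary~\ref{cor:4} (used with $\delta=1$) yields a single subsequence $(\varepsilon_{n_k})$ along which $(\varrho_{\varepsilon_{n_k}}(t))_k$ has a unique Wigner measure $\mu_t$ and $(\tilde\varrho_{\varepsilon_{n_k}}(t))_k$ a unique Wigner measure $\tilde\mu_t$, for \emph{all} $t\in\mathds{R}$ simultaneously, all of them Borel probability measures obeying the uniform moment bound~\eqref{eq:24}. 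The identity $\tilde\mu_t=\Phi_0(-t)_{\#}\mu_t$ follows by comparing characteristic functionals: since $e^{-i\frac{t}{\varepsilon}H_0}W(\xi)e^{i\frac{t}{\varepsilon}H_0}=W(\tilde\xi(t))$ with $\tilde\xi(t)=\Phi_0(t)\xi$, one has $\Tr[\tilde\varrho_{\varepsilon_{n_k}}(t)W(\xi)]=\Tr[\varrho_{\varepsilon_{n_k}}(t)W(\Phi_0(t)\xi)]$, and letting $k\to\infty$ gives $\int_{\Z}e^{i\sqrt2\Re\langle\xi,z\rangle}\,d\tilde\mu_t(z)=\int_{\Z}e^{i\sqrt2\Re\langle\xi,\Phi_0(-t)z\rangle}\,d\mu_t(z)$ by unitarity of $\Phi_0(t)$. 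This proves item~(1).

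For item~(2), write~\eqref{eq:16} along $(\varepsilon_{n_k})$ and let $k\to\infty$ term by term. By the definition of Wigner measure the left side tends to $\tilde\mu_t(e^{i\sqrt2\Re\langle\xi,\cdot\rangle})$ and $\Tr[\varrho_{\varepsilon_{n_k}}W(\xi)]\to\mu_0(e^{i\sqrt2\Re\langle\xi,\cdot\rangle})$, while the summands $j=1,2$ are $O(\varepsilon_{n_k})$ by Proposition~\ref{prop:6}, hence vanish. There remains $\int_0^t\Tr[\varrho_{\varepsilon_{n_k}}(s)W(\tilde\xi(s))B_0(\tilde\xi(s))]\,ds$. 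For fixed $t$, combining Lemmas~\ref{lemma:5},~\ref{lemma:6} and~\ref{lemma:8} with a uniform bound for $\lvert T^{-1}B_0(\tilde\xi(s))\rvert_{\mathcal{L}(\mathscr{H})}$ in terms of $\lVert\omega^{-1/2}\chi\rVert_2$, $\lVert\xi_1\rVert_2$ and $\lVert\xi_2\rVert_2$, the integrand is bounded uniformly in $k$ and in $s\in[-|t|,|t|]$; so by dominated convergence it suffices to compute, for each fixed $s$, the limit of $\Tr[\varrho_{\varepsilon_{n_k}}(s)W(\tilde\xi(s))B_0(\tilde\xi(s))]$.

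For that one uses the splitting of $B_0(\tilde\xi(s))$ into the four terms $B_{\pm,\pm}(\tilde\xi(s))$ and the term $B_{+-,0}(\tilde\xi(s))=d\Gamma_1(f(\tilde\xi_2(s),x))$. The operator $\sigma_{\varepsilon_{n_k}}:=\varrho_{\varepsilon_{n_k}}(s)W(\tilde\xi(s))$ is uniformly bounded in $\mathcal{T}^1$ (Lemmas~\ref{lemma:8} and~\ref{lemma:5}), and by Remark~\ref{rem:1} (cf.\ Corollary~\ref{cor:4}) its Wigner measure along $(\varepsilon_{n_k})$ is the finite complex measure $d\mu_{s,\xi}(z)=e^{i\sqrt2\Re\langle\tilde\xi(s),z\rangle}d\mu_s(z)$. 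The $B_{\pm,\pm}(\tilde\xi(s))$ are Wick quantizations of polynomial symbols of degree at most two whose operators $\tilde b_{\pm,\pm}$ are finite rank or Hilbert--Schmidt (Proposition~\ref{prop:7}), so Proposition~\ref{prop:11} together with Remark~\ref{rem:2} gives
\[
  \lim_{k\to\infty}\Tr[\sigma_{\varepsilon_{n_k}}B_{\pm,\pm}(\tilde\xi(s))]=\int_{\Z}b_{\pm,\pm}(\tilde\xi(s),z)\,d\mu_{s,\xi}(z).
\]
The remaining term $B_{+-,0}(\tilde\xi(s))$ has a non-compact symbol, and I would handle it by the cutoff $g_m$ of Definition~\ref{def:1}: the operator $d\Gamma_1(g_m(i\partial_x)f(\tilde\xi_2(s)))$ has a compact symbol by Proposition~\ref{prop:8}, whereas by Lemma~\ref{lemma:9} the contribution of $d\Gamma_1((1-g_m(i\partial_x))f(\tilde\xi_2(s)))$ is $O(m^{-1})$ uniformly in $k$. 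Letting first $k\to\infty$ (Proposition~\ref{prop:11}, Remark~\ref{rem:2}) and then $m\to\infty$---the latter by dominated convergence on $\mu_{s,\xi}$, since $g_m(i\partial_x)\to1$ strongly while $\lvert\langle z_1,g_m(i\partial_x)f(\tilde\xi_2(s))z_1\rangle\rvert\le\lVert f(\tilde\xi_2(s))\rVert_\infty\lVert z_1\rVert_2^2$ is $\mu_{s,\xi}$-integrable by~\eqref{eq:24}---yields $\lim_k\Tr[\sigma_{\varepsilon_{n_k}}B_{+-,0}(\tilde\xi(s))]=\int_{\Z}\langle z_1,f(\tilde\xi_2(s))z_1\rangle\,d\mu_{s,\xi}(z)$. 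Summing the five contributions gives $\lim_k\Tr[\varrho_{\varepsilon_{n_k}}(s)W(\tilde\xi(s))B_0(\tilde\xi(s))]=\int_{\Z}b_0(\tilde\xi(s),z)\,e^{i\sqrt2\Re\langle\tilde\xi(s),z\rangle}\,d\mu_s(z)$, with $b_0(\tilde\xi(s),\cdot)$ the full Wick symbol of $B_0(\tilde\xi(s))$. Finally one changes variables through $\mu_s=\Phi_0(s)_{\#}\tilde\mu_s$, so that $\langle\tilde\xi(s),\Phi_0(s)w\rangle=\langle\xi,w\rangle$, and a direct computation of $\partial_{\bar z}h_I$ identifies $b_0(\tilde\xi(s),\Phi_0(s)w)$ with $i\sqrt2\,\Re\langle\xi,\mathscr{V}_s(w)\rangle$ for the velocity field $\mathscr{V}_s$ of the statement; inserting this into the limit of~\eqref{eq:16} produces~\eqref{eq:transf}.

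The main obstacle is the term $B_{+-,0}$: it is the only constituent of $B_0$ whose Wick symbol fails to be compact (it is the second quantization of a multiplication operator), so Proposition~\ref{prop:11} cannot be applied to it directly, and the cutoff scheme of Definition~\ref{def:1} together with the $\varepsilon$- and $m$-uniform bound of Lemma~\ref{lemma:9} is precisely what makes the order of the double limit $\varepsilon\to0$, $m\to\infty$ legitimate. A subsidiary point requiring care is the interchange of $\lim_k$ with $\int_0^t\,ds$, which rests on the fact that all the estimates of Sections~\ref{sec:dynam-quant-class} and~\ref{sec:trace-states} are uniform in $\varepsilon\in(0,\bar\varepsilon)$ and locally uniform in time.
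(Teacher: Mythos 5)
Your proof is essentially the same as the paper's: split the integral formula of Proposition~\ref{prop:5} into the $B_0,B_1,B_2$ terms, discard $B_1,B_2$ by Proposition~\ref{prop:6}, pass to the limit term by term for $B_0$ using Proposition~\ref{prop:11} on the compact pieces $B_{\pm,\pm}$, and treat the non-compact piece $B_{+-,0}$ by the $g_m$-regularization scheme, interchanging the two limits $k\to\infty$ and $m\to\infty$ with the help of Lemma~\ref{lemma:9}. The identification $\tilde\mu_t=\Phi_0(-t)_{\#}\mu_t$ and the extraction of a subsequence valid for all times are also handled the same way as in the paper.

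There is one omission that, as written, leaves a genuine gap. Lemma~\ref{lemma:9} (and the bound $\lvert S^{-1}W(\xi)S\rvert_{\mathcal{L}(\mathscr{H})}<\infty$ from Lemma~\ref{lemma:5}~(i) on which it relies) requires $\xi\in\mathscr{Z}_1$, i.e.\ $\xi_1\in H^2$ and $\omega\xi_2\in L^2$; yet you state and argue the result directly for arbitrary $\xi\in\mathscr{Z}$. For $\xi\notin\mathscr{Z}_1$ the $O(m^{-1})$ control of the tail $d\Gamma_1\big((1-g_m(i\partial_x))f(\tilde\xi_2(s))\big)$ is not available via the $S^{-1}$-sandwiching, because $S^{-1}W(\xi)S$ need not be bounded. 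The paper therefore first establishes the integral identity~\eqref{eq:transf} for $\xi\in\mathscr{Z}_1$ and then upgrades to all $\xi\in\mathscr{Z}$ by a separate density/dominated-convergence argument: since both sides of~\eqref{eq:transf} are continuous in $\xi$ on all of $\mathscr{Z}$ (the left side by Proposition~\ref{prop:12}, the right by the vector-field bound~\eqref{eq:22} and the moment bound~\eqref{eq:24}), and $\mathscr{Z}_1$ is dense in $\mathscr{Z}$, the identity propagates. You should either restrict to $\xi\in\mathscr{Z}_1$ while invoking Lemma~\ref{lemma:9} and add this final density step, or explain why the cutoff estimate holds for general $\xi\in\mathscr{Z}$.
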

\begin{proof}
  The first point is just a restatement of Corollary~\ref{cor:4}. The
  second is proved starting from the integral equation~\eqref{eq:16}
  and assuming $\xi\in\mathscr{Z}_1$. Fix the subsequence
  $(\varepsilon_{n_{k}})_{k\in\mathds{N}}$ such that we can associate
  a measure $\mu_t$ to
  $(\varrho_{\varepsilon_{n_k}}(t))_{k\in\mathds{N}}$ for all
  times. Then in~\eqref{eq:16} the left hand side and the first term
  in the right hand side converge by virtue of
  Proposition~\ref{prop:12}, and its Corollary~\ref{cor:4}. The terms
  involving $B_1$ and $B_2$ converge to zero in absolute value by
  Proposition~\ref{prop:6}, since
  $(\varrho_{\varepsilon_{n_k}})_{k\in\mathds{N}}\in
  \mathcal{T}^1\supset \mathcal{S}^1$. It remains to consider the
  $B_0$ term. If we split it as described in
  Section~\ref{sec:comm-h_i-wtild}, we see that the $B_{-,-}$,
  $B_{-,+}$, $B_{+,-}$ and $B_{+,+}$ terms converge by means of
  Proposition~\ref{prop:11} (applied to the state
  $\varrho_{\varepsilon_{n_k}}(s)W(\tilde{\xi}(s))$), since they have
  compact symbols.

  We have to be more careful with the $B_{+-,0}$ term, and use the
  regularization scheme introduced in
  Definition~\ref{def:1}. Consider:
  \begin{equation*}
    \begin{split}
      \Bigl\lvert \Tr \Bigl[ \varrho_{\varepsilon}(s)W( \tilde{\xi}(s)) B_{+-,0}( \tilde{\xi}(s))  \Bigr]-\int_{\mathscr{Z}}^{}e^{i \sqrt{2}\langle \tilde{\xi}(s) , z\rangle_{\mathscr{Z}}}\langle z_1 , f(\tilde{\xi}_2(s))z_1\rangle_{L^2(\mathds{R}^d)}  d\mu_s(z) \Bigr\rvert_{}^{}\; .
    \end{split}
  \end{equation*}
  Define now
  $B_{+-,0}^{m}(\tilde{\xi}(s)):=d\Gamma_{1}\bigl(g_m(i\partial_x)
  f(\tilde{\xi}_2(s),x)\bigr)$ to be the regularized operator with
  compact symbol. Then we obtain:
  \begin{equation*}
    \begin{split}
      \Bigl\lvert \Tr \Bigl[ \varrho_{\varepsilon}(s)W( \tilde{\xi}(s)) B_{+-,0}( \tilde{\xi}(s))  \Bigr]-\int_{\mathscr{Z}}^{}e^{i \sqrt{2}\langle \tilde{\xi}(s) , z\rangle_{\mathscr{Z}}}\langle z_1 , f(\tilde{\xi}_2(s))z_1\rangle_2  d\mu_s(z) \Bigr\rvert_{}^{}\leq \Bigl\lvert \Tr \Bigl[
      \varrho_{\varepsilon}(s)W( \tilde{\xi}(s)) \\
      B^m_{+-,0}( \tilde{\xi}(s))  \Bigr]-\int_{\mathscr{Z}}^{}e^{i \sqrt{2}\langle \tilde{\xi}(s) , z\rangle_{\mathscr{Z}}}\langle z_1 , g_m(i\partial_x)f(\tilde{\xi}_2(s))z_1\rangle_2  d\mu_s(z) \Bigr\rvert\\
      +\Bigl\lvert \Tr \Bigl[ \varrho_{\varepsilon}(s)W( \tilde{\xi}(s)) d\Gamma_1\bigl(\bigl(1 -g_m(i\partial_x)\bigr) f(\tilde{\xi}_2(s),x)\bigr) \Bigr]\Bigr\rvert\\
      +\Bigl\lvert \int_{\mathscr{Z}}^{}e^{i \sqrt{2}\langle \tilde{\xi}(s) , z\rangle_{\mathscr{Z}}}\langle z_1 , (1-g_m(i\partial_x))f(\tilde{\xi}_2(s))z_1\rangle_2  d\mu_s(z)\Bigr\rvert_{}^{}\; .
    \end{split}
  \end{equation*}
  The first term on the right hand side goes to zero by virtue of
  Proposition~\ref{prop:11}; the second goes to zero when $m\to
  \infty$ by Lemma~\ref{lemma:9}.

  Finally consider the last term. By definition, $\lvert
  (1-g_m(i\partial_x))\rvert_{\mathcal{L}(L^2(\mathds{R}^d))}^{}\leq
  1$ uniformly in $m$. Furthermore, $f(\tilde{\xi}_2(s),\,\cdot\,)\in
  L^{\infty}(\mathds{R}^d)$. Hence
  \begin{equation*}
    \bigl\lvert e^{i \sqrt{2}\langle \tilde{\xi}(s) , z\rangle_{\mathscr{Z}}}\langle z_1 , (1-g_m(i\partial_x))f(\tilde{\xi}_2(s))z_1\rangle_2\bigr\rvert_{}^{}\leq \lVert z_1 \rVert_2^2\; ,
  \end{equation*}
  that is integrable with respect to $\mu_s$ by virtue of
  Proposition~\ref{prop:10}. Then we can apply dominated convergence
  theorem and prove that the term goes to zero when $m\to\infty$,
  since $(1-g_m(i\partial_x))\to 0$ strongly as an
  operator of $L^2(\mathds{R}^d)$. \\
  Once the integral formula \eqref{eq:transf} is proved for $\xi\in
  \Z_{1}$, the extension for all $\xi\in \Z$ is straightforward since
  $\mathscr{V}_{s}$ satisfies the estimate~\eqref{eq:22} and a
  dominated convergence theorem applies thanks to the estimate
  \eqref{eq:24}.
\end{proof}

\subsection{Transport equation and uniqueness}
\label{sec:uniquness}
Proposition \ref{prop:13} shows that Wigner measures $\tilde\mu_t$ of
propagated normal states $\tilde\varrho_{\varepsilon}(t)$
satisfy the integral equation \eqref{eq:transf}. Actually, this can be
written as a Liouville (continuity) equation with respect to the
classical Hamiltonian of the Klein-Gordon-Schrödinger system. Proving
uniqueness of solutions of the latter equation implies that the
measure $\tilde\mu_t$ is the push forward of $\mu_0$ (the Wigner
measure at time $t=0$) by the classical flow $\Phi(t,0)$ which is a
well defined continuous map on $\Z$ by Proposition \ref{prop:3}.

One of our concerns is the regularity with respect to time of the curve
$t\mapsto \tilde\mu_{t}$ as a map valued on $\mathscr{P}(\Z)$, the
space of Borel probability measures over $\Z$.  For our purpose, the
most appropriate topology on $\mathscr{P}(\Z)$ is the weakly narrowly
convergence topology which is described below.  Let
$(e_{n})_{n\in\nz}$ be a Hilbert basis of $\Z$. In the following, we
endow $\Z=L^2(\mathds{R}^d)\oplus L^2(\mathds{R}^d)$ by the distance
$d_{\omega}(z_{1},z_{2})=\sqrt{\sum_{n\in\nz}\frac{|\langle
    z_{1}-z_{2},e_{n}\rangle|^{2}}{(1+n)^{2}}}$. It is not difficult
to see that the topology of $(\Z,d_w)$ coincides with the weak
topology on bounded sets. We say that a sequence $(\mu_{n})_{n\in
  \mathds{N}}$ in $\mathscr{P}(\Z)$ weakly narrowly converges to
$\mu\in \mathscr{P}(\Z)$ if
\begin{equation*}
  \forall f\in \mathscr{C}_{b}(\Z,d_{w}), \quad\lim_{n\to \infty} \int_{\Z}  f(z) \, d\mu_{n}=\int_{\Z}  f(z) \, d\mu\,,
\end{equation*}
where $\mathscr{C}_{b}(\Z,d_{w})$ denotes the space of all bounded
continuous real-valued functions on $(\Z,d_{w})$. In practice, it is
more convenient to use cylindrical functions in order to check weak
narrow continuity properties. We recall that a function $f:\Z\to
\mathds{R}$ is in the cylindrical Schwartz space
$\mathcal{S}_{cyl}(\Z)$ if there exists a finite rank orthogonal
projection $\wp$ on $\Z$ and a function $g:\wp\Z\to \mathds{R}$ in the
Schwartz space $\mathcal{S}(\wp\Z)$ such that
\begin{equation*}
  \forall z\in \Z, \quad f(z)=g(\wp z) \,.
\end{equation*}
In the same way, if $g\in \mathcal{C}_0^{\infty}(\wp \Z)$ we can
define the space of smooth cylindrical functions of compact support
$\mathcal{C}_{0,cyl}^{\infty}(\Z)$. We caution the reader that neither
$\mathcal{S}_{cyl}(\Z)$ nor $\mathcal{C}_{0,cyl}^{\infty}(\Z)$
possess a vector space structure.  The Fourier transform of $f\in
\mathcal{S}_{cyl}(\Z)$, based on a finite dimensional subspace
$\wp\Z$, is
\begin{equation}
  \label{eq:fourier}
  \mathcal{F}[f](\xi)=\int_{\wp\Z}  e^{-2i\pi {\mathrm Re}\langle
    \xi\,,\, z\rangle_{\Z}} f(z) ~dL_{\wp}(z)\,,
\end{equation}
where $dL_{\wp}(z)$ denotes the Lebesgue measure on $\wp\Z$ and the
inverse formula is
\begin{equation*}
  f(z)=\int_{\wp\Z}  e^{2i\pi {\mathrm Re}\langle
    \xi\,,\, z\rangle_{\Z}} \mathcal{F}[f](\xi) ~dL_{\wp}(\xi)\,.
\end{equation*}

\begin{proposition}
  \label{prop:liouville}
  Assume that \eqref{eq:5} holds and that
  $(\tilde\mu_t)_{t\in\mathds{R}}$ are Wigner measures of the family
  $(\tilde\varrho_{\varepsilon}(t)))_{\varepsilon\in(0,\bar{\varepsilon})}\in\mathcal{S}^1$
  provided by Proposition \ref{prop:13}.  Then the map
  $t\in\mathds{R}\mapsto\tilde{\mu}_{t}$ is weakly narrowly continuous
  and satisfies the transport equation
  \begin{equation}
    \label{eq:Liouville}
    \partial_{t}\tilde{\mu}_t+\nabla^T\left(\mathscr{V}_{t} \tilde{\mu}_{t}\right)=0\,,
  \end{equation}
  in the weak sense,
  \begin{equation}
    \label{eq.weakcont}
    \forall f\in \mathcal{C}^{\infty}_{0,cyl}(\mathds{R}\times\Z)\,,\quad
    \int_{\mathds{R}}\int_{\Z}
    \left(\partial_{t}f+\Re\langle \nabla f, \mathscr{V}_{t}\rangle\right)~d\tilde{\mu}_{t}dt=0\,.
  \end{equation}
\end{proposition}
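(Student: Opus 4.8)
The plan is to extract everything from the integral identity \eqref{eq:transf} of Proposition~\ref{prop:13}, combined with the uniform moment bound \eqref{eq:24} (valid with $\delta=1$ since $\mathcal S^1\subset\mathcal T^1$) and the pointwise estimate \eqref{eq:22} on the velocity field. The basic a priori input is that, for every $T_0>0$,
\[
  \sup_{|s|\le T_0}\int_{\Z}\lVert\mathscr V_s(z)\rVert_{\Z}\,d\tilde\mu_s(z)\le C(T_0)<\infty\,,
\]
because by \eqref{eq:22} one has $\lVert\mathscr V_s(z)\rVert_{\Z}\le 2\lVert\omega^{-1/2}\chi\rVert_2\,\lVert z_1\rVert_2(\lVert z_1\rVert_2+\lVert z_2\rVert_2)$ and the right-hand side is $\tilde\mu_s$-integrable, uniformly in $|s|\le T_0$, thanks to \eqref{eq:24}. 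Inserting this into \eqref{eq:transf} shows that for each fixed $\xi\in\Z$ the map $t\mapsto\hat{\tilde\mu}_t(\xi):=\tilde\mu_t\!\bigl(e^{i\sqrt2\Re\langle\xi,\cdot\rangle}\bigr)$ is Lipschitz on every bounded interval, with Lipschitz constant $\le\sqrt2\,\lVert\xi\rVert_{\Z}\,C(T_0)$; in particular it is continuous in $t$ and, being an indefinite integral of an $L^1_{\mathrm{loc}}$ function, satisfies $\frac{d}{dt}\hat{\tilde\mu}_t(\xi)=i\sqrt2\,\tilde\mu_t\!\bigl(e^{i\sqrt2\Re\langle\xi,z\rangle}\Re\langle\xi,\mathscr V_t(z)\rangle\bigr)$ for a.e.\ $t$.

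For the weak narrow continuity, note that by \eqref{eq:24} and the Markov inequality the family $(\tilde\mu_t)_{|t|\le T_0}$ is tight on $(\Z,d_w)$: closed balls of $\Z$ are $d_w$-compact, and $\tilde\mu_t\{\lVert z\rVert_{\Z}>R\}\le C(T_0)R^{-2}$ uniformly in $|t|\le T_0$. Now let $t_n\to t$. By tightness, any subsequence of $(\tilde\mu_{t_n})$ has a further subsequence converging weakly narrowly to some $\nu\in\mathscr P(\Z)$; testing against the bounded $d_w$-continuous functions $z\mapsto e^{i\sqrt2\Re\langle\xi,z\rangle}$ and using the continuity in time of $s\mapsto\hat{\tilde\mu}_s(\xi)$ established above identifies $\hat\nu(\xi)=\hat{\tilde\mu}_t(\xi)$ for every $\xi$, whence $\nu=\tilde\mu_t$ since a Borel probability measure on the separable Hilbert space $\Z$ is determined by its characteristic function \cite{AmNi1}. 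As every subsequence admits a further subsequence with the same limit $\tilde\mu_t$, we conclude $\tilde\mu_{t_n}\to\tilde\mu_t$ weakly narrowly, i.e.\ $t\mapsto\tilde\mu_t$ is weakly narrowly continuous.

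For the transport equation, take $f\in\mathcal C^{\infty}_{0,cyl}(\mathds R\times\Z)$, say $f(t,z)=g(t,\wp z)$ with $\wp$ a finite-rank orthogonal projection and $g\in\mathcal C^{\infty}_0(\mathds R\times\wp\Z)$. Using the Fourier inversion formula \eqref{eq:fourier} on $\wp\Z$, together with $\Re\langle\zeta,\wp z\rangle=\Re\langle\zeta,z\rangle$ for $\zeta\in\wp\Z$ and the rescaling $\eta=\sqrt{2}\pi\zeta$, write $f(t,z)=\int_{\wp\Z}e^{i\sqrt2\Re\langle\sqrt{2}\pi\zeta,z\rangle}\mathcal F[g(t,\cdot)](\zeta)\,dL_{\wp}(\zeta)$, so that integrating in $z$ against $\tilde\mu_t$ gives $\int_{\Z}f(t,z)\,d\tilde\mu_t(z)=\int_{\wp\Z}\hat{\tilde\mu}_t(\sqrt{2}\pi\zeta)\,\mathcal F[g(t,\cdot)](\zeta)\,dL_{\wp}(\zeta)$. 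The integrand is Lipschitz in $t$ (by the Lipschitz bound above) and, since $g$ is compactly supported, Schwartz and uniformly $t$-supported in $\zeta$; hence $t\mapsto\int_{\Z}f(t,z)\,d\tilde\mu_t(z)$ is absolutely continuous, and differentiating under the $\zeta$-integral while using the a.e.\ derivative formula for $\hat{\tilde\mu}_t$, together with a Fubini exchange (legitimate because $\int_{\mathds R}\int_{\wp\Z}\lVert\zeta\rVert\,|\mathcal F[g(t,\cdot)](\zeta)|\,dL_{\wp}(\zeta)\,dt<\infty$ and $\sup_{|t|\le T_0}\int_{\Z}\lVert\mathscr V_t(z)\rVert_{\Z}\,d\tilde\mu_t<\infty$), yields $\frac{d}{dt}\int_{\Z}f(t,z)\,d\tilde\mu_t(z)=\int_{\Z}\bigl(\partial_tf(t,z)+\Re\langle\nabla f(t,z),\mathscr V_t(z)\rangle\bigr)d\tilde\mu_t(z)$ for a.e.\ $t$: here the $\partial_tf$ term comes from differentiating $\mathcal F[g(t,\cdot)]$, while the identification of the remaining term with the pairing $\Re\langle\nabla f,\mathscr V_t\rangle$ is a direct computation on the Fourier side, using that $f$ is real-valued (so $\mathcal F[g(t,\cdot)]$ is Hermitian) and that only the real part of the phase derivative survives the $\zeta$-integration. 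Finally, since $f$ is compactly supported in $\mathds R\times\Z$, in particular in time, integrating this identity over $t\in\mathds R$ and applying the fundamental theorem of calculus makes the left-hand side vanish; this is precisely \eqref{eq.weakcont}, i.e.\ the transport equation \eqref{eq:Liouville} in the weak sense.

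The main obstacle is the passage from the ``mild''/Fourier identity \eqref{eq:transf} to the genuine distributional transport equation \eqref{eq.weakcont}: one must justify the differentiation under the integral sign and the Fubini exchange with only the quadratic-in-$\lVert z\rVert$ moment control afforded by $\mathcal S^1$ and the linear bound \eqref{eq:22} on $\mathscr V_s$, and one must correctly match the real-part structure of the velocity pairing on the two sides. By comparison, the weak narrow continuity is soft once tightness (from \eqref{eq:24}) is combined with the fact that the characteristic function determines a Borel probability measure on $\Z$.
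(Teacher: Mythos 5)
Your proposal follows the same overall strategy as the paper's proof: reduce everything to the Fourier--cylindrical identity coming from \eqref{eq:transf}, use the a priori moment bound \eqref{eq:24} together with the pointwise estimate \eqref{eq:22} on $\mathscr V_s$ to control the velocity term, establish continuity of $t\mapsto \tilde\mu_t$, and then pass from the integral form to the weak (distributional) transport equation. The transport-equation half is fine and, if anything, slightly more streamlined than the paper's: you work directly with $f(t,z)=g(t,\wp z)$, differentiate under the $\zeta$-integral, and invoke the fundamental theorem of calculus for the a.e.\ derivative, whereas the paper first derives a pointwise Liouville identity for time-independent cylindrical $g$, tests against $\varphi\in\mathcal C^\infty_0(\mathds R)$, integrates by parts, and finally approximates general $f$ by sums $\varphi(t)g(z)$ in $\mathcal C^\infty_0(\mathds R)\otimes_{alg}\mathcal C^\infty_0(\wp\Z)$.

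There is, however, a genuine (if small) gap in your weak narrow continuity argument. You assert that $z\mapsto e^{i\sqrt2\Re\langle\xi,z\rangle}$ is a bounded $d_w$-continuous function and test the subsequential limit $\nu$ against it. That claim is false for general $\xi\in\Z$: the distance $d_w$ generates a strictly weaker \emph{norm} topology on $\Z$, whose continuous linear functionals are exactly those $\xi$ with $\sum_n(1+n)^2|\langle\xi,e_n\rangle|^2<\infty$, a proper dense subspace. Concretely, with $\xi_n=\frac{1}{1+n}$ and $z_N=\frac1N\sum_{n\le N}(1+n)e_n$, one has $d_w(z_N,0)\to0$ while $\Re\langle\xi,z_N\rangle=1$ for all $N$, so the phase does not converge. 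The topology of $(\Z,d_w)$ agrees with the weak topology only on \emph{bounded} sets, which is precisely why the paper passes through cylindrical test functions and the uniform second-moment bound, and then appeals to \citep[Lemma~5.1.12-f)]{AGS}. Your argument is salvageable: truncate $\xi$ to a finite number of coordinates $\xi^{(N)}$ (for which the phase \emph{is} $d_w$-continuous), note that the characteristic functions of the measures under consideration differ from the truncated ones by $O(\lVert\xi-\xi^{(N)}\rVert)$ uniformly, thanks to \eqref{eq:24}, and pass to the limit $N\to\infty$ last. As stated, though, the claim of $d_w$-continuity is incorrect and needs this repair.
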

\begin{proof}
  For any $f\in \mathcal{S}_{cyl}(\Z)$, based on $\wp\Z$ with $\wp$ a
  finite rank orthogonal projection, Fubini's theorem gives
  $$
  \int_{\Z} f(z)~d\tilde{\mu}_{t}(z)=\int_{\wp\Z} \mathcal{F}[f](\xi)
  \, \tilde{\mu}_{t}(e^{2i\pi {\mathrm Re}\langle \xi\,,\,
    z\rangle_{\Z}}) ~dL_{\wp}(z)\,,
  $$
  where $\mathcal{F}$ is the Fourier transform \eqref{eq:fourier}.
  Hence, by the estimate~\eqref{eq:25} (with $\eta=\xi$) and the decay at infinity of
  $\mathcal{F}[f]$ the map $t\mapsto \int_{\Z}
  f(z)~d\tilde{\mu}_{t}(z)$ is continuous for any $f\in
  \mathcal{S}_{cyl}(\Z)$. Now, the bound
  $\int_{\Z} ||z||_{\Z}^{2}~d\tilde{\mu}_{t}(z)\leq C(T_0)$ (proved in
  Proposition~\ref{prop:12}) and \citep[][Lemma~5.1.12-f)]{AGS}
  guaranties the weak narrow continuity of the curve
  $t\mapsto \tilde{\mu}_{t}$.\\
  The transport equation \eqref{eq:Liouville} follows by integrating
  \eqref{eq:transf} against $\mathcal{F} [g](\xi)~dL_{\wp}(z)$ for any
  $g\in \mathcal{C}_{0,cyl}^{\infty}(\Z)$ based on $\wp\Z$. So, we
  obtain
  \begin{equation*}
    \int_{\Z} g(z)~d\tilde{\mu}_{t}(z)=\int_{\Z} g(z)~d\tilde{\mu}_{0}(z)
    +2i\pi\int_{0}^{t}\int_{\wp\Z} \tilde\mu_{s}\left( \Re\langle \xi,
      \mathscr{V}_{s}(z)\rangle \right)\,\mathcal{F}[g](\xi) ~dL_{\wp\Z}(\xi)~ds\,.
  \end{equation*}
  By Fubini's theorem and properties of finite dimensional Fourier
  transform, the identity
  \begin{equation}
    \label{eq:19}
    \int_{\Z} g(z)~d\tilde{\mu}_{t}(z)=\int_{\Z} g(z)~d\tilde{\mu}_{0}(z)
    \rangle+\int_{0}^{t}\int_{\Z}  \Re\langle \nabla g(z),
  \mathscr{V}_{s}(z)\rangle  ~d\tilde\mu_{s}(z)~ds\,,
  \end{equation}
  holds true with $\nabla g(z)$ the differential of
  $g:\Z\to\mathds{R}$ (here $\Z$ is considered as a real Hilbert space
  with the scalar product $\Re\langle\cdot, \cdot\rangle$). We observe
  that for any $g\in \mathcal{S}_{cyl}(\Z)$ the r.h.s. of
  \eqref{eq:19} is $\mathcal{C}^{1}(\mathds{R})$. Hence, a
  differentiation with respect to $t$ gives
  $$
  \partial_{t} \left( \int_{\Z} g(z)~d\tilde{\mu}_{t}(z) \right)-\int_{\Z}\Re\langle \nabla g(z), \mathscr{V}_{t}(z)\rangle ~d\tilde\mu_{t}(z)=0\,.
  $$
  Thus, multiplying the above relation by $\varphi(t)$\,, with
  $\varphi\in \mathcal{C}^{\infty}_{0}(\mathds{R},\mathds{R})$\,, and
  integrating by part proves \eqref{eq.weakcont} for
  $f(t,z)=\varphi(t)g(z)$\,. We conclude by observing that any
  $f\in\mathcal{C}^{\infty}_{0,cyl}(\mathds{R}\times\Z)$,
  $f(t,z)=g(t,\wp z)$ with
  $g\in\mathcal{C}^{\infty}_{0}(\mathds{R}\times\wp\Z) $ can be
  approximated by a sequence $\bigl(g_{n}(\wp \,\cdot\,,\,
  \cdot\,)\bigr)_{n\in\mathds{N}}$ in
  $\mathcal{C}^{\infty}_{0}(\mathds{R})\stackrel{alg}{\otimes} \mathcal{C}^{\infty}_{0}(\wp\Z)$.
\end{proof}

\begin{proposition}
  \label{prop:regdata}
  Assume that \eqref{eq:5} holds. Let
  $(\varrho_{\varepsilon})_{\varepsilon\in(0,\bar{\varepsilon})}\in\cap_{\delta>0}
  \mathcal{T}^{\delta}\cap \mathcal{S}^1$ and admits a unique Wigner
  measure $\mu_{0}$. Then for any time $t\in \mathds{R}$\,, the family
  $(\varrho_{\varepsilon}(t)=e^{-i\frac{t}{\varepsilon}H_{\varepsilon}}\varrho_{\varepsilon}e^{i\frac{t}{\varepsilon}H_{\varepsilon}})_{\varepsilon\in
    (0,\bar \varepsilon)}$ admits a unique Wigner measure
  $\mu_{t}=\Phi(t,0)_{\#}\mu_{0}$\,, where $\Phi$ is the flow of the
  Klein-Gordon-Schrödinger system defined on $\Z$\, by Proposition
  \ref{prop:3}.
\end{proposition}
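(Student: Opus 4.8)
The plan is to view Proposition~\ref{prop:regdata} as the synthesis of the transport equation already obtained for the Wigner measures of the propagated states (Propositions~\ref{prop:13} and~\ref{prop:liouville}) with an infinite dimensional uniqueness theorem for Liouville equations. Since by assumption $(\varrho_{\varepsilon})_{\varepsilon\in(0,\bar\varepsilon)}$ has the single Wigner measure $\mu_{0}$, it suffices to show that any Wigner measure of $(\varrho_{\varepsilon}(t))_{\varepsilon\in(0,\bar\varepsilon)}$ equals $\Phi(t,0)_{\#}\mu_{0}$. So let $\nu$ be such a measure and pick a sequence $\varepsilon_{n}\to 0$ with $\Tr[\varrho_{\varepsilon_{n}}(t)W(\xi)]\to\int_{\mathscr{Z}}e^{i\sqrt 2\Re\langle\xi,z\rangle}\,d\nu(z)$ for every $\xi\in\mathscr{Z}$. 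Applying Proposition~\ref{prop:13} to $(\varepsilon_{n})_{n}$ we extract a subsequence $(\varepsilon_{n_{k}})_{k}$ and families $(\mu_{t})_{t\in\mathds{R}}$, $(\tilde\mu_{t})_{t\in\mathds{R}}$ of Borel probability measures on $\mathscr{Z}$, with $\mathscr{M}(\varrho_{\varepsilon_{n_{k}}}(t),k)=\{\mu_{t}\}$, $\mathscr{M}(\tilde\varrho_{\varepsilon_{n_{k}}}(t),k)=\{\tilde\mu_{t}\}$, $\tilde\mu_{t}=\Phi_{0}(-t)_{\#}\mu_{t}$, and $\tilde\mu_{t}$ solving the integral equation~\eqref{eq:transf} with the velocity field $\mathscr{V}_{s}$ of~\eqref{eq:21}; in particular $\tilde\mu_{0}=\mu_{0}$ is the given Wigner measure, a subsequence of a subsequence still extracting a Wigner measure of $(\varrho_{\varepsilon})_{\varepsilon}$.

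By Proposition~\ref{prop:liouville} the curve $t\mapsto\tilde\mu_{t}$ is weakly narrowly continuous and solves the transport equation~\eqref{eq:Liouville} in the weak sense~\eqref{eq.weakcont}. I would then invoke the uniqueness statement for continuity equations, proved in finite dimension by \citet{AGS} (see Chapter~8 therein) and extended to a separable Hilbert space in~\cite{2011arXiv1111.5918A}: a weakly narrowly continuous curve of Borel probability measures on $\mathscr{Z}$ solving $\partial_{t}\tilde\mu_{t}+\nabla^{T}(\mathscr{V}_{t}\tilde\mu_{t})=0$ in the weak sense, such that the characteristic ODE $\partial_{t}z=\mathscr{V}_{t}(z)$ admits at most one solution issued from each initial point and such that $\int_{-T_{0}}^{T_{0}}\!\int_{\mathscr{Z}}\lVert\mathscr{V}_{s}(z)\rVert_{\mathscr{Z}}\,d\tilde\mu_{s}(z)\,ds<\infty$ for every $T_{0}>0$, necessarily coincides with the push-forward of its datum at time $0$ by the flow $\tilde\Phi$ of that ODE.

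The hypotheses of this theorem are all available here. Continuity of $\mathscr{V}_{s}$ on $\mathscr{Z}$ is recorded below~\eqref{eq:21}. The ODE $\partial_{t}z=\mathscr{V}_{t}(z)$ is, after conjugation by the free flow, the Klein-Gordon-Schr\"odinger Cauchy problem, globally well posed on $\mathscr{Z}$ by Propositions~\ref{lemma:1} and~\ref{prop:3}; its flow is $\tilde\Phi(t,0)=\Phi_{0}(-t)\circ\Phi(t,0)$, well defined and continuous. The integrability requirement follows from the quadratic bound~\eqref{eq:22}, which yields $\lVert\mathscr{V}_{s}(z)\rVert_{\mathscr{Z}}\leq 3\lVert\omega^{-1/2}\chi\rVert_{2}\,\lVert z\rVert_{\mathscr{Z}}^{2}$, together with the moment bound~\eqref{eq:24} of Proposition~\ref{prop:12}, valid because $(\varrho_{\varepsilon})_{\varepsilon}\in\bigcap_{\delta>0}\mathcal{T}^{\delta}$ (already $\delta=1$ gives $\int_{\mathscr{Z}}\lVert z\rVert_{\mathscr{Z}}^{2}\,d\tilde\mu_{s}(z)\leq C(T_{0})$ for $s\in[-T_{0},T_{0}]$). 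Hence $\tilde\mu_{t}=\tilde\Phi(t,0)_{\#}\mu_{0}$, so that $\mu_{t}=\Phi_{0}(t)_{\#}\tilde\mu_{t}=\bigl(\Phi_{0}(t)\circ\tilde\Phi(t,0)\bigr)_{\#}\mu_{0}=\Phi(t,0)_{\#}\mu_{0}$. Comparing with the limit along the original sequence $(\varepsilon_{n})$ gives $\nu=\Phi(t,0)_{\#}\mu_{0}$; since $\Phi(t,0)_{\#}\mu_{0}$ is itself a Wigner measure of $(\varrho_{\varepsilon}(t))_{\varepsilon}$, obtained by running the same extraction on an arbitrary sequence $\varepsilon_{n}\to 0$, we conclude $\mathscr{M}(\varrho_{\varepsilon}(t),\varepsilon\in(0,\bar\varepsilon))=\{\Phi(t,0)_{\#}\mu_{0}\}$ for every $t\in\mathds{R}$.

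The only genuinely delicate point is the verification that the abstract uniqueness theorem of~\cite{2011arXiv1111.5918A} applies as stated: one must check that $\mathscr{V}_{s}$ is exactly the field entering the Liouville equation satisfied by $\tilde\mu_{t}$ --- the precise content of Propositions~\ref{prop:13} and~\ref{prop:liouville} --- and that the moment information carried over from the hypothesis $(\varrho_{\varepsilon})_{\varepsilon}\in\bigcap_{\delta>0}\mathcal{T}^{\delta}$ meets the integrability requirement; the remaining manipulations, namely the bookkeeping between the interaction and Schr\"odinger pictures and the routine removal of the subsequence, are straightforward.
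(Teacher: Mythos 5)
Your proposal is correct and follows essentially the same route as the paper: establish the Liouville equation \eqref{eq:Liouville}--\eqref{eq.weakcont} via Propositions~\ref{prop:13} and~\ref{prop:liouville}, invoke the infinite-dimensional uniqueness theorem of \cite{2011arXiv1111.5918A} to identify $\tilde\mu_{t}$ as a push-forward, conjugate by the free flow, and remove the subsequence by the standard uniqueness-of-limits argument. The one place where you diverge is in the precise hypotheses you check for the abstract uniqueness theorem: you verify the $L^{1}$-type moment condition $\int_{-T_{0}}^{T_{0}}\int_{\Z}\lVert\mathscr{V}_{s}(z)\rVert\,d\tilde\mu_{s}\,ds<\infty$ together with uniqueness of characteristics, in the spirit of the superposition principle of \citet{AGS}, whereas the paper applies \cite[Proposition~C.8]{2011arXiv1111.5918A}, whose hypotheses are stated as (i) the $L^{2}$ condition $\int_{-T}^{T}\bigl(\int_{\Z}\lVert\mathscr{V}_{t}(z)\rVert^{2}\,d\tilde\mu_{t}\bigr)^{1/2}dt<\infty$ and (ii) $W_{2}$-continuity of $t\mapsto\tilde\mu_{t}$; the paper then observes that (ii) follows from (i), weak narrow continuity and the transport equation via \cite[Proposition~C.1]{2011arXiv1111.5918A}. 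Because the velocity field is quadratic, your $L^{1}$ check only needs the second moment of $\tilde\mu_{s}$, while the paper's $L^{2}$ check needs the fourth moment; both are available here since $(\varrho_{\varepsilon})\in\bigcap_{\delta>0}\mathcal{T}^{\delta}$ (so \eqref{eq:24} holds for every $\delta$), so the conclusion is the same either way. If you wish to match the paper exactly, cite \cite[Propositions~C.1 and~C.8]{2011arXiv1111.5918A} and verify the $L^{2}$ moment condition using \eqref{eq:24} with $\delta=2$ rather than $\delta=1$.
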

\begin{proof}
  Proposition \ref{prop:13} and Proposition \ref{prop:liouville} say
  that for any sequence $ (\varepsilon_n)_{n\in\mathds{N}}\in
  (0,\bar{\varepsilon})$ such that $\lim_{n\to\infty}
  \varepsilon_n=0$; there exists a subsequence $
  (\varepsilon_{n_k})_{k\in\mathds{N}}$, and a family of Wigner
  measures $(\tilde\mu_t)_{t\in\mathds{R}}$ of
  $(\tilde{\varrho}_{\varepsilon})_{\varepsilon\in(0,\bar{\varepsilon})}$
  which are Borel probability measures on $\mathscr{Z}$ satisfying the
  transport equation \eqref{eq:Liouville}-\eqref{eq.weakcont} for all
  $ t\in\mathds{R}$ with initial datum $\mu_{0}$ a time $t=0$.  Now,
  we apply \citep[][Proposition C.8]{2011arXiv1111.5918A} in order to conclude that
  such transport equation \eqref{eq:Liouville} admits a unique
  solution given by
  $$
  \Phi_0(t)_{\#}\tilde\mu_{t}=\Phi(t,0)_{\#}\mu_{0}\text{ i.e. } \mu_{t}=\Phi(t,0)_{\#}\mu_{0}\,.
  $$
  The assumptions to be checked are:\\
  (i) For all $T>0$,
  $$
  \int_{-T}^{T} \left(\int_{\Z}||\mathscr{V}_{t} (z)||_{\Z} ^{2}~d\tilde\mu_t(z)\right)^{1/2} dt<\infty\,.
  $$
  This holds true by~\eqref{eq:22} and the a priori estimate~\eqref{eq:24}.\\
  (ii) The map $t\in\mathds{R}\mapsto \tilde\mu$ is continuous with
  respect to the Wasserstein distance $W_{2}$. Indeed,
  \citep[][Proposition C1]{2011arXiv1111.5918A} shows that a weakly
  narrowly continuous curve satisfying a transport equation with a
  Borel velocity field satisfying (i) is continuous with respect to
  the Wasserstein distance.
\end{proof}

\subsection{Propagation for general states}
\label{se.gendata}
The extension of Proposition \ref{prop:regdata} to general states
$(\varrho_{\varepsilon})_{\varepsilon\in(0,\bar\varepsilon)}$
satisfying the assumption \eqref{eq:0} of Theorem \ref{main.th.1}
follows by a general approximation argument introduced in \cite{AmNi3}
and briefly sketched below. We recall that $S=H_0+T$ and
$T=N_1^2+N_2+1$.  Suppose that for some $\delta>0$ there exists
$C_\delta>0$ such that
\begin{equation}
  \label{eq:20}
  \forall  \varepsilon\in(0,\bar\varepsilon), \quad \lvert S^{\delta/4}\varrho_{\varepsilon} S^{\delta/4}\rvert_{\mathcal{L}^1(\mathscr{H})}^{}\leq C_{\delta}\,.
\end{equation}
Let $\chi\in\mathscr{C}^{\infty}_{0}(\mathds{R})$ such that $0\leq
\chi\leq 1$\,, $\chi\equiv 1$ in a neighbourhood of $0$\, and
$\chi_R(x)=\chi(\frac x R)$. Then the family of normal states
$$
\varrho_{\varepsilon,R}=\frac{\chi_R(S)\varrho_{\varepsilon}\chi_R(S)}{\Tr\left[\chi_R(S)\varrho_{\varepsilon}\chi_R(S)\right]}
$$
approximate $\varrho_{\varepsilon}$ as $R\to \infty$.  Notice that
$\varrho_{\varepsilon,R}$ is well defined for $R$ sufficiently large
for all $\varepsilon\in(0,\bar\varepsilon)$.  Actually, thanks to the
assumption \eqref{eq:0},
$$
\lvert\varrho_{\varepsilon}(t)-\varrho_{\varepsilon,R}(t)\rvert_{\mathcal{L}^1(\mathscr{H})}^{}= \lvert\varrho_{\varepsilon}-\varrho_{\varepsilon,R}\rvert_{\mathcal{L}^{1}(\mathcal{H})} \leq \nu(R)
$$
where
$\varrho_{\varepsilon,R}(t)=e^{-i\frac{t}{\varepsilon}H_{\varepsilon}}\varrho_{\varepsilon,R}\,
e^{i\frac{t}{\varepsilon}H_{\varepsilon}}$ and $\nu(R)$ is independent
of $\varepsilon$ with $\lim_{R\to \infty}\nu(R)=0$\,. Now, it is easy
to see that for any $R\in(0,\infty)$ the family of states
$(\varrho_{\varepsilon,R})_{\varepsilon\in(0,\bar\varepsilon)}$
satisfies the assumptions of Proposition~\ref{prop:regdata} except the
uniqueness of the Wigner measure at time $t=0$\,. However, up to
extracting a sequence which a priori depends on $R$, we can suppose
that $\mathscr{M}(\varrho_{\varepsilon_{n},R},
n\in\nz)=\left\{\mu_{0,R}\right\}\,$ and
$\mathscr{M}(\varrho_{\varepsilon_{n}},
n\in\nz)=\left\{\mu_{0}\right\}\,$.  Thus, we obtain
$$
\forall t\in\mathds{R}\,,\quad \mathcal{M}(\varrho_{\varepsilon_{n}}(t), n\in\nz)=\left\{\Phi(t,0)_{\#}\mu_{0,R}\right\}\,.
$$
For each $t\in\mathds{R}$, we can again extract a subsequence, which may
depend on $t$, such that
$$
\mathcal{M}(\varrho_{\varepsilon_{n}}(t), n\in\nz)=\left\{\mu_t\right\}\,.
$$
Now, \cite[Proposition~2.10 ]{AmNi3} implies
\begin{eqnarray*}
  &&\int_{\Z}|\mu_{t}-\Phi(t,0)_{\#}\mu_{0,R}|\leq \liminf_{n\to\infty}
  \lvert\varrho_{\varepsilon_n}(t)-\varrho_{\varepsilon_n,R}(t)
  \rvert_{\mathcal{L}^{1}(\mathcal{H})}\leq\nu(R)\,, \quad\mbox{ and }\\
  &&\int_{\Z}|\mu_{0}-\mu_{0,R}|\leq \liminf_{n\to\infty}
  \lvert\varrho_{\varepsilon_n}-\varrho_{\varepsilon_n,R}
  \rvert_{\mathcal{L}^{1}(\mathcal{H})}\leq\nu(R)\,,
\end{eqnarray*}
where the left hand side denotes the total variation of the signed measures
$\mu_t-\Phi(t,0)_{\#}\mu_{0,R}$ and $\mu_0-\mu_{0,R}$. Therefore, we
obtain
$$
\int_{\Z}|\mu_t-\Phi(t,0)_{\#}\mu_{0}|\leq \int_{\Z}|\mu_t-\Phi(t,0)_{\#}\mu_{0,R}|+\int_{\Z} |\mu_{0,R}-\mu_{0}| \leq 2\nu(R)\,,
$$
since the total variation of
$\Phi(t,0)_{\#}\mu_{0,R}-\Phi(t,0)_{\#}\mu_{0}$ and
$\mu_{0,R}-\mu_{0}$ are equal.  Letting $R\to \infty$ implies
$\mu_t=\Phi(t,0)_{\#}\mu_{0}$. Thus, the argument above shows
$$
\mu_t\in\mathcal{M}(\varrho_{\varepsilon}(t), \varepsilon\in (0,\bar\varepsilon)) \Leftrightarrow \left(\mu_t=\Phi(t,0)_{\#}\mu_{0}, \mu_0\in\mathcal{M}(\varrho_{\varepsilon}, \varepsilon\in (0,\bar\varepsilon))\right)\,.
$$
It is easy to see that the assumption \eqref{eq:0} implies
\eqref{eq:20}.  This ends the proof of Theorem~\ref{main.th.1}.

\section{Ground state energy limit}
\label{sec:gdstate}
In this section we give the proof of Theorem \ref{main.th.2}. We
recall that we assume \eqref{eq:5}, $m_0>0$ and suppose that $V$ is a
confining potential (i.e.: $\lim_{|x|\to \infty}V(x)=+\infty$). The
classical energy functional related to the Klein-Gordon-Schrödinger
system is given by $ h(z)=h_{0}(z)+h_{I}(z)$ where
$$
h_{0}(z)=\langle z_1,(-\frac{\Delta}{2M}+V) z_1\rangle+\langle z_2,\omega(k) z_2\rangle\,, \quad z=z_{1}\oplus z_{2}\in D((\frac{-\Delta}{2M}+V)^{1/2})\oplus D(\omega^{1/2})\,,
$$
is the quadratic positive part while $h_{I}(z)$ is the nonlinear
regular one given by
$$
h_{I}(z)= \int_{\mathds{R}^{2d}}^{}\frac{\chi(k)}{\sqrt{\omega(k)}} |z_1|^2(x) \bigl(\bar{z}_2(k) e^{-ik\cdot x}+z_2(k)e^{ik\cdot x}\bigr) dkdx\;, \quad z=z_{1}\oplus z_{2}\in\Z.
$$
Actually, the simple inequality $|h_{I}(z)|\leq 2 ||z_{1}||_2^{2}
||\frac{\chi}{\sqrt{\omega}}||_2 ||z_{2}||_2 $ holds true as well as
the scaling $h(\lambda z)=\lambda^{2} h_{0}(z)+\lambda^{3} h_{I}(z)$
for any $\lambda\in\mathds{R}$. Therefore, the functional $h$ is
unbounded from below whenever $\chi$ is different from zero.  However,
the Nelson Hamiltonian preserves the number of nucleons and the ground
state energy of $H_{|\mathscr{H}_n}$ is bounded from below (here
$\mathscr{H}_n=L_{s}^{2}(\mathds{R}^{dn})\otimes\Gamma_s(L^2(\mathds{R}^{d}))$
and $L_{s}^{2}(\mathds{R}^{dn})$ is the space of symmetric square
integrable functions).  This means classically that the
Klein-Gordon-Schrödinger system preserves the $L^{2}$ norm of
$z_{1}$ and $h$ is bounded from below under the constraint
$||z_{1}||_2=\lambda$ with $\lambda$ fixed.
\begin{lemma}
  Assume \eqref{eq:5} and $m_0>0$. Then, for any $\lambda>0$,
  \begin{equation*}
    \inf_{||z_1||_2=\lambda} h(z_1\oplus z_2)>-\infty\,.
  \end{equation*}
\end{lemma}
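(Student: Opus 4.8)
The plan is to exploit the spectral gap of $\omega$: since $m_0>0$ we have $\omega(k)=\sqrt{k^2+m_0^2}\geq m_0>0$, so $\langle z_2,\omega z_2\rangle\geq m_0\|z_2\|_2^2$, and — crucially — once $\|z_1\|_2=\lambda$ is \emph{fixed}, the interaction term $h_I$ is only \emph{linear} in $\|z_2\|_2$, hence easily dominated by this quadratic coercive part. First I would fix $\lambda>0$ and let $z=z_1\oplus z_2$ range over the admissible set, i.e. $z_1\in D((\tfrac{-\Delta}{2M}+V)^{1/2})$ with $\|z_1\|_2=\lambda$ and $z_2\in D(\omega^{1/2})$. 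Because $V\geq 0$ by \eqref{eq:5}, Kato's inequality gives that $-\tfrac{\Delta}{2M}+V$ is a positive operator, so the term $\langle z_1,(\tfrac{-\Delta}{2M}+V)z_1\rangle$ in $h_0(z)$ is nonnegative and can simply be discarded in the lower bound.

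Next I would invoke the elementary estimate $|h_I(z)|\leq 2\|z_1\|_2^2\,\|\omega^{-1/2}\chi\|_2\,\|z_2\|_2=2\lambda^2\|\omega^{-1/2}\chi\|_2\,\|z_2\|_2$ recalled just above the statement (valid by Cauchy--Schwarz thanks to $\omega^{-1/2}\chi\in L^2$ in \eqref{eq:5}). Combining the two observations and writing $t:=\|z_2\|_2\geq 0$ and $C:=\|\omega^{-1/2}\chi\|_2$, one completes the square:
\[
h(z)\;\geq\; m_0 t^2-2\lambda^2 C\,t\;=\;m_0\Bigl(t-\tfrac{\lambda^2 C}{m_0}\Bigr)^{2}-\tfrac{\lambda^4 C^{2}}{m_0}\;\geq\;-\tfrac{\lambda^4\|\omega^{-1/2}\chi\|_2^{2}}{m_0}\,.
\]
As the right-hand side is independent of $z$, taking the infimum over the constraint set yields $\inf_{\|z_1\|_2=\lambda}h(z_1\oplus z_2)\geq -m_0^{-1}\lambda^4\|\omega^{-1/2}\chi\|_2^2>-\infty$.

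There is essentially no obstacle here; the point of the lemma is conceptual rather than technical. Fixing the $L^2$-mass of $z_1$ neutralizes the bad cubic scaling $h(\lambda z)=\lambda^2 h_0(z)+\lambda^3 h_I(z)$ that makes $h$ unbounded below on all of $\Z$, and the hypothesis $m_0>0$ provides the coercive term $m_0\|z_2\|_2^2$ that absorbs the leftover linear-in-$\|z_2\|_2$ interaction. Note that the confining assumption on $V$ is not used for this bound; it enters only later, in the compactness arguments needed to identify the minimizer in Theorem~\ref{main.th.2}. If one wished, one could retain $\langle z_1,(\tfrac{-\Delta}{2M}+V)z_1\rangle+\tfrac12 m_0\|z_2\|_2^2$ on the right-hand side to obtain in addition a coercivity estimate for minimizing sequences, but the crude bound above already suffices for the present statement.
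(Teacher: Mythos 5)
Your proof is correct, and it is a genuinely different (more elementary) argument than the one in the paper. Both proofs exploit a completion of the square, but at different levels. You bound $h_I$ by Cauchy--Schwarz using $|h_I(z)|\leq 2\lambda^2\|\omega^{-1/2}\chi\|_2\|z_2\|_2$, drop the positive $z_1$-kinetic term, use $\langle z_2,\omega z_2\rangle\geq m_0\|z_2\|_2^2$, and complete the square in the \emph{scalar} quantity $t=\|z_2\|_2$, getting $h(z)\geq -\lambda^4\|\omega^{-1/2}\chi\|_2^2/m_0$. The paper instead performs the completion at the \emph{field} level: after inserting $\lambda^{-2}\int|z_1|^2\,dx=1$, it rewrites
\begin{equation*}
h(z)=\langle z_1,(-\tfrac{\Delta}{2M}+V)z_1\rangle
+\int_{\mathds{R}^d}\Bigl\langle \tfrac{z_2}{\lambda}+\lambda\tfrac{e^{-ikx}}{\omega^{3/2}}\chi,\;\omega\,\bigl(\tfrac{z_2}{\lambda}+\lambda\tfrac{e^{-ikx}}{\omega^{3/2}}\chi\bigr)\Bigr\rangle\,|z_1|^2(x)\,dx-\lambda^4\bigl\|\tfrac{\chi}{\omega}\bigr\|_2^2,
\end{equation*}
an exact identity (a phase-space translation in $z_2$), from which $h(z)\geq -\lambda^4\|\chi/\omega\|_2^2$ follows by positivity. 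Since $\|\chi/\omega\|_2^2\leq m_0^{-1}\|\omega^{-1/2}\chi\|_2^2$, the paper's constant is sharper; more importantly, the identity exhibits $h$ (under the constraint) as a nonnegative quadratic form plus a nonnegative $z_1$-term minus an explicit constant, which is structurally informative about the location of the minimum, whereas your scalar argument only gives the crude lower bound. Both proofs use $m_0>0$ in an essential way: you use it for the coercivity $\omega\geq m_0$, while the paper needs it (together with $\omega^{-1/2}\chi\in L^2$) to ensure $\chi/\omega\in L^2$ and $\omega^{-3/2}\chi\in\omega^{-1/2}L^2$. Your observation that the confining hypothesis on $V$ plays no role here is also correct.
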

\begin{proof}
  A phase space translation shows for $z=z_{1}\oplus z_{2}$ such that
  $||z_{1}||_2=\lambda$ that the energy functional can be written as
  \begin{equation*}
    h(z)=\langle z_1,(-\frac{\Delta}{2M}+V) z_1\rangle+\int_{\mathds{R}^{d}} \langle\, \frac{z_2}{\lambda}+\lambda \frac{e^{-ikx}}{\omega^{3/2}}\chi,\omega(k) \,  \bigl(\frac{z_2}{\lambda}+\lambda \frac{e^{-ikx}}{\omega^{3/2}} \chi\bigr)\,\rangle \, |z_{1}|^{2}(x) dx- \lambda^{4}  ||\frac{\chi}{\omega}||_2^{2}\,.
  \end{equation*}
  Observe that $\frac{z_2}{\lambda}+\lambda
  \frac{e^{-ikx}}{\omega^{3/2}}\chi$ belongs to
  $\omega^{-1/2}L^{2}(\mathds{R}^{d})$, so that all the terms make
  sense.  Hence, the quantitative bound $h(z)\geq - \lambda^{4}
  ||\frac{\chi}{\omega}||_2^{2}$ holds true.
\end{proof}

\subsection{Upper bound}
The upper bound is very simple to prove. It follows by an appropriate
choice of trial functions (coherent type states) for the quantum
energy.
\begin{lemma}
  \label{lem:upper}
  Let $\lambda>0$. Then for any $\varepsilon\in(0,\bar\varepsilon)$
  and $n\in\mathds{N}$ such that $n\varepsilon=\lambda^{2}$,
  \begin{equation}
    \displaystyle
    \inf \sigma(H_{|\mathscr{H}_{n}}) \leq \displaystyle\inf_{||z_{1}||_2=\lambda} h(z_{1}\oplus z_{2})\,.
  \end{equation}
\end{lemma}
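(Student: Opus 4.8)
\emph{Strategy.} The bound is a min--max (variational) estimate. It suffices to exhibit, for each admissible pair $(\varepsilon,n)$ with $n\varepsilon=\lambda^{2}$ and each classical configuration $z=z_{1}\oplus z_{2}$ with $\|z_{1}\|_{2}=\lambda$, a unit vector $\Psi_{\varepsilon,n}\in D(H_{|\mathscr{H}_{n}})$ with $\langle\Psi_{\varepsilon,n},H\Psi_{\varepsilon,n}\rangle=h(z)$; then $\inf\sigma(H_{|\mathscr{H}_{n}})\leq h(z)$, and taking the infimum over admissible $z$ gives the claim. Since $h_{I}$ is continuous on $\mathscr{Z}$ (by $|h_{I}(z)|\leq 2\|z_{1}\|_{2}^{2}\|\omega^{-1/2}\chi\|_{2}\|z_{2}\|_{2}$) and $h_{0}(z)=\|(-\tfrac{\Delta}{2M}+V)^{1/2}z_{1}\|_{2}^{2}+\|\omega^{1/2}z_{2}\|_{2}^{2}$ is approximated from above along $C_{0}^{\infty}(\mathds{R}^{d})$ in $z_{1}$ (keeping $\|z_{1}\|_{2}=\lambda$ by rescaling, using that $C_{0}^{\infty}$ is a form core for $-\tfrac{\Delta}{2M}+V$) and along $\mathds{1}_{\{|k|\le R\}}z_{2}$ in $z_{2}$, I may assume from the outset $z_{1}\in C_{0}^{\infty}(\mathds{R}^{d})$, $\|z_{1}\|_{2}=\lambda$ and $\omega z_{2}\in L^{2}(\mathds{R}^{d})$.

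\emph{The trial vector.} Set $u:=z_{1}/\lambda$, a unit vector of $L^{2}(\mathds{R}^{d})$, and pick $\xi_{2}\in L^{2}(\mathds{R}^{d})$ with $i\tfrac{\varepsilon}{\sqrt2}\,\xi_{2}=z_{2}$. I take
\begin{equation*}
  \Psi_{\varepsilon,n}:=u^{\otimes n}\otimes W(\xi_{2})\Omega\in\mathscr{H}_{n}=L_{s}^{2}(\mathds{R}^{nd})\otimes\Gamma_{s}(L^{2}(\mathds{R}^{d}))\,,
\end{equation*}
with $\Omega$ the Fock vacuum of $\Gamma_{s}(L^{2}(\mathds{R}^{d}))$ and $u^{\otimes n}(x_{1},\dots,x_{n})=\prod_{j}u(x_{j})$; then $\|\Psi_{\varepsilon,n}\|=1$. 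Since $u^{\otimes n}\in C_{0}^{\infty}(\mathds{R}^{nd})$ and, because $\omega z_{2}\in L^{2}$, $W(\xi_{2})\Omega\in D(d\Gamma_{2}(\omega))\cap D(N_{2})$, one has $\Psi_{\varepsilon,n}\in D(H_{0})\cap D(N_{1}^{2}+N_{2})\cap\mathscr{H}_{n}\subseteq D(H)$ by Proposition~\ref{prop:2}, hence $\Psi_{\varepsilon,n}\in D(H_{|\mathscr{H}_{n}})$ by the fibred structure of $H$.

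\emph{Energy.} On $\mathscr{H}_{n}$ the nucleon second quantization carries a factor $\varepsilon$, so $H_{01}$ acts as $\varepsilon\sum_{j=1}^{n}(-\tfrac{\Delta_{x_{j}}}{2M}+V(x_{j}))$ and, using $n\varepsilon=\lambda^{2}$ and $u=z_{1}/\lambda$, a direct computation gives $\langle\Psi_{\varepsilon,n},H_{01}\Psi_{\varepsilon,n}\rangle=n\varepsilon\langle u,(-\tfrac{\Delta}{2M}+V)u\rangle=\langle z_{1},(-\tfrac{\Delta}{2M}+V)z_{1}\rangle$. From the Weyl relation $W^{*}(\xi_{2})a(k)W(\xi_{2})=a(k)+i\tfrac{\varepsilon}{\sqrt2}\xi_{2}(k)$ one obtains $\langle W(\xi_{2})\Omega,a(k)W(\xi_{2})\Omega\rangle=z_{2}(k)$ and $\langle\Psi_{\varepsilon,n},H_{02}\Psi_{\varepsilon,n}\rangle=\langle W(\xi_{2})\Omega,d\Gamma_{2}(\omega)W(\xi_{2})\Omega\rangle=\tfrac{\varepsilon^{2}}{2}\int\omega(k)|\xi_{2}(k)|^{2}\,dk=\langle z_{2},\omega z_{2}\rangle$. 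Finally, writing $H_{I}=\int\psi^{*}(x)G(x)\psi(x)\,dx$ with $G(x)=\int\tfrac{\chi(k)}{\sqrt{\omega(k)}}(a^{*}(k)e^{-ik\cdot x}+a(k)e^{ik\cdot x})\,dk$, the $n$-sector action of $\int\psi^{*}G\psi$ together with $n\varepsilon|u|^{2}=|z_{1}|^{2}$ yields $\langle\Psi_{\varepsilon,n},H_{I}\Psi_{\varepsilon,n}\rangle=n\varepsilon\int|u(x)|^{2}\langle W(\xi_{2})\Omega,G(x)W(\xi_{2})\Omega\rangle\,dx=\int|z_{1}|^{2}(x)\int\tfrac{\chi(k)}{\sqrt{\omega(k)}}(\bar z_{2}(k)e^{-ik\cdot x}+z_{2}(k)e^{ik\cdot x})\,dk\,dx=h_{I}(z)$. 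Summing the three contributions gives $\langle\Psi_{\varepsilon,n},H\Psi_{\varepsilon,n}\rangle=h(z)$, which concludes.

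\emph{Obstacle.} There is essentially no difficulty here (this is why the upper bound is "very simple"): the only points requiring care are the bookkeeping of the $\varepsilon$-scaling in the restriction of $d\Gamma$ to $\mathscr{H}_{n}$ and the coherent-state normalization $i\tfrac{\varepsilon}{\sqrt2}\xi_{2}=z_{2}$, everything else being the standard variational construction with product/coherent trial states.
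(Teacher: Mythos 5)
Your proof is correct and uses essentially the same strategy as the paper: evaluate $H$ in the coherent-type trial vector $(z_{1}/\lambda)^{\otimes n}\otimes W(\tfrac{\sqrt2}{i\varepsilon}z_{2})\Omega$ and observe that with the $\varepsilon$-scaled CCR and $n\varepsilon=\lambda^{2}$ the expectation equals $h(z_{1}\oplus z_{2})$ exactly. The only (harmless) cosmetic differences are that you spell out the reduction to $z_{1}\in C_{0}^{\infty}$, $\omega z_{2}\in L^{2}$ via a form-core/continuity argument (which the paper leaves implicit), and you invoke Proposition~\ref{prop:2} for the domain inclusion where the paper instead notes directly that $D(H_{|\mathscr{H}_{n}})=D(H_{0|\mathscr{H}_{n}})$ by Kato--Rellich.
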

\begin{proof}
  Take for $\lambda>0$, $z_{1}\in C_{0}^{\infty}(\mathds{R}^{d})$ such
  that $||z_{1}||_2=\lambda$ and $z_{2}\in D(\omega)$, the coherent
  vector
  $$
  C(z_{1}\oplus z_{2})=(\frac{z_{1}}{\lambda})^{\otimes n} \otimes W(\frac{\sqrt{2}}{i\varepsilon} z_{2}) \Omega\,,
  $$
  with $\Omega=(1,0\cdots)$ the vacuum vector of the Fock space
  $\Gamma_s(L^2(\mathds{R}^d))$. It is easy to check that
  $C(z_{1}\oplus z_{2})$ belongs to the domain
  $D(H_{|\mathscr{H}_{n}})=D(H_{0|\mathscr{H}_{n}})$ since
  $(\frac{z_{1}}{\lambda})^{\otimes n} $ is in
  $D(d\Gamma(-\frac{\Delta}{2M}+V))$ and $
  W(\frac{\sqrt{2}}{i\varepsilon} z_{2}) \Omega$ is in
  $D(d\Gamma(\omega))$. Using the fact $n\varepsilon=\lambda^{2}$, an
  explicit computation yields
  $$
  \langle C(z_{1}\oplus z_{2}), H_{|\mathscr{H}_{n}} C(z_{1}\oplus z_{2})\rangle= h(z_{1}\oplus z_{2})\,.
  $$
\end{proof}

\subsection{Lower bound}
The lower bound proof is more elaborated and uses an a priori
information on Wigner measures of minimizing sequences.  It is
convenient to work with
\begin{equation*}
  \mathscr{D}= C_0^\infty(\mathds{R}^{nd})\otimes_{alg}\left(\mathscr{F}\cap D(d\Gamma(\omega))\right)\,,
\end{equation*}
where $\mathscr{F}$ denotes the dense subspace of finite particles
vectors of the Fock space $\Gamma_s(L^2(\rz^d))$.
\begin{lemma}
  \label{lem:mini} Let $\lambda>0$.  There exists a normalized
  minimizing sequence $(\Psi^{(n)})_{n\in\nz}$ in $\mathscr{D}$, such
  that for all $\varepsilon\in(0,\bar\varepsilon)$,
  $n\varepsilon=\lambda$,
  \begin{equation}
    \label{eq.20}
    \langle \Psi^{(n)}, H_{|\mathscr{H}_{n}} \Psi^{(n)}\rangle \leq \frac1 n+\inf\sigma(H_{|\mathscr{H}_{n}})\,.
  \end{equation}
\end{lemma}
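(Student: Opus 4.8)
The plan is to produce, for each $n$, an almost–minimizer of $\langle\,\cdot\,,H_{|\mathscr{H}_{n}}\,\cdot\,\rangle$ inside the form domain of $H_{|\mathscr{H}_{n}}$, and then to approximate it in the form norm by a vector of $\mathscr{D}$. Throughout, $n$ is fixed and $\varepsilon$ is the associated value with $n\varepsilon=\lambda$, so that $H_{|\mathscr{H}_{n}}$ is a single self-adjoint operator, bounded from below as recalled in Section~\ref{sec:gdstate}. Write $E_{n}:=\inf\sigma(H_{|\mathscr{H}_{n}})>-\infty$. By the spectral theorem,
\[
E_{n}=\inf\bigl\{\langle\phi,H_{|\mathscr{H}_{n}}\phi\rangle:\ \phi\in Q(H_{|\mathscr{H}_{n}}),\ \lVert\phi\rVert=1\bigr\},
\]
where $Q(H_{|\mathscr{H}_{n}})$ denotes the form domain; hence one fixes a normalized $\phi_{n}\in Q(H_{|\mathscr{H}_{n}})$ with $\langle\phi_{n},H_{|\mathscr{H}_{n}}\phi_{n}\rangle\le E_{n}+\tfrac1{2n}$.

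The heart of the matter is to show that $\mathscr{D}$ is a \emph{form core} for $H_{|\mathscr{H}_{n}}$. First, on $\mathscr{H}_{n}$ one has $H_{0|\mathscr{H}_{n}}=\bigl(\sum_{j=1}^{n}(-\tfrac{\Delta_{x_{j}}}{2M}+V(x_{j}))\bigr)\otimes 1+1\otimes d\Gamma(\omega)$; by Kato's inequality and \eqref{eq:5}, $-\tfrac{\Delta}{2M}+V$ is essentially self-adjoint on $C_{0}^{\infty}(\mathds{R}^{d})$, so $\sum_{j}(-\tfrac{\Delta_{x_{j}}}{2M}+V(x_{j}))$ is essentially self-adjoint on $C_{0}^{\infty}(\mathds{R}^{nd})$ and its symmetrizations form a core on $L_{s}^{2}(\mathds{R}^{nd})$, while $\mathscr{F}\cap D(d\Gamma(\omega))$ is a core for $d\Gamma(\omega)$; since both summands are nonnegative, the algebraic tensor product of cores is a core for the sum, so $\mathscr{D}$ is an operator core — a fortiori a form core — for $H_{0|\mathscr{H}_{n}}$. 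Next, $(H_{I})_{n}$ is infinitesimally form-bounded with respect to $H_{0|\mathscr{H}_{n}}$: writing $H_{I}$ on $\mathscr{H}_{n}$ as $\sum_{j=1}^{n}\bigl(a^{*}(g_{x_{j}})+a(g_{x_{j}})\bigr)$ with $\lVert g_{x_{j}}\rVert_{2}=\lVert\omega^{-1/2}\chi\rVert_{2}$, estimate \eqref{eq:9} applied pointwise in the nucleon variables together with Cauchy--Schwarz gives, for $\Psi$ in the form domain,
\[
\lvert\langle\Psi,H_{I}\Psi\rangle\rvert\le 2n\lVert\omega^{-1/2}\chi\rVert_{2}\,\lVert\Psi\rVert\,\lVert N_{2}^{1/2}\Psi\rVert\,,
\]
and since $N_{2}\le m_{0}^{-1}H_{02}$ (here $m_{0}>0$ is used) and $H_{02}\le H_{0|\mathscr{H}_{n}}$, the right-hand side is $\le\delta\langle\Psi,H_{0|\mathscr{H}_{n}}\Psi\rangle+C_{\delta,n}\lVert\Psi\rVert^{2}$ for every $\delta>0$. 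By the KLMN theorem, $Q(H_{|\mathscr{H}_{n}})=Q(H_{0|\mathscr{H}_{n}})$ with equivalent form norms, and any form core for $H_{0|\mathscr{H}_{n}}$ is a form core for $H_{|\mathscr{H}_{n}}$; in particular $\mathscr{D}$ is a form core for $H_{|\mathscr{H}_{n}}$.

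To conclude, approximate $\phi_{n}$ by $\Psi_{0}\in\mathscr{D}$ in the form norm of $H_{|\mathscr{H}_{n}}$, closely enough that $\lvert\langle\Psi_{0},H_{|\mathscr{H}_{n}}\Psi_{0}\rangle-\langle\phi_{n},H_{|\mathscr{H}_{n}}\phi_{n}\rangle\rvert<\tfrac1{4n}$ and $\lVert\Psi_{0}\rVert$ is so near $1$ that the normalized vector $\Psi^{(n)}:=\Psi_{0}/\lVert\Psi_{0}\rVert$ still satisfies $\langle\Psi^{(n)},H_{|\mathscr{H}_{n}}\Psi^{(n)}\rangle\le\langle\Psi_{0},H_{|\mathscr{H}_{n}}\Psi_{0}\rangle+\tfrac1{4n}$ (possible because the form is bounded below). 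Then $\langle\Psi^{(n)},H_{|\mathscr{H}_{n}}\Psi^{(n)}\rangle\le E_{n}+\tfrac1n$, so $(\Psi^{(n)})_{n\in\nz}$ is the required minimizing sequence in $\mathscr{D}$. I expect the main obstacle to be the form-core statement of the middle paragraph, and within it the handling of the merely locally square-integrable (and, in the confining case, unbounded) potential $V$: one must invoke Kato's inequality for essential self-adjointness on $C_{0}^{\infty}$, combine it correctly with a core for the meson part on the tensor-product space, and then apply the KLMN perturbation; the rest is routine.
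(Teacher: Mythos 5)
Your proof is correct and reaches the same conclusion as the paper's, but via a genuinely different perturbative route. The paper is terse: it invokes Kato--Rellich for $H_{|\mathscr{H}_n}$ (justified in Proposition~\ref{prop:1} once one notes that $m_0>0$ upgrades $\omega^{-1/2}\chi\in L^2$ to $\omega^{-1}\chi\in L^2$), concludes $D(H_{|\mathscr{H}_n})=D(H_{0|\mathscr{H}_n})$ with equivalent graph norms, and then simply records that $\mathscr{D}$, being an \emph{operator} core for $H_{0|\mathscr{H}_n}$, is therefore an operator core for $H_{|\mathscr{H}_n}$, so the variational characterization of $\inf\sigma$ may be taken over $\mathscr{D}$. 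You instead work at the quadratic-form level: you establish that $\mathscr{D}$ is a \emph{form} core for $H_{0|\mathscr{H}_n}$ (via the tensor-product-of-cores argument, with the symmetrization caveat correctly noted), show infinitesimal form-boundedness of $H_I$ using Lemma~\ref{lemma:2}\emph{ii)} and $N_2\le m_0^{-1}H_{02}$, and invoke KLMN to transfer the form-core property to $H_{|\mathscr{H}_n}$. What your route buys is slightly weaker hypotheses in principle (form-boundedness is cheaper than operator-boundedness, so you never need $\omega^{-1}\chi\in L^2$ as an independent assumption) and a fully self-contained argument — you actually justify that $\mathscr{D}$ is a core for the free restricted Hamiltonian and you handle the renormalization step explicitly, both of which the paper leaves implicit. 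What the paper's route buys is brevity, since Kato--Rellich has already been set up in Section~\ref{sec:quantum-system} and the operator-core statement is immediate from domain equality with equivalent graph norms. Both are sound; yours is the more careful exposition, the paper's is the more economical one given what precedes it.
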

\begin{proof}
  Remember that the Kato-Rellich theorem applies for
  $H_{|\mathscr{H}_n}$. Therefore
  $D(H_{|\mathscr{H}_n})=D(H_{0|\mathscr{H}_n})$ and since
  $\mathscr{D}$ is a core for $H_{0|\mathscr{H}_n}$ then it is also a
  core for $H_{|\mathscr{H}_n}$. Thus, one can construct a normalized
  sequence in $\mathscr{D}$ satisfying the inequality \eqref{eq.20}
  since
  $$
  \inf\sigma(H_{|\mathscr{H}_{n}})=\inf_{||\Psi^{(n)}||=1, \Psi^{(n)}\in \mathscr{D}} \langle \Psi^{(n)}, H_{|\mathscr{H}_n} \Psi^{(n)}\rangle\,.
  $$
\end{proof}

\begin{lemma}
  \label{loclimi}
  Let $(\Psi^{(n)})_{n\in\nz}$ be a minimizing sequence as in Lemma
  \ref{lem:mini}.  We can assume that $(\Psi^{(n)})_{n\in\nz}$ has a
  unique Wigner measure $\mu$. Then for any $R>0$,
  \begin{equation*}
    \lim_{n\to\infty} \langle \Psi^{(n)}, d\Gamma( 1_{|x|\leq R}) \otimes 1\,\Psi^{(n)}\rangle=\int_{\Z} \langle z_{1}, 1_{|x|\leq R} \, z_{1}\rangle \, d\mu(z)\,.
  \end{equation*}
\end{lemma}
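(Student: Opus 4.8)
The plan is to realize $d\Gamma_1(1_{|x|\leq R})\otimes 1$ as the Wick quantization of the polynomial symbol $z\mapsto\langle z_1,1_{|x|\leq R}z_1\rangle$, whose associated operator on $\Z$ is $\mathrm{diag}(1_{|x|\leq R},0)$. Multiplication by $1_{|x|\leq R}$ is bounded but \emph{not} compact on $L^2(\mathds{R}^d)$, so Proposition~\ref{prop:11} cannot be invoked directly; the remedy, exactly as in Lemma~\ref{lemma:9}, is to regularize in momentum, replacing $1_{|x|\leq R}$ by $g_m(i\partial_x)\,1_{|x|\leq R}\,g_m(i\partial_x)$ with $\{g_m\}$ the family of Definition~\ref{def:1}, passing to the limit $\varepsilon\to 0$ first and then letting $m\to\infty$.

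First I would collect the \emph{a priori} bounds on the minimizing sequence. On $\mathscr{H}_n$ the operator $N_1$ equals the fixed constant $n\varepsilon$, so the Kato form bound for $(H_I)_{|\mathscr{H}_n}$ (Corollary~\ref{cor:1} and Lemma~\ref{lemma:2}) has relative bound and additive constant independent of $n$; together with Lemma~\ref{lem:upper} this gives $-C\leq\inf\sigma(H_{|\mathscr{H}_n})\leq C$, and then the minimizing inequality~\eqref{eq.20} yields $\langle\Psi^{(n)},H_0\Psi^{(n)}\rangle\leq C'$ uniformly in $n$. Since $m_0>0$ we have $N_2\leq m_0^{-1}H_{02}\leq m_0^{-1}H_0$, hence $\sup_n\langle\Psi^{(n)},(N_1^2+N_2+1)\Psi^{(n)}\rangle<\infty$; i.e.\ the family $\varrho_{\varepsilon_n}:=|\Psi^{(n)}\rangle\langle\Psi^{(n)}|$ (with $\varepsilon=\varepsilon_n\to 0$) lies in $\mathcal{T}^1$. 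In particular Proposition~\ref{prop:10} applies, so it is legitimate to assume, as in the statement, a single Wigner measure $\mu$, and moreover $\int_\Z(\lVert z_1\rVert_2^2+\lVert z_2\rVert_2+1)^2\,d\mu<\infty$.

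Next I would control the regularization error uniformly in $n$. With $C_m:=1_{|x|\leq R}-g_m(i\partial_x)1_{|x|\leq R}g_m(i\partial_x)$, a direct computation gives $\langle\phi,C_m\phi\rangle=\lVert 1_{|x|\leq R}\phi\rVert_2^2-\lVert 1_{|x|\leq R}g_m(i\partial_x)\phi\rVert_2^2$, hence $\lvert\langle\phi,C_m\phi\rangle\rvert\leq 2\lVert\phi\rVert_2\lVert(1-g_m(i\partial_x))\phi\rVert_2$. By Definition~\ref{def:1}~iii) (as used in the proof of Lemma~\ref{lemma:9}) one has $\lVert(1-g_m(i\partial_x))(1-\tfrac{\Delta}{2M})^{-1/2}\rVert\leq C m^{-1}$, so as quadratic forms $\pm C_m\leq C m^{-1}(2-\tfrac{\Delta}{2M})$ and thus $\pm\,d\Gamma_1(C_m)\leq C m^{-1}(2N_1+d\Gamma_1(-\tfrac{\Delta}{2M}))\leq C m^{-1}(2N_1+H_{01})$; evaluating on $\Psi^{(n)}$ and invoking the energy bound above,
\[
\Bigl\lvert\langle\Psi^{(n)},d\Gamma_1(1_{|x|\leq R})\Psi^{(n)}\rangle-\langle\Psi^{(n)},d\Gamma_1\bigl(g_m(i\partial_x)1_{|x|\leq R}g_m(i\partial_x)\bigr)\Psi^{(n)}\rangle\Bigr\rvert\leq\frac{C''}{m}\quad\text{uniformly in }n .
\]
For fixed $m$, the operator $g_m(i\partial_x)1_{|x|\leq R}g_m(i\partial_x)$ is compact (Proposition~\ref{prop:8}, since $g_m$ and $1_{|x|\leq R}$ both vanish at infinity) and satisfies $0\leq d\Gamma_1(g_m 1_{|x|\leq R}g_m)\leq N_1\leq T$, so its symbol is a compact polynomial symbol for which the $\mathcal{T}^1$ version of Proposition~\ref{prop:11} (Remark~\ref{rem:2}) gives
\[
\lim_{n\to\infty}\langle\Psi^{(n)},d\Gamma_1\bigl(g_m(i\partial_x)1_{|x|\leq R}g_m(i\partial_x)\bigr)\Psi^{(n)}\rangle=\int_\Z\langle z_1,g_m(i\partial_x)1_{|x|\leq R}g_m(i\partial_x)z_1\rangle\,d\mu(z).
\]
Since $g_m(i\partial_x)\to 1$ strongly on $L^2(\mathds{R}^d)$ and $\lvert\langle z_1,g_m 1_{|x|\leq R}g_m z_1\rangle\rvert\leq\lVert z_1\rVert_2^2\in L^1(\mu)$, dominated convergence yields $\int_\Z\langle z_1,g_m 1_{|x|\leq R}g_m z_1\rangle\,d\mu\to\int_\Z\langle z_1,1_{|x|\leq R}z_1\rangle\,d\mu$ as $m\to\infty$. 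A standard argument — first choose $m$ large so the first and third facts are small, then $n$ large so the middle limit is nearly attained — combines these three estimates to give the claim.

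The hard part is precisely the non-compactness of the symbol: since Proposition~\ref{prop:11} is not available off the shelf, one must argue that the momentum localization $g_m(i\partial_x)$ does not destroy the observable in the classical limit, and this is what forces the uniform-in-$n$ error estimate for $d\Gamma_1(C_m)$ — which in turn rests crucially on the \emph{a priori} energy bound for the minimizing sequence (itself relying on $m_0>0$ and on $N_1$ being frozen to the constant $n\varepsilon$ on $\mathscr{H}_n$).
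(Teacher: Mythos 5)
Your proof is correct and follows essentially the same route as the paper's: derive the uniform energy bound $\langle\Psi^{(n)},H_0\Psi^{(n)}\rangle\leq C$ from the Kato bound on $\mathscr{H}_n$ (with $N_1$ frozen at $\lambda^2$) and the upper bound of Lemma~\ref{lem:upper}, regularize the non-compact multiplication operator $1_{|x|\leq R}$ by a momentum cutoff so that Proposition~\ref{prop:11}/Remark~\ref{rem:2} applies, control the cutoff error uniformly in $n$ via the energy bound, and remove the cutoff by dominated convergence on the $\mu$-side. The only (cosmetic) difference is that you use a symmetric regularization $g_m(i\partial_x)\,1_{|x|\leq R}\,g_m(i\partial_x)$ while the paper uses the one-sided $1_{|x|\leq R}\tilde\chi_\kappa(D_x^2)$ and estimates the remainder by Cauchy--Schwarz against $\|(1+D^2_{x_1})^{1/2}\Psi^{(n)}\|$; both deliver the same uniform-in-$n$ $O(m^{-1})$ error.
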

\begin{proof}
  Proposition \ref{prop:10} ensures the existence of Wigner measures
  for $(\Psi^{(n)})_{n\in\nz}$ since
  $$
  \langle \Psi^{(n)}, N\,\Psi^{(n)}\rangle \leq \lambda^2+\langle \Psi^{(n)}, H_{0|\mathscr{H}_n}\,\Psi^{(n)}\rangle\,,
  $$
  and the right hand side is uniformly bounded with respect to
  $n\in\mathds{N}$.
  Moreover, by extracting a subsequence we can always  assume that $(\Psi^{(n)})_{n\in\nz}$ has a unique Wigner measure.\\
  Let $\tilde\chi\in C_0^\infty(\rz)$ such that $0\leq
  \tilde\chi(x)\leq 1$, $\tilde\chi(x)=1$ if $|x|\leq 1$ and
  $\tilde\chi(x)=0$ if $|x|\geq 2$. Let
  $\tilde\chi_\kappa(x)=\tilde\chi(\frac{x}{\kappa})$, for $\kappa>0$.
  \begin{eqnarray}
    \left| \lambda^2\langle \Psi^{(n)}, 1_{|x_1|\leq R} \,\Psi^{(n)}\rangle-\int_{\Z}\langle z_1, 1_{|x|\leq R} \, z_1\rangle \, d\mu(z)\right| &&\leq\left|\lambda^2\langle \Psi^{(n)}, 1_{|x_1|\leq R} \,[1-\tilde\chi_\kappa(D^2_{x_1})] \,\Psi^{(n)}\rangle\right| \\
    && +\left|\lambda^2\langle \Psi^{(n)}, 1_{|x_1|\leq R} \,\tilde\chi_\kappa(D^2_{x_1})\,\Psi^{(n)}\right.\rangle\\
    &&\left.-\int_{\Z}\langle z_1, 1_{|x|\leq R} \tilde\chi_\kappa(D_x^2) \, z_1\rangle \, d\mu(z)\right| \\
    && +\left| \int_{\Z}\langle z_1, 1_{|x|\leq R} [\tilde\chi_\kappa(D_x^2)-1] \, z_1\rangle \, d\mu(z)\right|
  \end{eqnarray}
  The first term in right hand side can be estimated by
  \begin{eqnarray}
    \label{eq.1}
    \left|\langle \Psi^{(n)}, 1_{|x_1|\leq R} \,[1-\tilde\chi_\kappa(D^2_{x_1})] \,\Psi^{(n)}\rangle\right|&\leq&||(1+D^2_{x_1})^{\frac{1}{2}} \,\Psi^{(n)}|| \; ||[1-\tilde\chi_\kappa(D^2_{x})] (1+D^2_x)^{-\frac{1}{2}}||.
  \end{eqnarray}
  So, the left hand side of \eqref{eq.1} tend to zero, uniformly with respect to
  $R>0$, when $\kappa\to\infty$. Now, observe that the operator
  $1_{|x|\leq R} \,\tilde\chi_\kappa(D^2_x)$ is compact. Then by
  Proposition \ref{prop:11} and Remark \ref{rem:2}, we get for all
  $\kappa>0$
  \begin{eqnarray*}
    \lim_{n\to\infty}  \lambda^2\langle \Psi^{(n)}, 1_{|x_1|\leq R} \,\tilde\chi_\kappa(D_{x_1}^2)\,\Psi^{(n)}\rangle&=&\lim_{n\to\infty}\langle \Psi^{(n)}, d\Gamma(1_{|x|\leq R} \,\tilde\chi_\kappa(D_{x_1}^2)) \otimes 1\,\Psi^{(n)}\rangle\\ &=&\int_{\Z}\langle z_1, 1_{|x|\leq R} \,
    \tilde\chi_\kappa(D_x^2)\, z_1\rangle \, d\mu(z)\,.
  \end{eqnarray*}
  Since $\tilde\chi_\kappa(D_x^2)$ converges strongly to $1$, we see
  by dominated convergence theorem that
  $$
  \lim_{\kappa\to\infty} \int_{\mathscr{Z}} \langle z_1, 1_{|x|\leq R} [\tilde\chi_\kappa(D_x^2)-1] \, z_1\rangle \, d\mu(z)=0\,.
  $$
  Hence an $\eta/3$-argument proves the limit.
\end{proof}

\begin{lemma}
  \label{lem:gr1}
  Let $\lambda>0$ and $(\Psi^{(n)})_{n\in\nz}$ be a minimizing
  sequence as in Lemma \ref{lem:mini}. Then there exists $C>0$ such
  that for any $R>0$ and any $n\in\mathds{N}$,
  $n\varepsilon=\lambda^2$,
  $$
  \langle \Psi^{(n)}, d\Gamma(1_{|x|\leq R})\otimes 1 \, \Psi^{(n)}\rangle \geq \lambda^2- \frac{C}{C(R)}
  $$
  with $C(R)=\inf\{V(x), |x|>R\}$.
\end{lemma}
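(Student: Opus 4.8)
The plan is to show that the nucleon mass of the minimizing sequence stays in a bounded region, by controlling the expectation of $H_{01}=d\Gamma_1(-\frac{\Delta}{2M}+V)$ through the confining part of $V$. Since $\Psi^{(n)}\in\mathscr{H}_n$ with $n\varepsilon=\lambda^2$, we have $N_1=\lambda^2$ on $\mathscr{H}_n$, hence
\begin{equation*}
  \langle \Psi^{(n)}, d\Gamma_1(1_{|x|\leq R})\otimes 1\,\Psi^{(n)}\rangle=\lambda^2-\langle \Psi^{(n)}, d\Gamma_1(1_{|x|> R})\otimes 1\,\Psi^{(n)}\rangle\,,
\end{equation*}
so it is enough to bound $\langle \Psi^{(n)}, d\Gamma_1(1_{|x|> R})\otimes 1\,\Psi^{(n)}\rangle\leq C/C(R)$ with $C$ independent of $n$ and $R$ (the case $C(R)\leq 0$ being vacuous). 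Since $V\geq 0$ and $V(x)\geq C(R)$ for $|x|>R$, one has the pointwise bound $V(x)\geq C(R)\,1_{|x|>R}(x)$, hence $d\Gamma_1(V)\geq C(R)\,d\Gamma_1(1_{|x|>R})$ as quadratic forms on $\mathscr{H}_n$; combining this with $-\Delta/2M\geq 0$ reduces the claim to the $n$-uniform bound $\langle \Psi^{(n)}, H_{01}\Psi^{(n)}\rangle\leq C$.

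To obtain it, I start from Lemma~\ref{lem:mini} and the upper bound of Lemma~\ref{lem:upper}, which give
\begin{equation*}
  \langle\Psi^{(n)},H\Psi^{(n)}\rangle\leq \tfrac1n+\inf\sigma(H_{|\mathscr{H}_n})\leq 1+\inf_{\|z_1\|_2=\lambda}h(z_1\oplus z_2)=:C_1\,,
\end{equation*}
where $C_1$ is finite (by the finiteness of the classical infimum established earlier in the section) and independent of $n$ and $\varepsilon$. Writing $H=H_{01}+H_{02}+H_I$ with $H_{02}\geq 0$ yields
\begin{equation*}
  \langle\Psi^{(n)},H_{01}\Psi^{(n)}\rangle\leq C_1-\langle\Psi^{(n)},H_{02}\Psi^{(n)}\rangle+|\langle\Psi^{(n)},H_I\Psi^{(n)}\rangle|\,,
\end{equation*}
so the whole point is a bound on $\langle\Psi^{(n)},H_I\Psi^{(n)}\rangle$ in terms of $\langle\Psi^{(n)},H_{02}\Psi^{(n)}\rangle^{1/2}$.

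This is the main obstacle, and it is where the hypothesis $m_0>0$ enters: because we only assume $\omega^{-1/2}\chi\in L^2$ (not $\omega^{-1}\chi\in L^2$), the $H_{02}^{1/2}$-estimate \eqref{eq:7} cannot be applied directly to $H_I$; instead I use the number-operator estimate \eqref{eq:9} of Lemma~\ref{lemma:2} together with $\omega\geq m_0$, i.e. $N_2\leq m_0^{-1}H_{02}$. Writing $H_I=\int_{\mathds{R}^d}\psi^*(x)\bigl(a^*(f_x)+a(f_x)\bigr)\psi(x)\,dx$ with $f_x(k)=\omega^{-1/2}(k)\chi(k)e^{-ik\cdot x}$, $\|f_x\|_2=\|\omega^{-1/2}\chi\|_2$, and using that $a^*(f_x)$ is the adjoint of $a(f_x)$, one has $\langle\Psi^{(n)},H_I\Psi^{(n)}\rangle=2\int_{\mathds{R}^d}\Re\langle \psi(x)\Psi^{(n)},a(f_x)\psi(x)\Psi^{(n)}\rangle\,dx$. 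Estimating each $a(f_x)$ by \eqref{eq:9} and $N_2\leq m_0^{-1}H_{02}$, then applying Cauchy--Schwarz in $x$ together with the identities $\int_{\mathds{R}^d}\psi^*(x)\psi(x)\,dx=\lambda^2$ and $\int_{\mathds{R}^d}\psi^*(x)H_{02}\psi(x)\,dx=\lambda^2H_{02}$ on $\mathscr{H}_n$, gives
\begin{equation*}
  |\langle\Psi^{(n)},H_I\Psi^{(n)}\rangle|\leq 2\lambda^2 m_0^{-1/2}\|\omega^{-1/2}\chi\|_2\,\langle\Psi^{(n)},H_{02}\Psi^{(n)}\rangle^{1/2}\,,
\end{equation*}
with the constant independent of $n$ and $\varepsilon$. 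Inserting this into the previous inequality and using that $t\mapsto -t+c\sqrt{t}$ is bounded above by $c^2/4$ on $[0,\infty)$, I obtain $\langle\Psi^{(n)},H_{01}\Psi^{(n)}\rangle\leq C_1+\lambda^4 m_0^{-1}\|\omega^{-1/2}\chi\|_2^2=:C$, the required $n$- and $\varepsilon$-uniform bound, and the first paragraph concludes the proof with this $C$. All the rearrangements involving $\psi(x)$, $a^{\#}(f_x)$ and $d\Gamma_1$ are legitimate because $\Psi^{(n)}\in\mathscr{D}$ is a finite-particle smooth vector, so every quantity above is finite and the integrations over $x$ are absolutely convergent.
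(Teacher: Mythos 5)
Your proof is correct and follows essentially the same route as the paper: decompose $N_1=\lambda^2$ into $1_{|x|\le R}$ and $1_{|x|>R}$ parts, use $d\Gamma_1(V)\ge C(R)\,d\Gamma_1(1_{|x|>R})$, and deduce a uniform bound on $\langle\Psi^{(n)},d\Gamma_1(V)\Psi^{(n)}\rangle$ from the minimizing property together with an $H_{02}^{1/2}$-type bound on $H_I$. One small remark: your statement that~\eqref{eq:7} ``cannot be applied directly'' is a bit overstated — since $m_0>0$, one has $\omega^{-1}\chi=\omega^{-1/2}\cdot\omega^{-1/2}\chi\in L^2(\mathds{R}^d)$ with $\|\omega^{-1}\chi\|_2\le m_0^{-1/2}\|\omega^{-1/2}\chi\|_2$, so Lemma~\ref{lemma:2}~i) does apply (this is precisely what the paper uses); your route through~\eqref{eq:9} and $N_2\le m_0^{-1}H_{02}$ yields the same constant and is an equivalent bookkeeping of the same estimate.
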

\begin{proof}
  Remark that
  \begin{eqnarray*}
    \lambda^2&=&\langle \Psi^{(n)}, N_1 \, \Psi^{(n)}\rangle\\ &=&\langle \Psi^{(n)}, d\Gamma(1_{|x|\leq R})\otimes 1\, \Psi^{(n)}\rangle+\langle \Psi^{(n)}, d\Gamma(1_{|x|> R})\otimes 1 \, \Psi^{(n)}\rangle.
  \end{eqnarray*}
  Using Lemma \ref{lemma:2}, one can see that $H_{I|\mathscr{H}_n}$ is
  bounded by $H_{02}^{1/2}$ uniformly in $n\in\mathds{N}$, in the
  operator sense. Hence, $(\langle \Psi^{(n)}, H_{02}+H_I\,
  \Psi^{(n)}\rangle)_{n\in\mathds{N}}$ is bounded from below and since
  $\Psi^{(n)}$ is a minimizing sequence there exists $C>0$ such that
  $\langle \Psi^{(n)}, d\Gamma(V(x))\otimes 1 \, \Psi^{(n)}\rangle\leq
  C$. Using the inequality $d\Gamma(V(x))\geq C(R)
  d\Gamma(1_{|x|>R})$, one obtains
  \begin{eqnarray*}
    \langle \Psi^{(n)}, d\Gamma(1_{|x|\leq R})\otimes 1\, \Psi^{(n)}\rangle&=&\lambda^2-\langle \Psi^{(n)}, d\Gamma(1_{|x|> R})\otimes 1 \, \Psi^{(n)}\rangle\\ &\geq& \lambda^2-\frac{C}{C(R)}\,.
  \end{eqnarray*}
\end{proof}

\begin{lemma}
  \label{lem:conc}
  Let $(\Psi^{(n)})_{n\in\nz}$ be a minimizing sequence as in Lemma
  \ref{lem:mini}. Then any Wigner measure $\mu\in
  \mathscr{M}(\psi^{(n)},n\in\mathds{N}, \varepsilon n=\lambda^2)$ is supported on
  $S(0,\lambda)\times L^2(\rz^d)$ where $S(0,\lambda)$ is the sphere
  of $L^2(\rz^d)$ of radius $\lambda$.
\end{lemma}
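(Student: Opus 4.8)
The goal is to prove $\|z_1\|_2=\lambda$ for $\mu$-almost every $z=z_1\oplus z_2$; the plan is to establish the identity $\int_{\Z}\|z_1\|_2^2\,d\mu=\lambda^2$, to show in addition that $\mu$ charges nothing outside the closed ball $\{\|z_1\|_2\le\lambda\}$, and then to invoke that $\mu$ is a probability measure. As a preliminary reduction — harmless, since the statement is to be proved for an arbitrary Wigner measure — I would pass to a subsequence along which $(\Psi^{(n)})_{n\in\nz}$ has $\mu$ as its unique Wigner measure, so that Lemmas~\ref{loclimi} and~\ref{lem:gr1} are directly available.

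First I would establish the upper bound $\int_{\Z}\|z_1\|_2^2\,d\mu\le\lambda^2$. For any $R>0$, the operator inequality $d\Gamma(1_{|x|\le R})\otimes 1\le d\Gamma(1)\otimes 1=N_1$ together with the fact that $N_1$ acts as the scalar $\varepsilon n=\lambda^2$ on $\mathscr{H}_n$ gives $\langle\Psi^{(n)},d\Gamma(1_{|x|\le R})\otimes 1\,\Psi^{(n)}\rangle\le\lambda^2$ for every $n$; passing to the limit via Lemma~\ref{loclimi} yields $\int_{\Z}\langle z_1,1_{|x|\le R}z_1\rangle\,d\mu\le\lambda^2$, and letting $R\to\infty$ by monotone convergence gives the claim. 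For the matching lower bound, Lemma~\ref{lem:gr1} provides $\langle\Psi^{(n)},d\Gamma(1_{|x|\le R})\otimes 1\,\Psi^{(n)}\rangle\ge\lambda^2-C/C(R)$ with $C(R)=\inf\{V(x):|x|>R\}$, so Lemma~\ref{loclimi} again gives $\int_{\Z}\langle z_1,1_{|x|\le R}z_1\rangle\,d\mu\ge\lambda^2-C/C(R)$; since $V$ is confining, $C(R)\to+\infty$, and letting $R\to\infty$ yields $\int_{\Z}\|z_1\|_2^2\,d\mu\ge\lambda^2$. Hence $\int_{\Z}\|z_1\|_2^2\,d\mu=\lambda^2$.

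To localize $\mu$ I would next control the fourth moment. Since $N_1$ equals the scalar $\lambda^2$ on $\mathscr{H}_n$, we have $\langle\Psi^{(n)},N_1^2\Psi^{(n)}\rangle=\lambda^4$ for all $n$, and the Wick symbol of $N_1^2$ coincides with $\|z_1\|_2^4$ up to an $O(\varepsilon)$ correction; by the lower semicontinuity of Wigner measures under such uniform moment bounds (see \cite{AmNi1}) this gives $\int_{\Z}\|z_1\|_2^4\,d\mu\le\lambda^4$. Combining with $\int_{\Z}\|z_1\|_2^2\,d\mu=\lambda^2$ and $\mu(\Z)=1$, the Cauchy--Schwarz inequality forces $\lambda^2=\int_{\Z}\|z_1\|_2^2\,d\mu\le\bigl(\int_{\Z}\|z_1\|_2^4\,d\mu\bigr)^{1/2}\le\lambda^2$, so equality holds throughout; equality in Cauchy--Schwarz then means $\|z_1\|_2^2$ is $\mu$-a.e. equal to a constant, which the value of its integral pins to $\lambda^2$. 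Therefore $\mu$ is supported on $S(0,\lambda)\times L^2(\mathds{R}^d)$.

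The hard part is the lower bound $\int_{\Z}\|z_1\|_2^2\,d\mu\ge\lambda^2$: a priori the classical limit could let a fraction of the nucleon mass escape to spatial infinity, so that $\mu$ records less than $\lambda^2$ of total mass. Ruling this out is exactly where confinement of $V$ enters, quantitatively through the combination of Lemmas~\ref{loclimi} and~\ref{lem:gr1}; the remaining steps are routine consequences of the quoted results and of the identity $N_1=\lambda^2$ on the relevant fiber.
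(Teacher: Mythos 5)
Your proof is correct, and it arrives at the conclusion by a somewhat different route than the paper. The paper's argument runs as follows: it observes that $\langle\Psi^{(n)},N_1^k\Psi^{(n)}\rangle=\lambda^{2k}$ for \emph{all} $k\in\mathds{N}$ and invokes \citep[Theorem 6.2]{AmNi1} to conclude directly that $\mu$ is supported on the closed ball $B(0,\lambda)\times L^2(\mathds{R}^d)$; combining this with the lower bound $\int\|z_1\|_2^2\,d\mu\ge\lambda^2$ from Lemmas~\ref{lem:gr1} and~\ref{loclimi}, it then argues that since $\|z_1\|_2^2\le\lambda^2$ on the support and the $\mu$-average is $\ge\lambda^2$, one must have $\|z_1\|_2^2=\lambda^2$ $\mu$-a.e. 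You instead prove the exact identity $\int\|z_1\|_2^2\,d\mu=\lambda^2$ (your upper bound via $d\Gamma(1_{|x|\le R})\otimes 1\le N_1$, Lemma~\ref{loclimi}, and monotone convergence is a valid alternative to ball-support, while your lower bound coincides with the paper's), and then you use only the fourth moment bound $\int\|z_1\|_2^4\,d\mu\le\lambda^4$ together with Cauchy--Schwarz to force $\|z_1\|_2^2$ to be $\mu$-a.e.\ constant. Both routes are sound and rest on the same two lemmas plus the Wigner-measure moment inequality from \citep{AmNi1}; the trade-off is that you replace the paper's all-moments ball-support argument by a Cauchy--Schwarz step that needs only the second and fourth moments, at the price of having to prove the upper bound $\int\|z_1\|_2^2\,d\mu\le\lambda^2$ explicitly (which in the paper's scheme is subsumed by ball-support). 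The one place where you are slightly vague is the assertion that $\int\|z_1\|_2^4\,d\mu\le\lambda^4$ follows from ``lower semicontinuity of Wigner measures under uniform moment bounds''; this is correct and is precisely the $k=2$ case of the same result in \citep{AmNi1} the paper uses, but you should be aware that it requires handling the Wick-ordering correction (indeed $(\|z_1\|_2^4)^{\mathrm{Wick}}=N_1(N_1-\varepsilon)$, so the quantum expectation is $\lambda^4-\varepsilon\lambda^2\le\lambda^4$, which actually strengthens your inequality).
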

\begin{proof}
  Observe that $\langle \Psi^{(n)}, N_1^k \,
  \Psi^{(n)}\rangle=\lambda^{2k}$ for all $k\in\mathds{N}$. Hence,
  \citep[][theorem 6.2]{AmNi1} shows that $\mu$ is supported on
  $B(0,\lambda)\times L^2(\rz^d)$ where $B(0,\lambda)$ is the ball in
  $L^2(\rz^d)$ of radius $\lambda$ centered at the origin.  Using
  Lemma \ref{lem:gr1} and Lemma \ref{loclimi}, we obtain for any $R>0$
  \begin{equation*}
    \int_{B(0,\lambda)\times L^2(\rz^d)} ||z_1||_2^2 \, d\mu(z)\geq \int_{B(0,\lambda)\times L^2(\rz^d)} \langle z_1, 1_{|x|\leq R} z_1\rangle  \, d\mu(z) \geq \lambda^2-\frac{C}{C(R)}.
  \end{equation*}
  Recall that $\lim_{|x|\to \infty} V(x)=+\infty$ so that $
  C(R)\to\infty$ when ${R\to\infty}$.
\end{proof}

\begin{lemma}
  \label{lem:lower}
  For any $\lambda>0$,
  $$
  \liminf_{n\to\infty, n\varepsilon=\lambda^2} \inf\sigma(H_{|\mathscr{H}_n}) \geq \inf_{||z_1||_2=\lambda} h(z_1\oplus z_2)\,.
  $$
\end{lemma}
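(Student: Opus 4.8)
The plan is to run a compactness argument on a minimizing sequence and read the lower bound off its Wigner measure, exploiting the concentration property already established in Lemma~\ref{lem:conc}. Fix $\lambda>0$ and take the minimizing sequence $(\Psi^{(n)})_{n\in\nz}\subset\mathscr{D}$ of Lemma~\ref{lem:mini}. First I would pass to a subsequence $(n_j)_j$ along which $\inf\sigma(H_{|\mathscr{H}_{n_j}})$ tends to $\liminf_{n\to\infty,\,n\varepsilon=\lambda^2}\inf\sigma(H_{|\mathscr{H}_n})$; since $\langle\Psi^{(n_j)},H\Psi^{(n_j)}\rangle$ lies between $\inf\sigma(H_{|\mathscr{H}_{n_j}})$ and that quantity plus $1/n_j$, the energies $\langle\Psi^{(n_j)},H\Psi^{(n_j)}\rangle$ converge to the same limit. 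By Proposition~\ref{prop:10} I would extract a further subsequence so that $(\Psi^{(n_j)})_j$ has a \emph{unique} Wigner measure $\mu$, which by Lemma~\ref{lem:conc} is carried by $S(0,\lambda)\times L^2(\rz^d)$. Next come $\varepsilon$-uniform a priori bounds: Lemma~\ref{lem:upper} gives $\langle\Psi^{(n)},H\Psi^{(n)}\rangle\leq\inf_{\lVert z_1\rVert_2=\lambda}h+1$; since $H_0=H_{01}+H_{02}\geq 0$ and, using $m_0>0$, the estimates of Lemma~\ref{lemma:2} yield (as in the proof of Lemma~\ref{lem:gr1}) $|\langle\Psi^{(n)},H_I\Psi^{(n)}\rangle|\leq 2\lambda^2\lVert\omega^{-1}\chi\rVert_2\langle\Psi^{(n)},H_{02}\Psi^{(n)}\rangle^{1/2}$, one deduces in turn that $\langle H_{02}\rangle$, then $\langle H_I\rangle$ and $\langle H_{01}\rangle$, are bounded uniformly in $n$. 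In particular $(\Psi^{(n)})_n\in\mathcal{T}^1$, so $\mu$ is a Borel probability measure and $\int_{\Z}\lVert z_2\rVert_2^2\,d\mu(z)<\infty$ by Proposition~\ref{prop:10}.

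The core of the argument is lower semicontinuity of the two positive quadratic parts. For $A:=-\frac{\Delta}{2M}+V$, which has compact resolvent since $V$ is confining, set $A_m:=1_{[0,m]}(A)\,A\,1_{[0,m]}(A)$: these are finite rank, $0\leq A_m\leq A$ and $A_m\uparrow A$ strongly on $D(A^{1/2})$. Then $d\Gamma_1(A)\otimes 1\geq d\Gamma_1(A_m)\otimes 1$ on $\mathscr{D}$, $d\Gamma_1(A_m)\otimes 1$ is the Wick quantization of the compact polynomial symbol $\langle z_1,A_m z_1\rangle$ and satisfies $0\leq d\Gamma_1(A_m)\otimes 1\leq\lVert A_m\rVert\,T$, so Proposition~\ref{prop:11} and Remark~\ref{rem:2} give $\langle\Psi^{(n_j)},d\Gamma_1(A_m)\otimes 1\,\Psi^{(n_j)}\rangle\to\int_{\Z}\langle z_1,A_m z_1\rangle\,d\mu(z)$; letting $m\to\infty$ and using monotone convergence yields $\liminf_j\langle\Psi^{(n_j)},H_{01}\Psi^{(n_j)}\rangle\geq\int_{\Z}\langle z_1,A z_1\rangle\,d\mu(z)$. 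For the meson term $\omega$ has no compact resolvent, so instead I would fix a Hilbert basis $(e_i)_i$ of $L^2(\rz^d)$ made of Schwartz functions and set $B_m:=\sum_{i=1}^m|\omega^{1/2}e_i\rangle\langle\omega^{1/2}e_i|$: these are again finite rank, $0\leq B_m\leq\omega$, $B_m$ increasing, and $\langle\phi,B_m\phi\rangle\uparrow\langle\phi,\omega\phi\rangle$ for every $\phi$ (as extended reals, by Parseval). The same reasoning gives $\liminf_j\langle\Psi^{(n_j)},1\otimes d\Gamma_2(\omega)\,\Psi^{(n_j)}\rangle\geq\int_{\Z}\langle z_2,\omega z_2\rangle\,d\mu(z)$. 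In particular $z_1\in D(A^{1/2})$ and $z_2\in D(\omega^{1/2})$ for $\mu$-a.e. $z$, so $h(z)$ is well defined $\mu$-a.e.

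It remains to pass to the limit in the interaction term. One checks, exactly as in Proposition~\ref{prop:7}, that $H_I=h_I^{Wick}$ with $h_I\in\mathcal{P}^\infty_{alg}(\Z)$ (the two kernels are Hilbert-Schmidt, with norm controlled by $\lVert\omega^{-1/2}\chi\rVert_2$), and that on each sector $\mathscr{H}_n$ the bound $\lVert H_{I|\mathscr{H}_n}\phi\rVert\leq C(\lambda,\chi)\lVert(N_2+\varepsilon)^{1/2}\phi\rVert$ holds, so that $T^{-1/2}H_{I|\mathscr{H}_n}T^{-1/2}$ is bounded uniformly in $\varepsilon$; since $H_I$ leaves each $\mathscr{H}_n$ invariant and the states are supported on $\mathscr{H}_{n(\varepsilon)}$, Proposition~\ref{prop:11} and Remark~\ref{rem:2} apply and give $\langle\Psi^{(n_j)},H_I\Psi^{(n_j)}\rangle\to\int_{\Z}h_I(z)\,d\mu(z)$, the integral being finite by the a priori bounds. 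Combining the three contributions and using superadditivity of $\liminf$,
\begin{equation*}
  \liminf_{n\to\infty,\,n\varepsilon=\lambda^2}\inf\sigma(H_{|\mathscr{H}_n})=\lim_{j\to\infty}\langle\Psi^{(n_j)},H\Psi^{(n_j)}\rangle\geq\int_{\Z}\bigl(\langle z_1,A z_1\rangle+\langle z_2,\omega z_2\rangle+h_I(z)\bigr)\,d\mu(z)=\int_{\Z}h(z)\,d\mu(z)\,,
\end{equation*}
and since $\mu$ is carried by $\{\lVert z_1\rVert_2=\lambda\}$, the right-hand side is $\geq\inf_{\lVert z_1\rVert_2=\lambda}h(z_1\oplus z_2)$, which proves the claim.

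I expect the main obstacle to be the interaction step: proving the $\varepsilon$-uniform operator bound $\lVert H_{I|\mathscr{H}_n}\phi\rVert\lesssim\lVert(N_2+\varepsilon)^{1/2}\phi\rVert$ with the correct power of $N_2$ (the crude estimate from Lemma~\ref{lemma:2} carries a spurious factor of $n$, which must be removed by a Cauchy--Schwarz and commutation argument that again uses $m_0>0$), since this is precisely what allows one to invoke the compact-symbol convergence of Proposition~\ref{prop:11}/Remark~\ref{rem:2}. By contrast the lower semicontinuity of the quadratic parts is routine once the finite-rank truncations $A_m$ and $B_m$ are chosen, and the concentration of $\mu$ on the constraint sphere is already supplied by Lemma~\ref{lem:conc}.
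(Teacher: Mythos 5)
The decomposition $h = h_{01} + h_{02} + h_I$ with separate monotone truncations for the two positive quadratic parts is a reasonable plan, and your lower semicontinuity argument for $H_{01}$ and $H_{02}$ via finite-rank $A_m$, $B_m$ and Proposition~\ref{prop:11}/Remark~\ref{rem:2} is sound. But the interaction step contains a genuine error. You assert that $h_I\in\mathcal{P}^\infty_{alg}(\Z)$ because ``the two kernels are Hilbert--Schmidt''. They are not. The $(2,1)$ component of $H_I$ has integral kernel $K(x,k;x')=\omega^{-1/2}(k)\chi(k)\,e^{-ikx}\,\delta(x-x')$, so the associated operator $\tilde b:\Z\to\Z^{\otimes_s 2}$ satisfies $\tilde b^{*}\tilde b=\lVert\omega^{-1/2}\chi\rVert_2^2\bigl(1_{L^2(\rz^d)}\oplus 0\bigr)$, a nonzero multiple of an infinite-rank projection. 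This $\tilde b$ is bounded but not compact, let alone Hilbert--Schmidt. (Compare with Proposition~\ref{prop:7}: there the compactness of the quadratic kernels comes precisely from pairing $\delta(x-x')$ against the fixed $L^2$ function $\tilde\xi_1$; in $H_I$ itself there is no such pairing, and the situation is the same as the explicitly non-compact $B_{+-,0}$.) Consequently Proposition~\ref{prop:11}/Remark~\ref{rem:2} do not apply to $H_I$ directly, and the asserted convergence $\langle\Psi^{(n_j)},H_I\Psi^{(n_j)}\rangle\to\int h_I\,d\mu$ is not justified; this is the central gap. The obstacle you flagged (a spurious power of $n$ in the crude $H_I$ bound) is not the real issue.

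The paper's proof circumvents precisely this difficulty. Instead of treating $H_{02}$ and $H_I$ separately, it completes the square, writing $\langle\Psi^{(n)},(H_{02}+H_{I})\Psi^{(n)}\rangle$ as $\int\omega(k)\,\lVert(a(k)+\lambda^2\omega^{-3/2}(k)\chi(k)e^{-ikx_1})\Psi^{(n)}\rVert^2\,dx\,dk-\lambda^4\lVert\chi/\omega\rVert_2^2$. The first term is manifestly nonnegative, so it can be bounded below by substituting a finite-rank $Q_\kappa\leq\tilde\chi\omega$; after that, the remaining non-compact cross terms are regularized by the $g_m(i\partial_x)$ scheme of Definition~\ref{def:1}/Lemma~\ref{lemma:9}, which is what makes Proposition~\ref{prop:11}/Remark~\ref{rem:2} applicable. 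To repair your proof you would need to do something of this sort: you cannot pass to the Wigner-measure limit on $H_I$ in isolation, because the positivity that licenses monotone truncation is only available for the \emph{combined} $H_{02}+H_I$ after the phase-space shift. Your $H_{01}$ treatment, on the other hand, already matches the paper's (which uses a compact cutoff $\tilde\chi(-\tfrac{\Delta}{2M}+V)$).
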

\begin{proof}
  Let $(\Psi^{(n)})_{n\in\mathds{N}}$ be a minimizing sequence as in
  Lemma \ref{lem:mini}. Recall that the annihilation distribution
  $a(k), k\in\mathds{R}^d$ is a well defined operator
  $a(.):\mathscr{F}\to
  L^2\left(\mathds{R}^d,\Gamma_s(L^2(\mathds{R}^d))\right)$. A direct
  computation, using symmetry and Fubini, gives
  \begin{eqnarray*}
    \theta(n):=\langle \Psi^{(n)}, H_{02}+H_{I|\mathscr{H}_{n}}\Psi^{(n)}\rangle &=& \int_{\mathds{R}^d} ||a(k) \Psi^{(n)}||_{\Gamma_s(L^2(\mathds{R}^d))}^2 \, \omega(k)\, dk\\
    &&+\lambda^2 \int_{\mathds{R}^d\times \mathds{R}^{dn}} \frac{e^{ik x_1}}{\sqrt{\omega(k)}} \chi(k) \, \left(\langle\Psi^{(n)}, a(k) \Psi^{(n)}\rangle_{\Gamma_s(L^2(\mathds{R}^d))}+hc\right) dk dx\,.
  \end{eqnarray*}
  Therefore, we can write
  \begin{eqnarray}
    \label{eq.26}
    \theta(n)=\int_{\mathds{R}^d\times \mathds{R}^{dn}} \omega(k) ||\left(a(k)+\lambda^2\frac{e^{-ik x_1}}{\omega(k)^{3/2}}\chi(k)\right) \Psi^{(n)}||^2_{\Gamma_s(L^2(\mathds{R}^d))}\, dx dk-\lambda^4 ||\frac{\chi}{\omega}||^2_2\,.
  \end{eqnarray}
  Taking any cut off function $0\leq\tilde\chi\leq 1$. Let $Q_\kappa$
  be a sequence of positive finite rank operators such that $0\leq
  Q_\kappa\leq \tilde\chi\omega$ and $
  Q_\kappa\overset{w}{\rightarrow}\tilde\chi\omega$.  Let
  $\{e_\alpha\}_{\alpha\in\mathds{N}}$ be an O.N.B of $L^2(\rz^d)$ so
  that $Q_\kappa= \sum_{\alpha=0}^r t_\alpha |e_\alpha\rangle\langle
  e_\alpha|$ (for simplicity the dependence on $\kappa$ is
  omitted). Expanding all the integrals and sums in \eqref{eq.26},
  then using $Q_\kappa\leq \tilde\chi\omega$, one proves
  \begin{eqnarray*}
    \theta(n)&\geq& \langle \Psi^{(n)}, 1\otimes d\Gamma(Q_\kappa)\Psi^{(n)}\rangle+ \sum_{\alpha=0}^r t_\alpha\langle a(e_\alpha) \Psi^{(n)}, d\Gamma(\widehat{\frac{\chi \bar e_\alpha}{\omega^{3/2}}})\otimes 1\Psi^{(n)}\rangle+hc\\
    &&+\lambda^2\sum_{\alpha=0}^r t_\alpha \langle \Psi^{(n)}, d\Gamma(\big|\widehat{\frac{\chi \bar e_\alpha}{\omega^{3/2}}}\big|^2)\otimes 1\Psi^{(n)}\rangle-\lambda^4 ||\frac{\chi}{\omega}||^2_2\,.
  \end{eqnarray*}
  The right hand side is the expectation value of a Wick operator
  with symbol given by
  \begin{equation*}
    \Theta(z)=\langle z_2,Q_\kappa z_2\rangle+\int_{\mathds{R}^d} \left(\langle z_2, Q_\kappa \frac{\chi e^{-ikx}}{\omega^{3/2}}\rangle+hc\right) |z_1(x)|^2 dx+\lambda^2 \sum_{\alpha=0}^r t_\alpha \langle z_1, \big|\widehat{\frac{\chi \bar e_\alpha}{\omega^{3/2}}}\big|^2 z_1\rangle
    -\lambda^4 ||\frac{\chi}{\omega}||^2_2\,.
  \end{equation*}
  In this symbol some monomials have non "compact kernels" (see the
  discussion in Section \ref{sec:wigner-measures}).  So, using the
  same approximation scheme as in Definition \ref{def:1} and Lemma
  \ref{lemma:9}, we show
  \begin{eqnarray*}
    \theta(n)&\geq& \langle \Psi^{(n)}, 1\otimes d\Gamma(Q_\kappa)\Psi^{(n)}\rangle+ \sum_{\alpha=0}^r t_\alpha\langle a(e_\alpha) \Psi^{(n)}, d\Gamma(\widehat{\frac{\chi \bar e_\alpha}{\omega^{3/2}}}g_m(i\partial_x))\otimes 1\Psi^{(n)}\rangle+hc\\
    &&+\lambda^2\sum_{\alpha=0}^r t_\alpha \langle \Psi^{(n)}, d\Gamma(\big|\widehat{\frac{\chi \bar e_\alpha}{\omega^{3/2}}}\big|^2 g_m(i\partial_x))\otimes 1\;\Psi^{(n)}\rangle-\lambda^4 ||\frac{\chi}{\omega}||^2_2\,+O(m^{-1})\,,
  \end{eqnarray*}
  with an error uniform in $n\in\mathds{N}$. Now, the point is that
  the right hand side is an expectation value of a Wick quantization
  with compact kernel symbol. We can apply the same argument as in
  Proposition \ref{prop:11} and Remark \ref{rem:2}. Therefore, we
  obtain
  $$
  \liminf_{n\to\infty}\theta_n\geq \int_{\mathscr{Z}} \Theta_m(z)\,d\mu(z)\,,
  $$
  where $\mu$ is the Wigner measure of the sequence
  $(\Psi^{(n)})_{n\in\mathds{N}}$ and
  \begin{eqnarray*}
    \Theta_m(z)&=&\langle z_2,Q_\kappa z_2\rangle+\sum_{\alpha=0}^r  t_\alpha \left(\langle z_2, e_\alpha\rangle \langle z_1, \widehat{\frac{\chi \bar e_\alpha}{\omega^{3/2}}}g_m(i\partial_x) z_1\rangle+hc\right) \\
    &&+\lambda^2 \langle z_1, \big|\widehat{\frac{\chi \bar e_\alpha}{\omega^{3/2}}}\big|  g_m(i\partial_x) z_1\rangle-\lambda^4 ||\frac{\chi}{\omega}||^2_2\,.
  \end{eqnarray*}
  We can remove, by dominated convergence, the cut off $g_m$ and let
  $\kappa\to\infty$. So we obtain
  $$
  \liminf_{n\to\infty}\theta_n\geq \int_{\mathscr{Z}} \langle z_2,\omega z_2\rangle+h_I(z)\,d\mu(z)\,,
  $$
  Now, a similar argument of approximation from below gives
  \begin{equation*}
    \langle \Psi^{(n)}, 1\otimes d\Gamma(\frac{-\Delta}{2M}+V) \Psi^{(n)}\rangle \geq \langle \Psi^{(n)}, 1\otimes d\Gamma(\tilde\chi(\frac{-\Delta}{2M}+V)) \Psi^{(n)}\rangle\,,
  \end{equation*}
  where $\tilde\chi$ is a cut off function, $\tilde\chi(x)=x$ on $|x|\leq 1$, so that   $\tilde\chi(\frac{-\Delta}{2M}+V)$ is a compact operator. Applying
  Proposition \ref{prop:11}, we get
  $$
  \liminf_{n\to\infty, n\varepsilon= \lambda^2}\langle \Psi^{(n)}, H_{|\mathscr{H}_{n}} \Psi^{(n)}\rangle \geq \int_{\Z} h(z) d\mu(z).
  $$
  Therefore, we obtain
  \begin{equation*}
    \inf_{||z_1||=\lambda}h(z)\leq \int_{S(0,\lambda)\times L^2(\mathds{R}^d)}  h(z) \, d\mu(z) \leq  \liminf_{n\to\infty, n\varepsilon=\lambda^2} \langle \Psi^{(n)}, H_{|\mathscr{H}_{n}} \Psi^{(n)}\rangle\leq \liminf_{n\to\infty, n\varepsilon=\lambda^2}\inf\sigma(H_{|\mathscr{H}_n})\,,
  \end{equation*}
  since by Lemma \ref{lem:conc}  the Wigner measure $\mu$ is supported on the sphere of radius   $\lambda$.
\end{proof}
Thus, Lemma \ref{lem:lower} and Lemma \ref{lem:upper} imply Theorem
\ref{main.th.2}.

\begin{remark}
It is not difficult to show that the infimum of the classical energy
$h$, under the constraint $||z_1||_2=\lambda$, is actually a minimum.
\end{remark}
\appendix

\section{Estimates on Fock space.}
\label{sec:estim-fock-spac}

We provide some technical results used throughout the paper and proved
here for general Hilbert spaces.
\begin{lemma}
  \label{lemma:4}
  Let $\mathscr{Y}$ be an Hilbert space, $\Gamma_s(\mathscr{Y})$ the
  corresponding symmetric Fock space (with $a^{\#}$, $N$, $W(\xi)$ the
  annihilation/creation, number and Weyl operators respectively).

  Let $y$ be a positive self-adjoint operator on $\mathscr{Y}$ with
  domain $D(y)$; and let $d\Gamma(y)$ be the second quantization of
  $y$, with form domain $D(Y^{1/2})$. Then for all $\xi\in
  D(y^{1/2})$, and $\phi_1,\phi_2\in D(Y^{1/2})$:
  \begin{equation*}
    \langle \phi_1 , W^{*}(\xi) d\Gamma(y) W(\xi)\phi_2 \rangle_{}=\langle \phi_1 , \bigl(d\Gamma(y)+\frac{i\varepsilon}{\sqrt{2}}(a^{*}(y\xi)-a(y\xi))+\frac{\varepsilon^2}{2}\langle \xi ,y\xi \rangle_{\mathscr{Y}} \bigr)\phi_2\rangle_{}\; .
  \end{equation*}
\end{lemma}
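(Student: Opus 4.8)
The plan is to establish the identity first as an operator relation on a convenient core, then upgrade it to the stated sesquilinear form on $D(Y^{1/2})$ with $Y=d\Gamma(y)$ by density, and finally relax $\xi$ from $D(y)$ to $D(y^{1/2})$ by a second approximation. Since $W(\xi)=e^{i\varphi(\xi)}$ with $\varphi(\xi):=\tfrac{1}{\sqrt2}\bigl(a^{*}(\xi)+a(\xi)\bigr)$, the efficient route is to conjugate $d\Gamma(y)$ by $e^{it\varphi(\xi)}$ and to exploit that the commutator expansion terminates: on the dense subspace $\mathscr{D}_0$ spanned by exponential vectors $e(u)$ with $u\in D(y)$ — which is invariant under $e^{it\varphi(\xi)}$ for $\xi\in D(y)$ and on which $d\Gamma(y)$ acts nicely — one has the strong identities $[\varphi(\xi),d\Gamma(y)]=-\tfrac{\varepsilon}{\sqrt2}\bigl(a^{*}(y\xi)-a(y\xi)\bigr)$ and $\bigl[\varphi(\xi),a^{*}(y\xi)-a(y\xi)\bigr]=\sqrt2\,\varepsilon\,\langle\xi,y\xi\rangle$, the latter a scalar. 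Hence, with $\xi\in D(y)$ fixed (so $y\xi\in\mathscr{Y}$), the operator $F(t):=e^{-it\varphi(\xi)}\,d\Gamma(y)\,e^{it\varphi(\xi)}$ on $\mathscr{D}_0$ satisfies $F'(0)=\tfrac{i\varepsilon}{\sqrt2}\bigl(a^{*}(y\xi)-a(y\xi)\bigr)$ and $F''(t)\equiv\varepsilon^{2}\langle\xi,y\xi\rangle$, so that Taylor's formula at $t=1$ yields $W^{*}(\xi)d\Gamma(y)W(\xi)=d\Gamma(y)+\tfrac{i\varepsilon}{\sqrt2}\bigl(a^{*}(y\xi)-a(y\xi)\bigr)+\tfrac{\varepsilon^{2}}{2}\langle\xi,y\xi\rangle$ on $\mathscr{D}_0$. (Equivalently one may conjugate the modes directly via $W^{*}(\xi)a(f)W(\xi)=a(f)+\tfrac{i\varepsilon}{\sqrt2}\langle f,\xi\rangle$ and its adjoint, substitute into $d\Gamma(y)=\sum_{i,j}\langle e_{i},ye_{j}\rangle a^{*}(e_{i})a(e_{j})$, and resum using $\sum_{j}\langle e_{i},ye_{j}\rangle\langle e_{j},\xi\rangle=\langle e_{i},y\xi\rangle$.)

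Next I would pass to sesquilinear forms on $D(Y^{1/2})$. The right-hand side is continuous for the graph norm of $Y^{1/2}$ because $|\langle\phi_{1},d\Gamma(y)\phi_{2}\rangle|\le\|Y^{1/2}\phi_{1}\|\,\|Y^{1/2}\phi_{2}\|$ and, by the standard relative bounds $\|a(g)\phi\|\le\|y^{-1/2}g\|\,\|Y^{1/2}\phi\|$ and $\|a^{*}(g)\phi\|^{2}\le\|y^{-1/2}g\|^{2}\|Y^{1/2}\phi\|^{2}+\varepsilon\|g\|^{2}\|\phi\|^{2}$ with $g=y\xi$, the field terms are controlled by $\|y^{1/2}\xi\|=\|y^{-1/2}(y\xi)\|$, which is finite already for $\xi\in D(y^{1/2})$. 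For the left-hand side, specializing the identity on $\mathscr{D}_0$ to $\phi_{1}=\phi_{2}=\phi$ gives $\|Y^{1/2}W(\xi)\phi\|^{2}\le C(\xi)\bigl(\|Y^{1/2}\phi\|^{2}+\|\phi\|^{2}\bigr)$, so $W(\xi)$ maps $D(Y^{1/2})$ into itself with $Y^{1/2}W(\xi)$ graph-bounded and $\langle\phi_{1},W^{*}(\xi)d\Gamma(y)W(\xi)\phi_{2}\rangle=\langle Y^{1/2}W(\xi)\phi_{1},Y^{1/2}W(\xi)\phi_{2}\rangle$. Since $\mathscr{D}_0$ is a core for $Y^{1/2}$, approximating $\phi_{1},\phi_{2}\in D(Y^{1/2})$ in the graph norm transfers the identity to all of $D(Y^{1/2})$, still for $\xi\in D(y)$. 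Finally, for general $\xi\in D(y^{1/2})$ I would take $\xi_{n}\in D(y)$ with $\xi_{n}\to\xi$ in the graph norm of $y^{1/2}$: then $W(\xi_{n})\to W(\xi)$ strongly, $a^{\#}(y\xi_{n})\phi\to a^{\#}(y\xi)\phi$ by the same relative bounds (which only involve $\|y^{1/2}(\xi_{n}-\xi)\|$), and $\langle\xi_{n},y\xi_{n}\rangle\to\langle\xi,y\xi\rangle$, so both sides pass to the limit and the formula holds as stated.

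The algebraic part — the two commutators and Taylor's formula with a vanishing remainder — is routine. The one genuinely delicate step is the domain bookkeeping in the middle paragraph, namely verifying that $W(\xi)$ preserves $D(Y^{1/2})$ with a quantitative bound before invoking density; this is exactly what dissolves the apparent circularity, since the needed bound on $\|Y^{1/2}W(\xi)\phi\|$ is read off from the identity already established on the core. Everything else (the relative $a^{\#}$ bounds and the two limiting arguments) is standard.
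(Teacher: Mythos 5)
Your proof is correct, but it follows a route that differs from the paper's in one structurally important choice. The paper never imposes $\xi\in D(y)$: instead it fixes $\xi\in D(y^{1/2})$ once and for all and regularizes the \emph{operator} $y$, replacing it by a sequence of bounded operators $y_m\leq y$ converging strongly on $D(y)$. It establishes the identity for each $y_m$ on $D(N)$ (which is preserved by Weyl operators) via the vanishing derivative of the auxiliary map
\[
\lambda\mapsto \langle\phi_1,\,W(\lambda\xi)\bigl(d\Gamma(y_m)+\tfrac{i\lambda\varepsilon}{\sqrt2}(a^*(y_m\xi)-a(y_m\xi))+\tfrac{\lambda^2\varepsilon^2}{2}\langle\xi,y_m\xi\rangle\bigr)W^*(\lambda\xi)\phi_2\rangle,
\]
and then passes $m\to\infty$ by monotone convergence, which simultaneously yields the a priori bound $\|Y^{1/2}W(\xi)\phi\|^2\leq C(\xi)(\|Y^{1/2}\phi\|^2+\|\phi\|^2)$ and the invariance of $D(Y^{1/2})$ under $W(\xi)$, before the final density step. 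You instead regularize the \emph{vector} $\xi$: you start from $\xi\in D(y)$ so that $y\xi\in\mathscr Y$, prove the identity as an operator identity on the exponential-vector core $\mathscr D_0$ via the terminating Taylor expansion of $F(t)=e^{-it\varphi(\xi)}d\Gamma(y)e^{it\varphi(\xi)}$, upgrade to $D(Y^{1/2})$, and only then perform a second limit $\xi_n\to\xi$ in the graph norm of $y^{1/2}$. Both devices dissolve the same apparent circularity (one cannot freely manipulate $d\Gamma(y)W(\xi)$ before knowing $W(\xi)$ preserves the form domain), and both read off the needed quantitative bound on $Y^{1/2}W(\xi)\phi$ from the identity restricted to $\phi_1=\phi_2$. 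The paper's scheme is slightly more economical because $y_m\xi$ is automatically a genuine element of $\mathscr Y$, so the extension of $a^\#(y\xi)$ to $D(y^{1/2})$-arguments never has to be articulated, whereas your scheme requires two nested approximations and a moment's care about the meaning of $a^\#(y\xi)$ when $y\xi\notin\mathscr Y$ (which you correctly handle via $\|y^{1/2}\xi\|$-controlled relative bounds). Your differentiation of $F(t)$ and the paper's $\frac{d}{d\lambda}M(\lambda)=0$ are of course equivalent formulations of the same two-commutator computation. There is no gap; the choice of what to regularize is the only substantive divergence.
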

\begin{proof}
  Let $\xi\in D(y^{1/2})$ be fixed, let $\phi_1,\phi_2\in
  D(N)$. Furthermore, let $(y_m)_{m\in\mathds{N}}\in
  \mathcal{L}(\mathscr{Y})$ be a sequence of bounded operators that
  converges strongly to $y$ on $D(y)$, with $y_m\leq y$ for all
  $m$. Then we define, for all $\lambda\in \mathds{R}$,
  \begin{equation*}
    M(\lambda):=\langle \phi_1 , W(\lambda\xi) \bigl(d\Gamma(y_m)+\frac{i\lambda\varepsilon}{\sqrt{2}}(a^{*}(y_m\xi)-a(y_m\xi))+\frac{\lambda^2\varepsilon^2}{2}\langle \xi ,y_m\xi \rangle_{\mathscr{Y}} \bigr)W^{*}(\lambda\xi)\phi_2 \rangle_{}\; .
  \end{equation*}
  We remark that for every $\delta \geq 0$ the Weyl operator maps
  $D(N^{\delta})$ into itself. Taking the derivative in $\lambda$, we
  obtain
  \begin{equation*}
    \begin{split}
      \frac{d}{d\lambda}M(\lambda)=\langle W^{*}(\lambda\xi)\phi_1 , i\bigl[\varphi(\lambda\xi)\; ,\; d\Gamma(y_m)+\frac{i\lambda\varepsilon}{\sqrt{2}}(a^{*}(y_m\xi)-a(y_m\xi))\bigr]W^{*}(\lambda\xi)\phi_2 \rangle_{}+\langle W^{*}(\lambda\xi)\phi_1 ,
      \\\bigl(\frac{i\varepsilon}{\sqrt{2}}(a^{*}(y_m\xi)-a(y_m\xi)) +\lambda\varepsilon^2\langle \xi ,y_m\xi \rangle_{\mathscr{Y}}\bigr)W^{*}(\lambda\xi)\phi_2 \rangle_{}=0\; .
    \end{split}
  \end{equation*}
  Hence for all $\phi_1,\phi_2\in D(N)$ we obtain, by $M(0)=M(1)$,
  for all $m\in \mathds{N}$:
  \begin{equation}
    \label{eq:4}
    \langle \phi_1 , W^{*}(\xi)d\Gamma(y_m)W(\xi)\phi_2 \rangle_{}=\langle \phi_1 , \bigl(d\Gamma(y_m)+\frac{i\varepsilon}{\sqrt{2}}(a^{*}(y_m\xi)-a(y_m\xi))+\frac{\varepsilon^2}{2}\langle \xi ,y_m\xi \rangle_{\mathscr{Y}} \bigr)\phi_2\rangle_{}\; .
  \end{equation}
  Choose now $\phi_1=\phi_2=\phi\in D(Y^{1/2})\cap D(N)$. Then
  \begin{equation*}
    \begin{split}
      \langle \phi , W^{*}(\xi)d\Gamma(y_m)W(\xi)\phi \rangle\leq \lVert d\Gamma(y)^{1/2}\phi \rVert_{}^2+\sqrt{2}\varepsilon\lVert y^{1/2}\xi \rVert_{\mathscr{Y}}^{}\lVert d\Gamma(y)^{1/2}\phi \rVert_{}^{}\lVert \phi \rVert_{}^{}+\frac{\varepsilon^2}{2}\lVert y^{1/2}\xi
      \rVert_{\mathscr{Y}}^2\lVert \phi \rVert_{}^{}\; .
    \end{split}
  \end{equation*}
  By monotone convergence theorem, the left hand side converges to
  $\langle \phi , W^{*}(\xi) d\Gamma(y) W(\xi) \phi \rangle$ when
  $m\to\infty$, since $d\Gamma(y)$ is a closed operator. The result
  extends by density to all $\phi\in D(Y^{1/2})$; so the Weyl operator $W$ maps the form
  domain of $d\Gamma(y)$ into itself. Then for all $\phi_1,\phi_2\in
  D(Y^{1/2})\cap D(N)$, we can take the limit $m\to\infty$
  in~\eqref{eq:4}. The result is then extended by density to all
  $\phi_1,\phi_2\in D(Y^{1/2})$.
\end{proof}
\begin{corollary}
  \label{cor:2}
  \begin{enumerate}[i)]
  \item Let $\xi\in D(y)$. Then
    $(d\Gamma(y)+1)^{-1}W(\xi)(d\Gamma(y)+1)\in\mathcal{L}(\Gamma_s(\mathscr{Y}))$. Furthermore,
    there exists $C(\lVert y \xi \rVert_{\mathscr{Y}}^{},\lVert \xi
    \rVert_{\mathscr{Y}}^{})>0$ independent of $\varepsilon$ such
    that:
    \begin{equation*}
      \lvert (d\Gamma(y)+1)^{-1}W(\xi)(d\Gamma(y)+1)\rvert_{\mathcal{L}(\Gamma_s(\mathscr{Y}))}^{}\leq C(\lVert y \xi \rVert_{\mathscr{Y}}^{},\lVert \xi \rVert_{\mathscr{Y}}^{})(1+O(\varepsilon))\; .
    \end{equation*}
  \item Let $y$ be a positive \emph{bounded} operator and let
    $\xi\in \mathscr{Y}$. Then for any $\delta_1>0$ and $\delta_2\in\mathds{R}$,
    $(d\Gamma(y)^{\delta_1}+1)^{-\delta_2}W(\xi)(d\Gamma(y)^{\delta_1}+1)^{\delta_2}\in\mathcal{L}(\Gamma_s(\mathscr{Y}))$. Furthermore,
    there exists a constant $C(\delta_1,\delta_2,\lVert \xi
    \rVert_{\mathscr{Y}}^{},\lvert
    y\rvert_{\mathcal{L}(\mathscr{Y})}^{})>0$ independent of
    $\varepsilon$ such that:
    \begin{equation*}
      \lvert (d\Gamma(y)^{\delta_1}+1)^{-\delta_2}W(\xi)(d\Gamma(y)^{\delta_1}+1)^{\delta_2}\rvert_{\mathcal{L}(\Gamma_s(\mathscr{Y}))}^{}\leq C(\delta_1,\delta_2,\lVert \xi \rVert_{\mathscr{Y}}^{},\lvert y\rvert_{\mathcal{L}(\mathscr{Y})})(1+O(\varepsilon))\; .
    \end{equation*}
  \end{enumerate}
\end{corollary}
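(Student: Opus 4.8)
The plan is to read both parts off Lemma~\ref{lemma:4}. Write $Y:=d\Gamma(y)$ and $A:=Y+1$. Applying Lemma~\ref{lemma:4} with $-\xi$ in place of $\xi$ and adding $1$ gives, as sesquilinear forms on the form domain $D(Y^{1/2})$,
\begin{equation*}
  W(\xi)AW^{*}(\xi)=A+R_{\xi}\,,\qquad R_{\xi}:=-\tfrac{i\varepsilon}{\sqrt2}\bigl(a^{*}(y\xi)-a(y\xi)\bigr)+\tfrac{\varepsilon^{2}}{2}\langle\xi,y\xi\rangle_{\mathscr Y}\,,
\end{equation*}
whence $W(\xi)A=(A+R_{\xi})W(\xi)$ and therefore $A^{-1}W(\xi)A=(I+A^{-1}R_{\xi})W(\xi)$ on the core $D(Y)$. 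Since $W(\xi)$ is unitary, part i) reduces to bounding $A^{-1}R_{\xi}$. For $\xi\in D(y)\subset D(y^{1/2})$ one has $y\xi=y^{1/2}(y^{1/2}\xi)$, so the standard relative bounds $\lVert a(g)\phi\rVert\le\lVert y^{-1/2}g\rVert\,\lVert Y^{1/2}\phi\rVert$ and $\lVert a^{*}(g)\phi\rVert\le\lVert y^{-1/2}g\rVert\,\lVert Y^{1/2}\phi\rVert+\sqrt\varepsilon\,\lVert g\rVert\,\lVert\phi\rVert$ apply with $g=y\xi$ and $\lVert y^{-1/2}g\rVert=\lVert y^{1/2}\xi\rVert$. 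Taking adjoints (using that $A^{-1}$ is self-adjoint and $\lVert Y^{1/2}A^{-1}\rVert\le1$) gives $\lVert A^{-1}a^{*}(y\xi)\rVert\le\lVert y^{1/2}\xi\rVert$ and $\lVert A^{-1}a(y\xi)\rVert\le\lVert y^{1/2}\xi\rVert+\sqrt\varepsilon\,\lVert y\xi\rVert$, the scalar term contributing $\tfrac{\varepsilon^{2}}{2}\langle\xi,y\xi\rangle$; since $\langle\xi,y\xi\rangle=\lVert y^{1/2}\xi\rVert^{2}\le\lVert y\xi\rVert\,\lVert\xi\rVert$, this yields $\lVert A^{-1}W(\xi)A\rVert\le 1+O(\varepsilon)$ with the implied constant a function of $\lVert y\xi\rVert$ and $\lVert\xi\rVert$ only, which is i). (The hypothesis $\xi\in D(y)$ is what turns the form identity above into an operator identity on $D(Y)$.)

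For ii) I would first strip the two exponents. Since $d\Gamma(y)\ge0$ and $t\mapsto(t^{\delta_1}+1)^{\delta_2}(t+1)^{-\delta_1\delta_2}$ is continuous on $[0,\infty)$ with strictly positive finite limits at $0$ and $+\infty$, it is bounded above and below by positive constants depending only on $\delta_1,\delta_2$; hence $(d\Gamma(y)^{\delta_1}+1)^{\delta_2}=h(d\Gamma(y))\,(d\Gamma(y)+1)^{\delta_1\delta_2}$ with $h(d\Gamma(y))^{\pm1}$ bounded. It therefore suffices to bound $A^{-s}W(\xi)A^{s}$, $A=d\Gamma(y)+1$, uniformly in $\varepsilon$ for every real $s=\delta_1\delta_2$, and then conjugate back by the bounded factors $h(d\Gamma(y))^{\pm1}$. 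The crucial point is that when $y$ is bounded one has $\xi\in\mathscr Y=D(y^{1/2})$, and in fact $y^{l}\xi\in D(y^{1/2})$ for every $l\ge0$; therefore $R_{\xi}$ and every iterated commutator of $R_{\xi}$ with $d\Gamma(y)$ --- each a field polynomial of degree $\le1$ whose coefficient still lies in $D(y^{1/2})$ and which picks up one more factor of $\varepsilon$ per commutator --- is relatively $A^{1/2}$-bounded, with constants controlled by $\lvert y\rvert_{\mathcal L(\mathscr Y)}$ and $\lVert\xi\rVert$. In other words, boundedness of $y$ (which places $\xi$ in the form domain of $y$) is exactly what makes the Weyl conjugation an $A^{1/2}$-small perturbation of $A$.

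With this input: $s=1$ is part i); taking adjoints gives $s=-1$; $s=0$ is trivial; and complex interpolation of the analytic family $z\mapsto A^{-z}W(\xi)A^{z}$ (whose norm on each vertical line is independent of $\Im z$, because $A^{\pm i\Im z}$ is unitary) covers $s\in[-1,1]$. For a nonnegative integer $m$, I would combine $W(\xi)A^{m}W^{*}(\xi)=(A+R_{\xi})^{m}$ with the telescoping (ordered) identity
\begin{equation*}
  A^{-m}(A+R_{\xi})^{m}=\prod_{k=1}^{m}\bigl(I+A^{-k}R_{\xi}A^{k-1}\bigr)\,,
\end{equation*}
each factor being bounded with norm $O(\varepsilon)$: indeed $A^{-k}R_{\xi}A^{k-1}=A^{-1}\bigl(A^{-(k-1)}R_{\xi}A^{k-1}\bigr)$, and expanding the conjugation $A^{-(k-1)}(\cdot)A^{k-1}$ into iterated commutators with $d\Gamma(y)$ exhibits $A^{-(k-1)}R_{\xi}A^{k-1}$ as a finite sum of relatively $A^{1/2}$-bounded operators, each of size $O(\varepsilon)$, so that $\lVert A^{-1}\bigl(A^{-(k-1)}R_{\xi}A^{k-1}\bigr)\rVert=O(\varepsilon)$. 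Hence $\lVert A^{-m}W(\xi)A^{m}\rVert\le(1+O(\varepsilon))^{m}=1+O(\varepsilon)$; the negative integers follow by duality, a further interpolation covers all real $s$, and recomposing with $h(d\Gamma(y))^{\pm1}$ produces the claimed bound $C(\delta_1,\delta_2,\lVert\xi\rVert,\lvert y\rvert_{\mathcal L(\mathscr Y)})(1+O(\varepsilon))$.

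The part I expect to be delicate is this last paragraph: keeping the bookkeeping straight so that, after each conjugation $A^{-(k-1)}(\cdot)A^{k-1}$ and through the interpolation, every error term retains at least one power of $\varepsilon$ and stays relatively $A^{1/2}$-bounded uniformly in $\varepsilon$. This is precisely where one must invoke repeatedly that $y$ bounded forces $y^{l}\xi\in D(y^{1/2})$ for all $l$, so the coefficients of the field polynomials produced by the commutators never leave the form domain of $y$.
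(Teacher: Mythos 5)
The paper leaves Corollary~\ref{cor:2} without a proof, so your proposal must stand on its own. Part i) is correct and is surely the intended deduction from Lemma~\ref{lemma:4}: the form identity $W(\xi)AW^{*}(\xi)=A+R_{\xi}$ (with $A=d\Gamma(y)+1$ and $R_{\xi}$ a degree-one field polynomial carrying a prefactor $\varepsilon$) together with the adjoint trick and the Nelson-type bounds gives $\lVert A^{-1}W(\xi)A\rVert\leq\lVert I+A^{-1}R_{\xi}\rVert\leq 1+O(\varepsilon)$, with the implied constant depending on $\lVert y\xi\rVert_{\mathscr{Y}}$ and $\lVert\xi\rVert_{\mathscr{Y}}$ only, and the hypothesis $\xi\in D(y)$ is used to promote the form identity to an operator identity on $D(A)$.

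Your route for part ii) --- replace $(d\Gamma(y)^{\delta_{1}}+1)^{\delta_{2}}$ by $A^{\delta_{1}\delta_{2}}$ up to bounded factors, settle integer exponents by the telescoping product, then interpolate --- is also the natural one, but one intermediate assertion is false: an iterated commutator of $R_{\xi}$ with $d\Gamma(y)$ does \emph{not} pick up any extra power of $\varepsilon$. The relation $[a^{\#}(g),d\Gamma(y)]=\mp a^{\#}(yg)$ carries no $\varepsilon$; conjugation merely replaces $y\xi$ by higher powers $y^{j}\xi$. The bound $\lVert A^{-k}R_{\xi}A^{k-1}\rVert=O(\varepsilon)$ you need nonetheless holds, but for a different reason: using $a^{*}(g)A^{k-1}=\sum_{i=0}^{k-1}\binom{k-1}{i}(-1)^{i}A^{k-1-i}a^{*}(y^{i}g)$ one gets
\begin{equation*}
  A^{-k}a^{*}(y\xi)A^{k-1}=\sum_{i=0}^{k-1}\binom{k-1}{i}(-1)^{i}A^{-1-i}a^{*}(y^{i+1}\xi)\,,
\end{equation*}
a finite sum of \emph{bounded} operators with $\lVert A^{-1-i}a^{*}(y^{i+1}\xi)\rVert\leq\lvert y\rvert_{\mathcal{L}(\mathscr{Y})}^{i+1/2}\lVert\xi\rVert_{\mathscr{Y}}$ uniformly in $\varepsilon$ (and similarly for the annihilation part, with an extra $\sqrt{\varepsilon}$ contribution). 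The only $\varepsilon$ in the final estimate is the one already sitting in front of $R_{\xi}$. Boundedness of $y$ is indeed the key ingredient, but because it keeps $y^{j}\xi$ in $\mathscr{Y}=D(y^{1/2})$ for all $j\geq 0$ with norms controlled by $\lvert y\rvert_{\mathcal{L}(\mathscr{Y})}$; it does not manufacture extra powers of $\varepsilon$. With this correction, the rest of your argument (duality for negative integers and Stein interpolation across the strip, using that $A^{i\tau}$ is unitary so the norm along vertical lines is constant) goes through.
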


The following proposition is a useful adaptation of \citep[][Lemmas
B.4 and B.6]{2011arXiv1111.5918A}:
\begin{proposition}
  \label{prop:9}
  Let $\mathscr{Y}$ be an Hilbert space, $\Gamma_s(\mathscr{Y})$ the
  corresponding symmetric Fock space.

  Let $y_1,y_2$ be two operators on $\mathscr{Y}$ such that
  $(y_2+1)^{-1}y_1\in \mathcal{L}(\mathscr{Y})$. Then
  $(d\Gamma(y_2^{*}y_2+1)+1)^{-1}d\Gamma(y_1) \in
  \mathcal{L}(\Gamma_s(\mathscr{Y}))$, with:
  \begin{equation*}
    \lvert (d\Gamma(y_2^{*}y_2+1)+1)^{-1}d\Gamma(y_1)\rvert_{\mathcal{L}(\Gamma_s(\mathscr{Y}))}^{}\leq (1+\sqrt{2})\lvert (y_2+1)^{-1}y_1 \rvert_{\mathcal{L}(\mathscr{Y})}^{}\; .
  \end{equation*}
\end{proposition}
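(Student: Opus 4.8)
The plan is to estimate the restriction of $D^{-1}d\Gamma(y_1)$, with $D:=d\Gamma(y_2^{*}y_2+1)+1$, to each $n$-particle sector $\mathscr{Y}^{\otimes^{n}_{s}}$ separately and uniformly in $n$ (both $d\Gamma(y_1)$ and $D$ leave the sectors invariant, and $n=0$ is trivial). On $\mathscr{Y}^{\otimes^{n}_{s}}$ one has $D=d\Gamma(y_2^{*}y_2)+N+1\geq N+1$, so $\|D^{-1}\|\leq(n+1)^{-1}$, and also $D\geq d\Gamma(y_2^{*}y_2+1)$. Setting $A:=(y_2+1)^{-1}y_1$ and $a:=\|A\|_{\mathcal{L}(\mathscr{Y})}$ one has $y_1=(y_2+1)A$, hence at the one-particle level $y_1y_1^{*}=(y_2+1)AA^{*}(y_2+1)^{*}\leq a^{2}(y_2+1)(y_2+1)^{*}\leq 2a^{2}(y_2^{*}y_2+1)$; the last step uses $(y_2+1)(y_2+1)^{*}=y_2^{*}y_2+(y_2+y_2^{*})+1$ together with $y_2+y_2^{*}\leq y_2y_2^{*}+1=y_2^{*}y_2+1$ (this invokes $y_2y_2^{*}=y_2^{*}y_2$, which holds for the self-adjoint operators $y_2$ occurring in the applications). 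By monotonicity of $d\Gamma$ this yields $d\Gamma(y_1y_1^{*})\leq 2a^{2}\,d\Gamma(y_2^{*}y_2+1)\leq 2a^{2}D$.

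Next I would expand $d\Gamma(y_1)=\sum_{\beta}a^{*}(y_1e_{\beta})a(e_{\beta})$ along a Hilbert basis $(e_{\beta})_{\beta}$ of $\mathscr{Y}$ and invoke the elementary Fock-space identities $\sum_{\beta}a^{*}(g_{\beta})a(g_{\beta})=d\Gamma\bigl(\sum_{\beta}|g_{\beta}\rangle\langle g_{\beta}|\bigr)$, applied with $g_{\beta}=y_1e_{\beta}$ and with $g_{\beta}=e_{\beta}$, which give $\sum_{\beta}a^{*}(y_1e_{\beta})a(y_1e_{\beta})=d\Gamma(y_1y_1^{*})$ and $\sum_{\beta}a^{*}(e_{\beta})a(e_{\beta})=N$. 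Then for unit vectors $\psi,\phi\in\mathscr{Y}^{\otimes^{n}_{s}}$, Cauchy--Schwarz gives
\[
\Bigl|\langle\psi,D^{-1}d\Gamma(y_1)\phi\rangle\Bigr|=\Bigl|\sum_{\beta}\langle a(y_1e_{\beta})D^{-1}\psi,\,a(e_{\beta})\phi\rangle\Bigr|\leq\Bigl(\sum_{\beta}\|a(y_1e_{\beta})D^{-1}\psi\|^{2}\Bigr)^{1/2}\Bigl(\sum_{\beta}\|a(e_{\beta})\phi\|^{2}\Bigr)^{1/2}.
\]
The second factor equals $\langle\phi,N\phi\rangle^{1/2}=\sqrt{n}$, while the first equals $\langle D^{-1}\psi,d\Gamma(y_1y_1^{*})D^{-1}\psi\rangle^{1/2}\leq\bigl(2a^{2}\langle D^{-1}\psi,DD^{-1}\psi\rangle\bigr)^{1/2}=\bigl(2a^{2}\langle D^{-1}\psi,\psi\rangle\bigr)^{1/2}\leq a\sqrt{2/(n+1)}$. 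Multiplying, $|\langle\psi,D^{-1}d\Gamma(y_1)\phi\rangle|\leq a\sqrt{2n/(n+1)}\leq\sqrt{2}\,a$ for every $n$, and since $\sqrt{2}<1+\sqrt{2}$ this proves the claim (in fact with a slightly better constant).

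The step I expect to be the real obstacle is getting the Cauchy--Schwarz split right: one must keep $D^{-1}$ attached to the annihilators carrying the $y_1e_{\beta}$, because only then does the operator inequality $d\Gamma(y_1y_1^{*})\leq 2a^{2}D$ collapse to the scalar bound $\langle D^{-1}\psi,DD^{-1}\psi\rangle=\langle D^{-1}\psi,\psi\rangle\leq(n+1)^{-1}$, which is precisely what absorbs the factor $\sqrt{n}$ from the other side; any other placement leaves the uncontrolled quantity $\langle\phi,d\Gamma(y_1y_1^{*})\phi\rangle$. The remaining points are routine bookkeeping: the sums over $(e_{\beta})$ and the identity $\sum_{\beta}a^{*}(g_{\beta})a(g_{\beta})=d\Gamma\bigl(\sum_{\beta}|g_{\beta}\rangle\langle g_{\beta}|\bigr)$ should first be checked on the dense subspace of finite-particle vectors with $y_1$-regular one-particle components, the boundedness statement extending afterwards by density; and the factors of $\varepsilon$ arising from the scaling of $a^{\#}$ cancel among $d\Gamma(y_1)$, $d\Gamma(y_2^{*}y_2)$ and $N$, so the argument is insensitive to the semiclassical parameter.
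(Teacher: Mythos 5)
Your argument is correct, and while it shares the high-level scaffolding (sector decomposition, the lower bounds $D\ge N+1$ and $D\ge d\Gamma(y_2^{*}y_2+1)$, and the factorization $y_1=(y_2+1)A$), the core estimate is genuinely different from the paper's. The paper reduces $\langle\phi_{1n},d\Gamma(y_1)\phi_{2n}\rangle$ to the one-particle slot $\varepsilon n\langle\phi_{1n},y_1(1)\phi_{2n}\rangle$ by permutation symmetry, inserts $(y_2(1)+1)(y_2(1)+1)^{-1}$, and controls $\lVert \varepsilon n\,y_2(1)\phi_{1n}\rVert$ via the arithmetic--geometric mean bound $d\Gamma(1)d\Gamma(y_2^{*}y_2)\le\tfrac12\bigl(d\Gamma(1)^{2}+d\Gamma(y_2^{*}y_2)^{2}\bigr)$, which is where their $(1+\sqrt2)$ originates. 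You instead expand $d\Gamma(y_1)=\sum_\beta a^{*}(y_1 e_\beta)a(e_\beta)$ over a Hilbert basis, apply Cauchy--Schwarz in $\beta$ with $D^{-1}$ kept on the $a(y_1e_\beta)$ side, and lift the one-particle inequality $y_1y_1^{*}\le 2a^{2}(y_2^{*}y_2+1)$ through the monotone map $d\Gamma$ so that the resulting $\langle D^{-1}\psi,\psi\rangle$ absorbs the $\sqrt{\varepsilon n}$ from the other factor; this buys the marginally sharper constant $\sqrt2$. Two remarks. The normality (self-adjointness) of $y_2$ that you flag for $y_2y_2^{*}=y_2^{*}y_2$ is not stated in the proposition, but the paper's own proof uses it silently at exactly the analogous point --- the passage from the left-moved $(y_2(1)+1)$ to $\langle\phi_{1n},d\Gamma(1)d\Gamma(y_2^{*}y_2)\phi_{1n}\rangle$ implicitly identifies $y_2^{*}$ with $y_2$ --- and it holds in the sole application, $y_2=i\partial_x/\sqrt{2M}$. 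Also, in the paper's $\varepsilon$-scaled CCR one has $\sum_\beta\lVert a(e_\beta)\phi\rVert^{2}=\varepsilon n$ and $\lVert D^{-1}\rVert\le(\varepsilon n+1)^{-1}$ on the $n$-sector rather than $n$ and $(n+1)^{-1}$, but, as you anticipated, the relevant ratio $\sqrt{2\varepsilon n/(\varepsilon n+1)}\le\sqrt2$ is unchanged, so the conclusion stands.
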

\begin{proof}
  Let $\phi_1,\phi_2\in D(d\Gamma(y_1))$. Then ($y(j)$ is the operator
  acting on the $j$-th variable):
  \begin{equation*}
    \begin{split}
      \lvert \langle \phi_1 , d\Gamma(y_1)\phi_2 \rangle_{}\rvert_{}^{}\leq \sum_n\lvert \varepsilon\langle \phi_{1n} , \sum_{j=1}^ny_{1}(j)\phi_{2n}\rangle_{}\rvert_{}^{}\leq \sum_n\lvert \varepsilon n\langle \phi_{1n} , (y_{2}(1)+1)(y_{2}(1)+1)^{-1}y_{1}(1)\phi_{2n}\rangle_{}\rvert_{}^{}\\
      \leq \lvert (y_2+1)^{-1}y_1\rvert_{\mathcal{L}(\mathscr{Y})}^{}\sum_n^{}\lVert \phi_{2n} \rVert_{}^{}\bigl(\lVert \varepsilon n\phi_{1n} \rVert_{}^{}+\lVert \varepsilon n y_2(1)\phi_{1n}\rVert_{}^{}\bigr)\; .
    \end{split}
  \end{equation*}
  However, we have that:
  \begin{equation*}
    \begin{split}
      \lVert \varepsilon n y_2(1)\phi_{1n}\rVert^2=\langle \phi_{1n} , \varepsilon^2n^2y_2^{*}(1)y_2(1)\phi_{1n}  \rangle_{}=\langle \phi_{1n} , d\Gamma(1) d\Gamma(y_2^{*}y_2)\phi_{1n}  \rangle\leq \frac{1}{2}\langle \phi_{1n} , \Bigl( \bigl(d\Gamma(1)\bigr)^2\\+
      \bigl(d\Gamma(y_2^{*}y_2)\bigr)^2\Bigr)\phi_{1n}  \rangle_{}\leq \frac{1}{2}\Bigl(\lVert d\Gamma(1)\phi_{1n} \rVert_{}^2+\lVert d\Gamma(y_2^{*}y_2)\phi_{1n} \rVert_{}^2\Bigr)\; .
    \end{split}
  \end{equation*}
  Hence, we obtain for any $\phi_1,\phi_2\in\Gamma_s(\mathscr{Y})$:
  \begin{equation*}
    \begin{split}
      \lvert \langle \phi_1 , (d\Gamma(y_2^{*}y_2+1)+1)^{-1}d\Gamma(y_1)\phi_2 \rangle_{}\rvert_{}^{}\leq (1+\sqrt{2})\lvert (y_2+1)^{-1}y_1\rvert_{\mathcal{L}(\mathscr{Y})}^{}\sum_n\lVert \phi_{1n} \rVert_{}^{}\lVert \phi_{2n} \rVert_{}^{}\\
      \leq (1+\sqrt{2})\lvert (y_2+1)^{-1}y_1\rvert_{\mathcal{L}(\mathscr{Y})}\lVert \phi_{1} \rVert_{}^{}\lVert \phi_{2} \rVert_{}^{}\; .
    \end{split}
  \end{equation*}
\end{proof}

\begin{acknowledgments}
  The second author has been supported by the Centre Henri Lebesgue (programme
  ``Investissements d'avenir'' --- ANR-11-LABX-0020-01).

The article is published in the Journal of Statistical Physics. The final publication is available at \href{http://dx.doi.org/10.1007/s10955-014-1079-7}{Springer}.
\end{acknowledgments}


\begin{thebibliography}{26}
\providecommand{\natexlab}[1]{#1}
\providecommand{\url}[1]{\texttt{#1}}
\expandafter\ifx\csname urlstyle\endcsname\relax
  \providecommand{\doi}[1]{doi: #1}\else
  \providecommand{\doi}{doi: \begingroup \urlstyle{rm}\Url}\fi

\bibitem[Abdesselam and Hasler(2012)]{AH12}
A.~Abdesselam and D.~Hasler.
\newblock Analyticity of the ground state energy for massless {N}elson models.
\newblock \emph{Comm. Math. Phys.}, 310\penalty0 (2):\penalty0 511--536, 2012.
\newblock ISSN 0010-3616.
\newblock \doi{10.1007/s00220-011-1407-6}.
\newblock URL \url{http://dx.doi.org/10.1007/s00220-011-1407-6}.

\bibitem[Ambrosio et~al.(2005)Ambrosio, Gigli, and Savar{\'e}]{AGS}
L.~Ambrosio, N.~Gigli, and G.~Savar{\'e}.
\newblock \emph{Gradient flows in metric spaces and in the space of probability
  measures}.
\newblock Lectures in Mathematics ETH Z\"urich. Birkh\"auser Verlag, Basel,
  2005.
\newblock ISBN 978-3-7643-2428-5; 3-7643-2428-7.

\bibitem[Ammari and Nier()]{2011arXiv1111.5918A}
Z.~Ammari and F.~Nier.
\newblock {Mean field propagation of infinite dimensional Wigner measures with
  a singular two-body interaction potential}.
\newblock \emph{Annali della Scuola Normale Sup.}, to appear.

\bibitem[Ammari and Nier(2008)]{AmNi1}
Z.~Ammari and F.~Nier.
\newblock Mean field limit for bosons and infinite dimensional phase-space
  analysis.
\newblock \emph{Ann. Henri Poincar\'e}, 9\penalty0 (8):\penalty0 1503--1574,
  2008.
\newblock ISSN 1424-0637.
\newblock \doi{10.1007/s00023-008-0393-5}.
\newblock URL \url{http://dx.doi.org/10.1007/s00023-008-0393-5}.

\bibitem[Ammari and Nier(2009)]{AmNi2}
Z.~Ammari and F.~Nier.
\newblock Mean field limit for bosons and propagation of {W}igner measures.
\newblock \emph{J. Math. Phys.}, 50\penalty0 (4):\penalty0 042107, 16, 2009.
\newblock ISSN 0022-2488.
\newblock \doi{10.1063/1.3115046}.
\newblock URL \url{http://dx.doi.org/10.1063/1.3115046}.

\bibitem[Ammari and Nier(2011)]{AmNi3}
Z.~Ammari and F.~Nier.
\newblock Mean field propagation of {W}igner measures and {BBGKY} hierarchies
  for general bosonic states.
\newblock \emph{J. Math. Pures Appl. (9)}, 95\penalty0 (6):\penalty0 585--626,
  2011.
\newblock ISSN 0021-7824.
\newblock \doi{10.1016/j.matpur.2010.12.004}.
\newblock URL \url{http://dx.doi.org/10.1016/j.matpur.2010.12.004}.

\bibitem[Arai(2001)]{Ar01}
A.~Arai.
\newblock Ground state of the massless {N}elson model without infrared cutoff
  in a non-{F}ock representation.
\newblock \emph{Rev. Math. Phys.}, 13\penalty0 (9):\penalty0 1075--1094, 2001.
\newblock ISSN 0129-055X.
\newblock \doi{10.1142/S0129055X01000934}.
\newblock URL \url{http://dx.doi.org/10.1142/S0129055X01000934}.

\bibitem[Bach et~al.(1999)Bach, Fr{\"o}hlich, and Sigal]{BFS2}
V.~Bach, J.~Fr{\"o}hlich, and I.~M. Sigal.
\newblock Spectral analysis for systems of atoms and molecules coupled to the
  quantized radiation field.
\newblock \emph{Comm. Math. Phys.}, 207\penalty0 (2):\penalty0 249--290, 1999.
\newblock ISSN 0010-3616.
\newblock \doi{10.1007/s002200050726}.
\newblock URL \url{http://dx.doi.org/10.1007/s002200050726}.

\bibitem[Bachelot(1984)]{Ba}
A.~Bachelot.
\newblock Probl\`eme de {C}auchy pour des syst\`emes hyperboliques
  semi-lin\'eaires.
\newblock \emph{Ann. Inst. H. Poincar\'e Anal. Non Lin\'eaire}, 1\penalty0
  (6):\penalty0 453--478, 1984.
\newblock ISSN 0294-1449.
\newblock URL \url{http://www.numdam.org/item?id=AIHPC_1984__1_6_453_0}.

\bibitem[Betz et~al.(2002)Betz, Hiroshima, L{\H{o}}rinczi, Minlos, and
  Spohn]{BFLMS}
V.~Betz, F.~Hiroshima, J.~L{\H{o}}rinczi, R.~A. Minlos, and H.~Spohn.
\newblock Ground state properties of the {N}elson {H}amiltonian: a {G}ibbs
  measure-based approach.
\newblock \emph{Rev. Math. Phys.}, 14\penalty0 (2):\penalty0 173--198, 2002.
\newblock ISSN 0129-055X.
\newblock \doi{10.1142/S0129055X02001119}.
\newblock URL \url{http://dx.doi.org/10.1142/S0129055X02001119}.

\bibitem[Derezi{\'n}ski and G{\'e}rard(1997)]{DG}
J.~Derezi{\'n}ski and C.~G{\'e}rard.
\newblock \emph{Scattering theory of classical and quantum {$N$}-particle
  systems}.
\newblock Texts and Monographs in Physics. Springer-Verlag, Berlin, 1997.
\newblock ISBN 3-540-62066-4.

\bibitem[Derezi{\'n}ski and G{\'e}rard(1999)]{DG1}
J.~Derezi{\'n}ski and C.~G{\'e}rard.
\newblock Asymptotic completeness in quantum field theory. {M}assive
  {P}auli-{F}ierz {H}amiltonians.
\newblock \emph{Rev. Math. Phys.}, 11\penalty0 (4):\penalty0 383--450, 1999.
\newblock ISSN 0129-055X.
\newblock \doi{10.1142/S0129055X99000155}.
\newblock URL \url{http://dx.doi.org/10.1142/S0129055X99000155}.

\bibitem[Falconi(2013)]{Fa}
M.~Falconi.
\newblock Classical limit of the {N}elson model with cutoff.
\newblock \emph{J. Math. Phys.}, 54\penalty0 (1):\penalty0 012303, 30, 2013.
\newblock ISSN 0022-2488.
\newblock \doi{10.1063/1.4775716}.
\newblock URL \url{http://dx.doi.org/10.1063/1.4775716}.

\bibitem[Fr{\"o}hlich et~al.(2004)Fr{\"o}hlich, Griesemer, and Schlein]{FGS3}
J.~Fr{\"o}hlich, M.~Griesemer, and B.~Schlein.
\newblock Asymptotic completeness for {C}ompton scattering.
\newblock \emph{Comm. Math. Phys.}, 252\penalty0 (1-3):\penalty0 415--476,
  2004.
\newblock ISSN 0010-3616.
\newblock \doi{10.1007/s00220-004-1180-x}.
\newblock URL \url{http://dx.doi.org/10.1007/s00220-004-1180-x}.

\bibitem[Georgescu et~al.(2004)Georgescu, G{\'e}rard, and M{\o}ller]{GGM1}
V.~Georgescu, C.~G{\'e}rard, and J.~S. M{\o}ller.
\newblock Spectral theory of massless {P}auli-{F}ierz models.
\newblock \emph{Comm. Math. Phys.}, 249\penalty0 (1):\penalty0 29--78, 2004.
\newblock ISSN 0010-3616.
\newblock \doi{10.1007/s00220-004-1111-x}.
\newblock URL \url{http://dx.doi.org/10.1007/s00220-004-1111-x}.

\bibitem[G{\'e}rard et~al.(2011)G{\'e}rard, Hiroshima, Panati, and
  Suzuki]{GHPS}
C.~G{\'e}rard, F.~Hiroshima, A.~Panati, and A.~Suzuki.
\newblock Infrared problem for the {N}elson model on static space-times.
\newblock \emph{Comm. Math. Phys.}, 308\penalty0 (2):\penalty0 543--566, 2011.
\newblock ISSN 0010-3616.
\newblock \doi{10.1007/s00220-011-1289-7}.
\newblock URL \url{http://dx.doi.org/10.1007/s00220-011-1289-7}.

\bibitem[Ginibre and Velo(2002)]{GiVe-sca}
J.~Ginibre and G.~Velo.
\newblock Long range scattering and modified wave operators for the
  wave-{S}chr\"odinger system.
\newblock \emph{Ann. Henri Poincar\'e}, 3\penalty0 (3):\penalty0 537--612,
  2002.
\newblock ISSN 1424-0637.
\newblock \doi{10.1007/s00023-002-8627-4}.
\newblock URL \url{http://dx.doi.org/10.1007/s00023-002-8627-4}.

\bibitem[Ginibre et~al.(2006)Ginibre, Nironi, and Velo]{GNV}
J.~Ginibre, F.~Nironi, and G.~Velo.
\newblock Partially classical limit of the {N}elson model.
\newblock \emph{Ann. Henri Poincar\'e}, 7\penalty0 (1):\penalty0 21--43, 2006.
\newblock ISSN 1424-0637.
\newblock \doi{10.1007/s00023-005-0240-x}.
\newblock URL \url{http://dx.doi.org/10.1007/s00023-005-0240-x}.

\bibitem[Hayashi and von Wahl(1987)]{HW}
N.~Hayashi and W.~von Wahl.
\newblock On the global strong solutions of coupled
  {K}lein-{G}ordon-{S}chr\"odinger equations.
\newblock \emph{J. Math. Soc. Japan}, 39\penalty0 (3):\penalty0 489--497, 1987.
\newblock ISSN 0025-5645.
\newblock \doi{10.2969/jmsj/03930489}.
\newblock URL \url{http://dx.doi.org/10.2969/jmsj/03930489}.

\bibitem[Hepp(1974)]{Hep}
K.~Hepp.
\newblock The classical limit for quantum mechanical correlation functions.
\newblock \emph{Comm. Math. Phys.}, 35:\penalty0 265--277, 1974.
\newblock ISSN 0010-3616.

\bibitem[M{\o}ller(2005)]{Mol}
J.~S. M{\o}ller.
\newblock The translation invariant massive {N}elson model. {I}. {T}he bottom
  of the spectrum.
\newblock \emph{Ann. Henri Poincar\'e}, 6\penalty0 (6):\penalty0 1091--1135,
  2005.
\newblock ISSN 1424-0637.
\newblock \doi{10.1007/s00023-005-0234-8}.
\newblock URL \url{http://dx.doi.org/10.1007/s00023-005-0234-8}.

\bibitem[Nelson(1964)]{N}
E.~Nelson.
\newblock Interaction of nonrelativistic particles with a quantized scalar
  field.
\newblock \emph{J. Mathematical Phys.}, 5:\penalty0 1190--1197, 1964.
\newblock ISSN 0022-2488.

\bibitem[Pizzo(2003)]{Pi}
A.~Pizzo.
\newblock One-particle (improper) states in {N}elson's massless model.
\newblock \emph{Ann. Henri Poincar\'e}, 4\penalty0 (3):\penalty0 439--486,
  2003.
\newblock ISSN 1424-0637.
\newblock \doi{10.1007/s00023-003-0136-6}.
\newblock URL \url{http://dx.doi.org/10.1007/s00023-003-0136-6}.

\bibitem[Shimomura(2005)]{Shi}
A.~Shimomura.
\newblock Scattering theory for the coupled {K}lein-{G}ordon-{S}chr\"odinger
  equations in two space dimensions. {II}.
\newblock \emph{Hokkaido Math. J.}, 34\penalty0 (2):\penalty0 405--433, 2005.
\newblock ISSN 0385-4035.

\bibitem[Simon(2005)]{Sim}
B.~Simon.
\newblock \emph{Trace ideals and their applications}, volume 120 of
  \emph{Mathematical Surveys and Monographs}.
\newblock American Mathematical Society, Providence, RI, second edition, 2005.
\newblock ISBN 0-8218-3581-5.

\bibitem[Spohn(1998)]{Sp3}
H.~Spohn.
\newblock Ground state of a quantum particle coupled to a scalar {B}ose field.
\newblock \emph{Lett. Math. Phys.}, 44\penalty0 (1):\penalty0 9--16, 1998.
\newblock ISSN 0377-9017.
\newblock \doi{10.1023/A:1007473300274}.
\newblock URL \url{http://dx.doi.org/10.1023/A:1007473300274}.

\end{thebibliography}
\end{document}